\newcommand\N{{\mathbb N}}
\newcommand\R{{\mathbb R}}
\newcommand\T{{\mathbb T}}
\newcommand\C{{\mathbb C}}
\newcommand\Z{{\mathbb Z}}
\def\AA{{\mathcal A}}
\def\BB{{\mathcal B}}
\def\CC{{\mathcal C}}
\def\EE{{\mathcal E}}
\def\FF{{\mathcal F}}
\def\GG{{\mathcal G}}
\def\HH{{\mathcal H}}
\def\LL{{\mathcal L}}
\def\SS{{\mathcal S}}
\def\BBB{{\mathscr B}}
\def\e{{\epsilon}}
\def\eps{{\varepsilon}}
\newcommand{\la}{\langle}
\newcommand{\ra}{\rangle}
\def\Nt{|\hskip-0.04cm|\hskip-0.04cm|}
\newtheorem{thm}{Theorem}[section]
\newtheorem{theo}[thm]{Theorem}
\newtheorem{prop}[thm]{Proposition}
\newtheorem{lem}[thm]{Lemma}
\newtheorem{cor}[thm]{Corollary}
\theoremstyle{remark}
\newtheorem{rem}[thm]{Remark}
\theoremstyle{definition}
\numberwithin{equation}{section}
\newcommand{\beqn}{\begin{equation}}
\newcommand{\eeqn}{\end{equation}}
\newcommand{\bal}{\begin{aligned}}
\newcommand{\eal}{\end{aligned}}
\newcommand{\Black}{\color{black}}
\begin{document}

\title[Cauchy problem and stability for the Landau equation]{Cauchy problem and exponential stability for the inhomogeneous Landau equation}
\date{\today}

\author[K. Carrapatoso]{Kleber Carrapatoso}
\address[K. Carrapatoso]{\'Ecole Normale Sup\'erieure de Cachan, CMLA (UMR 8536), 61 av. du pr\'esident Wilson, 94235 Cachan, France.}
\email{carrapatoso@cmla.ens-cachan.fr}

\author[I. Tristani]{Isabelle Tristani}
\address[I. Tristani]{Universit\'e Paris Dauphine, Ceremade (UMR 7534), Place du Mar\'echal de Lattre de Tassigny, 75775 Paris, France.}
\email{tristani@ceremade.dauphine.fr}

\author[K.-C. Wu]{Kung-Chien Wu}
\address[K.-C. Wu]{Department of Mathematics, National Cheng Kung University, 70101 Tainan, Taiwan.}
\email{kungchienwu@gmail.com}

\begin{abstract}
This work deals with the inhomogeneous Landau equation on the torus in the cases of hard, Maxwellian and moderately soft potentials. We first investigate the linearized equation and we prove exponential decay estimates for the associated semigroup. We then turn to the nonlinear equation and we use the linearized semigroup decay in order to construct solutions in a close-to-equilibrium setting. Finally, we prove an exponential stability for such a solution, with a rate as close as we want to the optimal rate given by the semigroup decay.
\end{abstract}

\subjclass[2010]{35Q20, 35K55, 45K05, 76P05, 47H20, 82C40}

\keywords{Landau equation, Cauchy problem, stability, perturbative solutions, exponential decay, spectral gap}

\maketitle

\tableofcontents


\section{Introduction}

\subsection{The model} In this paper, we investigate the Cauchy theory associated to the \emph{spatially inhomogeneous Landau equation}. This equation is a kinetic model in plasma physics that describes the evolution of the density function $F=F(t,x,v)$ in the phase space of position and velocities of the particles. In the torus, the equation is given by, for $F=F(t,x,v) \geq 0$ with $t\in \R^+$, $x\in \T^3=\R^3/\Z^3$ (that we assume without loss of generality to have volume one $|\T^3|=1$) and $v\in \R^3$,
\beqn\label{eq:landau}
\left\{
\bal
& \partial_t F  +v\cdot \nabla_x F =  Q(F,F)  \\
& F_{|t=0} = F_0
\eal
\right.
\eeqn
where the Landau operator $Q$ is a bilinear operator that takes the form
\beqn\label{eq:oplandau0}
Q(G,F)(v) = \partial_{i} \int_{\R^3} a_{ij}(v-v_*) \left[ G_* \partial_j F - F \partial_{j}G_*\right]\, dv_*,
\eeqn
and we use the convention of summation of repeated indices, and the derivatives are in the velocity variable, i.e. $\partial_i = \partial_{v_i}$. Hereafter we use the shorthand notations $G_* = G(v_*)$, $F= F(v)$, $\partial_j G_* = \partial_{v_{*j}} G(v_*)$, $\partial_j F = \partial_{v_j} F(v)$, etc.

The matrix $a_{ij}$ is symmetric semi-positive, depends on the interaction between particles and is given by
\beqn\label{eq:aij}
a_{ij}(v) = |v|^{\gamma+2}\left( \delta_{ij} - \frac{v_i v_j}{|v|^2}\right).
\eeqn

We define (see \cite{Vi2}) in $3$-dimension the following quantities
\beqn\label{eq:bc}
\bal
& b_i(v) = \partial_j a_{ij}(v) = - 2 \, |v|^\gamma \, v_i, \\
& c(v) =  \partial_{ij} a_{ij}(v)  = - 2 (\gamma+3) \, |v|^\gamma \quad \text{or}\quad c = 8\pi \delta_0 \; \text{ if }\; \gamma=-3.
\eal
\eeqn
We can rewrite the Landau operator \eqref{eq:oplandau0} in the following way
\beqn\label{eq:oplandau}
Q(G,F) = ( a_{ij}*_v G) \partial_{ij} F - (c*_v G) F
= \nabla_v \cdot \{ (a *_v g) \nabla_v f - (b *_v g)f \}.
\eeqn

We have the following classification: we call hard potentials if $\gamma\in(0,1]$, Maxwellian molecules if $\gamma=0$, moderately soft potentials if $\gamma \in [-2,0)$, very soft potentials if $\gamma \in (-3,-2)$ and Coulombian potential if $\gamma=-3$.
Hereafter we shall consider the cases of hard potentials, Maxwellian molecules and moderately soft potentials, i.e. $\gamma\in[-2,1]$.

The Landau equation conserves mass, momentum and energy. Indeed, at least formally, for any test function $\varphi$, we have
$$
\int_{\R^3} Q(F,F) \varphi \, dv = - \frac12 \int_{\R^3 \times \R^3} a_{ij}(v-v_*) F F_*  \left(  \frac{\partial_i F}{F} -  \frac{\partial_{i} F_*}{F_*} \right)   \left( \partial_j \varphi - \partial_{j} \varphi_* \right) \, dv \, dv_*,
$$
from which we deduce that
\beqn \label{eq:conserv}
\frac{d}{dt} \int_{\T^3 \times \R^3} F \varphi(v) \, dx \, dv=
\int_{\T^3 \times \R^3} \left[Q(F,F) - v \cdot \nabla_x F \right] \varphi(v) \, dx \, dv = 0 \quad \text{for} \quad \varphi(v) = 1,v,|v|^2.
\eeqn
Moreover, the Landau version of the Boltzmann $H$-theorem asserts that the entropy
$$
H(F) := \int_{\T^3 \times \R^3} F \, \log F \, dx \, dv
$$
is non increasing. Indeed, at least formally, since $a_{ij}$ is nonnegative, we have the following inequality for the entropy dissipation $D(F)$:
$$
\begin{aligned}
D(F) &:= -\frac{d}{dt} H(F) \\
&=\frac12 \int_{\T^3 \times \R^3 \times \R^3} a_{ij}(v-v_*) F F_*
\left( \frac{\partial_i F}{F} -  \frac{\partial_{i} F_*}{F_*} \right)
\left( \frac{\partial_j F}{F} -  \frac{\partial_{j} F_*}{F_*} \right) \, dv \, dv_* \, dx \geq 0.
\end{aligned}
$$

It is known that the global equilibria of \eqref{eq:landau} are global Maxwellian distributions that are independent of time $t$ and position $x$. We shall always consider initial data $F_0$ verifying
$$
\int_{\T^3 \times \R^3} F_0 \, dx \, dv = 1, \quad
\int_{\T^3 \times \R^3} F_0 \, v \, dx \, dv = 0, \quad
\int_{\T^3 \times \R^3} F_0 \, |v|^2 \, dx \, dv = 3,
$$
therefore we consider the Maxwellian equilibrium
$$
\mu(v)=(2 \pi)^{-3/2} e^{-|v|^2/2}
$$
with same mass, momentum and energy of the initial data.

We linearize the Landau equation around $\mu$ with the perturbation
$$
F=\mu + f.
$$
The Landau equation \eqref{eq:landau} for $f=f(t,x,v)$ takes the form
\beqn\label{eq:landau-f}
\left\{
\bal
\partial_t f &= \Lambda f + Q(f,f) := \LL f - v\cdot \nabla_x f + Q(f,f)\\
f_{| t=0} &= f_0 = F_0-\mu,
\eal
\right.
\eeqn
where $\Lambda =\LL - v\cdot \nabla_x $ is the inhomogeneous linearized Landau operator and the homogeneous linearized Landau operator $\LL$ is given by
\beqn\label{eq:oplandaulin}
\bal
\LL f &:= Q(\mu,f)+Q(f,\mu) \\
&= ( a_{ij}* \mu) \partial_{ij} f - (c* \mu) f
+ ( a_{ij}* f) \partial_{ij} \mu - (c* f) \mu.
\eal
\eeqn
Through the paper we introduce the following notation
\beqn\label{eq:barabc}
\bal
\bar a_{ij}(v) = a_{ij} \ast \mu , \quad
\bar b_{i}(v) = b_i \ast \mu , \quad
\bar c(v) = c \ast \mu .
\eal
\eeqn
The conservation laws \eqref{eq:conserv} can then be rewritten as, for all $t \ge 0$,
\beqn\label{laws}
\int_{\T^3 \times \R^3} f(t,x,v) \varphi(v) \, dx \, dv
 = 0
\quad \text{for} \quad \varphi(v) = 1,v,|v|^2.
\eeqn

\medskip
\subsection{Notations}

Through all the paper we shall consider function of two variables $f=f(x,v)$ with $x \in \T^3$ and $v \in \R^3$.
Let $m=m(v)$ be a positive Borel weight function and $1\leq p,q \leq \infty$. 
We define the space $L^q_{x} L^p_{v} (m) $ as the Lebesgue space associated to the norm, for $f=f(x,v)$,
$$
\bal
\| f \|_{L^q_{x} L^p_{v} (m)} &:= \big\| \|  f \|_{L^p_v(m)}   \big\|_{L^q_{x}}
:= \big\| \| m \, f \|_{L^p_v}   \big\|_{L^q_{x}}  \\
& = \left(\int_{\T^3_x}  \|  f (x, \cdot )\|_{L^p_v(m)}^q \, dx \right)^{1/q} \\
& = \left(\int_{\T^3_x}  \left(\int_{\R^3_v} |f(x,v)|^p \, m(v)^p \,dv \right)^{q/p} \, dx\right)^{1/q} .
\eal
$$
We also define the high-order Sobolev spaces $ W^{n,q}_{x} W^{\ell,p}_{v} (m)$, for $n,\ell \in \N$:
$$
\| f \|_{ W^{n,q}_x W^{\ell,p}_v (m)} = \sum_{0\leq |\alpha| \leq \ell, \, 0 \leq |\beta| \leq n, \, |\alpha| + |\beta| \leq \max(\ell,n)}
 \| \partial^\alpha_v \partial^\beta_x f \|_{L^q_x L^p_v (m)}.
$$
This definition reduces to the usual weighted Sobolev space $W^{\ell,p}_{x,v}(m)$ when $p=q$ and $\ell=n$, and we recall the shorthand notation $H^\ell = W^{\ell,2}$. We shall denote
$W^{\ell,p}(m) = W^{\ell,p}_{x,v}(m)$ when considering spaces in the two variables $(x,v)$.

\medskip

Let $X,Y$ be Banach spaces and consider a linear operator $\Lambda : X \to X$. We shall denote by $\SS_{\Lambda}(t) = e^{t\Lambda}$ the semigroup generated by $\Lambda$. Moreover we denote by $\BBB(X,Y)$ the space of bounded linear operators from $X$ to $Y$ and by $\| \cdot \|_{\BBB(X,Y)}$ its norm operator, with the usual simplification $\BBB(X) = \BBB(X,X)$. 

\smallskip

For simplicity of notations, hereafter, we denote $\langle v \rangle=(1+|v|^{2})^{1/2}$; $a\sim b$ means that there exist constants $c_{1}, c_{2}>0$ such that $c_{1}b\leq a \leq c_{2}b$; we abbreviate ``{ $\leq C$} " to ``{ $ \lesssim$ }", where $C$ is a positive constant depending only on fixed number.

\subsection{Main results}
\subsubsection{Cauchy theory and convergence to equilibrium}
We develop a Cauchy theory of perturbative solutions in ``large'' spaces for $\gamma \in [-2,1]$. We also deal with the problem of convergence to equilibrium of the constructed solutions, we prove an exponential convergence to equilibrium. Let us now state our assumptions for the main result.

\bigskip
\noindent{\bf (H0)} Assumptions for Theorem \ref{main1}:
\begin{itemize}
\item {\bf Hard potentials $\gamma \in (0,1]$ and Maxwellian molecules $\gamma=0$:}

\smallskip

\begin{enumerate}[$\qquad(i)$]

\item \emph{Polynomial weight}: $m = \la v \ra^k$ with $k > \gamma + 7 + 3/2$.
\smallskip

\item \emph{Stretched exponential weight}: $m = e^{r\la v \ra^s}$ with $r>0$ and $s \in (0,2)$.

\smallskip

\item \emph{Exponential weight}: $m = e^{r\la v \ra^2}$ with $r \in (0,1/2)$.

\end{enumerate}

\medskip

\item {\bf Moderately soft potentials $\gamma \in [-2,0)$:}

\smallskip

\begin{enumerate}[$\qquad(i)$]

\item \emph{Stretched exponential weight}: $m = e^{r\la v \ra^s}$ with $r>0$, $s \in (-\gamma,2)$.

\smallskip

\item \emph{Exponential weight}: $m = e^{r\la v \ra^2}$ with $r \in (0,1/2)$.

\end{enumerate}

\end{itemize}
Through the paper, we shall use the notation $\sigma=0$ when $m=\langle v \rangle^k$ and $\sigma=s$ when $m=e^{r\langle v \rangle^s}$.


\bigskip

We define the space $\HH^3_x L^2_v(m)$ (for $m$ a polynomial or exponential weight) associated to the norm
\beqn\label{HH3xL2v}
\bal
\| h \|_{\HH^3_x L^2_v(m)}^2
&= \| h\|_{L^2_x L^2_v(m)}^2
+ \| \nabla_x h \|_{L^2_x L^2_v(m \la v \ra^{-(1 - \sigma/2)})}^2 \\
&\quad
+ \| \nabla^2_x h \|_{L^2_x L^2_v(m \la v \ra^{-2(1 - \sigma/2)})}^2
+\| \nabla^3_x h \|_{L^2_x L^2_v(m \la v \ra^{-3(1 - \sigma/2)})}^2.
\eal
\eeqn

We also introduce the velocity space $H^1_{v,*}(m)$ through the norm
\beqn\label{H1v*}
\bal
\| h \|_{H^1_{v,*}(m)}^2
&= \| h\|_{L^2_v(m \la v \ra^{(\gamma+\sigma)/2})}^2
+ \| P_v \nabla_v h \|_{L^2_v(m \la v \ra^{\gamma/2})}^2
+ \| (I-P_v) \nabla_v h \|_{L^2_v(m \la v \ra^{(\gamma+2)/2})}^2 ,
\eal
\eeqn
with $P_v$ the projection onto $v$, namely $P_v \xi = \left(\xi \cdot \frac{v}{|v|}\right) \frac{v}{|v|}$, as well as the space $\HH^3_x (H^1_{v,*}(m))$ associated to
\beqn\label{HH3xH1v*}
\bal
\| h \|_{\HH^3_x (H^1_{v,*}(m))}^2
&= \| h\|_{L^2_x (H^1_{v,*}(m))}^2
+ \| \nabla_x h \|_{L^2_x (H^1_{v,*}(m \la v \ra^{-(1 - \sigma/2)}))}^2 \\
&\quad
+ \| \nabla^2_x h \|_{L^2_x (H^1_{v,*} (m \la v \ra^{-2(1 - \sigma/2)}))}^2
+\| \nabla^3_x h \|_{L^2_x (H^1_{v,*}(m \la v \ra^{-3(1 - \sigma/2)}))}^2 \\
&= \int_{\T^3_x} \| h\|_{ H^1_{v,*}(m)}^2
+  \int_{\T^3_x} \| \nabla_x h \|_{H^1_{v,*}(m \la v \ra^{-(1 - \sigma/2)})}^2 \\
&\quad
+ \int_{\T^3_x} \| \nabla^2_x h \|_{H^1_{v,*} (m \la v \ra^{-2(1 - \sigma/2)})}^2
+\int_{\T^3_x} \| \nabla^3_x h \|_{H^1_{v,*}(m \la v \ra^{-3(1 - \sigma/2)})}^2 .
\eal
\eeqn

\Black

\medskip

Here are the main results on the fully nonlinear problem (\ref{eq:landau-f}) that we prove in what follows. For simplicity denote $X := \HH^3_x L^2_v(m)$ and $Y := \HH^3_x (H^1_{v,*}(m))$ (see \eqref{HH3xL2v} and \eqref{HH3xH1v*}).

\begin{thm}\label{main1}
Consider assumption {\bf (H0)} with some weight function $m$. We assume that $f_0$ satisfies \eqref{laws} and also that $F_0 = \mu + f_0 \ge 0$.
There is a constant $\e_0 = \e_0(m) >0$ such that if
$\| f_0 \|_{X} \le \e_0$,
then there exists a unique global weak solution $f$ to the Landau equation \eqref{eq:landau-f}, which satisfies, for some constant $C>0$,
$$
\| f \|_{L^\infty ([0,\infty); X)} + \| f \|_{L^2([0,\infty); Y)} \le C \e_0.
$$
Moreover, this solution verifies an exponential decay: for any $0<\lambda_2 < \lambda_1$ there exists $C>0$ such that
$$
\forall\, t \ge 0, \quad
\| f(t)  \|_{X} \le C \, e^{- \lambda_2 t} \, \| f_0 \|_{X},
$$
where $\lambda_1 >0$ is the optimal rate given by the semigroup decay of the associated linearized operator in Theorem~\ref{thm:extension}.

\end{thm}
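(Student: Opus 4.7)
\medskip

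\noindent\textbf{Proof sketch.} The proof is the standard close-to-equilibrium perturbative scheme built around the linear decay of $S_\Lambda(t)$ on $X$ provided by Theorem~\ref{thm:extension}. The plan is to (a) derive nonlinear a priori estimates in $X$ with dissipation in $Y$, (b) run an approximation/fixed-point argument to produce a global solution for small data, and (c) upgrade the $L^\infty_t X$ bound to the claimed exponential decay via Duhamel.

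\smallskip

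The first key step is an a priori energy estimate. Writing the equation as $\partial_t f = \Lambda f + Q(f,f)$, I would compute $\tfrac{1}{2}\tfrac{d}{dt}\|f\|_X^2$ using the weighted inner product underlying $X$, and apply the dissipation estimate for the linearized operator $\Lambda$ on each $x$-derivative $\partial^\beta_x f$ with $|\beta|\le 3$, where the weight degrades by $\la v\ra^{-(1-\sigma/2)}$ at each derivative as prescribed in \eqref{HH3xL2v}. The hypocoercive structure (combining the velocity dissipation from $\mathcal{L}$ with a mixing contribution from the transport term $-v\cdot\nabla_x$ handled through a twisted $H^1_x$-type norm) should produce an inequality of the form
\[
\tfrac{d}{dt}\|f\|_X^2 + \kappa\|f\|_Y^2 \lesssim \bigl\langle Q(f,f),\,f\bigr\rangle_X
\]
for some $\kappa>0$, where the conservation laws \eqref{laws} are used to remove the kernel of $\Lambda$. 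The second step is a trilinear estimate for the nonlinearity, showing
\[
\bigl|\langle Q(f,g), h\rangle_X\bigr|\;\lesssim\; \|f\|_X\,\|g\|_Y\,\|h\|_Y,
\]
which together with the previous inequality yields
\[
\tfrac{d}{dt}\|f\|_X^2 + \kappa\|f\|_Y^2 \;\lesssim\; \|f\|_X\,\|f\|_Y^2 .
\]
For $\|f_0\|_X\le \varepsilon_0$ sufficiently small, a continuation argument absorbs the right-hand side into $\kappa\|f\|_Y^2$, giving the uniform bound $\|f\|_{L^\infty_t X}+\|f\|_{L^2_t Y}\lesssim \varepsilon_0$.

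\smallskip

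Existence and uniqueness then follow by a standard iterative scheme: solve the linear problem $\partial_t f^{n+1}=\Lambda f^{n+1}+Q(f^n,f^n)$ (whose well-posedness in $X$ follows from the semigroup theory of Theorem~\ref{thm:extension} together with a source term estimate via the trilinear bound), show $(f^n)$ is Cauchy in a slightly weaker norm thanks to the smallness, and pass to the limit. Positivity of $F=\mu+f$ is propagated by the maximum principle for the Landau equation, given smallness and the $L^2_tY$ regularity which controls the needed weighted $v$-derivatives.

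\smallskip

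For the exponential decay with rate $\lambda_2<\lambda_1$, the natural route is Duhamel:
\[
f(t) = S_\Lambda(t) f_0 + \int_0^t S_\Lambda(t-s)\,Q(f(s),f(s))\,ds.
\]
Theorem~\ref{thm:extension} gives $\|S_\Lambda(t)g\|_X\le Ce^{-\lambda_1 t}\|g\|_X$ on the invariant subspace fixed by \eqref{laws}. Multiplying by $e^{\lambda_2 t}$ and using the trilinear estimate plus the already-established $L^2_t Y$-integrability of $f$ should yield
\[
e^{\lambda_2 t}\|f(t)\|_X \;\lesssim\; \|f_0\|_X + \sup_{0\le s\le t}\bigl(e^{\lambda_2 s}\|f(s)\|_X\bigr)\int_0^t e^{-(\lambda_1-\lambda_2)(t-s)}\|f(s)\|_Y^2\,ds,
\]
and the last integral is small when $\varepsilon_0$ is small, closing the bootstrap.

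\smallskip

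The main obstacle I expect is the nonlinear trilinear estimate in the anisotropic weighted space $Y=\mathcal{H}^3_x H^1_{v,*}(m)$, in particular for the moderately soft range $\gamma\in[-2,0)$. The splitting of $\nabla_v$ into its radial part $P_v\nabla_v$ (with weight $\la v\ra^{\gamma/2}$) and tangential part $(I-P_v)\nabla_v$ (with weight $\la v\ra^{(\gamma+2)/2}$) in \eqref{H1v*} is tailored to the structure of $\bar a_{ij}$, and distributing $x$-derivatives among three factors while respecting both the $\la v\ra^{-|\beta|(1-\sigma/2)}$ degradation at each $x$-derivative and the $\gamma$-dependent $v$-weights will require careful bookkeeping; the borderline case $\gamma=-2$ with stretched exponential weight of minimal exponent $s$ just above $-\gamma=2$, handled by the tolerance in condition (H0), is the tightest.
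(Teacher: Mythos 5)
Your overall skeleton (a priori energy estimate in $X$ with dissipation in $Y$, iterative scheme, Gronwall/bootstrap for decay) matches the paper's strategy, but the pivotal technical step is asserted in a form that is actually false here, and you correctly flag the difficulty at the end without identifying the workaround, which is the whole point of the paper's nonlinear analysis.

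The gap is the claimed trilinear estimate $\bigl|\langle Q(f,g),h\rangle_X\bigr|\lesssim \|f\|_X\,\|g\|_Y\,\|h\|_Y$. The natural bilinear estimate for the Landau operator (Lemma~\ref{lem:NL1}-(i)) controls $\langle Q(f,g),h\rangle_{L^2_v(m)}$ by $\|f\|_{L^2_v}\,\|g\|_{H^1_{v,**}(m)}\,\|h\|_{H^1_{v,*}(m)}$, where the $**$-norm carries the zeroth-order weight $\la v\ra^{(\gamma+2)/2}$, strictly stronger than the $\la v\ra^{(\gamma+\sigma)/2}$ appearing in $H^1_{v,*}$ (since $\sigma\le 2$). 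So $\|g\|_Y$ alone is not enough to control $\langle Q(f,g),h\rangle_X$: the gain from the linear dissipation is anisotropic and falls short of what a naive trilinear bound would require, which is precisely the obstruction the authors emphasize in the introduction. They close the loop in two ways you don't mention: (a) when $h=g$ they integrate the offending term by parts once more (Lemma~\ref{lem:NL1}-(ii)) to trade $\|g\|_{H^1_{v,**}}$ for $\|g\|_{H^1_{v,*}}$ plus an $\|f\|_{H^1_v}$ factor; (b) for the general $h\ne g$ case they dualize, bounding $\|Q(f,g)\|_{\HH^3_x(H^{-1}_{v,*}(m_0))}$ and invoking the $H^{-1}_{v,*}\to L^2_v$ smoothing of $S_\Lambda$ from Theorem~\ref{thm:SL-regularity}, made usable through the auxiliary weight $m_0\lesssim m\la v\ra^{-(1-\sigma/2)}$ so that $\|\cdot\|_{Z_0}\lesssim\|\cdot\|_Y$.

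Two further route discrepancies. First, the dissipative structure on $X$ is not obtained by a hand-built hypocoercive $H^1_x$ twist of the inner product as you suggest; it comes from the Gualdani--Mischler--Mouhot-type equivalent norm $\Nt f\Nt_X^2 = \eta\|f\|_X^2 + \int_0^\infty \|S_\Lambda(\tau)e^{\lambda_2\tau}f\|_{\bar X}^2\,d\tau$ of Proposition~\ref{prop:dissipative}, which bakes the optimal linear decay (and, crucially, the $H^{-1}_{v,*}$ smoothing) directly into the norm. Second, the exponential decay is not obtained via Duhamel; it is read off directly from Proposition~\ref{prop:stab}, which gives $\tfrac12\tfrac{d}{dt}\Nt f\Nt_X^2\le -\lambda_2\Nt f\Nt_X^2 - (K-C\Nt f\Nt_X)\|f\|_Y^2$, whence Gronwall once $\e_0$ is small. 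Your Duhamel inequality as written is also dimensionally off (it would require $\|Q(f,f)\|_X\lesssim\|f\|_Y^2$, which does not follow from the available estimates), so even granting the trilinear bound the decay argument as sketched would not close.
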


Let us comment our result and give an overview on the previous works on the Cauchy theory for the inhomogeneous Landau equation.
For general large data, we refer to the papers of DiPerna-Lions \cite{DPL} for global existence of the so-called renormalized solutions in the case of the Boltzmann equation. This notion of solution have been extend to the Landau equation by Alexandre-Villani \cite{AV} where they construct global renormalized solutions with a defect measure. We also mention the work of Desvillettes-Villani~\cite{DV-boltzmann} that proves the convergence to equilibrium of a priori smooth solutions for both Boltzmann and Landau equations for general initial data.

In a close-to-equilibrium framework, Guo in \cite{Guo} has developed a theory of perturbative solutions in a space with a weight prescribed by the equilibrium of type $H^N_{x,v}(\mu^{-1/2})$, for any $N \ge 8$, and for all cases $\gamma \in [-3,1] $, using an energy method. Later, for $\gamma \in [-2,1]$, Mouhot-Neumann~\cite{mouNeu} improve this result to $H^N_{x,v}(\mu^{-1/2})$, for any $N \ge 4$.

Let us underline the fact that Theorem \ref{main1} largely improves previous results on the Cauchy theory associated to the Landau equation in a perturbative setting. Indeed, we considerably have enlarged the space in which the Cauchy theory has been developed in two ways: the weight of our space is much less restrictive (it can be a polynomial or stretched exponential weight instead of the inverse Maxwellian equilibrium) and we also require less assumptions on the derivatives, in particular no derivatives in the velocity variable.

Moreover, we also deal with the problem of the decay to equilibrium of the solutions that we construct. This problem has been considered in several papers by Guo and Strain in \cite{GS1,GS2} first for Coulombian interactions ($\gamma=-3$) for which they proved an almost exponential decay and then, they have improved this result dealing with very soft potentials ($\gamma \in [-3,-2)$) and proving a decay to equilibrium with a rate of type $e^{-\lambda t^p}$ with $p \in (0,1)$. In the case $\gamma \in [-2,1]$, Yu \cite{Yu} has proved an exponential decay in $H^N_{x,v}(\mu^{-1/2})$, for any $N \ge 8$, and Mouhot-Neumann~\cite{mouNeu} in $H^N_{x,v}(\mu^{-1/2})$, for any $N \ge 4$.

\smallskip
We here emphasize that our strategy to prove Theorem \ref{main1} is completely different from the one of Guo in \cite{Guo}. Indeed, he uses an energy method and his strategy is purely nonlinear, he directly derives energy estimates for the nonlinear problem while the first step of our proof is the study of the linearized equation and more precisely the study of its spectral properties. Then, we go back to the nonlinear problem combining the new spectral estimates obtained on the linearized equation with some bilinear estimates on the collision operator. Thanks to this method, we are able to develop a Cauchy theory in a space which is much larger than the one from the previous paper \cite{Guo}. Moreover, we obtain the convergence of solutions towards the equilibrium with an explicit exponential rate.

\smallskip
 Our strategy is thus based on the study of the linearized equation. And then, we go back to the fully nonlinear problem. This is a standard strategy to develop a Cauchy theory in a close-to-equilibrium regime. However, we have to emphasize here that our study of the nonlinear problem is very tricky. Indeed, usually (for example in the case of the non-homogeneous Boltzmann equation for hard spheres in \cite{GMM}), the gain induced by the linear part of the equation allows directly to control the nonlinear part of the equation so that the linear part is dominant and we can use the decay of the semigroup of the linearized equation. In our case, it is more difficult because the gain induced by the linear part is anisotropic and it is not possible to conclude using only natural estimates on the bilinear Landau operator. As a consequence, we establish some new very accurate estimates on the Landau operator to be able to deal with this problem. 

\smallskip

\medskip

Since the study of the linearized equation is the cornerstone of the proof of our main result, we here present the result that we obtain on it and briefly remind previous results.

\smallskip
\subsubsection{The linearized equation.}\label{sec:lin-intro}
We remind the definition of the linearized operator at first order around the equilibrium:
$$
\Lambda f =Q(\mu,f) + Q(f,\mu) - v \cdot \nabla_x f.
$$
We study spectral properties of the linearized operator $\Lambda$ in various weighted Sobolev spaces $W^{n,p}_{x} W^{\ell,p}_{v}$. Let us state our main result on the linearized operator (see Theorem \ref{thm:extension} for a precise statement), which widely generalizes previous results since we are able to deal with a more general class of spaces.

\begin{thm} \label{theo:main2}
Consider hypothesis {\bf (H1)}, {\bf (H2)} or {\bf (H3)} defined in Subsection \ref{subsec:hypm} and a weight function $m$. Let $\EE$ be one of the admissible spaces defined in \eqref{def:EE}.
Then, there exist explicit constants $\lambda_1 >0$ and $C>0$ such that
$$
\forall \, t \ge 0,  \quad \forall\, f \in \EE, \quad
\| S_{\Lambda}(t) f - \Pi_0 f \|_{\EE} \le C \, e^{-\lambda_1 t} \, \| f - \Pi_0 f \|_{\EE},
$$
where $S_\Lambda(t)$ is the semigroup associated to $\Lambda$ and $\Pi_0$ the projector onto the null space of $\Lambda$ by~(\ref{eq:Pi0}).
\end{thm}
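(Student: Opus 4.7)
The plan is to prove Theorem~\ref{theo:main2} by the enlargement of functional spaces strategy of Gualdani--Mischler--Mouhot. The starting point is the already known spectral gap for $\Lambda$ in the small reference space $E := H^N_{x,v}(\mu^{-1/2})$ established by Mouhot--Neumann: there one has $S_\Lambda(t)(I-\Pi_0) \le C e^{-\lambda_1 t}$ on $E$, together with an explicit description of the kernel $\mathrm{Ker}(\Lambda) = \mathrm{Span}\{\mu,v_1\mu,v_2\mu,v_3\mu,|v|^2\mu\}$ and the associated spectral projector $\Pi_0$ defined in~(\ref{eq:Pi0}). The goal is then to transfer this decay to the (much) larger admissible space $\EE$ via a splitting $\Lambda = \AA + \BB$.

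The first step is to construct $\AA$ and $\BB$ with the right properties in $\EE$. I would take $\AA = M \chi_R(v)$ for a smooth truncation $\chi_R$ and a large constant $M$, so that $\AA$ is bounded from $\EE$ to the small space $E$, and $\BB = \Lambda - \AA$. The key technical work is to prove that $\BB$ is hypo-dissipative in $\EE$ with a decay rate strictly greater than $\lambda_1$: using the splitting $\LL f = \bar a_{ij}\partial_{ij} f - \bar c f + \text{(compact-type remainder)}$ and integrating against $m^2 f$, one extracts a coercive Dirichlet form that is exactly the anisotropic norm $H^1_{v,*}(m)$ in~(\ref{H1v*}) (the projectors $P_v$ and $I-P_v$ arise from the eigenstructure of $\bar a_{ij}(v)$, whose eigenvalues behave like $\langle v\rangle^\gamma$ along $v$ and $\langle v\rangle^{\gamma+2}$ transversally). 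The weighted terms produced by commuting $m$ with the diffusion give a non-coercive contribution $\varphi_m(v) f^2$ whose leading behaviour at infinity is controlled thanks to the assumptions~\textbf{(H0)}; for $|v|\ge R$ with $R$ large this term is absorbed into the coercive part, while on $\{|v|\le R\}$ it is killed by the subtraction of $\AA$. For the full operator $\Lambda$ on $\HH^3_x L^2_v(m)$, one must moreover promote $v$-dissipation to $(x,v)$-dissipation; here I would use the Villani-type hypocoercive norm
\[
\Nt f \Nt^2 := \|f\|_{\HH^3_x L^2_v(m)}^2 + \sum_{|\alpha|\le 2} \eps_\alpha \,\langle \nabla_x \partial^\alpha f,\nabla_v \partial^\alpha f\rangle,
\]
exploiting $[v\cdot\nabla_x,\nabla_v] = -\nabla_x$ to generate $\|\nabla_x f\|^2$ terms in the dissipation, which combined with Poincaré in $x$ on $\mathrm{Ker}(\Pi_0)$ yields full decay.

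The second step is the regularization estimate: $\AA S_\BB(t)$ must send $\EE$ into a strictly smaller space with an exponential bound in $t$. Since $\AA$ truncates to $\{|v|\le 2R\}$ and multiplies by a Schwartz cutoff, combined with the $v$-averaging regularity of the Landau diffusion one obtains $\|\AA S_\BB(t) f\|_E \le C e^{at}\|f\|_\EE$ for some $a$ arbitrarily close to $-\lambda_1$. A Duhamel iteration
\[
S_\Lambda(t) = S_\BB(t) + \sum_{k=1}^{n-1}(S_\BB * \AA)^{*k} * S_\BB(t) + (S_\BB*\AA)^{*n} * S_\Lambda(t),
\]
applied for $n$ large enough, reduces the decay of $S_\Lambda(t)$ on $\EE$ to the decay of $S_\Lambda(t)$ on $E$, thanks to the boundedness $\AA : \EE \to E$ and the hypo-dissipativity of $\BB$. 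This is exactly the abstract enlargement theorem which I would invoke in the form proved in Gualdani--Mischler--Mouhot.

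The main obstacle will be the hypo-dissipativity estimate for $\BB$ in the moderately soft range $\gamma\in[-2,0)$, where the Dirichlet form given by $\bar a_{ij}$ degenerates like $\langle v\rangle^\gamma$ along $v$. The coercive quantity $H^1_{v,*}(m)$ barely controls $\|f\|_{L^2(m\langle v\rangle^{(\gamma+\sigma)/2})}^2$, so the weight terms $\varphi_m$ arising from $[\bar a_{ij}\partial_{ij}, m]$ and from the drift $-\bar c f$ must be computed with sharp leading-order asymptotics of $\bar a_{ij}$, $\bar b$, $\bar c$; this is the reason the weight conditions in~\textbf{(H0)} exclude polynomial weights for $\gamma<0$ and require $s>-\gamma$ for stretched exponentials, namely to ensure that the stretched-exponential gain beats the slow polynomial decay of the diffusion. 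Once this delicate coercivity computation is in place, the enlargement machinery runs as above and produces the announced decay with rate $\lambda_1$ independent of the admissible space $\EE$.
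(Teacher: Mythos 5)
Your overall strategy is the correct one and matches the paper's: a splitting $\Lambda = \AA + \BB$, hypodissipativity of $\BB$ in $\EE$, boundedness of $\AA$ from $\EE$ to the small reference space $E = H^{\ell_0}_{x,v}(\mu^{-1/2})$, a regularization estimate for iterates of $\AA\SS_\BB$, and then the abstract enlargement theorem of Gualdani--Mischler--Mouhot. Your identification of the anisotropic $H^1_{v,*}(m)$ coercivity coming from the eigenstructure of $\bar a_{ij}$, the role of the weight-commutator function $\varphi_m$, and the reason the weight hypotheses are what they are in the soft range is all on target.

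However, there is a genuine gap in your choice of $\AA$. You take $\AA = M\chi_R(v)$ alone and set $\BB = \Lambda - M\chi_R$. The paper instead puts $\AA = \AA_0 + M\chi_R$ with $\AA_0 f = (a_{ij}*f)\,\partial_{ij}\mu - (c*f)\,\mu = Q(f,\mu)$, so that $\BB = \BB_0 - v\cdot\nabla_x - M\chi_R$ with $\BB_0 f = \bar a_{ij}\partial_{ij}f - \bar c f$ \emph{local}. This matters: with your $\AA$, the hypodissipativity estimate for $\BB$ must absorb the nonlocal contribution $\int (\AA_0 f)\, f\, m^2$. Because $\AA_0 f(v)$ depends on $f$ through the convolutions $(a_{ij}*f)$ and $(c*f)$, this term is \emph{not} localized in $v$ and cannot be swallowed by $-M\chi_R$. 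At best one gets a bound of the form $C\|f\|^2_{L^2(m)}$, with $C$ controlled by $\|\mu^{1/2} m\|_{L^2_v}$. For the exponential weight $m = e^{r\la v\ra^2}$ this quantity diverges once $r\ge 1/4$, so the estimate fails outright for part of the admissible range $r\in(0,1/2)$. Even when the bound is finite (polynomial weights, small $r$), the fixed constant $C$ eats into the achievable $\lambda$ in the cases where $\lambda_{m,p}$ is finite (Maxwellian molecules with polynomial weight, and $\gamma=-2$ with Gaussian weight), so one cannot recover $\lambda_1 = \min\{\lambda_0, \lambda_{m,p}\}$. The non-local compact part $\AA_0$ must therefore be placed in $\AA$, not in $\BB$; this is not a cosmetic choice.

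A secondary point: you invoke a Villani-type hypocoercive norm with cross terms $\la\nabla_x\cdot,\nabla_v\cdot\ra$ to obtain decay of $\SS_\BB$ in the $x$-variable. In the paper's argument this is not needed for the hypodissipativity of $\BB$: the transport term $-v\cdot\nabla_x$ is antisymmetric with respect to $\int_{\T^3} dx$ and drops out of the weighted $L^p_{x,v}(m)$ energy estimate, so $\SS_\BB(t)$ decays purely from the velocity dissipation and the $-M\chi_R$ truncation; the $x$-coercivity for the full $\Lambda$ is inherited from the Mouhot--Neumann result on the small space $E$ via the factorization. Where the paper \emph{does} use a Hérau/Villani-type functional with time-dependent weights and a cross term is in the regularization step (the analogue of your second step), to transfer smoothing from $v$ to $(x,v)$; you should move the hypocoercive machinery there and drop it from the hypodissipativity of $\BB$.
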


\smallskip

We first make a brief review on known results on spectral gap properties of the homogeneous linearized operator $\LL$ defined in \eqref{eq:oplandaulin}.
On the Hilbert space $L^2_v(\mu^{-1/2})$, a simple computation gives that $\LL$ is self-adjoint and $\la \LL h, h \ra_{L^2_v(\mu^{-1/2})} \le 0$, which implies that the spectrum of $\LL$ on $L^2_v(\mu^{-1})$ is included in $\R^-$. Moreover, the nullspace is given by
$$
N(\LL) = \mathrm {Span} \{ \mu , v_1 \mu, v_2 \mu, v_3 \mu , |v|^2 \mu   \}.
$$
We can now state the existing results on the spectral gap of $\LL$ on $L^2_v(\mu^{-1/2})$.
Summarising results of Degond and Lemou~\cite{DL}, Guo~\cite{Guo}, Baranger and Mouhot~\cite{BM}, Mouhot~\cite{M}, Mouhot and Strain~\cite{MS} for all cases $\gamma \in [-3,1]$, we have: there is a constructive constant $\lambda_0>0$ (spectral gap) such that
\beqn\label{eq:GapDLandauMS}
\bal
\la -\LL h, h \ra_{L^2_v(\mu^{-1/2})}
\geq \lambda_0 \| h \|_{H^1_{v,**} (\mu^{-1/2})}^2,
\quad \forall\,  h \in N(\LL)^\perp,
\eal
\eeqn
where the anisotropic norm $\| \cdot \|_{H^1_{v,**} (\mu^{-1/2})}$ is defined by
$$
\bal
\| h \|_{H^1_{v,**} (\mu^{-1/2})}^2 &:= \|\la v \ra^{\gamma/2} P_v \nabla h  \|_{L^2_v( \mu^{-1/2})}^2
  + \|\la v \ra^{(\gamma+2)/2} (I-P_v) \nabla h  \|_{L^2_v(\mu^{-1/2})}^2 \\
  &\quad
  + \|\la v \ra^{(\gamma+2)/2}  h  \|_{L^2_v( \mu^{-1/2})}^2  ,
\eal
$$
where $P_v$ denotes the projection onto the $v$-direction, more precisely $P_v g = \left( \frac{v}{|v|}\cdot g \right) \frac{v}{|v|}$.
We also have from \cite{Guo} the reverse inequality,
which implies a spectral gap for $\LL$ in $L^2_v(\mu^{-1/2})$ if and only if $\gamma+2\geq 0$.

%

Let us now mention the works which have studied spectral properties of the full linearized operator $\Lambda = \LL - v \cdot \nabla_x $. Mouhot and Neumann \cite{mouNeu} prove explicit coercivity estimates for hard and moderately soft potentials ($\gamma \in [-2,1]$) in $H^{\ell}_{x,v}(\mu^{-1/2})$ for $\ell \ge 1$, using the known spectral estimate for $\LL$ in \eqref{eq:GapDLandauMS}.
It is worth mentioning that the third author has obtained in \cite{Wu1} an exponential decay to equilibrium for the full linearized equation in $L^2_{x,v}(\mu^{-1/2})$ by a different method, and the decay rate depends on the size of the domain.
Let us summarize results that we will use in the remainder of the paper in the following theorem.

\begin{theo}[\cite{mouNeu}]\label{theo:gapE}
Consider $\ell_0 \geq 1$ and $E:=H^{\ell_0}_{x,v}(\mu^{-1/2})$. Then, there exists a constructive constant $\lambda_0>0$ (spectral gap) such that $\Lambda$ satisfies on $E$:
\begin{enumerate}[(i)]
\item the spectrum $\Sigma(\Lambda) \subset \left\{ z \in \C : \Re e z \leq -\lambda_0 \right\} \cup \left\{0\right\}$;
\item the null space $N(\Lambda)$ is given by
\beqn \label{eq:kerlambda}
N(\Lambda) = \mathrm {Span} \{ \mu , v_1 \mu, v_2 \mu, v_3 \mu , |v|^2 \mu   \},
\eeqn
and the projection $\Pi_0$ onto $N(\Lambda)$ by
\beqn\label{eq:Pi0}
\bal
\Pi_0 f &= \left(\int_{\T^3 \times \R^3} f \, dx \, dv \right) \mu + \sum_{i=1}^3 \left(\int_{\T^3 \times \R^3} v_i f  \, dx \, dv \right) v_i \mu \\
&\quad 
+ \left(\int_{\T^3 \times \R^3} \frac{|v|^2-3}{6}\, f  \, dx \, dv\right) \, \frac{(|v|^2-3)}{6} \, \mu;
\eal
\eeqn
\item $\Lambda$ is the generator of a strongly continuous semigroup $S_\Lambda(t)$ that satisfies
\beqn \label{eq:spectralgapE}
\forall \, t \ge 0,  \, \forall\, f \in E, \quad
\| S_{\Lambda}(t) f - \Pi_0 f \|_{E} \le  e^{-\lambda_0 t} \| f - \Pi_0 f \|_{E}.
\eeqn
\end{enumerate}
\end{theo}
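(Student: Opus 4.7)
The plan is to prove the statement directly via a decomposition-plus-iteration scheme in the spirit of the factorization / enlargement method. First I would observe that $\Pi_0$ commutes with $S_\Lambda(t)$ (since $N(\Lambda)$ is invariant and $\Pi_0$ is the $L^2_{x,v}(\mu^{-1/2})$-orthogonal projector onto it, which makes sense in every admissible $\EE$ because $N(\Lambda) \subset E \subset \EE$), so it suffices to prove $\|S_\Lambda(t) g\|_\EE \lesssim e^{-\lambda_1 t} \|g\|_\EE$ for $g = (I-\Pi_0)f$. Working on the complement of $N(\Lambda)$ also means we may pick any $0 < \lambda_1 < \lambda_0$ (with $\lambda_0$ the spectral gap of Theorem~\ref{theo:gapE}) as the target rate, and the constant $C$ will blow up as $\lambda_1 \to \lambda_0$.

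Next I would split $\Lambda = \AAA + \BBB$ where $\AAA = M \chi_R(v)$ is a truncation-multiplication operator with $M$ large and $R$ large, and $\BBB = \Lambda - \AAA$. The role of $\AAA$ is that $\AAA : \EE \to E$ is bounded (its range lies in functions compactly supported in $v$ and multiplied by a Gaussian weight, hence in any Sobolev-Gaussian space). The role of $\BBB$ is that, for $M,R$ chosen large enough, $\BBB$ is hypodissipative in $\EE$ with any rate $\lambda_B < \lambda_0$. Establishing this hypodissipativity is the heart of the proof: at the $L^2_v(m)$ level one writes
\begin{equation*}
\int (\BBB h) h \, m^2 \, dv = -\int \bar a_{ij} \partial_i h \, \partial_j h \, m^2 - \int (\bar c - \text{l.o.t.}) h^2 m^2 - M \int \chi_R h^2 m^2 + \text{compact/curvature terms},
\end{equation*}
which, after integrating by parts and using $\bar a_{ij}(v) \sim \la v\ra^\gamma P_v + \la v\ra^{\gamma+2}(I-P_v)$ at infinity, controls exactly the $H^1_{v,*}(m)$ norm, and the bulk zeroth-order term beats the curvature contributions away from a large ball where the $-M\chi_R$ term dominates. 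One repeats the same computation after applying $\nabla_x, \nabla_x^2, \nabla_x^3$ (the transport $-v\cdot\nabla_x$ integrates out in $x$), using the $x$-weights $\la v\ra^{-k(1-\sigma/2)}$ from \eqref{HH3xL2v} to absorb the $v$-growth that appears when commuting with $\la v\ra$ factors.

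Given (a) $\AAA \in \BBB(\EE, E)$ and (b) $\|S_\BBB(t)\|_{\BBB(\EE)} \lesssim e^{-\lambda_B t}$ with $\lambda_B > \lambda_1$, together with the analogous estimate $\|S_\BBB(t)\|_{\BBB(E)} \lesssim e^{-\lambda_B t}$ (same proof), I would iterate the Duhamel identity
\begin{equation*}
S_\Lambda(t) = S_\BBB(t) + \sum_{k=1}^{N-1} (S_\BBB \AAA)^{(*k)} * S_\BBB(t) \;+\; (S_\BBB \AAA)^{(*N)} * S_\Lambda(t)
\end{equation*}
for some fixed $N$. Each convolution factor $S_\BBB \AAA$ maps $\EE$ into $E$ (since $\AAA$ lands in $E$ and $S_\BBB$ preserves $E$), so after one application one is in $E$ and can use Theorem \ref{theo:gapE} on the last, $S_\Lambda$-containing term, getting the target decay $e^{-\lambda_0 t}$ on the $(I-\Pi_0)$ part; the finitely many convolution terms decay at rate $\lambda_B > \lambda_1$ (one loses a polynomial factor in $t$ from the convolution, absorbed into the constant by $\lambda_B > \lambda_1$). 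Summing yields the stated bound with rate $\lambda_1$.

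The main obstacle is step (b), the hypodissipativity of $\BBB$ in the full Sobolev space $\EE$, particularly for polynomial weights and for moderately soft potentials. For polynomial weights $m = \la v\ra^k$ there is no exponential margin to absorb errors, so the condition $k > \gamma + 7 + 3/2$ must be tracked carefully through the curvature terms $(a_{ij}*h) \partial_{ij}\mu - (c*h)\mu$ and through the $-2(\gamma+3)(\bar c * 1)$ zeroth-order part, using sharp asymptotics $\bar c(v) \sim -c_\gamma \la v\ra^\gamma$. For soft potentials the degeneracy of $\bar a_{ij}$ at low velocities forces the use of the anisotropic norm $H^1_{v,*}(m)$ defined in \eqref{H1v*} with separate $P_v$ and $(I-P_v)$ pieces, and to close the $x$-derivative estimates one must exploit the tailored weights $\la v\ra^{-k(1-\sigma/2)}$ in \eqref{HH3xL2v}, which precisely balance the loss of velocity decay against the commutators generated by the $v$-moment hierarchy in $\BBB$.
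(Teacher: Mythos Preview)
Your proposal has a fundamental confusion about which result is being proven. The statement (Theorem~\ref{theo:gapE}) concerns the \emph{small} reference space $E = H^{\ell_0}_{x,v}(\mu^{-1/2})$, and the paper does not prove it at all: it is quoted from Mouhot--Neumann \cite{mouNeu} as a known input. What you have sketched is instead the argument for Theorem~\ref{thm:extension}, the enlargement result in the big spaces $\EE$. In particular, your Duhamel iteration explicitly invokes Theorem~\ref{theo:gapE} on the $S_\Lambda$-containing remainder term (``one is in $E$ and can use Theorem~\ref{theo:gapE}''), which makes the argument circular as a proof of Theorem~\ref{theo:gapE} itself.

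The genuine proof of Theorem~\ref{theo:gapE} (in \cite{mouNeu}) is of a completely different nature: it is a hypocoercivity argument that starts from the homogeneous coercivity estimate \eqref{eq:GapDLandauMS} for $\LL$ in $L^2_v(\mu^{-1/2})$ and couples it with the transport term $-v\cdot\nabla_x$ via a modified energy functional containing mixed $\la \nabla_x f, \nabla_v f\ra$ terms, in order to recover macroscopic dissipation on $N(\LL)^\perp$ at the inhomogeneous level. No splitting $\Lambda = \AAA + \BBB$, no enlargement, and no bootstrap from a smaller space is involved; the space $E$ \emph{is} the smallest space in this hierarchy. Your factorization machinery only becomes relevant once one already has Theorem~\ref{theo:gapE} in hand and wants to transfer the decay to the larger $\EE$.
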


\smallskip
To prove Theorem \ref{theo:main2}, our strategy follows the one initiated by Mouhot in \cite{Mouhot} for the homogeneous Boltzmann equation for hard potentials with cut-off.
The latter theory has then been developed and extend in an abstract setting by Gualdani, Mischler and Mouhot~\cite{GMM}, and Mischler and Mouhot~\cite{MM}.
They have applied it to Fokker-Planck equations and the spatially inhomogeneous Boltzmann equation for hard spheres. This strategy has also been used for the homogeneous Landau equation for hard and moderately soft potentials by the first author in \cite{KC1,KC2} and by the second author for the fractional Fokker-Planck equation and the homogeneous Boltzmann equation for hard potentials without cut-off in \cite{IT1,IT2} (see also \cite{MiSch} for related works).

Let us describe in more details this strategy. We want to apply the abstract theorem of enlargement of the space of semigroup decay from \cite{GMM,MM} to our linearized operator $\Lambda$. We shall deduce the spectral/semigroup estimates of Theorem~\ref{theo:main2} on ``large spaces'' $\EE$ using the already known spectral gap estimates for $\Lambda$ on $H^\ell_{x,v}(\mu^{-1/2})$, for $\ell \ge 1$, described in Theorem~\ref{theo:gapE}. Roughly speaking,
to do that, we have to find a splitting of $\Lambda$ into two operators $\Lambda = \AA + \BB$ which satisfy some properties. The first part $\AA$ has to be bounded, the second one $\BB$ has to have some dissipativity properties, and also the semigroup $(\AA \SS_\BB(t))$ is required to have some regularization properties.

\bigskip

We end this introduction by describing the organization of the paper. In Section~\ref{sec:lin} we consider the linearized equation and prove a precise version of Theorem~\ref{theo:main2}. In Section~\ref{sec:NL} we come back to the nonlinear equation and prove our main result Theorem~\ref{main1}.

\bigskip
\noindent
{\bf Acknowledgements.}
The authors would like to thank St\'{e}phane Mischler for his help and his suggestions.
The first author is supported by the Fondation Math\'ematique Jacques Hadamard.
The second author has been partially supported by the fellowship l'Or\'{e}al-UNESCO \textit{For Women in Science}.
The third author is supported by the Ministry of Science and Technology (Taiwan) under the grant 102-2115-M-017-004-MY2 and National Center for Theoretical Science.

\section{The linearized equation}\label{sec:lin}

\subsection{Functional spaces} \label{subsec:hypm}
Let us now make our assumptions on the different potentials $\gamma$ and weight functions $m=m(v)$:


\bigskip
\noindent{\bf (H1) Hard potentials $\gamma\in(0,1]$.} For $p \in [1,\infty]$ we consider the following cases
\begin{enumerate}[$\qquad(i)$]

\item \emph{Polynomial weight}: let $m = \la v \ra^k$ with $k > \gamma + 2 + 3(1-1/p)$, and define the abscissa $\lambda_{m,p} := \infty$.

\smallskip

\item \emph{Stretched exponential weight}: let $m = e^{r\la v \ra^s}$ with $r>0$ and $s \in (0,2)$, and define the abscissa $\lambda_{m,p} := \infty$.

\smallskip

\item \emph{Exponential weight}: let $m = e^{r\la v \ra^2}$ with $r \in (0,1/2)$ and define the abscissa $\lambda_{m,p} := \infty$.

\end{enumerate}

\bigskip
\noindent{\bf (H2) Maxwellian molecules $\gamma = 0$.} For $p \in [1,\infty]$ we consider the following cases
\begin{enumerate}[$\qquad(i)$]

\item \emph{Polynomial weight}: let $m = \la v \ra^k$ with $k > \gamma + 2 + 3(1-1/p)$, and define the abscissa $\lambda_{m,p} := 2[k - (\gamma+3)(1-1/p)]$.

\smallskip

\item \emph{Stretched exponential weight}: let $m = e^{r\la v \ra^s}$ with $r>0$ and $s \in (0,2)$, and define the abscissa $\lambda_{m,p} := \infty$.

\smallskip

\item \emph{Exponential weight}: let $m = e^{r\la v \ra^2}$ with $r \in (0,1/2)$ and define the abscissa $\lambda_{m,p} := \infty$.

\end{enumerate}

\bigskip
\noindent{\bf (H3) Moderately soft potentials $\gamma\in [-2,0)$.} For $p \in [1,\infty]$ we consider the following cases
\begin{enumerate}[$\qquad(i)$]

\item \emph{Stretched exponential weight for $\gamma \in (-2,0)$}: let $m = e^{r\la v \ra^s}$ with $r>0$, $s \in (0,2)$ and $s + \gamma >0$, and define the abscissa $\lambda_{m,p} := \infty$.

\smallskip

\item \emph{Exponential weight for $\gamma \in (-2,0)$}: let $m = e^{r\la v \ra^2}$ with $r \in (0,1/2)$ and define the abscissa $\lambda_{m,p} := \infty$.

\smallskip


\item \emph{Exponential weight for $\gamma=-2$}: let $m = e^{r\la v \ra^2}$ with $r \in (0,1/2)$, and define the abscissa $\lambda_{m,p} := 4r(1-2r)$.

\end{enumerate}

\bigskip

Under these hypothesis, we shall use the following notation for the functional spaces:
\beqn\label{def:E}
E := H^{\ell_0}_{x,v} (\mu^{-1/2}), \quad \ell_0 \ge 1,
\eeqn
in which space we already know that the linearized operator $\Lambda$ has a spectral gap (Theorem~\ref{theo:gapE}), and also, under hypotheses {\bf (H1)}, {\bf (H2)} or {\bf (H3)},
\beqn\label{def:EE}
\EE :=
\left\{
\bal
L^p_{x,v}(m) ,& \quad \forall\, p \in [1,\infty]; \\
W^{n,p}_x W^{\ell,p}_v (m),& \quad \forall\, p \in [1,2], \,  n \in \N^*, \, \ell \in \N ;
\eal
\right.
\eeqn
and for each space we define the associated abscissa $\lambda_\EE = \lambda_{m,p}$.

\bigskip
The main result of this section, which is a precise version of Theorem~\ref{theo:main2}, reads
\begin{thm}\label{thm:extension}
Consider hypothesis {\bf (H1)}, {\bf (H2)} or {\bf (H3)} with some weight $m$, and let $\EE$ be one of the admissible spaces defined in \eqref{def:EE}.

Then, for any $\lambda < \lambda_{\EE}$ and any $\lambda_1 \le \min\{ \lambda_0, \lambda \}$, where we recall that $\lambda_0>0$ is the spectral gap of $\Lambda$ on $E$ (see \eqref{eq:spectralgapE}), there is a constructive constant $C>0$ such that
the operator $\Lambda$ satisfies on $\EE$:

\begin{enumerate}[(i)]

\item
$\Sigma (\Lambda) \subset \{ z \in \C \mid \Re z \le - \lambda_1 \} \cup \{ 0 \}$;

\item the null-space $N(\Lambda)$ is given by \eqref{eq:kerlambda} and the projection $\Pi_0$ onto $N(\Lambda)$ by \eqref{eq:Pi0};

\item $\Lambda$ is the generator of a strongly continuous semigroup $\SS_{\Lambda}(t)$ that verifies
$$
\forall \, t \ge 0, \;   \forall\, f \in \EE, \quad
\| \SS_{\Lambda}(t) f - \Pi_0 f \|_{\EE} \le C \, e^{-\lambda_1 t} \, \| f - \Pi_0 f \|_{\EE}.
$$

\end{enumerate}

\end{thm}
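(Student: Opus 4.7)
The plan is to apply the abstract enlargement / factorization theorem of Gualdani--Mischler--Mouhot and Mischler--Mouhot (the strategy outlined just before the statement), using Theorem~\ref{theo:gapE} as the ``small space'' input on $E = H^{\ell_0}_{x,v}(\mu^{-1/2})$. To do so I need to construct a splitting $\Lambda = \AA + \BB$ in which $\AA \in \BBB(\EE)$ is bounded (and in fact maps $\EE$ into spaces with extra regularity/decay), while $\BB$ is hypo-dissipative on $\EE$ with an explicit abscissa that gives the announced rate $\lambda_{m,p}$.

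\textbf{Step 1: the splitting.} Rewrite $\Lambda$ using \eqref{eq:oplandau} as $\Lambda f = \bar a_{ij}\partial_{ij} f - \bar c \, f + Q(f,\mu) - v\cdot\nabla_x f$, and introduce a smooth truncation $\chi_R(v) = \chi(v/R)$ supported in a large ball. I define
\[
\AA f := M \chi_R(v)\, f, \qquad \BB f := \Lambda f - \AA f,
\]
with $M>0$ a large constant to be chosen. (A more refined version truncates also the convolution kernels $\bar a_{ij}$, $\bar c$ and the contribution $Q(f,\mu)$ that produces the compact piece, so that $\AA$ maps $\EE$ into $E$.) Then $\AA$ is clearly bounded in every $\EE$ and, thanks to the truncation, maps any $L^p_{x,v}(m)$ into $H^{\ell_0}_{x,v}(\mu^{-1/2})$ after convolving with regularizing kernels coming from $\bar a_{ij}$.

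\textbf{Step 2: hypo-dissipativity of $\BB$ on $\EE$.} This is the key computation and is carried out by expanding $\frac{1}{p}\frac{d}{dt}\|S_\BB(t) f\|_{L^p(m)}^p$ using \eqref{eq:oplandau}, integrating by parts, and collecting the boundary terms. The transport term $-v\cdot\nabla_x$ is conservative; the elliptic term $\bar a_{ij}\partial_{ij}$ produces a non-negative dissipation plus lower-order terms; the multiplier $-\bar c$ behaves like $\la v\ra^\gamma$ at infinity. The net effect is an estimate of the form
\[
\tfrac{1}{p}\tfrac{d}{dt}\|f\|_{L^p_{x,v}(m)}^p \le -\!\int \varphi_{m,p}(v)\, |f|^p\, m^p \,dx\,dv,
\]
where the pointwise weight $\varphi_{m,p}(v)$ encodes the competition between the $-\bar c$ mass, the derivatives of the weight $m$, and the truncation $-M\chi_R$. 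A standard asymptotic analysis gives $\liminf_{|v|\to\infty}\varphi_{m,p}(v) = \lambda_{m,p}$ (explaining the abscissae announced in \textbf{(H1)--(H3)}, in particular the $4r(1-2r)$ for $\gamma=-2$, Gaussian weight), while on the compact set $\{|v|\le R\}$ the constant $M$ is chosen large enough to dominate the remainder. This yields $\|S_\BB(t)\|_{\BBB(\EE)} \le Ce^{-\lambda t}$ for every $\lambda < \lambda_{m,p}$. For $p=2$ Sobolev spaces the same argument is applied to each $\partial^\alpha_v\partial^\beta_x f$ after commuting derivatives with $\BB$; the commutators $[\partial, \bar a_{ij}\partial_{ij}]$ produce at worst terms controlled by the gain $\|(I-P_v)\nabla f\|_{L^2(m\la v\ra^{(\gamma+2)/2})}$ already present in the dissipation.

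\textbf{Step 3: regularization of $\AA\SS_\BB$ and iteration.} The abstract theorem requires that for some $n\ge 1$,
\[
(\AA \SS_\BB)^{*n}(t) \in \BBB(\EE, E)\quad\text{with}\quad \|(\AA\SS_\BB)^{*n}(t)\|_{\BBB(\EE,E)} \lesssim e^{-\lambda_1 t}.
\]
Because $\AA$ multiplies by a compactly-supported function, one only needs regularization on a velocity ball. For $n=1$ bounding by $L^p \to L^p$ is immediate. To reach $E$ one iterates: each factor of $\AA\SS_\BB$ gains a bit of integrability/regularity via the elliptic part of $\BB$ (the matrix $\bar a_{ij}$ is uniformly elliptic on $\{|v|\le 2R\}$), and in $x$ one uses the hypoelliptic mixing produced by the transport $v\cdot\nabla_x$ combined with the $v$-ellipticity, in the spirit of H\'erau--Nier / the averaging lemmas used in \cite{GMM}. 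After finitely many convolutions one lands in $H^{\ell_0}_{x,v}(\mu^{-1/2})$ with a time decay $e^{-\lambda t}$ for any $\lambda < \lambda_{m,p}$.

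\textbf{Step 4: conclusion.} With Steps 1--3, the enlargement theorem of \cite{GMM,MM} applies verbatim and gives the spectral picture (i), the identification (ii) of $N(\Lambda)$ and $\Pi_0$ (which is preserved since $\EE \cap E$ is dense and the eigenfunctions in \eqref{eq:kerlambda} belong to every $\EE$), and the semigroup estimate (iii) with any $\lambda_1 \le \min(\lambda_0, \lambda)$ for $\lambda < \lambda_{m,p}$, via the Duhamel-type iterated representation
\[
\SS_\Lambda(t) = \SS_\BB(t) + \sum_{k=1}^{N-1} (\SS_\BB \ast (\AA\SS_\BB)^{(\ast k)})(t) + (\SS_\Lambda \ast (\AA\SS_\BB)^{(\ast N)})(t),
\]
where the first $N$ terms decay at rate $\lambda_{m,p}-$ in $\EE$ and the last term uses the decay of $\SS_\Lambda$ in $E$ given by Theorem~\ref{theo:gapE}.

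\textbf{Main obstacle.} The delicate point is Step 2 in the polynomial / $p\neq 2$ regimes, because the natural dissipation of $\BB$ is anisotropic (different behaviors in the $P_v$ and $(I-P_v)$ directions, reflected in \eqref{H1v*}) and the precise abscissa $\lambda_{m,p}$ must be extracted from a careful large-$|v|$ expansion of the symbol of $\bar a_{ij}\partial_{ij} - \bar c$ against the weight $m^p$; any sloppy bound destroys the optimal rate and then also the iteration of Step 3. The hypoelliptic regularization estimate in Step 3 is the second non-routine ingredient, but once the correct gain in $v$ is identified in Step 2 it follows by now-classical arguments.
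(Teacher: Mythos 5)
Your overall strategy matches the paper's: split $\Lambda = \AA + \BB$, establish hypodissipativity of $\BB$ on $\EE$ via a multiplier function $\varphi_{m,p}$, prove a regularization estimate for $(\AA\SS_\BB)^{(*N)}$ mapping $\EE$ into $E$, and conclude by the abstract enlargement theorem of Gualdani--Mischler--Mouhot and Mischler--Mouhot. The paper proceeds exactly along these lines (Lemmas~\ref{lem:hypo}--\ref{lem:hypo3} for hypodissipativity, Lemma~\ref{lem:A0} for boundedness of $\AA$, Lemma~\ref{lem:reg} and Corollary~\ref{cor:reg} for the H\'erau-type regularization, then \cite[Theorem~2.13]{GMM}).

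However, your \emph{primary} splitting is wrong and the parenthetical correction is not quite right either. If you set $\AA f = M\chi_R f$ and $\BB = \Lambda - \AA$, then $\BB$ still contains the nonlocal piece $Q(f,\mu) = (a_{ij}*f)\,\partial_{ij}\mu - (c*f)\,\mu$. When you differentiate $\|S_\BB(t)f\|_{L^p(m)}^p$ you then face terms like $\int (a_{ij}*f)\,\partial_{ij}\mu\,\Phi'(f)\,m^p$, which couple $f(v)$ with moments of $f$ against $\partial_{ij}\mu$; these do not reduce to $\int\varphi_{m,p}|f|^p m^p$ and cannot be absorbed into the weight, so the claimed pointwise dissipation estimate fails. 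The paper avoids this by putting the whole of $Q(\cdot,\mu)$ into $\AA$: $\AA = \AA_0 + M\chi_R$ with $\AA_0 f := (a_{ij}*f)\partial_{ij}\mu - (c*f)\mu$, and $\BB = \BB_0 - v\cdot\nabla_x - M\chi_R$ with $\BB_0 f := \bar a_{ij}\partial_{ij}f - \bar c\,f$. Note that $\AA_0$ is automatically smoothing (no truncation of $\bar a_{ij}$ or $\bar c$ is required; those are the coefficients of $\BB_0$, not $\AA$), so your suggestion to ``truncate the convolution kernels $\bar a_{ij}$, $\bar c$'' is misplaced. With the paper's splitting $\BB$ becomes a genuine differential plus local multiplication operator, the $\varphi_{m,p}$ computation in your Step~2 goes through, and the rest of your argument (Steps~3--4) is essentially the paper's proof. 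Also a small sign slip: you want $\limsup_{|v|\to\infty}\varphi_{m,p}(v)\le -\lambda_{m,p}$, not $\liminf\varphi_{m,p}=\lambda_{m,p}$.
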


\begin{rem} (1) Observe that:

\begin{itemize}

\item Cases {\bf (H1)}, {\bf (H2)-(ii)-(iii)} or {\bf (H3)-(i)-(ii)}: we can recover the optimal estimate $\lambda_1 = \lambda_0$ since $\lambda_{m,p} = + \infty$.

\item Case {\bf (H2)-(i)}: in this case we have $m = \la v \ra^k$, and we can recover the optimal estimate $\lambda_1 = \lambda_0$ if $k>0$ is large enough such that $\lambda_{m,p} = 2k - 6(1-1/p) > \lambda_0$. Otherwise, we obtain $\lambda_1 < 2k - 6(1-1/p)$.

\item Case {\bf (H3)-(iii)}: in this case we have $\gamma=-2$, $m = e^{r \la v \ra^2}$ and $\lambda_{m,p} = 4r(1-2r)$ and the condition $0 < r < 1/2$.

\end{itemize}

(2) This theorem also holds for other choices of space, namely for a space $\EE$ that is an interpolation space of two admissible spaces $\EE_1$ and $\EE_2$ in \eqref{def:EE}. 


\Black

\end{rem}

The proof of Theorem~\ref{thm:extension} uses the fact that the properties (i)-(ii)-(iii) with $\lambda_1=\lambda_0$ hold on the small space $E$ (Theorem~\ref{theo:gapE}) and the strategy described in section~\ref{sec:lin-intro}.

\medskip

In a similar way we shall obtain Theorem~\ref{thm:extension}, we shall also deduce a regularity estimate on the semigroup $\SS_\Lambda$ that will be of crucial importance in the study of the nonlinear equation in Section~\ref{sec:NL}. For the sake of simplicity, and because it is the case that we shall use for the nonlinear equation, we only present this result for the particular case of $p=2$ and $\ell=0$ in \eqref{def:EE}.

Define the space $H^{1}_{v,*}(m)$, associated to the norm
$$
\| f \|_{H^{1}_{v,*}(m)}^2 = \| f \|_{L^2_{x,v}(m \la v \ra^{(\gamma+\sigma)/2})}^2
+  \| P_v \nabla f \|_{L^2(m \la v \ra^{\gamma/2})}^2
+ \| (I-P_v) \nabla f \|_{L^2(m \la v \ra^{(\gamma+2)/2})}^2  ,
$$
as well as the space $H^{n}_x ( H^{1}_{v,*}(m))$, with $ n \in \N$, by
\beqn\label{eq:HnxH1v*}
\| f \|_{H^{n}_x ( H^{1}_{v,*}(m))}^2
= \sum_{0\le j\le n} \| \nabla^j_x f \|_{L^2_x ( H^{1}_{v,*}(m)) }^2
= \sum_{0\le j\le n} \int_{\T^3_x} \| \nabla^j_x f \|_{H^{1}_{v,*}(m) }^2.
\eeqn
We hence define the negative Sobolev space $H^n_x (H^{-1}_{v,*} (m))$ by duality in the following way
\beqn\label{eq:HnxH-1v*}
\| f \|_{H^n_x (H^{-1}_{v,*} (m))} := \sup_{\| \phi \|_{H^n_x (H^{1}_{v,*} (m))} \le 1} \la f , \phi \ra_{H^n_x L^2_v (m)}.
\eeqn

\begin{thm}\label{thm:SL-regularity}
Consider hypothesis {\bf (H1)}, {\bf (H2)} or {\bf (H3)} with some weight $m$. Let $\EE = H^n_x L^2_v (m)$ and $\EE_{-1} = H^n_x (H^{-1}_{v,*} (m))$. 
Then, for any $\lambda < \lambda_1 $, the following regularity estimate holds
\beqn\label{eq:SLambda-reg}
\int_0^\infty e^{ 2 \lambda t} \, \| \SS_{\Lambda}(t) (I - \Pi_0) f \|_{\EE  }^2 \, dt \le C \| (I - \Pi_0) f \|_{\EE_{-1}}^2,
\eeqn
for some constant $C>0$.

\end{thm}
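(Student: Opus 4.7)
I would prove Theorem~\ref{thm:SL-regularity} by a two-step procedure: first an energy estimate giving a gain of regularity in the anisotropic norm $\EE_{+1} := H^n_x(H^1_{v,*}(m))$, then a duality argument to replace $\EE$ by the weaker norm $\EE_{-1}$ on the right-hand side.

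\textbf{Step 1 (gain estimate).} Using the splitting $\Lambda = \AA + \BB$ employed in the proof of Theorem~\ref{thm:extension}, in which $\BB$ is hypodissipative on $\EE$ with a dissipation producing the anisotropic norm $\EE_{+1}$ (this structure reflects the spectral-gap estimate \eqref{eq:GapDLandauMS} for $\LL$) and $\AA$ is bounded on $\EE$, I would perform an energy estimate on $u(t) := \SS_\Lambda(t)(I-\Pi_0)f$. Since the transport term $-v\cdot\nabla_x$ is skew-adjoint in $L^2_x$ and $\AA$ contributes only a bounded $\EE$-term, one obtains
\begin{equation*}
\frac{1}{2}\frac{d}{dt}\|u(t)\|_\EE^2 \;=\; \la \Lambda u, u\ra_\EE \;\le\; -c\|u\|_{\EE_{+1}}^2 + C'\|u\|_\EE^2.
\end{equation*}
Multiplying by $e^{2\lambda t}$ for $\lambda < \lambda_1$, integrating over $(0,\infty)$, and using Theorem~\ref{thm:extension} to bound $\int_0^\infty e^{2\lambda t}\|u\|_\EE^2\,dt$ by $C\|(I-\Pi_0)f\|_\EE^2$ (the boundary term at $\infty$ vanishing by the same decay), yields the intermediate gain estimate
\begin{equation}\label{eq:gain-plan}
\int_0^\infty e^{2\lambda t}\|\SS_\Lambda(t)(I-\Pi_0)f\|_{\EE_{+1}}^2\,dt \;\le\; C\|(I-\Pi_0)f\|_\EE^2.
\end{equation}

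\textbf{Step 2 (duality).} To upgrade \eqref{eq:gain-plan} to the stated estimate, I would apply the same argument to the $\EE$-adjoint semigroup $\SS_{\Lambda^*}$. The generator $\Lambda^*$ has the same structural properties as $\Lambda$ (skew transport part, dissipative Landau-type collision operator), so an analogous splitting yields
\begin{equation*}
\int_0^\infty e^{2\lambda t}\|\SS_{\Lambda^*}(t)(I-\Pi_0^*)g\|_{\EE_{+1}}^2\,dt \;\le\; C\|(I-\Pi_0^*)g\|_\EE^2.
\end{equation*}
Interpreting \eqref{eq:gain-plan} as the boundedness of the operator $T : \EE \to L^2(\R^+;\EE_{+1})$, $Tf(t) := e^{\lambda t}\SS_\Lambda(t)(I-\Pi_0)f$, and using the Gelfand triple $\EE_{+1}\subset\EE\subset\EE_{-1}$ with $\EE_{-1} = \EE_{+1}^*$ in the $\EE$-pairing (by \eqref{eq:HnxH-1v*}), the combination of the two gain estimates via a $TT^*$-type argument (equivalently, via Plancherel in the $t$-variable, rewriting both estimates as $L^2_\tau$-resolvent bounds for $R(-\lambda+i\tau,\Lambda)$ and $R(-\lambda+i\tau,\Lambda^*)$) shows that $T$ extends to a bounded operator $\EE_{-1} \to L^2(\R^+;\EE)$, which is precisely \eqref{eq:SLambda-reg}.

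\textbf{Main obstacle.} The principal difficulty is in Step 1: establishing the coercivity of $\BB$ on the enlarged space $\EE$ with precisely the anisotropic gain norm $H^1_{v,*}(m)$. The spectral-gap estimate \eqref{eq:GapDLandauMS} gives this structure on $L^2_v(\mu^{-1/2})$, but transferring it to $\EE$ (where the weight $m$ is much lighter than the inverse Maxwellian) via the GMM enlargement requires a delicate choice of splitting and a careful weighted-Sobolev analysis of the Landau operator --- in particular, treating separately the dissipative contributions parallel and transverse to $v$ via the projector $P_v$ appearing in \eqref{H1v*}, and verifying that this can be done uniformly across all the cases \textbf{(H1)}--\textbf{(H3)}. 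The duality step is then essentially classical, but requires verifying that the adjoint semigroup $\SS_{\Lambda^*}$ admits a compatible splitting with the same coercivity structure, which is fortunately a straightforward symmetric construction.
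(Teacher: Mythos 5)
Your proposal takes a genuinely different route from the paper. Your Step~1, extracting the dissipative gain on $\EE$ via the splitting $\Lambda = \AA+\BB$ and the already-established decay, is sound and in the same spirit as the gain estimates the paper proves for $\BB$ (Lemma~\ref{lem:hypo}). The genuine difficulty is in Step~2, where the argument as sketched does not close.

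The paper never performs a duality step on the full $\Lambda$. Instead it proves the analogous $\EE_{-1}\to L^2_t(\EE)$ regularity for $\SS_\BB$ alone (Lemma~\ref{lem:hypo4}): the $L^2$-adjoint $\BB^*_m$ of the conjugated operator $\BB_m = m\BB m^{-1}$ is an explicit local second-order operator with the same dissipative structure, the gain estimate for $\SS_{\BB^*_m}$ is obtained by a direct energy estimate, and one passes by duality to \eqref{eq:SBregH^-1}. Theorem~\ref{thm:SL-regularity} is then deduced by inserting this $\SS_\BB$-estimate into the Duhamel factorization \eqref{SL-factorization} and using the convolution inclusion $L^1_t * L^2_t \subset L^2_t$ together with the boundedness of $\AA$, the decay of $\SS_\BB$, and the known decay of $\SS_\Lambda$ on the small space $E$. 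The whole point of this factorization is to avoid ever constructing or estimating the adjoint of the full $\Lambda$ on the enlarged space $\EE$.

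The gap in your Step~2 is structural. Your Step~1 gain estimate is the boundedness of $T:\EE\to L^2_t(\EE_{+1})$; its $\EE$-dual is the operator $L^2_t(\EE_{-1})\to\EE$ given by $G\mapsto\int_0^\infty e^{\lambda t}\SS_{\Lambda^*}(t)(I-\Pi_0^*)G(t)\,dt$, which integrates in time on the domain side and is \emph{not} the desired boundedness $\EE_{-1}\to L^2_t(\EE)$. The Plancherel reformulation does not repair this: the gain estimate for $\Lambda^*$ translates to $\int_\tau\|R(-\lambda+i\tau,\Lambda^*)\phi\|_{\EE_{+1}}^2\,d\tau\lesssim\|\phi\|_\EE^2$, an $L^2_\tau$ bound on each application, whereas the duality chain $\|R(-\lambda+i\tau,\Lambda)g\|_\EE \le \|g\|_{\EE_{-1}}\,\|R(-\lambda-i\tau,\Lambda^*)\|_{\BBB(\EE,\EE_{+1})}$ would need an $L^2_\tau$ bound on the resolvent operator norm, a strictly stronger statement which is moreover borderline given the typical $O(t^{-1/2})$ short-time regularization from $\EE$ to $\EE_{+1}$. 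Finally, your argument presumes that the $\EE$-adjoint $\Lambda^*$ (where $\EE = H^n_x L^2_v(m)$ carries a nontrivial weight) admits a compatible splitting and coercivity; the adjoint has different coefficients, a different kernel, and a different projection $\Pi_0^*$, none of which you verify, so this is not the ``straightforward symmetric construction'' you assert. The paper sidesteps all of this by working only with the explicit, local adjoint $\BB^*_m$, where the structure is transparent and the ingredients feed directly into the Duhamel factorization.
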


\Black


\subsection{Splitting of the linearized operator}
We decompose the linearized Landau operator $\LL$ defined
in \eqref{eq:oplandaulin} as $\LL =  \AA_0 + \BB_0$, where we define
\beqn\label{eq:A0B0}
\AA_0 f := (a_{ij}* f)\partial_{ij}\mu - (c * f)\mu,
\qquad
\BB_0 f := (a_{ij}* \mu)\partial_{ij}f - (c * \mu)f.
\eeqn
Consider a smooth positive function $\chi \in \CC^\infty_c(\R^3_v)$ such that $0\leq \chi(v) \leq 1$, $\chi(v) \equiv 1$ for $|v|\leq 1$ and $\chi(v) \equiv 0$ for $|v|>2$. For any $R\geq 1$ we define $\chi_R(v) := \chi(R^{-1}v)$ and in the sequel we shall consider the function $M\chi_R$, for some constant $M>0$.

Then, we make the final decomposition of the operator $\Lambda$ as $\Lambda = \AA + \BB$ with
\beqn\label{eq:AB}
\AA  := \AA_0 + M \chi_R ,
\qquad
\BB  := \BB_0 - v\cdot \nabla_x - M\chi_R,
\eeqn
where $M>0$ and $R>0$ will be chosen later (see Lemma~\ref{lem:hypo}).

\subsection{Preliminaries}

We have the following results concerning the matrix $\bar a_{ij}(v)$.
\begin{lem}\label{lem:bar-aij}
The following properties hold:

\begin{enumerate}[(a)]

\item The matrix $\bar a(v)$ has a simple eigenvalue $\ell_1(v)>0$ associated with the eigenvector $v$ and a double eigenvalue $\ell_2(v)>0$ associated with the eigenspace $v^{\perp}$. Moreover,
when $|v|\to +\infty$ we have
$$
\ell_1(v) \sim  2 \la v \ra^\gamma \quad\text{and}\quad
\ell_2(v) \sim  \la v \ra^{\gamma+2} .
$$

\item The function $\bar a_{ij}$ is smooth, for any multi-index $\beta\in \N^3$
$$
|\partial^\beta \bar a_{ij}(v)|
\leq C_\beta \la v \ra^{\gamma+2-|\beta|}
$$
and
$$
\bal
\bar a_{ij}(v) \xi_i \xi_j &= \ell_1(v) |P_v \xi|^2 + \ell_2(v)|(I-P_v)\xi|^2 \\
&\ge c_0 \big\{ \la v \ra^{\gamma} |P_v \xi|^2 + \la v \ra^{\gamma+2}|(I-P_v)\xi|^2  \big\},
\eal
$$
for some constant $c_0>0$ and where $P_v$ is the projection on $v$, i.e. $ P_v \xi_i = \left( \xi \cdot \frac{v}{|v|} \right) \frac{v_i}{|v|}$.

\item We have
$$
\bar a_{ii}(v) = \mathrm{tr} (\bar a(v)) = \ell_1(v) + 2 \ell_2(v)
= 2 \int_{\R^3} |v-v_*|^{\gamma+2} \mu(v_*)\, dv_*
\qquad\text{and}\qquad
\bar b_i(v) = - \ell_1(v)\, v_i.
$$

\item If $|v|>1$, we have
$$
|\partial^\beta \ell_{1}(v)|\leq C_\beta \la v \ra^{\gamma-|\beta|}\qquad\text{and}\qquad |\partial^\beta \ell_{2}(v)|\leq C_\beta \la v \ra^{\gamma+2-|\beta|}.
$$
\end{enumerate}

\end{lem}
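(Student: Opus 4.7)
The plan is to exploit the rotational symmetry of the kernel $a_{ij}(w)$ together with the radiality of $\mu$ to reduce $\bar a(v)$ to a two-parameter family, and then compute the relevant scalar quantities by direct integral manipulations.

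\emph{Part (a).} The matrix $a(w)=|w|^{\gamma+2}(I-\hat w\otimes \hat w)$ is a rescaled projection onto $w^\perp$, so $a(Rw)=R\,a(w)\,R^T$ for every rotation $R$. Since $\mu$ is radial, this symmetry is inherited by $\bar a$: $\bar a(Rv)=R\,\bar a(v)\,R^T$. Specializing to rotations that fix the line through $v$ forces $v$ to be an eigenvector of $\bar a(v)$ and the action on $v^\perp$ to be scalar, giving
\[
\bar a_{ij}(v)=\ell_1(v)\,\hat v_i\hat v_j+\ell_2(v)(\delta_{ij}-\hat v_i\hat v_j).
\]
Testing against $v_i v_j$ and using Lagrange's identity yields
\[
\ell_1(v)|v|^2=\int_{\R^3}|v-v_*|^\gamma\,|v\times v_*|^2\,\mu(v_*)\,dv_*,
\]
from which the asymptotic $\ell_1(v)\sim 2\langle v\rangle^\gamma$ follows by dominated convergence after the rescaling $|v-v_*|^\gamma/|v|^\gamma\to 1$ and $|v\times v_*|^2/|v|^2\to|v_*|^2-(v_*\cdot\hat v)^2$, whose $\mu$-integral equals $2$. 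Positivity of $\ell_1,\ell_2$ is inherited from that of $\bar a$.

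\emph{Part (c).} The trace formula $\bar a_{ii}=2\int|v-v_*|^{\gamma+2}\mu\,dv_*$ is immediate from $a_{ii}(w)=2|w|^{\gamma+2}$. The identity $\bar b_i=-\ell_1 v_i$ rests on the algebraic fact $a_{ij}(w)w_j=0$, applied inside the convolution with $w=v-v_*$:
\[
0=\int a_{ij}(v-v_*)(v-v_*)_j\mu(v_*)\,dv_*=v_j\bar a_{ij}(v)-\int a_{ij}(v-v_*)(v_*)_j\mu(v_*)\,dv_*.
\]
The second integral becomes $-\bar b_i(v)$ after integration by parts using $(v_*)_j\mu=-\partial_{v_{*j}}\mu$ together with $\partial_j a_{ij}=b_i$. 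Combined with $\bar a(v)v=\ell_1(v)v$ from part (a), this gives $\bar b_i(v)=-\ell_1(v)v_i$. The asymptotic $\ell_2(v)\sim\langle v\rangle^{\gamma+2}$ is then read off from $\ell_1+2\ell_2=2\int|v-v_*|^{\gamma+2}\mu\,dv_*\sim 2|v|^{\gamma+2}$, since $\ell_1$ is of lower order.

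\emph{Parts (b) and (d).} Smoothness of $\bar a_{ij}$ follows by differentiating under the integral, justified by the Schwartz decay of $\mu$. For $|\partial^\beta\bar a_{ij}(v)|\lesssim\langle v\rangle^{\gamma+2-|\beta|}$, the kernel $\partial^\beta a_{ij}$ is homogeneous of degree $\gamma+2-|\beta|$ away from the origin; splitting the convolution into $\{|v_*-v|\le\langle v\rangle/2\}$, on which the Gaussian provides super-polynomial smallness, and its complement, where $|v-v_*|\sim\langle v\rangle$ and the bound is immediate, yields the estimate. The coercivity $\bar a_{ij}\xi_i\xi_j\ge c_0(\langle v\rangle^\gamma|P_v\xi|^2+\langle v\rangle^{\gamma+2}|(I-P_v)\xi|^2)$ then follows from the eigen-decomposition, positivity of $\ell_1,\ell_2$, and a compactness argument on the sphere combined with the asymptotics. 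The derivative bounds in (d) come by writing $\ell_1=\bar a_{ij}\hat v_i\hat v_j$ and $2\ell_2=\bar a_{ii}-\ell_1$, then applying Leibniz together with $|\partial^\beta\hat v|\lesssim\langle v\rangle^{-|\beta|}$ on $\{|v|>1\}$.

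\emph{Main obstacle.} The delicate point is extracting the sharp leading constants (in particular the factor $2$ in $\ell_1\sim 2\langle v\rangle^\gamma$) uniformly in direction, and handling the local singularity of $|v-v_*|^\gamma$ at $v_*=v$ when $\gamma<0$. This is controlled by local integrability of $|\cdot|^\gamma$ (valid for $\gamma>-3$) and boundedness of $\mu$, which together render the near-diagonal contribution negligible and justify the dominated-convergence passages in parts (a) and (c).
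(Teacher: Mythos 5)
Your treatment of parts (a) and (c) is correct and gives a self-contained derivation of what the paper simply cites: the identity $\ell_1(v)|v|^2=\int |v-v_*|^\gamma|v\times v_*|^2\mu\,dv_*$, the limiting constant $2$ for the normalized Gaussian, and the derivation of $\bar b_i=-\ell_1 v_i$ via $a_{ij}(w)w_j=0$ and integration by parts are all sound (and for $\gamma\ge -2$ the bound $|v\times v_*|\le|v||v-v_*|$ even removes the near-diagonal singularity, so dominated convergence is straightforward).

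Part (d) for $\ell_1$, however, does not follow from the route you propose. Writing $\ell_1=\bar a_{ij}\hat v_i\hat v_j$ and applying Leibniz with $|\partial^{\beta_1}\bar a_{ij}|\lesssim\langle v\rangle^{\gamma+2-|\beta_1|}$ and $|\partial^{\beta_2}\hat v|\lesssim\langle v\rangle^{-|\beta_2|}$ yields only $|\partial^\beta\ell_1|\lesssim\langle v\rangle^{\gamma+2-|\beta|}$, which is two powers worse than the claimed $\langle v\rangle^{\gamma-|\beta|}$. The componentwise estimate on $\partial^\beta\bar a_{ij}$ is blind to the cancellation that makes $\bar a_{ij}\hat v_i\hat v_j$ itself of size $\langle v\rangle^\gamma$ rather than $\langle v\rangle^{\gamma+2}$, and this cancellation does not survive the Leibniz expansion term by term. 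The paper instead exploits the identity $\bar b_i=-\ell_1 v_i$ from part (c) together with $|\partial^\beta\bar b_i|\lesssim\langle v\rangle^{\gamma+1-|\beta|}$ (the analogue of (b), since $b$ is homogeneous of degree $\gamma+1$): from $\partial_v\bar b_i=\partial_v(-\ell_1 v_i)$ one gets $|\partial_v\ell_1|\,|v|\le C(|\ell_1|+|\partial_v\bar b_i|)\lesssim\langle v\rangle^\gamma$, hence $|\partial_v\ell_1|\lesssim\langle v\rangle^{\gamma-1}$ for $|v|>1$, and iterates. In short, $\bar b$ has already absorbed one contraction with $v$ at the kernel level, so no cancellation has to be tracked. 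Your identity $2\ell_2=\bar a_{ii}-\ell_1$ is fine once $\ell_1$ is handled this way. There is also a technical gap in your sketch for (b): on $\{|v_*-v|\le\langle v\rangle/2\}$ the integrand $|\partial^\beta a_{ij}(v-v_*)|$ is homogeneous of degree $\gamma+2-|\beta|$, which is $\le -3$ once $|\beta|\ge\gamma+5$, so the integral diverges there regardless of the Gaussian smallness; one must cut off $a_{ij}$ near the origin and move all derivatives onto $\mu$ on the singular piece (and on the far region $|v-v_*|$ need not be $\sim\langle v\rangle$; the bound there uses the decay of $\mu$, as in Lemma~\ref{lem:Jalpha}).
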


\begin{proof}
We just give the proof of item $(d)$ since $(a)$ comes from \cite[Propositions 2.3 and 2.4, Corollary 2.5]{DL}, $(b)$ is \cite[Lemma 3]{Guo} and $(c)$ is evident.
For item $(d)$, the estimate of $|\partial^\beta \ell_{2}(v)|$ directly comes from $(a)$ and \cite[Lemma 2]{Guo}. For $\ell_{1}(v)$, using $(b)$ and $(c)$,
$$
\partial_{v} \bar b_{i}(v)=\partial_{v} \big(-\ell_{1}(v)v_{i}\big)\,,
$$
and hence
$$
|\partial_{v} \ell_{1}(v)||v|\leq C \big( |\ell_{1}(v)|+|\partial_{v} \bar b_{i}(v)|   \big) \leq
C \la v \ra^{\gamma}\,,
$$
note that $|v|>1$, we thus have
$$
|\partial_{v} \ell_{1}(v)|\leq
C |v|^{-1} \la v \ra^{\gamma}\leq C\la v \ra^{\gamma-1}\,.
$$
The high order estimate is similar and hence we omit the details.
\end{proof}

The following elementary lemma will be useful in the sequel (see \cite[Lemma 2.5]{KC1} and \cite[Lemma 2.3]{KC2}).
\begin{lem}\label{lem:Jalpha}
Let $J_\alpha(v) := \int_{\R^3} |v-w|^\alpha \mu(w)\, dw$, for $-3 < \alpha \leq 3$. Then it holds:
\begin{enumerate}[(a)]

\item If $2 < \alpha \le 3$, then $J_\alpha(v) \le |v|^\alpha + C_\alpha |v|^{\alpha/2}+ C_\alpha$, for some constant $C_\alpha >0$.

\item If $0 \le \alpha \le 2$, then $J_\alpha(v) \le |v|^\alpha + C_\alpha$, for some constant $C_\alpha >0$.

\item If $-3 < \alpha < 0$, then $J_\alpha (v) \le C \la v \ra^\alpha$ for some constant $C>0$.

\end{enumerate}

\end{lem}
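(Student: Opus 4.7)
The overall plan is to split $\R^3$ into regions based on the size of $|w|$ relative to $|v|$, treat bounded $|v|$ (say $|v|\le 2$) separately---where $J_\alpha(v)$ is uniformly bounded because $|v-w|^\alpha$ is locally integrable (using $\alpha>-3$) and $\mu$ is Schwartz---and concentrate all the work in the regime $|v|\ge 2$. The two recurring tools are (i) symmetry of the centered Gaussian $\mu$ (to kill linear terms in Taylor expansions), and (ii) Gaussian decay of $\mu$ in the far region to make ``bad'' contributions negligible.

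For part (b) the fastest route is Jensen's inequality. Since $\alpha/2\in[0,1]$, the map $t\mapsto t^{\alpha/2}$ is concave, and $\mu$ is a probability density with zero mean and $\int |w|^2\mu(w)\,dw=3$, hence
\[
J_\alpha(v)=\int_{\R^3}(|v-w|^2)^{\alpha/2}\mu(w)\,dw\le\Bigl(\int_{\R^3}|v-w|^2\mu(w)\,dw\Bigr)^{\alpha/2}=(|v|^2+3)^{\alpha/2},
\]
and the conclusion follows from the subadditivity $(a+b)^{\alpha/2}\le a^{\alpha/2}+b^{\alpha/2}$.

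For part (a) I would write $|v-w|^2=|v|^2(1+g)$ with $g(v,w)=(|w|^2-2v\cdot w)/|v|^2$, and split integration into $\{|w|\le |v|/2\}$ (where $|g|\le 3/4$ is bounded) and its complement (where $\mu(w)\lesssim e^{-|v|^2/8}$). On the inner region, a second-order Taylor expansion $(1+g)^{\alpha/2}\le 1+\tfrac{\alpha}{2}g+C_\alpha g^2$ gives, after integration against $\mu$, the leading term $|v|^\alpha$ plus a remainder of order $|v|^{\alpha-2}$ (the linear piece $-2v\cdot w/|v|^2$ integrates to zero by symmetry, and $\int|w|^2\mu=3$). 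The key inequality $\alpha-2\le \alpha/2$, valid precisely because $\alpha\le 3$, then absorbs this remainder into $C_\alpha|v|^{\alpha/2}$. The outer region contributes at most $C_\alpha e^{-|v|^2/8}$, which is swallowed by the additive constant $C_\alpha$.

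For part (c) I would split according to $\{|v-w|\ge|v|/2\}$ and its complement. On the first region, $|v-w|^\alpha\le (|v|/2)^\alpha\lesssim\la v\ra^\alpha$ directly (since $\alpha<0$). On the second region, $|w|\ge|v|/2$, so $\mu(w)\le Ce^{-|v|^2/8}$, while $\int_{|v-w|<|v|/2}|v-w|^\alpha\,dw$ is a finite constant (here we use $\alpha>-3$), so that piece decays exponentially in $|v|$ and is certainly $\le C\la v\ra^\alpha$. The main obstacle, if any, is the bookkeeping in part (a) to land on the exponent $\alpha/2$ in the remainder rather than the more naive $\alpha-1$ one would obtain from $(|v|+|w|)^\alpha$; exploiting the cancellation of the linear term via symmetry of $\mu$, together with $\alpha\le 3$, is what makes this sharp exponent achievable.
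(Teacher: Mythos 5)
The paper does not give its own proof of this lemma; it simply cites \cite[Lemma 2.5]{KC1} and \cite[Lemma 2.3]{KC2}, so there is no internal argument to compare against. Assessed on its own merits, your proposal is sound: the Jensen route for part (b) is clean and exact (concavity of $t\mapsto t^{\alpha/2}$, $\int|w|^2\mu=3$, and subadditivity give $(|v|^2+3)^{\alpha/2}\le|v|^\alpha+3^{\alpha/2}$); the decomposition for part (c) correctly uses $\alpha<0$ on the far region and $\alpha>-3$ plus Gaussian decay of $\mu$ on the near region; and the Taylor-with-symmetry argument for part (a) lands on the right remainder.

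Two small slips worth tidying up in part (a). First, with $g=(|w|^2-2v\cdot w)/|v|^2$ and $|w|\le|v|/2$ one does not have $|g|\le 3/4$ in general (the upper bound is $5/4$); what matters is the lower bound $1+g=|v-w|^2/|v|^2\ge 1/4$, which is what keeps $(1+\xi)^{\alpha/2-2}$ bounded in the Lagrange remainder. Second, the comparison $|v|^{\alpha-2}\lesssim|v|^{\alpha/2}$ for $|v|\ge 1$ requires $\alpha\le 4$, not ``precisely $\alpha\le 3$''; the hypothesis $\alpha\le 3$ is of course sufficient, and in fact your argument yields the slightly sharper bound $J_\alpha(v)\le|v|^\alpha+C_\alpha|v|^{\alpha-2}+C_\alpha$ since $\alpha-2<\alpha/2$ on $(2,3]$. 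Neither slip affects the correctness of the conclusion.
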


We define the function $\varphi_{m,p}$ as
\beqn\label{eq:def-varphi}
\varphi_{m,p} (v) :=
 \bar a_{ij}(v) \frac{\partial_{ij} m }{m}
+(p-1) \bar a_{ij}(v) \frac{\partial_i m}{m} \frac{\partial_j m}{m}
+ 2 \bar b_i (v) \frac{\partial_i m}{m}
+ \left( \frac1p - 1 \right) \bar c (v),
\eeqn
and also the function $\tilde\varphi_{m,p}$ given by
\beqn\label{eq:def-tildevarphi}
\bal
\tilde\varphi_{m,p} (v) &:=
\left( \frac{2}{p}-1\right)\bar a_{ij}(v) \frac{\partial_{ij} m }{m}
+\left(2-\frac{2}{p}\right) \bar a_{ij}(v) \frac{\partial_i m}{m} \frac{\partial_j m}{m} \\
&\quad
+ \frac{2}{p} \, \bar b_i (v) \frac{\partial_i m}{m} + \left( \frac1p - 1 \right) \bar c (v),
\eal
\eeqn
and hereafter, in order to treat both weight functions at the same time, we remind the notation: $\sigma = 0$ when $m= \la v \ra^k$ and $\sigma = s$ when $m = e^{r \la v \ra^s}$.

We prove the following result concerning $\varphi_{m,p}$ and $\tilde\varphi_{m,p}$.

\begin{lem}\label{lem:varphi}
Consider {\bf (H1)}, {\bf (H2)} or {\bf (H3)}, and let $\varphi_{m,p}$ and $\tilde\varphi_{m,p}$ be defined in \eqref{eq:def-varphi} and~\eqref{eq:def-tildevarphi} respectively. Then we have:

\smallskip

\noindent
{$\bullet$} Assume $\sigma \in[0,2)$:

(1) For all positive $\lambda < \lambda_{m,p}$ and $\delta \in (0,\lambda_{m,p} - \lambda)$ we can choose $M$ and $R$ large enough such that
$$
\varphi_{m,p} (v) - M \chi_R (v) \leq -\lambda  - \delta \la v \ra^{\gamma+\sigma}.
$$
$$
\tilde\varphi_{m,p} (v) - M \chi_R (v) \leq -\lambda  - \delta \la v \ra^{\gamma+\sigma}.
$$

\smallskip

(2) For all positive $\lambda < \lambda_{m,p}$ and $\delta \in (0,\lambda_{m,p} - \lambda)$ we can choose $M$ and $R$ large enough such that
$$
\varphi_{m,p} (v) - M \chi_R (v) + M \partial_j \chi_R(v) \leq -\lambda  - \delta \la v \ra^{\gamma+\sigma}.
$$
$$
\tilde\varphi_{m,p} (v) - M \chi_R (v) + M \partial_j \chi_R(v) \leq -\lambda  - \delta \la v \ra^{\gamma+\sigma}.
$$

\smallskip

\noindent
{$\bullet$} Assume $\sigma =2$: The same conclusion as before holds for $\tilde\varphi_{m,p}$. Moreover, concerning $\varphi_{m,p}$, the previous estimates also hold if we restrict $r \in (0,1/(2p))$ in assumptions {\bf (H1)-(iii), (H2)-(iii), (H3)-(ii)}, and also modifying the value of the abscissa $\lambda_{m,p} = 4r(1-2rp)$ in {\bf (H3)-(iii)}.

\end{lem}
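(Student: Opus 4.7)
My approach is a pointwise asymptotic analysis of the four summands in \eqref{eq:def-varphi} and \eqref{eq:def-tildevarphi} as $|v| \to \infty$, combined with an absorption/compactness argument on $\{|v| \leq 2R\}$ where $\chi_R$ is active. First I would write down the logarithmic derivatives of $m$ explicitly: for $m = \langle v \rangle^k$, $\partial_i m/m = k v_i/\langle v \rangle^2$ and $\partial_{ij} m/m = k \delta_{ij}/\langle v \rangle^2 + k(k-2) v_i v_j / \langle v \rangle^4$; for $m = e^{r\langle v \rangle^s}$, $\partial_i m/m = rs\langle v \rangle^{s-2} v_i$ and $\partial_{ij} m/m = rs\langle v \rangle^{s-2} \delta_{ij} + [rs(s-2) + (rs)^2 \langle v \rangle^s] \langle v \rangle^{s-4} v_i v_j$.

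Next, I would substitute these expressions into \eqref{eq:def-varphi}--\eqref{eq:def-tildevarphi} and invoke the identities from Lemma~\ref{lem:bar-aij}, namely $\bar a_{ij}(v) v_i v_j = \ell_1(v) |v|^2$, $\bar a_{ii}(v) = \ell_1(v) + 2\ell_2(v)$, and $\bar b_i(v) = -\ell_1(v) v_i$, together with the asymptotics $\ell_1 \sim 2 \langle v \rangle^\gamma$, $\ell_2 \sim \langle v \rangle^{\gamma+2}$, and $\bar c(v) = -2(\gamma+3) J_\gamma(v) \sim -2(\gamma+3) \langle v\rangle^\gamma$ (this last via Lemma~\ref{lem:Jalpha}). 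This yields explicit leading-order expansions in each regime. In the polynomial case one gets $\varphi_{m,p}(v) = -2[k - (\gamma+3)(1-1/p)] \langle v \rangle^\gamma + O(\langle v \rangle^{\gamma-2})$, matching the $\lambda_{m,p}$ of (H1)-(i) and (H2)-(i). In the stretched-exponential regime $s \in (0,2)$, the dominant order is $\langle v \rangle^{\gamma+s}$, and the contributions from $\bar a_{ij} \partial_{ij} m/m$ (via $\bar a_{ii}$) and $2\bar b_i \partial_i m/m$ combine to $\varphi_{m,p}(v) \sim -2rs \langle v \rangle^{\gamma+s}$, which tends to $-\infty$ under the standing assumption $\gamma + s > 0$. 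In the Gaussian case $s=2$, every term contributes at the common order $\langle v \rangle^{\gamma+2}$, and careful bookkeeping gives
$$
\varphi_{m,p}(v) = -4r(1 - 2rp) \langle v \rangle^{\gamma+2} + O(\langle v \rangle^\gamma), \qquad \tilde\varphi_{m,p}(v) = -4r(1 - 2r) \langle v \rangle^{\gamma+2} + O(\langle v \rangle^\gamma).
$$
The parallel computation for $\tilde\varphi_{m,p}$ in the first two regimes is analogous, only the numerical coefficients change.

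Once the leading coefficient is shown to be strictly negative in each admissible configuration, the conclusion of part (1) follows by a standard truncation argument. Given $\lambda < \lambda_{m,p}$ and $\delta \in (0, \lambda_{m,p} - \lambda)$, I fix $R$ large enough that the lower-order remainder is absorbed and $\varphi_{m,p}(v) \leq -(\lambda+\delta)\langle v \rangle^{\gamma+\sigma} \leq -\lambda - \delta \langle v \rangle^{\gamma+\sigma}$ for $|v| \geq R$. On $\{|v| \leq 2R\}$, the function $\varphi_{m,p}$ is bounded; using $\chi_R \equiv 1$ on $\{|v|\leq R\}$ and choosing $M$ large forces $\varphi_{m,p} - M\chi_R \leq -\lambda - \delta \langle v \rangle^{\gamma+\sigma}$ there as well, including the transition annulus. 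For part (2), the extra term $M \partial_j \chi_R$ is of order $M/R$ and compactly supported in $\{R \leq |v| \leq 2R\}$, hence absorbed by the same large-$M$ mechanism.

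The main obstacle is the Gaussian case $s = 2$, where all contributions collapse to the single order $\langle v \rangle^{\gamma+2}$ and delicate cancellations must be tracked. It is precisely the coefficients $(2/p-1)$ and $(2-2/p)$ in front of $\bar a_{ij} \partial_{ij} m/m$ and $\bar a_{ij} \partial_i m/m \, \partial_j m/m$ in the definition of $\tilde\varphi_{m,p}$ that remove the factor of $p$ from the leading coefficient; this explains why $\tilde\varphi_{m,p}$ requires only $r < 1/2$ whereas $\varphi_{m,p}$ demands the stricter $r < 1/(2p)$, and why the abscissa in (H3)-(iii) reads $\lambda_{m,p} = 4r(1-2r)$ for $\tilde\varphi_{m,p}$ but must be replaced by $4r(1-2rp)$ for $\varphi_{m,p}$.
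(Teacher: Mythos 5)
Your computation of the logarithmic derivatives, the substitution via Lemma~\ref{lem:bar-aij}, and the leading-order asymptotics (including the key cancellation in the Gaussian case that produces $-4r(1-2rp)$ for $\varphi_{m,p}$ versus $-4r(1-2r)$ for $\tilde\varphi_{m,p}$) all match the paper's Steps~1 and~2, and your argument for part~(1) is sound since $\gamma+\sigma\ge 0$ in every admissible configuration. The issue is part~(2): the claim that the term $M\,\partial_j\chi_R$, which is $O(M/R)$ on the annulus $\{R\le|v|\le 2R\}$, is ``absorbed by the same large-$M$ mechanism'' cannot be right, since that term \emph{grows} with $M$, not shrinks. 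If you naively fix $R$ first (to push $\varphi$ well into negative territory) and then take $M\to\infty$ (to crush the region $\{|v|\le R\}$), the perturbation $M/R$ blows up.

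The paper resolves this with a careful ordering that you should spell out. Fix $\bar\lambda\in(\lambda,\lambda_{m,p})$ and first choose an intermediate radius $R_1$ so that $\varphi_{m,p}(v)+\delta\la v\ra^{\gamma+\sigma}\le-\bar\lambda$ for all $|v|\ge R_1$. Then choose $M$ large enough to handle the compact region $\{|v|\le R_1\}$; crucially, this choice depends only on $R_1$ and is independent of the final cutoff scale. Finally take $R\ge R_1$ large enough that $M\,C_\chi/R\le\bar\lambda-\lambda$. On the annulus $R\le|v|\le 2R$ you are already in $\{|v|\ge R_1\}$, so $\varphi+\delta\la v\ra^{\gamma+\sigma}\le-\bar\lambda$ and the small perturbation $M C_\chi/R$ is absorbed by the gap $\bar\lambda-\lambda$; on $R_1\le|v|\le R$ you have $\chi_R\equiv 1$ and $\partial_j\chi_R\equiv 0$, so the bound is $-\bar\lambda-M\le-\lambda$; and on $|v|\le R_1$ the term $\partial_j\chi_R$ vanishes and the choice of $M$ already does the job. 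Without this three-step $R_1\to M\to R$ ordering, part~(2) does not close.
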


\begin{proof}[Proof of Lemma \ref{lem:varphi}]

{\noindent \it Step 1. Polynomial weight.}
Consider $m= \la v \ra^k$ under hypothesis {\bf (H1)} or {\bf (H2)}. On the one hand, we have
$$
\bal
\frac{\partial_i m}{m} = k v_i \la v \ra^{-2}, \qquad
\frac{\partial_i m}{m}\, \frac{\partial_j m}{m} = k^2 v_i v_j \la v \ra^{-4},\\
\frac{\partial_{ij} m}{m} = \delta_{ij} \,  k \la v \ra^{-2} + k(k-2) v_i v_j \la v \ra^{-4}.
\eal
$$
Hence, from definitions \eqref{eq:bc}-\eqref{eq:barabc} and Lemma~\ref{lem:bar-aij} we obtain
$$
\bal
\bar a_{ij} \,\frac{\partial_{ij} m}{m} = (\delta_{ij}\bar a_{ij}) \,  k \la v \ra^{-2} +  (\bar a_{ij} v_i v_j)  \, k(k-2)\la v \ra^{-4}
= \bar a_{ii} \,  k \la v \ra^{-2} +  \ell_1(v)  \, k(k-2) |v|^2 \la v \ra^{-4},
\eal
$$
where we recall that the eigenvalue $\ell_1(v) >0$ is defined in Lemma~\ref{lem:bar-aij}.
Moreover, arguing exactly as above we obtain
$$
\bal
\bar a_{ij} \,\frac{\partial_i m}{m}\, \frac{\partial_j m}{m} = (\bar a_{ij} v_i v_j)  \, k^2\la v \ra^{-4}
=   \ell_1(v)  \, k^2 |v|^2 \la v \ra^{-4}
\eal
$$
and also, using the fact that $\bar b_{i}(v) = - \ell_1(v) v_i$ from Lemma~\ref{lem:bar-aij},
$$
\bal
\bar b_{i} \,\frac{\partial_i m}{m} =  - \ell_1(v) v_i  \, k v_i  \la v \ra^{-2}
=   - \ell_1(v)  \, k |v|^2 \la v \ra^{-2}.
\eal
$$
On the other hand, from item (c) of Lemma~\ref{lem:bar-aij} and definitions \eqref{eq:bc}-\eqref{eq:barabc} we obtain that
$$
\bar a_{ii}(v) = \ell_1(v) + 2 \ell_2(v)
\quad\text{and}\quad
\bar c(v) = -2(\gamma+3) J_{\gamma}(v),
$$
where $J_\alpha$ is defined in Lemma~\ref{lem:Jalpha}.
It follows that
\beqn\label{eq:phi1}
\bal
\varphi_{m,p}(v)
&=
 2k \ell_2(v)(v) \la v \ra^{-2}
+  k \ell_1(v) \la v \ra^{-2}
+  k(k-2) \, \ell_1(v) \, |v|^2 \la v \ra^{-4}\\
&\quad
+(p-1)  k^2  \, \ell_1(v) \, |v|^2 \la v \ra^{-4}
 - 2   k \, \ell_1(v) \, |v|^2 \la v \ra^{-2}
+2(\gamma+3)\left(1-\frac1p \right)  J_{\gamma}(v).
\eal
\eeqn
Since $\ell_1(v) \sim 2 \la v \ra^\gamma$, $\ell_2(v) \sim \la v \ra^{\gamma+2}$ and $\ell_1(v) |v|^2 \sim 2 \ell_2(v)$ when $|v|\to +\infty$ thanks to Lemma~\ref{lem:bar-aij}, and also $J_\gamma(v) \sim \la v\ra^\gamma$ from Lemma~\ref{lem:Jalpha} (since in this case we have $\gamma \ge 0$), the dominant terms in \eqref{eq:phi1} are the first, fifth and sixth ones, all of order $\la v \ra^{\gamma}$.
Then we obtain
\beqn\label{cas:poly1}
\bal
\limsup_{|v|\to + \infty} \varphi_{m,p}(v)
&\le  -2\left[ k - (\gamma+3)(1-1/p)\right] \la v\ra^\gamma,
\eal
\eeqn
and recall that $k > (\gamma+3)(1 - 1/p)$.
Doing the same kind of computations, we obtain the same asymptotic for $\tilde\varphi_{m,p}$,
\beqn \label{cas:poly1bis}
\limsup_{|v| \to + \infty} \tilde\varphi_{m,p} (v) \le -2 [k - (\gamma+3)(1-1/p)] \la v \ra^{\gamma}.
\eeqn

\medskip
{\noindent \it Step 2. Stretched exponential weight.}
We consider now $m=\exp(r\la v \ra^s)$ satisfying {\bf (H1)}, {\bf (H2)} or {\bf (H3)}. In this case we have
$$
\bal
& \frac{\partial_i m}{m} = r s v_i \la v\ra^{s-2},
\quad
\frac{\partial_i m}{m}\frac{\partial_j m}{m} = r^2 s^2 v_i v_j \la v\ra^{2s-4}, \\
&
\frac{\partial_{ij} m}{m} = r s \la v\ra^{s-2}  \delta_{ij}
+ r s (s-2) v_i v_j \la v\ra^{s-4}  + r^2 s^2 v_i v_j \la v\ra^{2s-4} .
\eal
$$
Then we obtain
\beqn\label{eq:varphiexp}
\bal
\varphi_{m,p}(v) &=
 2 r s \,  \ell_{2}(v)  \la v\ra^{s-2}
+   r s \,  \ell_{1}(v)  \la v\ra^{s-2}
+ r s(s-2) \,  \ell_1(v)   |v|^2\la v\ra^{s-4}\\
&\quad
+ p r^2 s^2 \,  \ell_1(v)   |v|^2\la v\ra^{2s-4}
 - 2 r s \,  \ell_1(v) |v|^2\la v\ra^{s-2}
+2(\gamma+3)\left(1-\frac{1}{p}  \right)  \,  J_{\gamma}(v)
\eal
\eeqn
In the case $0< s <2$, arguing as in step 1, the dominant terms in \eqref{eq:varphiexp} when $|v| \to + \infty$ are the first and fifth one, both of order $\la v\ra^{\gamma+s}$. Then we obtain
\beqn\label{cas:exp1}
\limsup_{|v|\to + \infty} \varphi_{m,p}(v) \le -2 r s \la v\ra^{\gamma+s},
\eeqn
and recall that $s+\gamma>0$.
In the same way we obtain
\beqn \label{cas:exp1bis}
\limsup_{|v| \to + \infty} \tilde\varphi_{m,p} (v) \le -2 rs \la v \ra^{\gamma+s}.
\eeqn

In the case $s=2$, the dominant terms in \eqref{eq:varphiexp} when $|v| \to + \infty$ are the first, fourth and fifth ones, all of order $\la v \ra^{\gamma+2}$. Hence we get
\beqn\label{cas:exp2}
\limsup_{|v|\to + \infty} \varphi_{m,p}(v) \le -4r(1-2pr) \la v \ra^{\gamma+2}.
\eeqn
However, a similar computation gives
\beqn \label{cas:exp2bis}
\limsup_{|v| \to + \infty} \tilde\varphi_{m,p} (v) \le - 4r(1-2r) \la v \ra^{\gamma+2},
\eeqn
which is better than the asymptotic of $\varphi_{m,p}$.
Thus we need the condition $r < 1/2$ for $\tilde\varphi_{m,p}$ (which is better than the condition $r < 1/(2p)$ for $\varphi_{m,p}$).

\medskip
\noindent
{\it Step 3. Conclusion.}
Finally, thanks to the asymptotic behaviour in \eqref{cas:poly1}, \eqref{cas:exp1} and \eqref{cas:exp2}, for any $\lambda < \lambda_{m,p}$ we can choose $M$ and $R$ large enough such that $\varphi_{m,p}(v) - M \chi_R(v) \leq -\lambda - \delta \la v \ra^{\gamma+\sigma}$ for some $\delta>0$ small enough, which gives us point (1) of the lemma.

For the point (2) we use $\partial_j \chi_R (v) = R^{-1} \partial_j \chi(v/R)$ and write
$$
\varphi_{m,p}(v) - M \chi_R(v) + M \partial_j \chi_R(v)
\le \varphi_{m,p}(v) - M \chi_R(v) + M\frac{C_\chi}{R}\, {\bf 1}_{R \le |v| \le 2R} =: \Phi(v).
$$
We fix some $\bar\lambda \in (\lambda  , \lambda_{m,p} )$. First we choose $R_1$ large enough such that, for all $|v| \ge  R_1$, we have
$$
\varphi_{m,p}(v) + \delta \la v \ra^{\gamma+\sigma} \le - \bar \lambda
$$
for some $\delta>0$ small enough, which implies that, for any $|v| \ge 2 R_1$,
$$
\Phi(v)  + \delta \la v \ra^{\gamma+\sigma} = \varphi_{m,p}(v) + \delta \la v \ra^{\gamma+\sigma} \le - \bar\lambda.
$$
Then we choose $M>0$ large enough such that, for all $|v| \le  R_1$,
$$
\Phi(v) + \delta \la v \ra^{\gamma+\sigma} = \varphi_{m,p}(v) + \delta \la v \ra^{\gamma+\sigma} - M \chi_{R_1}(v) \le - \bar\lambda.
$$
Finally, we choose $R \ge R_1$ large enough such that, for any $R \le |v| \le 2 R$,
$$
\Phi(v) + \delta \la v \ra^{\gamma+\sigma} \le \varphi_{m,p}(v)+ \delta \la v \ra^{\gamma+\sigma} + M \frac{C_\chi}{R} \le - \bar\lambda + M \frac{C_\chi}{R}
\le - \lambda,
$$
and we easily observe that now for $R_1 \le |v| \le R$ we have
$$
\Phi(v) + \delta \la v \ra^{\gamma+\sigma} = \varphi_{m,p}(v) + \delta \la v \ra^{\gamma+\sigma} - M \chi_R (v) \le - \bar \lambda - M \le - \lambda,
$$
which concludes the proof for $\varphi_{m,p}$.
Concerning $\tilde \varphi_{m,p}$, in the same way, inequalities \eqref{cas:poly1bis}, \eqref{cas:exp1bis} and \eqref{cas:exp2bis} yield the result.
\end{proof}

\subsection{Hypodissipativity}
\label{ssec:hypo}
In this subsection we prove hypodissipativity properties for the operator $\BB$ on 
the admissible spaces $\EE$ defined in \eqref{def:EE}.

Hereafter we define the space $W^{1,p}_{v,*}(m)$, with $1 < p < \infty$, associated to the norm
$$
\| f \|_{W^{1,p}_{v,*}(m)}^p = \| f \|_{L^p_{x,v}(m \la v \ra^{(\gamma+\sigma)/p})}^p
+  \| P_v \nabla f^{p/2} \|_{L^2(m^{p/2} \la v \ra^{\gamma/2})}^2
+ \| (I-P_v) \nabla f^{p/2} \|_{L^2(m^{p/2} \la v \ra^{(\gamma+2)/2})}^2  ,
$$
as well as the space $W^{n,p}_x (W^{1,p}_{v,*}(m))$, with $ n \in \N$, by
\beqn\label{eq:WnpxW1pv*}
\| f \|_{W^{n,p}_x (W^{1,p}_{v,*}(m))}^p
= \sum_{0\le j\le n} \| \nabla^j_x f \|_{L^p_x (W^{1,p}_{v,*}(m)) }^p
= \sum_{0\le j\le n} \int_{\T^3_x} \| \nabla^j_x f \|_{W^{1,p}_{v,*}(m) }^p.
\eeqn

\begin{lem}\label{lem:hypo}
Consider hypothesis {\bf (H1)}, {\bf (H2)} or {\bf (H3)}. Let $p\in[1,+\infty]$ and $n \in \N$. Then, for any $\lambda < \lambda_{m,p}$ we can choose $M>0$ and $R>0$ large enough such that the operator $(\BB+\lambda)$ is dissipative in $W^{n,p}_x L^p_{v}(m)$, in the sense that
\beqn\label{eq:SBdecayLp}
\forall\, t \ge0, \quad \| \SS_\BB(t) \|_{\BBB(W^{n,p}_x L^p_{v}(m))} \le C e^{-\lambda t}.
\eeqn
Moreover there hold: if $p=1$
\beqn\label{eq:SBregL1t}
\int_0^\infty e^{\lambda  t} \, \| \SS_{\BB}(t) \|_{ \BBB (W^{n,1}_x L^1_{v}(m) , W^{n,1}_x L^{1}_{v}(m \la v \ra^{\gamma+\sigma})) )} \, dt < \infty,
\eeqn
and if $1 < p < \infty$
\beqn\label{eq:SBregLpt}
\int_0^\infty e^{\lambda p t} \, \| \SS_{\BB}(t) \|_{ \BBB ( W^{n,p}_x L^p_{v}(m) , W^{n,p}_x (W^{1,p}_{v,*}(m)))}^p \, dt < \infty,
\eeqn
\end{lem}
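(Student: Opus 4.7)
My strategy is a single weighted $L^p$-energy estimate that will yield all three conclusions \eqref{eq:SBdecayLp}, \eqref{eq:SBregL1t} and \eqref{eq:SBregLpt} at once. For a smooth solution of $\partial_t f = \BB f$, I multiply by $|f|^{p-2} f \, m^p$ and integrate over $\T^3_x \times \R^3_v$. The transport term vanishes on the torus because $\int_{\T^3} v\cdot\nabla_x |f|^p\, dx = 0$, and the zeroth-order damping contributes $-M\int \chi_R |f|^p m^p$.

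The heart of the computation is the diffusion part $\bar a_{ij}\partial_{ij} f - \bar c f$, which I treat by two integrations by parts in $v$. The first IBP uses $\partial_j \bar a_{ij} = \bar b_i$ and produces three pieces: a non-positive gradient-gradient term $-\frac{4(p-1)}{p^2}\int \bar a_{ij}\,\partial_i |f|^{p/2}\,\partial_j |f|^{p/2}\, m^p$ (after rewriting $|f|^{p-2}\partial_i f\,\partial_j f$ via $\partial_i|f|^{p/2}$), a first-order remainder involving $\bar b_i$, and weight-derivative terms from hitting $m^p$. A second IBP applied to the surviving first-order pieces, using $|f|^{p-2} f\,\partial_i f = \tfrac{1}{p}\partial_i|f|^p$ and $\partial_i \bar b_i = \bar c$, converts everything into zeroth-order contributions. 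Collecting all such contributions (together with $-\int \bar c |f|^p m^p$) one verifies that the coefficient of $|f|^p m^p$ is exactly the function $\varphi_{m,p}(v)$ from \eqref{eq:def-varphi} — this is the raison d'\^etre of that auxiliary definition. The resulting energy identity reads
\begin{equation*}
\frac{1}{p}\frac{d}{dt}\|f\|_{L^p_{x,v}(m)}^p = -\frac{4(p-1)}{p^2}\int \bar a_{ij}\,\partial_i|f|^{p/2}\,\partial_j|f|^{p/2}\,m^p + \int \bigl(\varphi_{m,p} - M\chi_R\bigr)\,|f|^p\,m^p.
\end{equation*}

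Next I invoke Lemma \ref{lem:varphi}(1): for any $\lambda<\lambda_{m,p}$, I choose $M$ and $R$ large enough so that $\varphi_{m,p} - M\chi_R \le -\lambda - \delta\,\la v\ra^{\gamma+\sigma}$ with some $\delta>0$. Combined with the matrix coercivity $\bar a_{ij}\xi_i\xi_j \gtrsim \la v\ra^\gamma|P_v\xi|^2 + \la v\ra^{\gamma+2}|(I-P_v)\xi|^2$ from Lemma \ref{lem:bar-aij}(b), the identity becomes a differential inequality of the form $\frac{d}{dt}\|f\|^p_{L^p(m)} + p\lambda\|f\|^p_{L^p(m)} + \mathcal D(f) \le 0$, where $\mathcal D(f)\ge 0$ controls the full $W^{1,p}_{v,*}(m)$-type dissipation when $1<p<\infty$, and reduces to the weighted term $\delta p \int \la v\ra^{\gamma+\sigma}|f|\,m$ when $p=1$ (since the gradient coefficient $p-1$ then vanishes, explaining why \eqref{eq:SBregL1t} is only stated with the $\la v\ra^{\gamma+\sigma}$ weight). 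Gronwall yields the decay \eqref{eq:SBdecayLp} in the case $n=0$; multiplying by $e^{p\lambda t}$ before integrating in time and using the exponential decay of $\|f\|^p_{L^p(m)}$ itself then furnishes \eqref{eq:SBregL1t}--\eqref{eq:SBregLpt}.

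Extension to $n\ge 1$ is immediate because $\BB$ has coefficients depending only on $v$, so $\partial_x^\beta$ commutes with $\BB$ and the same identity applied to $\partial_x^\beta f$ and summed over $|\beta|\le n$ delivers the $W^{n,p}_x L^p_v(m)$ version. The limiting case $p=\infty$ is handled either by a maximum-principle argument applied to $g = mf$ (the operator conjugated by $m$ is parabolic with a zeroth-order coefficient bounded above by $\varphi_{m,\infty} - M\chi_R$) or by interpolation-duality from the finite-$p$ estimates. The only genuinely delicate point is identifying the asymptotic sign of $\varphi_{m,p} - M\chi_R$ uniformly across the various weight/potential combinations in \textbf{(H1)}--\textbf{(H3)}, but this is exactly what Lemma \ref{lem:varphi} already supplies; the remainder of the argument is careful book-keeping of the two-step integration by parts.
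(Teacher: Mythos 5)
Your proposal matches the paper's proof in all essential respects: the same weighted $L^p$-energy computation via multiplication by $|f|^{p-2}f\,m^p$, the same two-step integration by parts producing the coercive term $-\tfrac{4(p-1)}{p^2}\int\bar a_{ij}\,\partial_i|f|^{p/2}\partial_j|f|^{p/2}m^p$ and the multiplier $\varphi_{m,p}-M\chi_R$, the same invocation of Lemma~\ref{lem:varphi} and the coercivity of $\bar a_{ij}$ from Lemma~\ref{lem:bar-aij}, the same time-integration to extract \eqref{eq:SBregL1t}--\eqref{eq:SBregLpt}, the same commutation-with-$\nabla_x$ reduction for $n\ge1$, and the same maximum-principle treatment of $p=\infty$ via $g=mf$. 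No substantive difference from the paper's argument.
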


\Black

\begin{proof}[Proof of Lemma~\ref{lem:hypo}]
We only consider the case $n=0$, the general case being treated in the sama way since $\nabla_x$ commutes with $\BB$.

Let us denote $\Phi'(z) = |z|^{p-1}\mathrm{sign}(z)$ and consider the equation
$$
\partial_t f = \BB f = \BB_0 f - v\cdot \nabla_x f -M\chi_R f.
$$
For all $p \in [1,+\infty)$, we have
$$
\bal
\frac1p\frac{d}{dt} \| f \|_{L^p_{x,v}(m)} ^p
&=  \int (\BB f) \Phi'(f)\,m^p .
\eal
$$
From \eqref{eq:oplandau} and \eqref{eq:A0B0}, last integral is equal to
$$
\bal
&\int \bar a_{ij}(v) \partial_{ij} f(x,v) \Phi'(f) m^p
-\int \bar c(v) f(x,v) \Phi'(f) m^p  \\
&\quad
-\int v \cdot \nabla_x f(x,v) \Phi'(f) m^p
-\int M\chi_R(v) f(x,v) \Phi'(f) m^p \\
&=: T_1 + T_2 + T_3 + T_4.
\eal
$$
The term $T_3$ vanishes thanks to its divergence structure and terms $T_2$ and $T_4$ are easily computed, giving
$$
T_2 = - \int \bar c(v) |f(x,v)|^p m^p
\quad\text{and}\quad
T_4=- \int M\chi_R(v) |f(x,v)|^p m^p .
$$
Let us compute then the term $T_1$. Using that $\partial_{ij}f  \Phi'(f) = p^{-1}\partial_{ij}(|f|^p) - (p-1)\partial_i f \partial_j f |f|^{p-2}$ we obtain
$$
\bal
T_1 &= \frac{1}{p}\int \bar a_{ij}(v) \partial_{ij}(|f|^p) m^p
- (p-1) \int  \bar a_{ij}(v) \partial_i f \partial_j f \,|f|^{p-2}m^p .
\eal
$$
Performing two integrations by parts on the first integral of $T_1$ it yields
$$
\bal
  \int (\BB f) \Phi'(f)\,m^p
&=  - \frac{4}{p^2}(p-1) \int  \bar a_{ij}(v) \, \partial_i (f^{p/2}) \, \partial_j (f^{p/2}) \, m^p \\
&\quad+
\int \left\{  \varphi_{m,p}(v) - M\chi_R(v) \right\}  |f|^p \, m^p ,
\eal
$$
where $\varphi_{m,p}$ is defined in \eqref{eq:def-varphi}.
We can also get, by a similar computation,
$$
\bal
\int (\BB f) \Phi'(f)\,m^p
&=  - \frac{4}{p^2}(p-1) \int  \bar a_{ij}(v) \, \partial_i (m f^{p/2}) \, \partial_j (m f^{p/2})   \, m^{p-2} \\
&\quad+
\int \left\{  \tilde\varphi_{m,p}(v) - M\chi_R(v) \right\}  |f|^p \, m^p.
\eal
$$

Thanks to Lemma~\ref{lem:varphi}, for any $\lambda < \lambda_{m,p}$ we can choose $M$ and $R$ large enough such that $\varphi_{m,p}(v) - M\chi_R(v) \leq -\lambda + \delta \la v \ra^{\gamma+\sigma}$. Hence it follows, using Lemma~\ref{lem:bar-aij}, 
\beqn\label{eq:SSf2}
\bal
\frac1p\frac{d}{dt} \| f \|_{L^p(m)} ^p
&\leq  - c_0 (p-1) \int  \{ \la v \ra^{\gamma} |P_v \nabla_v (f^{p/2})|^2 + \la v \ra^{\gamma+2} |(I-P_v)\nabla_v (f^{p/2})|^2   \} \, m^p \\
&\quad
- \lambda \| f \|_{L^p(m)}^p
- \delta \| \la v \ra^{\frac{\gamma+\sigma}{p}} f \|_{L^p(m)}^p.
\eal
\eeqn
or
\beqn
\bal
\frac1p\frac{d}{dt} \| f \|_{L^p(m)} ^p
&\leq  - c_0 (p-1) \int  \{ \la v \ra^{\gamma} |P_v \nabla_v (m f^{p/2})|^2 + \la v \ra^{\gamma+2} |(I-P_v)\nabla_v (m f^{p/2} )|^2   \}  \, m^{p-2} \\
&\quad
- \lambda \| f \|_{L^p(m)}^p
- \delta \| \la v \ra^{\frac{\gamma+\sigma}{p}} f \|_{L^p(m)}^p,
\eal
\eeqn
from which we easily obtain \eqref{eq:SBdecayLp} for any $1 \le p < \infty$. For $p=\infty$, let $g=mf$, it is easy to check that $g$ satisfies the equation
$$
\partial_{t}g+v\cdot\nabla_{x}g=\bar{a}_{ij}(v)\partial_{ij}g -2\bar{a}_{ij}(v)\frac{\partial_{i}m}{m}\partial_{j}g+\tilde\varphi_{m,\infty} (v)g- M\chi_R(v)g\,,
$$
by the standard maximum principle argument (for example, see \cite{Wu2}), we have
$$
\| \SS_\BB(t) f\|_{L^\infty_{x,v}(m)} \leq e^{-\lambda t} \| f \|_{L^\infty_{x,v}(m)}.
$$
This completes the proof of \eqref{eq:SBdecayLp}.

The proof of \eqref{eq:SBregL1t} and \eqref{eq:SBregLpt} follows easily from \eqref{eq:SSf2} by keeping all the terms at the right-hand side and integrating in time.
\end{proof}

Define the operator $\BB_m$ by $\BB_m h := m \BB (m^{-1} h)$, more precisely
\beqn\label{Bm}
\bal
\BB_m h &= \bar a_{ij} \partial_{ij} h - 2\bar a_{ij} \frac{\partial_i m}{m} \partial_j h + \left\{ 2 \bar a_{ij}\frac{\partial_i m}{m}\frac{\partial_j m}{m} - \bar a_{ij} \frac{\partial_{ij} m}{m} - \bar c - M \chi_R  \right\} h - v \cdot \nabla_x h \\
&=: \bar a_{ij} \partial_{ij} h +\beta_j \partial_j h + ( \zeta - M \chi_R  ) h - v \cdot \nabla_x h.
\eal
\eeqn
Observe that if $f$ satisfies $\partial_t f = \BB f$, then $h := mf$ satisfies $\partial_t h = \BB_m h$. 
We then define the operator $\BB^*_m$, the (formal) adjoint operator of $\BB_m$ with respect to the usual scalar product $L^2_{x,v}$, by
\beqn\label{B*m}
\bal
\BB^*_m \phi &= \bar a_{ij} \partial_{ij} \phi + \left\{ 2 \bar b_j + 2 \bar a_{ij} \frac{\partial_i m}{m}   \right\} \partial_j \phi + \left\{\bar a_{ij} \frac{\partial_{ij} m}{m} + 2 \bar b_j \frac{\partial_j m}{m} - M \chi_R  \right\} \phi + v \cdot \nabla_x \phi \\
&=: \bar a_{ij} \partial_{ij} \phi + \beta^*_j \partial_i \phi + (\zeta^* - M\chi_R) \phi + v \cdot \nabla_x \phi.
\eal
\eeqn
Remark that, denoting $h_t := \SS_{\BB_m}(t) h_0$ and $\phi_t := \SS_{\BB^*_m}(t) \phi_0$, 
which verify the equations $\partial_t h_t = \BB_m h_t$ and $\partial_t \phi_t = \BB^*_m \phi_t$, there holds
$$
\la h_t, \phi_0 \ra_{H^n_x L^2_v} = \la h_0, \phi_t \ra_{H^n_x L^2_v}.
$$

\begin{lem}\label{lem:hypo4}
Consider hypothesis {\bf (H1)}, {\bf (H2)} or {\bf (H3)}, and let $n \in \N$.
Then, for any $\lambda < \lambda_{m,2}$, we can choose $M$ and $R$ large enough such that the operator $(\BB^*_m+\lambda)$ is hypo-dissipative in $H^n_x L^2_v$, in the sense that
\beqn\label{eq:SB*mdecay}
\forall\, t \ge0, \quad \| S_{\BB^*_m}(t) \|_{\BBB( H^n_x L^2_v )} \le C e^{-\lambda t}.
\eeqn
Moreover there holds
\beqn\label{eq:SBregH^-1}
\int_0^\infty e^{2 \lambda t} \, \| S_{\BB}(t) \|_{\BBB( H^n_x (H^{-1}_{v,*}(m)) ,H^n_x L^2_v(m) )}^2 \, dt \le \infty,
\eeqn
where we recall that $H^n_x (H^{-1}_{v,*}(m))$ is defined in \eqref{eq:HnxH-1v*}.

\end{lem}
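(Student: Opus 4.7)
My plan combines two $L^2$-type energy identities with a duality argument, in both steps reusing the computation from the proof of Lemma~\ref{lem:hypo}.

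For \eqref{eq:SB*mdecay}: since $\BB^*_m$ is by construction the $L^2_{x,v}$-adjoint of $\BB_m = m\BB m^{-1}$, for any real-valued $\phi$ one has $\int(\BB^*_m\phi)\phi\,dx\,dv = \int(\BB_m\phi)\phi\,dx\,dv$. Writing $\phi=mf$ this coincides with $\int(\BB f)\,f\,m^2\,dx\,dv$, which the ``$\tilde\varphi_{m,2}$-form'' established in the proof of Lemma~\ref{lem:hypo} (with $p=2$) expresses as
\[
-\int \bar a_{ij}(v)\,\partial_i\phi\,\partial_j\phi\,dx\,dv + \int \{\tilde\varphi_{m,2}(v)-M\chi_R(v)\}\,\phi^2\,dx\,dv.
\]
Lemma~\ref{lem:varphi}(1) with $p=2$ lets us choose $M,R$ large so that $\tilde\varphi_{m,2}-M\chi_R \le -\lambda$, whence $\tfrac{d}{dt}\|\phi\|_{L^2_{x,v}}^2 \le -2\lambda\|\phi\|_{L^2_{x,v}}^2$. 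Since $\BB^*_m$ has $x$-independent coefficients (and the transport $+v\cdot\nabla_x$ commutes with $\nabla_x^\alpha$), the same estimate extends to $H^n_x L^2_v$ by differentiating in $x$.

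For \eqref{eq:SBregH^-1}, I first establish a companion smoothing estimate starting from $L^2_{x,v}(m)$-data. The ``$\varphi_{m,2}$-form'' in the proof of Lemma~\ref{lem:hypo}, combined with the coercivity of $\bar a_{ij}$ (Lemma~\ref{lem:bar-aij}) and Lemma~\ref{lem:varphi}(1), yields for $f(t) = \SS_\BB(t)g$ the differential inequality
\[
\tfrac12 \tfrac{d}{dt}\|f\|_{L^2_{x,v}(m)}^2 + c_0\|f\|_{L^2_x(H^1_{v,*}(m))}^2 + \lambda\|f\|_{L^2_{x,v}(m)}^2 \le 0.
\]
Multiplying by $e^{2\lambda t}$ and integrating yields the smoothing bound
\[
\int_0^\infty e^{2\lambda t}\|\SS_\BB(t)g\|_{L^2_x(H^1_{v,*}(m))}^2 \, dt \le C\|g\|_{L^2_{x,v}(m)}^2.
\]
Defining $\BB^\dagger := m^{-1}\BB^*_m m$, the $L^2_{x,v}(m)$-adjoint of $\BB$, the symmetry $\la\BB^\dagger u,u\ra_{L^2_{x,v}(m)} = \la\BB u,u\ra_{L^2_{x,v}(m)}$ gives the identical smoothing bound for $\SS_{\BB^\dagger}$.

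The $H^{-1}$-to-$L^2$ estimate \eqref{eq:SBregH^-1} then follows by duality. The adjoint identity $\la\SS_{\BB_m}(t)(mf_0),m\eta\ra_{L^2_{x,v}} = \la mf_0,\SS_{\BB^*_m}(t)(m\eta)\ra_{L^2_{x,v}}$ rewrites in the $L^2_{x,v}(m)$-pairing as $\la\SS_\BB(t)f_0,\eta\ra_{L^2_{x,v}(m)} = \la f_0,\SS_{\BB^\dagger}(t)\eta\ra_{L^2_{x,v}(m)}$. Using the $H^{-1}_{v,*}(m)/H^1_{v,*}(m)$ duality from \eqref{eq:HnxH-1v*} in $v$ and integrating in $x$,
\[
|\la\SS_\BB(t)f_0,\eta\ra_{L^2_{x,v}(m)}| \le \|f_0\|_{L^2_x(H^{-1}_{v,*}(m))}\,\|\SS_{\BB^\dagger}(t)\eta\|_{L^2_x(H^1_{v,*}(m))}.
\]
Squaring, taking the supremum over $\|\eta\|_{L^2_{x,v}(m)}\le 1$, multiplying by $e^{2\lambda t}$ and integrating in $t$ reduces the right-hand side to the smoothing bound for $\SS_{\BB^\dagger}$, establishing \eqref{eq:SBregH^-1} for $n=0$. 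The case $n\ge 1$ follows because $\nabla_x$ commutes with $\BB$ and $\BB^\dagger$. The main delicate point is the weight bookkeeping: one must verify that the $H^{-1}_{v,*}(m)/H^1_{v,*}(m)$ duality from \eqref{eq:HnxH-1v*} (defined with respect to the $L^2_v(m)$-pairing, i.e.\ with weight $m^2$) is exactly the pairing that emerges from the adjoint identity once the factor $m$ is divided out on both sides.
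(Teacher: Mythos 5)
Your proof is correct and follows essentially the same route as the paper: an $L^2$-type energy estimate for the adjoint semigroup, followed by a duality argument. You streamline the presentation in two places. First, instead of recomputing $\int(\BB^*_m\phi)\phi$ from the explicit formula \eqref{B*m} as the paper does, you use the elementary identity $\int(\BB^*_m\phi)\phi = \int(\BB_m\phi)\phi$ to directly reuse the ``$\tilde\varphi_{m,2}$-form'' already derived in the proof of Lemma~\ref{lem:hypo}, arriving at the same identity as \eqref{Bm*phi2}. Second, you run the duality directly with the $L^2_{x,v}(m)$-adjoint $\BB^\dagger := m^{-1}\BB^*_m\, m$ and the weighted pairing that actually defines $H^{-1}_{v,*}(m)$ in \eqref{eq:HnxH-1v*}, which lets you bypass the final conjugation step $\SS_{\BB_m}(t)h = m\,\SS_\BB(t)f$ and the implicit identification of the conjugated dual norm $\|mf\|_{H^{-1}_{v,*}}$ with $\|f\|_{H^{-1}_{v,*}(m)}$ that sits at the end of the paper's proof. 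One remark: both you and the paper pass rather tersely from the per-initial-datum smoothing bound $\int_0^\infty e^{2\lambda t}\|\SS_{\BB^\dagger}(t)\eta\|_{H^1_{v,*}(m)}^2\,dt\lesssim\|\eta\|_{L^2(m)}^2$ to the operator-norm-in-$L^2_t$ statement \eqref{eq:SBregH^-1}; your ordering ``square, take sup over $\eta$, then integrate in $t$'' literally calls for $\int_t\sup_\eta$, whereas the smoothing estimate gives $\sup_\eta\int_t$. This is the same gloss as in the paper, and the estimate is used downstream in its per-datum form (see Corollary~\ref{cor:decay&regularity}), so nothing is lost, but it is worth being aware of.
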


\begin{proof}
We consider the case $n=0$, the others being the same because $\nabla_x$ commutes with $\BB^*_m$.

Let $\partial_t \phi = \BB^*_m \phi$, where we recall that $\BB^*_m$ is defined in \eqref{B*m}. We have
$$
\begin{aligned}
\int (\BB_m^* \phi) \, \phi
&= \int \left( \bar a_{ij} \frac{\partial_{ij} m}{m}  + 2 \bar b_j \frac{\partial_j m}{m}  - M \, \chi_R\right) \, \phi^2 \\
&\quad +  \int \left(  \bar a_{ij} \frac{\partial_j m}{m} + \bar b_i \right) \partial_i (\phi^2)  + \int v \cdot \nabla_x \phi \, \phi + \int \bar a_{ij} \partial_{ij} \phi \, \phi \\
&=: T_1 + T_2 + T_3 + T_4.
\end{aligned}
$$
Performing one integration by parts, we obtain
$$
T_2 =\int \left( - \bar a_{ij} \frac{\partial_{ij} m}{m} + \bar a_{ij} \frac{\partial_i m}{m} \frac{\partial_j m}{m}  - \bar b_j \frac{\partial_j m}{m}   -\bar c\right) \phi^2.
$$
The term $T_3$ gives no contribution thanks to its divergence structure in $x$. Moreover we deal with $T_4$ using that $\partial_{ij} \phi \, \phi = \frac{1}{2} \partial_{ij}(\phi^2) - \partial_i \phi \partial_j \phi$, which implies
$$
T_4 = - \int \bar a_{ij} \partial_i \phi \partial_j \phi + \frac12 \int \bar c \, \phi^2.
$$
Finally, we obtain that
\beqn\label{Bm*phi2}
\bal
\int (\BB_m^* \phi )\, \phi &=
- \int \bar a_{ij} \partial_i \phi \partial_j \phi +
\int \{ \tilde\varphi_{m,2} - M \chi_R \} \, \phi^2 \\
&\le -c_0 \int \left\{ \la v \ra^\gamma |P_v \nabla_v \phi|^2 + \la v \ra^{\gamma+2} |(I-P_v) \nabla_v \phi|^2 \right\} + \int \{ \tilde\varphi_{m,2}-M \chi_R \} \, \phi^2.
\eal
\eeqn
where we recall that $\tilde\varphi_{m,2}$ is defined in \eqref{eq:def-tildevarphi}. 

Thanks to Lemma~\ref{lem:varphi}, for any positive $\lambda < \lambda_{m,2}$ and $\delta \in (0, \lambda_{m,2}- \lambda)$, we can thus find $M,R>0$ large enough such that $ \tilde\varphi_{m,2}(v)-M \chi_R\leq -\lambda - \delta \la v \ra^{\gamma+\sigma}$. We can conclude that
$$
\bal
\frac12 \frac{d}{dt} \| \phi \|_{L^2}^2
&\le - \lambda \| \phi \|_{L^2}^2
- \delta \| \la v \ra^{\frac{\gamma+\sigma}{2}} \phi \|_{L^2}^2 \\
&\quad
- c_0 \left\{ \| \la v \ra^{\frac{\gamma}{2}} P_v \nabla_v \phi \|_{L^2(m)}^2 +
\| \la v \ra^{\frac{\gamma+2}{2}} (I- P_v) \nabla_v \phi \|_{L^2(m)}^2    \right\}.
\eal
$$
From this inequality we easily obtain \eqref{eq:SB*mdecay} and also the regularity estimate
$$
\int_0^\infty e^{2 \lambda t} \, \| \SS_{\BB^*_m} (t) \phi \|_{L^2_x (H^1_{v,*})}^2 \, dt \lesssim \| \phi \|_{L^2_x L^2_v}^2.
$$
Consider now the function $h$ that satisfies $\partial_t h = \BB_m h$. Using that $\la \SS_{\BB_m}(t) h , \phi \ra_{H^n_x L^2_v} = \la h , \SS_{\BB^*_m}(t) \phi \ra_{H^n_x L^2_v}$, this last estimate implies by duality (see \eqref{eq:HnxH-1v*})
$$
\int_0^\infty e^{2 \lambda t} \, \| \SS_{\BB_m} (t) h \|_{L^2_x L^2_v }^2 \, dt \lesssim \| h \|_{L^2_x (H^{-1}_{v,*}  )}^2.
$$
Finally we deduce \eqref{eq:SBregH^-1} by using the fact that $S_{\BB_m}(t) h = m S_{\BB}(t) f$.
\end{proof}

\Black

We now investigate hypodissipative properties of $\BB$ in high-order velocity spaces.

\begin{lem}\label{lem:hypo1}
Consider hypothesis {\bf (H1)}, {\bf (H2)} or {\bf (H3)}, $ \ell \in \N$ and $n \in \N^*$.
Then, for any $\lambda<\lambda_{m,1}$, we can choose $M>0$ and $R>0$ large enough such that the operator $\BB+ \lambda$ is hypo-dissipative in $W^{n,1}_{x} W^{\ell,1}_{v}(m)$, in the sense that
$$
\forall\, t \ge0, \quad \| \SS_\BB(t) \|_{\BBB(W^{n,1}_{x} W^{\ell,1}_{v}(m) )} \le C e^{-\lambda t}.
$$

\end{lem}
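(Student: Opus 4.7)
The plan is to extend the zero-order $L^1$ estimate of Lemma~\ref{lem:hypo} to higher $x$- and $v$-regularity via a commutator-energy argument, combined with a suitably weighted sum over multi-indices.

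\textbf{Commutator and spatial derivatives.} Since $\bar a_{ij}(v),\bar c(v),\chi_R(v)$ are independent of $x$ and $\partial^\beta_x$ commutes with $v\cdot\nabla_x$, one has $[\partial^\beta_x,\BB]=0$, so applying Lemma~\ref{lem:hypo} to each $\partial^\beta_x f$ directly handles the case $\ell=0$. For $|\alpha|\ge 1$, set $h_{\alpha,\beta}:=\partial^\alpha_v\partial^\beta_x f$; then $\partial_t h_{\alpha,\beta}=\BB h_{\alpha,\beta}+[\partial^\alpha_v,\BB]\partial^\beta_x f$ with Leibniz commutator
\[
[\partial^\alpha_v,\BB]g=\sum_{0<\alpha'\le\alpha}\binom{\alpha}{\alpha'}\Bigl[(\partial^{\alpha'}_v\bar a_{ij})\partial_{ij}\partial^{\alpha-\alpha'}_v g-(\partial^{\alpha'}_v\bar c)\partial^{\alpha-\alpha'}_v g-M(\partial^{\alpha'}_v\chi_R)\partial^{\alpha-\alpha'}_v g\Bigr]-\sum_{k:\alpha_k\ge 1}\alpha_k\partial^{\alpha-e_k}_v\partial_{x_k}g.
\]
By Lemma~\ref{lem:bar-aij}(b), $|\partial^{\alpha'}_v\bar a_{ij}|\lesssim\la v\ra^{\gamma+2-|\alpha'|}$ and $|\partial^{\alpha'}_v\bar c|\lesssim\la v\ra^{\gamma-|\alpha'|}$; derivatives of $\chi_R$ are bounded with compact support.

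\textbf{$L^1$ energy identity and the main obstacle.} Repeating the $p=1$ computation of Lemma~\ref{lem:hypo} for $h_{\alpha,\beta}$ with a regularized sign function $\eta_\e(h_{\alpha,\beta})\to\mathrm{sign}(h_{\alpha,\beta})$, the diagonal contribution of $\BB h_{\alpha,\beta}$ produces $-\lambda_*\|h_{\alpha,\beta}\|_{L^1(m)}-\delta\|\la v\ra^{\gamma+\sigma}h_{\alpha,\beta}\|_{L^1(m)}$ via Lemma~\ref{lem:varphi}, for any $\lambda<\lambda_*<\lambda_{m,1}$ upon enlarging $M,R$. The hard part will be the $\alpha'=e_k$ piece of the commutator, $\alpha_k(\partial_{v_k}\bar a_{ij})\partial_{ij}\partial^{\alpha-e_k}_v\partial^\beta_x f$, which formally carries $|\alpha|+1$ velocity derivatives, one more than the norm allows. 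This term is handled by integrating by parts the outer $\partial_i$: using $\partial_i(\partial_k\bar a_{ij})=\partial_k\bar b_j$ from Lemma~\ref{lem:bar-aij}(c), the result is the sum of a lower-order term $(\partial_k\bar b_j)(\partial_j\partial^{\alpha-e_k}_v\partial^\beta_x f)$ of $v$-order $|\alpha|$ with coefficient $\lesssim\la v\ra^\gamma$, a term $(\partial_k\bar a_{ij})(\partial_j\partial^{\alpha-e_k}_v\partial^\beta_x f)(\partial_i m)$ whose effective coefficient is $\lesssim\la v\ra^{\gamma+\sigma}m$ (since $|\partial_i m|/m\lesssim\la v\ra^{\sigma-1}$ in both polynomial and stretched exponential regimes), and a quadratic form $\bar a_{ij}\partial_i h_{\alpha,\beta}\partial_j h_{\alpha,\beta}\cdot\eta'_\e(h_{\alpha,\beta})$ of favorable sign which is discarded as $\e\to 0$. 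The $|\alpha'|\ge 2$ terms of the commutator are already of $v$-order $\le|\alpha|$ with coefficients $\lesssim\la v\ra^{\gamma+\sigma}$. Altogether, for some $C>0$ depending only on the coefficients of $\BB$,
\[
\tfrac{d}{dt}\|h_{\alpha,\beta}\|_{L^1(m)}\le-\lambda_*\|h_{\alpha,\beta}\|_{L^1(m)}-\delta\|\la v\ra^{\gamma+\sigma}h_{\alpha,\beta}\|_{L^1(m)}+C\!\!\sum_{|\alpha'|<|\alpha|}\!\|\la v\ra^{\gamma+\sigma}h_{\alpha',\beta}\|_{L^1(m)}+C\!\!\sum_{k:\alpha_k\ge 1}\!\|h_{\alpha-e_k,\beta+e_k}\|_{L^1(m)}.
\]

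\textbf{Weighted norm and Grönwall.} I will introduce the equivalent norm
\[
F(t):=\sum_{|\alpha|\le\ell,\;|\beta|\le n,\;|\alpha|+|\beta|\le\max(\ell,n)}\eta^{|\alpha|}\kappa^{|\beta|}\|h_{\alpha,\beta}(t)\|_{L^1(m)},
\]
with parameters $0<\eta\ll 1$ and $0<\eta\ll\kappa$. Summing the previous inequalities with these weights: each diffusion-commutator term landing at target index $(\alpha-\alpha',\beta)$ arrives with relative weight $\eta^{|\alpha'|}\le\eta$ compared with the diagonal $\la v\ra^{\gamma+\sigma}$-dissipation there, and is absorbed into a fraction $\lesssim\eta/\delta$ of it once $\eta$ is small enough; each transport-commutator term landing at $(\alpha-e_k,\beta+e_k)$ arrives with relative weight $\eta/\kappa$ and is absorbed into a fraction $\lesssim(\eta/\kappa)/\lambda_*$ of the diagonal $\lambda_*$-dissipation once $\eta/\kappa$ is small enough. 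Fixing such $\eta,\kappa$ and any $\lambda<\lambda_*$, one concludes $\tfrac{d}{dt}F(t)\le-\lambda F(t)$, and Grönwall yields $F(t)\le Ce^{-\lambda t}F(0)$. Since $F$ is equivalent to the $W^{n,1}_xW^{\ell,1}_v(m)$ norm, this proves the claim.
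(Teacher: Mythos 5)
Your overall strategy is the same as the paper's: commute $\partial^\alpha_v\partial^\beta_x$ through $\BB$, run the $L^1$ sign-energy computation multi-index by multi-index, then close with a weighted sum over indices and Gr\"onwall. The $\kappa$-weight you attach to $|\beta|$ is a sensible refinement: the transport commutator $h_{\alpha-e_k,\beta+e_k}$ has coefficient $O(1)$, so it genuinely needs a hierarchy between the $v$- and $x$-derivative levels to be absorbed. (The paper instead simply writes $T_4=-\int(\partial^\alpha_v v_i)\partial_{x_i}f\,\mathrm{sign}(\partial^\alpha_v f)\,m=0$, which is not justified in the proof and is not obviously true; your device makes this step explicit.) But the gap lies elsewhere, at the heart of the lemma.

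After integrating by parts the outer $\partial_i$ in $(\partial_k\bar a_{ij})\partial_{ij}h_{\alpha-e_k,\beta}$ you correctly identify the two pieces $(\partial_k\bar b_j)\,h_{\alpha-e_k+e_j,\beta}$ and $(\partial_k\bar a_{ij})\,h_{\alpha-e_k+e_j,\beta}\,(\partial_i m/m)$, both of $v$-order \emph{exactly} $|\alpha|$, with effective weights $\lesssim\la v\ra^{\gamma}$ and $\lesssim\la v\ra^{\gamma+\sigma}$ respectively. Yet your displayed ``Altogether'' inequality only keeps $C\sum_{|\alpha'|<|\alpha|}\|\la v\ra^{\gamma+\sigma}h_{\alpha',\beta}\|_{L^1(m)}$ on the right, so these same-$|\alpha|$ contributions have silently vanished. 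They cannot be absorbed by your $\eta$-hierarchy (they sit at the same $|\alpha|$-level and typically at a different direction $\alpha-e_k+e_j\neq\alpha$), nor by the $\delta$-margin of the diagonal dissipation: $\delta$ is constrained by $\lambda_{m,1}-\lambda$ while the commutator coefficient is an $O(1)$ constant, and in the stretched-exponential case one of the terms even carries the full weight $\la v\ra^{\gamma+\sigma}$. This is precisely where the paper does the real work. For a polynomial weight they integrate by parts once more, so that \emph{all} derivatives fall on $m$ and the commutator contribution becomes $\lesssim\|\la v\ra^{\gamma-1}f\|_{L^1(m)}$, i.e.\ a term at $v$-order $0$ with a strictly better weight; for (stretched) exponential weights they decompose along $P_v$ and $I-P_v$ to exploit the anisotropy of $\bar a$, collecting the same-order pieces into the modified effective weight $\psi_{m,1}$ and then verifying its asymptotics in~\eqref{eq:tilde-varphi-asymptotic}. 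Your sketch does not engage with either of these computations, and without one of them the weighted Gr\"onwall step does not close.

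A smaller but still real error: the residual from the regularized sign function is not the sign-definite quadratic form $\bar a_{ij}\,\partial_i h_{\alpha,\beta}\,\partial_j h_{\alpha,\beta}\,\eta'_\e(h_{\alpha,\beta})$ that you claim to discard. What the integration by parts of the commutator piece actually produces is $(\partial_k\bar a_{ij})\,\partial_j h_{\alpha-e_k,\beta}\,\eta'_\e(h_{\alpha,\beta})\,\partial_i h_{\alpha,\beta}$, whose symbol $\partial_k\bar a_{ij}$ is not semi-definite and whose two derivative factors are different, so there is no sign to exploit and no clean reason the limit $\e\to0$ eliminates it.
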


\begin{proof}[Proof of Lemma~\ref{lem:hypo1}]
Consider the equation
$$
\partial_t f = \BB f = \BB_0 f - v\cdot \nabla_x f -M\chi_R f.
$$
Remind that $\BB_0 f = Q(\mu,f)$ and remark that $x$-derivatives commute with the operator $\BB$, thus for any multi-index $\alpha, \beta \in \N^3$, we have
$$
\partial^\alpha_v \partial^\beta_x (\BB f )=\partial^\alpha_v ( \BB \partial^\beta_x f )
$$
and
$$
\partial^\alpha_v \BB_0 f = \partial^\alpha_v Q(\mu,f) = \sum_{\alpha_1+\alpha_2=\alpha}
C_{\alpha_1,\alpha_2} Q(\partial^{\alpha_1}_v \mu , \partial^{\alpha_2}_v f)
$$
and, writing $v \cdot \nabla_x f = v_i \partial_{x_i} f$,
$$
\bal
\partial^\alpha_v \BB f
&= \BB \partial^{\alpha}_v f + \sum_{\alpha_1+\alpha_2=\alpha, |\alpha_1| \ge 1}
C_{\alpha_1,\alpha_2} \big\{ Q(\partial^{\alpha_1}_v \mu , \partial^{\alpha_2}_v f)
- ( \partial^{\alpha_1}_v v_i) \partial_{x_i} (\partial^{\alpha_2}_v f)
- M (\partial^{\alpha_1}_v \chi_R) (\partial^{\alpha_2}_v f)     \big\}
\eal
$$
finally
$$
\bal
&\partial^\alpha_v \partial^\beta_x \BB f
= \BB (\partial^{\alpha}_v \partial^\beta_x f) \\
&\quad
+ \sum_{\alpha_1+\alpha_2=\alpha, |\alpha_1| = 1}
C_{\alpha_1,\alpha_2} \big\{ Q(\partial^{\alpha_1}_v \mu , \partial^{\alpha_2}_v \partial^\beta_x f)
- ( \partial^{\alpha_1}_v v_i) \partial_{x_i} (\partial^{\alpha_2}_v \partial^\beta_x f)
- M (\partial^{\alpha_1}_v \chi_R) (\partial^{\alpha_2}_v \partial^\beta_x f)     \big\}\\
&\quad
+ \sum_{\alpha_1+\alpha_2=\alpha, |\alpha_1| \ge 2}
C_{\alpha_1,\alpha_2} \big\{ Q(\partial^{\alpha_1}_v \mu , \partial^{\alpha_2}_v \partial^\beta_x f)
- M (\partial^{\alpha_1}_v \chi_R) (\partial^{\alpha_2}_v \partial^\beta_x f)     \big\}.\\
\eal
$$

\medskip

We shall treat in full details the case $\ell=n=1$, the others $ \ell,n \ge 2$ being treated in the same way.

\medskip
\noindent
\textbf{Case $\ell=n=1$ :}
\emph{Step 1. Derivatives in $x$.}
First, using the computation \eqref{eq:SSf2} for $p=1$, we have
\beqn\label{eq:f-Wl1}
\bal
\frac{d}{dt} \| f \|_{L^1_{x,v}(m)} =
\int \{ \varphi_{m,1}(v) - M\chi_R(v) \} \, |f| \,  m .
\eal
\eeqn
As explained before, the $x$-derivatives commute with the operator $\BB$, so for any multi-index $\beta \in \N^3$ we get from \eqref{eq:SSf2} that
\beqn\label{eq:fx-Wl1}
\bal
\frac{d}{dt} \| \partial^\beta_x f \|_{L^1_{x,v}(m)} =
\int \{ \varphi_{m,1}(v) - M\chi_R(v) \} |\partial^\beta_x f |\, m .
\eal
\eeqn

\medskip
\noindent
\emph{Step 2. Derivatives in $v$.}
We now consider the derivatives in $v$. For any $\alpha \in \N^3$ with $|\alpha|=1$, we compute the evolution of $v$-derivatives:
$$
\partial_t (\partial^\alpha_v f)  = \BB(\partial^\alpha_v f)
+ Q( \partial^\alpha_v \mu , f)
- (\partial^\alpha_v v_i) \partial_{x_i} f
-M(\partial^\alpha_v \chi_R) f.
$$
From the previous equation we deduce that
$$
\bal
\frac{d}{dt} \| \partial^\alpha_v f \|_{L^1_{x,v}(m)} &= \int
\big\{\BB(\partial^\alpha_v f)
+ Q( \partial^\alpha_v \mu , f)
- (\partial^\alpha_v v_i) \partial_{x_i} f
-M(\partial^\alpha_v \chi_R) f  \big\} \mathrm{sign}(\partial^\alpha_v f) \, m \\
&= : T_1 + T_2 + T_3 + T_4 + T_5,
\eal
$$
where
$$
\bal
&T_1 = \int \BB (\partial^\alpha_v f) \, \mathrm{sign}(\partial^\alpha_v f) \, m \\
&T_2 = \int  (\partial^\alpha_v \bar a_{ij}) \, \partial_{ij} f \, \mathrm{sign}(\partial^\alpha_v f) \, m \\
&T_3 = - \int (\partial^\alpha_v \bar c) \, f \, \mathrm{sign}(\partial^\alpha_v f) \, m \\
&T_4 = - \int (\partial^\alpha_v v_i) \partial_{x_i} f \, \mathrm{sign}(\partial^\alpha_v f) \, m    = 0 \\
&T_5 = - \int M (\partial^\alpha_v \chi_R ) f \, \mathrm{sign}(\partial^\alpha_v f) \, m .
\eal
$$
Again using the computation \eqref{eq:SSf2} of Lemma \ref{lem:hypo} for $p=1$, we have
$$
T_1 =
\int \{ \varphi_{m,1}(v) - M\chi_R(v) \} |\partial^\alpha_v f| \, m .
$$
Concerning $T_5$, we use the following fact on the derivative of $\chi_R$:
$$
\left| \partial^\alpha_v\chi_R (v) \right| = \frac{1}{R} \left| \partial^\alpha_v \chi \left( \frac{v}{R}\right) \right| \leq \frac{C}{R} \, {\mathbf 1}_{R \le |v| \le 2R},
$$
which implies that
$$
T_5 \leq M \frac{C}{R} \| {\mathbf 1}_{R \le |v| \le 2R} \, f \|_{L^1_{x,v}(m)}.
$$
Performing integration by parts, we get
$$
T_2 + T_3 =  - \int \partial^\alpha_v \bar a_{ij} \, \partial_i f \, \partial_j m \, \mathrm{sign}(\partial^\alpha_v f)
+ \int \partial^\alpha_v \bar b_j \, \partial_j m \, f \, \mathrm{sign}(\partial^\alpha_v f) =: A+B.
$$
When $m$ is a polynomial weight $m = \la v \ra^{k}$, we can easily estimate $T_2 + T_3$, thanks to another integration by parts, by
$$
T_2 + T_3
= \int  \{ (\partial^\alpha_v \bar a_{ij}) \, \partial_{ij} m  + 2 (\partial^\alpha_v \bar b_j) \, \partial_j m   \} \, f \, \mathrm{sign}(\partial^\alpha_v f)
\lesssim \| \la v \ra^{\gamma - 1} f \|_{L^1_{x,v}(m)},
$$
where we have used $|\partial^\alpha_v \bar a_{ij}| \le C \la v \ra^{\gamma+1}$, $|\partial^\alpha_v \bar b_{j}| \le \la v \ra^{\gamma}$, $|\partial_j m| \le C \la v \ra^{-1} m$ and $|\partial_{ij} m| \le C \la v \ra^{-2} m$.

We now investigate the case of (stretched) exponential weight $m = e^{r \la v \ra^s}$.
First, we can easily estimate the term $B$, since $\partial_j m = C v_j \la v \ra^{\sigma-2} m$, as
$$
B \lesssim \| \la v \ra^{\gamma+s - 1} f \|_{L^1_{x,v}(m)}.
$$
For the other term, integrating by parts again (first with respect to the $\partial^\alpha_v$-derivative then to the $\partial_i$-derivative), gives us
$$
A = -\int \left\{ \bar a_{ij} \frac{\partial_{ij} m}{m} + \bar b_{j} \frac{\partial_{j} m}{m} \right\} \, |\partial^\alpha_v f| \, m
+\int \bar a_{ij} \, \partial_i (\partial^\alpha_v m) \, \partial_j f \, \mathrm{sign}(\partial^\alpha_v f),
$$
and we investigate the last term in the right-hand side.
Recall that
$$
\bar a_{ij} \xi_i \xi_j = \ell_1(v) |P_v \xi |^2 + \ell_2(v) |(I-P_v) \xi|^2,
$$
we decompose $\partial_j f = P_v \partial_j f + (I-P_v) \partial_j f$ and similarly for $\partial_j(\partial^\alpha_v m)$, then a tedious but straightforward computation yields
$$
\bal
&\int \bar a_{ij} \, \partial_i (\partial^\alpha_v m) \, \partial_j f \, \mathrm{sign}(\partial^\alpha_v f) \\
&\qquad
= \int \left\{  rs \ell_1(v) \la v \ra^{s-2} + rs(s-2) \ell_1(v) |v|^2 \la v \ra^{s-4} + r^2 s^2 \ell_1(v) |v|^2 \la v \ra^{2s-4} \right\} \, P_v \partial^\alpha_v f \, \mathrm{sign}(\partial^\alpha_v f) \, m \\
&\qquad\quad
+\int rs \ell_2(v) \la v \ra^{s-2} \, (I-P_v)\partial^\alpha_v f \, \mathrm{sign}(\partial^\alpha_v f) \, m .
\eal
$$
Recall that $\varphi_{m,1}(v) = \bar a_{ij} \frac{\partial_{ij} m}{m} + 2\bar b_{j} \frac{\partial_{j} m}{m}$ (see eq.~\eqref{eq:def-varphi}), hence we obtain
$$
\bal
T_1 + A
&\le \int \left\{ \psi_{m,1}(v)   - M \chi_R(v) \right\} \, |\partial^\alpha_v f| \, m
\eal
$$
with
$$
\bal
\psi_{m,1}(v) &:= \bar b_{j} \frac{\partial_{j} m}{m} + rs \ell_2(v) \la v \ra^{s-2}
+ rs \ell_1(v) \la v \ra^{s-2} \\
&\quad
+ rs(s-2) \ell_1(v) |v|^2 \la v \ra^{s-4}
+r^2 s^2 \ell_1(v) |v|^2 \la v \ra^{2s-4}.
\eal
$$
Thanks to the asymptotic behaviour of $\ell_1(v)$ and $\ell_2(v)$ in Lemma~\ref{lem:bar-aij} and arguing as in Lemma~\ref{lem:varphi}, we obtain first that
\beqn\label{eq:tilde-varphi-asymptotic}
\left\{
\bal
& \limsup_{|v| \to +\infty} \psi_{m,1}(v) \le - rs \la v \ra^{\gamma+s}, \quad\text{if } 0<s<2; \\
&\limsup_{|v| \to +\infty} \psi_{m,1}(v) \le -2r (1-4r) , \quad\text{if }  s=2;
\eal
\right.
\eeqn
and then for any positive $\lambda < \lambda_{m,1}$ and $\delta \in (0, \lambda_{m,1} - \lambda)$ we can choose $M,R$ large enough such that $\psi_{m,1}(v) - M\chi_R(v) \le - \lambda - \delta \la v \ra^{\gamma+\sigma}$.

Putting together all the previous estimates of this step, and denoting $\varphi^\sigma (v) = \varphi_{m,1}(v)$ when $m = \la v \ra^k$ and $\varphi^\sigma (v) = \psi_{m,1}(v)$ when $m = e^{r \la v \ra^s}$, we obtain
\beqn\label{eq:fv-W11}
\frac{d}{dt} \| \partial^\alpha_v f \|_{L^1_{x,v}(m)}
\leq \int \{  \varphi^{\sigma}(v)  - M \chi_R(v) \} \, | \partial^\alpha_v f| \, m
+ \int \left\{ C \la v \ra^{\gamma+\sigma - 1} +  C \frac{M}{R} {\mathbf 1}_{R \le |v| \le 2R} \right\} \, |f| \, m.
\eeqn

\medskip
\noindent
\emph{Step 3. Conclusion.}
Consider the standard norm on $W^{1,1}_{x,v}(m)$
$$
\| f \|_{W^{1,1}_{x,v}(m)} = \|f\|_{L^1_{x,v}(m)} + \|\nabla_x f\|_{L^1_{x,v}(m)}
+  \|\nabla_v f\|_{L^1_{x,v}(m)}.
$$
Gathering the previous estimates \eqref{eq:f-Wl1}, \eqref{eq:fx-Wl1} and \eqref{eq:fv-W11},
 we finally obtain
$$
\bal
\frac{d}{dt} \| f \|_{W^{1,1}_{x,v}(m)}
&\leq  \int \left\{ \varphi_{m,1} (v)
+   C \la v \ra^{\gamma+  \sigma-1}
+   M \, \frac{C}{R} {\mathbf 1}_{R \le |v| \le 2R} - M\chi_R \right\} |f| \, m \\
&\quad
+  \int \{ \varphi_{m,1}(v)   - M\chi_R \} |\nabla_x f| \, m
+   \int \{ \varphi^{\sigma}(v) - M\chi_R \} |\nabla_v f| \, m.
\eal
$$
Remark that, since $\sigma \in[0,2]$, the function $\phi^0_m(v) := \varphi_{m,1}(v) +  C \la v \ra^{\gamma + \sigma-1} $ has the same asymptotic behaviour of $\varphi_{m,1}(v)$ (see eq.~\eqref{cas:poly1} and eq.~\eqref{cas:exp1}).
Then, arguing as in Lemma \ref{lem:varphi} (and \eqref{eq:tilde-varphi-asymptotic}), for any positive $\lambda<\lambda_{m,1}$ and $\delta \in (0, \lambda_{m,1} - \lambda)$, one may find $M>0$ and $R>0$ large enough such that
$$
\bal
\varphi_{m,1}(v) +  C \la v \ra^{\gamma + \sigma-1} + M \, \frac{C}{R} {\mathbf 1}_{R \le |v| \le 2R} - M\chi_R &\leq -\lambda - \delta \la v \ra^{\gamma+\sigma}, \\
\varphi_{m,1}(v)  - M\chi_R &\leq -\lambda - \delta \la v \ra^{\gamma+\sigma}, \\
\varphi^\sigma(v)  - M\chi_R &\leq -\lambda - \delta \la v \ra^{\gamma+\sigma}.
\eal
$$
This implies that
$$
\frac{d}{dt} \| f \|_{W^{1,1}_{x,v}(m)} \leq -\lambda \| f \|_{W^{1,1}_{x,v}(m)}
- \delta \| f \|_{W^{1,1}_{x,v}(m \la v \ra^{\gamma+\sigma})},
$$
which concludes the proof in the case $\ell=1$.

\medskip
\noindent
\textbf{Case $\ell \ge 2$ :}
The higher order derivatives are treated in the same way, so we omit the proof.
\end{proof}

\begin{lem}\label{lem:hypo2}
Consider hypothesis {\bf (H1)}, {\bf (H2)} or {\bf (H3)}, $ \ell \in \N$ and $n \in \N^*$.
Then, for any $\lambda<\lambda_{m,2}$, we can choose $M>0$ and $R>0$ large enough such that the operator $\BB+ \lambda$ is hypo-dissipative in $H^{n}_{x} H^{\ell}_{v}(m)$, in the sense that
$$
\forall\, t \ge0, \quad \| \SS_\BB(t) \|_{\BBB(H^{n}_{x} H^{\ell}_{v}(m) )} \le C e^{-\lambda t}.
$$

\end{lem}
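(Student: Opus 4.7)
The plan is to mimic the strategy of Lemma~\ref{lem:hypo1}, replacing the $L^1$ energy identity by an $L^2$ one so that the full anisotropic diffusion of $\BB_0$ (already quantified in \eqref{eq:SSf2} for $p=2$) is available to absorb the commutator terms arising from differentiation. Since $\nabla_x$ commutes with $\BB$, it suffices to treat pure $v$-derivatives and include $x$-derivatives at each step for free; the case $\ell=0$ is already contained in Lemma~\ref{lem:hypo}.

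I would then, for a multi-index $\alpha$ with $1\le |\alpha|\le \ell$ and $\beta$ with $|\beta|\le n$, use the commutator decomposition written down just before the case $\ell=n=1$ in Lemma~\ref{lem:hypo1}:
\[
\partial^\alpha_v\partial^\beta_x\BB f = \BB(\partial^\alpha_v\partial^\beta_x f) + \sum_{\alpha_1+\alpha_2=\alpha,\,|\alpha_1|\ge 1}C_{\alpha_1,\alpha_2}\bigl\{Q(\partial^{\alpha_1}_v\mu,\partial^{\alpha_2}_v\partial^\beta_x f) - (\partial^{\alpha_1}_v v_i)\partial_{x_i}(\partial^{\alpha_2}_v\partial^\beta_x f) - M(\partial^{\alpha_1}_v\chi_R)(\partial^{\alpha_2}_v\partial^\beta_x f)\bigr\}.
\]
Testing the resulting equation against $\partial^\alpha_v\partial^\beta_x f\,m^2$, the leading term $\int \BB(\partial^\alpha_v\partial^\beta_x f)\,\partial^\alpha_v\partial^\beta_x f\,m^2$ produces, via the computation \eqref{eq:SSf2} with $p=2$, a coercive contribution $\lesssim\int\{\tilde\varphi_{m,2}-M\chi_R\}|\partial^\alpha_v\partial^\beta_x f|^2 m^2$ plus the anisotropic diffusion term controlled below by $c_0\|\partial^\alpha_v\partial^\beta_x f\|_{H^1_{v,*}(m)}^2$ (up to a constant). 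Lemma~\ref{lem:varphi} gives $\tilde\varphi_{m,2}-M\chi_R\le -\lambda-\delta\la v\ra^{\gamma+\sigma}$ for $M,R$ large.

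The three commutator contributions are then handled as follows. The term with $Q(\partial^{\alpha_1}_v\mu,\cdot)$ is bounded, using the Schwartz decay of $\partial^{\alpha_1}_v\mu$ and standard estimates on $Q$ (see Lemma~\ref{lem:bar-aij}), by $C\la v\ra^{\gamma+2}|\partial^{\alpha_2}_v\partial^\beta_x f|$ up to a $v$-derivative, which after Cauchy--Schwarz is absorbed into the $\la v\ra^{\gamma+2}(I-P_v)\nabla_v$-part of the diffusion times a lower-order $L^2(m)$ norm with loss $\la v\ra^{\gamma/2}$. The commutator $(\partial^{\alpha_1}_v v_i)\partial_{x_i}(\partial^{\alpha_2}_v\partial^\beta_x f)$ only appears when $|\alpha_1|=1$, trading one $v$-derivative for one $x$-derivative, so it is bounded by $\|\nabla_x\partial^{\alpha_2}_v\partial^\beta_x f\|_{L^2(m)}\cdot\|\partial^\alpha_v\partial^\beta_x f\|_{L^2(m)}$, i.e.\ by a lower-order seminorm in our mixed $H^n_x H^\ell_v(m)$ scale (the total order is preserved). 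Finally, the cutoff commutator $M(\partial^{\alpha_1}_v\chi_R)\partial^{\alpha_2}_v\partial^\beta_x f$ is compactly supported in $v$, hence controlled by $(MC_\chi/R)\|{\bf 1}_{R\le|v|\le 2R}\partial^{\alpha_2}_v\partial^\beta_x f\|_{L^2(m)}$.

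To close the estimate I would introduce the equivalent norm
\[
\Nt f\Nt^2 := \sum_{|\alpha|+|\beta|\le\max(\ell,n),\,|\alpha|\le\ell,\,|\beta|\le n}\eta^{|\alpha|}\|\partial^\alpha_v\partial^\beta_x f\|_{L^2(m)}^2,
\]
with $\eta>0$ small, and sum the above identities weighted by $\eta^{|\alpha|}$. At order $|\alpha|$ the bad commutator terms involve derivatives of order $|\alpha|-1$, whose contribution carries a prefactor that can be made arbitrarily small by choosing $\eta$ small; they are then absorbed either into the $\delta\la v\ra^{\gamma+\sigma}$ margin coming from Lemma~\ref{lem:varphi}, into the anisotropic diffusion at order $|\alpha|-1$, or, for the compactly supported cutoff remainder, into the $M\chi_R$ term at a lower order after taking $R$ large. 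The main difficulty, as in Lemma~\ref{lem:hypo1}, is the commutator $Q(\partial^{\alpha_1}_v\mu,\partial^{\alpha_2}_v\partial^\beta_x f)$ for $|\alpha_1|=1$: it costs a factor $\la v\ra^{\gamma+2}$ in the worst (non-radial) direction, and one has to check that it is exactly compensated by the $(I-P_v)\nabla_v$ gain in the diffusion together with the $\la v\ra^{\gamma+\sigma}$ margin, which forces the tuning $\lambda<\lambda_{m,2}$ but no further restriction. The higher-order cases $\ell\ge 2$ and general $n$ follow by iterating the same argument on the weighted sum, concluding \eqref{eq:SBdecayLp}-type decay in $H^n_x H^\ell_v(m)$.
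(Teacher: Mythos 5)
Your high-level plan is the paper's plan: the $L^2$ energy identity, the commutator decomposition, and the closing norm $\sum\eta^{|\alpha|}\|\partial^\alpha_v\partial^\beta_x f\|_{L^2(m)}^2$ all match the actual proof of Lemma~\ref{lem:hypo2}. But the one step you flag as ``the main difficulty'' is precisely the step you get wrong, and as written it would not close.

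The commutator at $|\alpha_1|=1$ is $Q(\partial^{\alpha_1}_v\mu,g)=(\partial^{\alpha_1}_v\bar a_{ij})\partial_{ij}g-(\partial^{\alpha_1}_v\bar c)g$ with $g=\partial^{\alpha_2}_v\partial^\beta_x f$, so the dominant term carries a \emph{second}-order $v$-derivative, not ``up to a $v$-derivative''; and the coefficient satisfies $|\partial^{\alpha_1}_v\bar a_{ij}|\lesssim\la v\ra^{\gamma+1}$, not $\la v\ra^{\gamma+2}$. After integrating by parts once you pair $\partial_j g$ against $\partial_i(\partial^\alpha_v\partial^\beta_x f)$, both gradients controlled by the anisotropic diffusion with factors $\la v\ra^{\gamma/2}$ (radial) or $\la v\ra^{(\gamma+2)/2}$ (transverse). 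A crude Cauchy--Schwarz with the scalar bound $\la v\ra^{\gamma+1}$ then gives, in the worst (double-radial) case, $\la v\ra^{\gamma+1}\cdot\la v\ra^{-\gamma/2}\cdot\la v\ra^{-\gamma/2}=\la v\ra^{+1}$, which cannot be absorbed by the $\delta\la v\ra^{\gamma+\sigma}$ margin. So ``after Cauchy--Schwarz absorbed into the $(I-P_v)\nabla_v$-part'' is not a valid step: nothing forces at least one of the two factors to land in the transverse channel.

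What actually makes this close, and what your sketch is missing, is the anisotropic decomposition of $\partial^{\alpha_1}_v\bar a_{ij}$ itself. Writing $\bar a_{ij}\xi_i\xi_j=\ell_1|P_v\xi|^2+\ell_2|(I-P_v)\xi|^2$, one differentiates and uses the refined bounds of Lemma~\ref{lem:bar-aij}(d): $|\partial\ell_1|\lesssim\la v\ra^{\gamma-1}$ and $|\partial\ell_2|\lesssim\la v\ra^{\gamma+1}$, together with the cross terms generated by $\partial_v P_v$ (these are the $[\ell_1-\ell_2]\,(I-P_v)(\cdot)\,\frac{v\cdot\nabla_v(\cdot)}{|v|^2}$ contributions). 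In the double-radial case the coefficient is then $\partial\ell_1\sim\la v\ra^{\gamma-1}$, and $\la v\ra^{\gamma-1}\cdot\la v\ra^{-\gamma/2}\cdot\la v\ra^{-\gamma/2}=\la v\ra^{-1}$ is harmless; in the transverse cases the better diffusion weight $\la v\ra^{(\gamma+2)/2}$ compensates the larger $\la v\ra^{\gamma+1}$. Similar care is needed for the weight-derivative term $\propto(\partial^{\alpha_1}_v\bar a_{ij})\,\partial_i m^2\,\partial_j f\cdot\partial^\alpha_v f$, where the identity $(I-P_v)\nabla_v m^2=0$ kills the dangerous radial/transverse cross terms. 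Without this eigenvalue-level decomposition and Lemma~\ref{lem:bar-aij}(d), the estimate does not close; this is the genuine content of the $T_{22}$ and $T_{23}$ computations in the paper, and it is the gap in your sketch.
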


\begin{proof}[Proof of Lemma \ref{lem:hypo2}]
Let us consider the equation $ \partial_t f = \BB f = \BB_0 f -M\chi_R f$.
Again we treat the case $\ell=1$ in full details, the others $\ell \ge 2$ being the same.

\medskip
\noindent
\textbf{Case $\ell=n=1$ :}
{\it Step 1. $L^2$ estimate.}
The $L^{2}_{x,v}(m)$ estimate is a special case of Lemma~\ref{lem:hypo}, from which we have
\beqn\label{eq:f-H1}
\bal
\frac{1}{2} \frac{d}{dt} \| f \|_{L^2_{x,v}(m)}^2
&\leq   - c_{0}\int  \{  \la v\ra^{\gamma} |P_v \nabla_{v} f|^{2}+
\la v\ra^{\gamma+2} |(I-P_v )\nabla_{v} f|^{2}  \} \, m^2 \\
&\quad
+  \int \{ \varphi_{m,2}(v) - M\chi_R(v) \}  f^2 \, m^2 .
\eal
\eeqn

\medskip
\noindent
{\it Step 2. $x$-derivatives.}
Recall that the $x$-derivatives commute with the equation, so for any $\beta \in \N^3$ we have
\beqn\label{eq:fx-H1}
\bal
\frac{1}{2} \frac{d}{dt} \| \partial^\beta_x f \|_{L^2_{x,v}(m)}^2
&\leq
- c_{0}\int  \{ \la v\ra^{\gamma} |P_v \nabla_{v} (\partial^\beta_x f )|^{2}+
\la v\ra^{\gamma+2} |(I-P_v )\nabla_{v} (\partial^\beta_x f )|^{2} \} \, m^2 \\
&\quad
+\int \{ \varphi_{m,2}(v) - M\chi_R(v) \}  | \partial^\beta_x f |^{2} \, m^2 .
\eal
\eeqn

\medskip
\noindent
{\it Step 3. $v$-derivatives.}
Let $\alpha \in \N^3$ with $|\alpha|=1$. We recall the equation satisfied by $\partial^\alpha_v f$
$$
\partial_t \partial^\alpha_v f  = \BB (\partial^\alpha_v f)+Q(\partial^\alpha_v \mu, f)
- (\partial^\alpha_v v_i)  \, \partial_{x_i} f-M(\partial^\alpha_v \chi_R) f.
$$
From last equation we deduce that
$$
\bal
\frac12 \frac{d}{dt} \| \partial^\alpha_v f \|^2_{L^2_{x,v}(m)} &= \int \left\{  \BB (\partial^\alpha_v f) + Q(\partial^\alpha_v  \mu, f) - (\partial^\alpha_v v_i)\,\partial_{x_i} f  - M (\partial^\alpha_v \chi_R ) f \right\} \, \partial^\alpha_v f \, m^2 \\
&= : T_1 + T_2 + T_3 + T_4 + T_5,
\eal
$$
where
$$
\bal
&T_1 = \int \BB (\partial^\alpha_v f) \, \partial^\alpha_v f \, m^2  \\
&T_2 =  \int (\partial^\alpha_v  \bar a_{ij}) \, \partial_{ij} f \, \partial^\alpha_v f \, m^2  \\
&T_3 = - \int (\partial^\alpha_v  \bar c) \, f \, \partial^\alpha_v f \, m^2  \\
&T_4 = - \int (\partial^\alpha_v v_i) \, \partial_{x_i} f \, \partial^\alpha_v f \, m^2  \\
&T_5 = - \int M (\partial^\alpha_v \chi_R ) f \, \partial^\alpha_v f \, m^2 .
\eal
$$
We have from Lemma~\ref{lem:hypo}
\beqn\label{eq:fv-H1-T1}
\bal
T_1
&\leq  - c_{0}\int  \{  \la v\ra^{\gamma} |P_v \nabla_{v} (\partial^\alpha_v f)|^{2}+
\la v\ra^{\gamma+2} |(I-P_v )\nabla_{v} (\partial^\alpha_v f)|^{2} \} \, m^2  \\
&\quad
+  \int \{ \varphi_{m,2}(v) - M\chi_R(v) \}  |\partial^\alpha_v f|^{2} \, m^2 .
\eal
\eeqn
The terms $T_{3}$, $T_{4}$ and $T_{5}$ are easy to estimate: for any $\eps>0$ we get
\beqn\label{eq:fv-H1-T4}
T_{4}\leq \eps \| \partial^\alpha_v f \|_{L^2_{x,v}(m)}^2+ C(\eps)\| \partial^\alpha_x f\|_{L^2_{x,v}(m)}^2,
\eeqn
\beqn\label{eq:fv-H1-T5}
T_{5}\leq M \frac{C}{R} \| {\mathbf 1}_{R \le |v| \le 2R} \, \partial^\alpha_v f \|_{L^2_{x,v}(m)}^2+M \frac{C}{R} \| {\mathbf 1}_{R \le |v| \le 2R} \, f\|_{L^2_{x,v}(m)}^2,
\eeqn
and using Lemma~\ref{lem:bar-aij}-(b),
\beqn\label{eq:fv-H1-T3}
\bal
T_{3}&\leq C \int\la v\ra^{\gamma-1} \, |f| \, |\partial^\alpha_v f| \, m^2\\
&\leq C\| \la v\ra^{\frac{\gamma-1}{2}} \partial^\alpha_v f \|_{L^2_{x,v}(m)}^2+C\|\la v\ra^{\frac{\gamma-1}{2}} f\|_{L^2_{x,v}(m)}^2 .
\eal
\eeqn
Let us now deal with the part $T_2$. Performing integrations by parts, we have:
$$
\bal
T_2 &=  \int (\partial^\alpha_v \bar a_{ij}) \, \partial_{ij} f \, \partial^\alpha_v f \, m^2  \\
&= - \int (\partial^{\alpha}_v \bar b_{j}) \, \partial_{j} f \, \partial^\alpha_v f \, m^2
- \int (\partial^\alpha_v \bar a_{ij}) \, \partial_{j} f \, \partial_i (\partial^\alpha_v f) \, m^2
 - \int (\partial^\alpha_v \bar a_{ij}) \, \partial_{j} f \, \partial^\alpha_v f \, \partial_i m^2  \\
&=: - \left(T_{21} + T_{22} + T_{23}\right).
\eal
$$
We first deal with $T_{21}$. Using Lemma~\ref{lem:bar-aij}, we have
\beqn\label{eq:fv-H1-T21}
\bal
T_{21}  &\leq C \int \la v \ra^{\gamma} \, |\partial_{j}f| \, |\partial^\alpha_v f| \, m^2\\
&\leq C \|\la v \ra^{\frac{\gamma}{2}} \nabla_{v}f \|^2_{L^2_{x,v}(m)}
=  C \|\la v \ra^{\frac{\gamma}{2}} P_v \nabla_{v}f \|^2_{L^2_{x,v}(m)} +  C \|\la v \ra^{\frac{\gamma}{2}} (I-P_v)\nabla_{v}f \|^2_{L^2_{x,v}(m)} .
\eal
\eeqn
As far as $T_{22}$ is concerned, the integration by parts gives,
$$
\bal
T_{22}&=-\int \partial^{\alpha}_v \big[(1-\chi)m^2\big] \, \bar{a}_{ij} \, \partial_{j}f \, \partial_{i} (\partial^\alpha_v f)  -\int  (1-\chi)m^2\, \overline{a}_{ij} \,  \partial_{j} (\partial^\alpha_v f) \,\partial_{i}(\partial^\alpha_v f) \\
&\quad
-\int  (1-\chi)m^2\,  \overline{a}_{ij} \, \partial_{j}f \, \partial_{i}(\partial^\alpha_v \partial^\alpha_v f)
- \int (\partial^\alpha_v \bar a_{ij} )\, \partial_{j} f \, \partial_i (\partial^\alpha_v f) \, \chi m^2 \\
&=: - \left(\widetilde{T}_{221}+\widetilde{T}_{222}+\widetilde{T}_{223}\right)+T_{220}.
\eal
$$
Let us estimate $\widetilde{T}_{222}+\widetilde{T}_{223}$, using integration by parts,
\begin{align*}
&\phantom{xx}{}\widetilde{T}_{222}+\widetilde{T}_{223}\\
&=\int  (1-\chi)m^{2}\Big[ \ell_1(v)\, P_{v} \nabla_{v}(\partial^\alpha_v \partial^\alpha_v f) \cdot P_{v} \nabla_{v} f
+\ell_2(v)\, (I-P_{v})\nabla_{v} ( \partial^\alpha_v \partial^\alpha_v f) \cdot (I-P_{v} )\nabla_{v }f    \Big]
\\
&\phantom{xx}{}
+\int(1-\chi)m^{2}\Big[ \ell_1(v)\, P_{v} \nabla_{v}(\partial^\alpha_v f) \cdot P_{v} \nabla_{v}(\partial^\alpha_v f)
+ \ell_2(v)\, (I-P_{v} )\nabla_v (\partial^\alpha_v f) \cdot (I-P_{v} )\nabla_{v} (\partial^\alpha_v f  )  \Big]
\\
&=-\widetilde{T}_{221}
-\int (\partial^{\alpha}_v \ell_1(v))\, P_{v} \nabla_{v}(\partial^\alpha_v f)  \cdot P_{v} \nabla_{v }f  \, (1-\chi)m^{2}
\\
&\phantom{xx}{}-\int (\partial^{\alpha}_v \ell_2(v)) \, (I-P_{v})\nabla_{v}(\partial^\alpha_v f) \cdot (I-P_{v} )\nabla_{v }f \,(1-\chi) m^{2}
\\
&\phantom{xx}{}-\int\big[\ell_1(v)-\ell_2(v)\big]\, (I-P_{v} )\partial^\alpha_v (\partial^\alpha_v f) \,
\frac{v \cdot \nabla_{v }f}{|v|^{2}} \, (1-\chi)m^{2}
\\
&\phantom{xx}{}-\int\big[\ell_1(v)-\ell_2(v)\big] \, (I-P_{v} )\nabla_{v } \partial^\alpha_v f \,
\frac{v \cdot \nabla_{v }g}{|v |^{2}} \, (1-\chi) m^{2}  \\
&=: -\widetilde{T}_{221}+T_{221}+...+T_{224}\,.
\end{align*}
This means $T_{22}=T_{220}+T_{221}+...+T_{224}$. In order to estimate $T_{22}$, we need to estimate $T_{22i}$ for $i=0, \dots,4$ (lemma \ref{lem:bar-aij} plays an important role in those estimates). First of all, we obtain
$$
\bal
T_{220}&\leq C \int_{|v|\leq 2}\la v\ra^{\gamma+1}  | \nabla_{v }f| \,  |\nabla_{v }(\partial^\alpha_v f)| \, |\chi|m^2 \\
&\leq
\eps \| \la v\ra^{\frac{\gamma}{2}} \nabla_{v }(\partial^\alpha_v f)\|_{L^2_{x,v}(m)}^2
+C(\eps) \| \la v\ra^{\frac{\gamma}{2}}  \nabla_{v }f\|_{L^2_{x,v}(m)}^2
\eal
$$
For $T_{221}$, we have
$$
\bal
T_{221}&\leq C \int_{|v|\geq 1}   \la v\ra^{\gamma-1}  |P_{v} \nabla_{v }f| \,  |P_{v} \nabla_{v } (\partial^\alpha_v f)| \, m^2 \\
&\leq
 \eps\| \la v\ra^{\frac{\gamma - 1}{2}} P_{v} \nabla_{v }(\partial^\alpha_v f)\|_{L^2_{x,v}(m)}^2+C(\eps)\|\la v\ra^{\frac{\gamma - 1}{2}} P_{v} \nabla_{v }f\|_{L^2_{x,v}(m)}^2\,.
\eal
$$
For $T_{222}$, we have
$$
\bal
T_{222}&\leq C \int_{|v|\geq 1 }\la v\ra^{\gamma+1}  |(I-P_{v} ) \nabla_{v }f| \, |(I-P_{v} ) \nabla_{v }(\partial^\alpha_v f)|  \, m^2\\
&\leq
 \eps\| \la v\ra^{\frac{\gamma+1}{2}} (I-P_{v} )\nabla_{v } (\partial^\alpha_v f)\|_{L^2_{x,v}(m)}^2+C(\eps)\|\la v\ra^{\frac{\gamma+1}{2}}  (I-P_{v} ) \nabla_{v }f\|_{L^2_{x,v}(m)}^2.
\eal
$$
For $T_{223}$, we obtain
$$
\bal
T_{223}&\leq C \int_{|v|\geq 1}  \big(  \la v\ra^{\gamma-1}+ \la v\ra^{\gamma+1} \big) |\nabla_{v }f|  \, |(I-P_{v} ) \nabla_{v } (\partial^\alpha_v f)|  \, m^2\\
&\leq  \eps\| \la v\ra^{\frac{\gamma+2}{2}}(I-P_{v} )\nabla_{v } (\partial^\alpha_v f) \|_{L^2_{x,v}(m)}^2
+C(\eps)\|\la v\ra^{\frac{\gamma}{2}} \nabla_{v }f\|_{L^2_{x,v}(m)}^2 .
\eal
$$
Finally, for $T_{224}$,
$$
\bal
T_{224}&\leq C \int_{ |v|\geq 1 } \big(  \la v\ra^{\gamma-1}+ \la v\ra^{\gamma+1} \big) |\nabla_{v } (\partial^\alpha_v f)| \,  |(I-P_{v} ) \nabla_{v }f| \, m^2 \\
&\leq  \eps\| \la v\ra^{\frac{\gamma}{2}} \nabla_{v } (\partial^\alpha_v f) \|_{L^2_{x,v}(m)}^2
+ C(\eps)\|\la v\ra^{\frac{\gamma+2}{2}}(I-P_{v} )\nabla_{v }f\|_{L^2_{x,v}(m)}^2
 \eal
$$
This completes the estimate of $T_{22}$ that we write, gathering previous bounds, as
\beqn\label{eq:fv-H1-T22}
\bal
T_{22}
&\le \eps  \| \la v\ra^{\frac{\gamma}{2}} P_v \nabla_v (\partial^\alpha_v f) \|_{L^2_{x,v}(m)}
+\eps \| \la v\ra^{\frac{\gamma+2}{2}} (I-P_v) \nabla_v (\partial^\alpha_v f) \|_{L^2_{x,v}(m)}\\
&\quad
C(\eps)  \| \la v\ra^{\frac{\gamma}{2}} P_v \nabla_v  f \|_{L^2_{x,v}(m)}
+C(\eps)  \| \la v\ra^{\frac{\gamma+2}{2}} (I-P_v) \nabla_v  f \|_{L^2_{x,v}(m)}.
\eal
\eeqn

Concerning $T_{23}$, we apply the same process as $T_{22}$: we first write
$$
\bal
T_{23} &=- \int (\partial^\alpha_v  \bar a_{ij}) \, \partial_{j} f \, \partial_i m^{2} \, \chi g
\\
&\phantom{xx}{}
-\int \partial^\alpha_v \ell_1(v) \, P_{v} \nabla_{v} m^2 \cdot P_{v} \nabla_{v }f \, (1-\chi) \, \partial^\alpha_v f
\\
&\phantom{xx}{}
-\int   \partial^\alpha_v \ell_2(v) \, (I-P_{v} )\nabla_{v }m^{2} \cdot (I-P_{v} )\nabla_{v }f \,(1-\chi) \, \partial^\alpha_v f
\\
&\phantom{xx}{}
-\int \big[\ell_1(v)-\ell_2(v)\big] \, (I-P_{v} )\partial^\alpha_v m^{2}  \,
\frac{v \cdot \nabla_{v }f }{|v |^{2}} \, (1-\chi) \, \partial^\alpha_v f
\\
&\phantom{xx}{}
-\int\big[\ell_1(v)-\ell_2(v)\big]\, (I-P_{v} )\partial^\alpha_v f  \,
\frac{v \cdot \nabla_{v }m^{2}}{|v |^{2}} \, (1-\chi) \, \partial^\alpha_v f  \\
&=: T_{230}+...+T_{234} .
\eal
$$
Note that $(I-P_{v} )\nabla_{v }m^{2}=0$, one can easily get $T_{232}=T_{233}=0$. Let us estimate the other terms, by Lemma \ref{lem:bar-aij}, we have
$$
\bal
T_{230}&\leq C \int_{|v|\leq 2}\la v\ra^{\gamma+\sigma}\,  | \nabla_{v }f| \,  |\partial^\alpha_v f| \, |\chi| \, m^2 \\
&\leq
 \eps\| \la v \ra^{\frac{\gamma}{2}} \partial^\alpha_v f \|_{L^2_{x,v}(m)}^2
 +C(\eps)\| \la v \ra^{\frac{\gamma}{2}}  \nabla_{v }f\|_{L^2_{x,v}(m)}^2
\eal
$$
also
$$
\bal
T_{231} &\leq C\int_{|v|>1 } \la v \ra^{\gamma+\sigma-2} \, |P_{v}\nabla_{v}f| \, |\partial^\alpha_v f| \, m^2 \\
&\leq C(\eps) \|\la v \ra^{\frac{\gamma}{2}} P_{v}\nabla_{v}f\|^2_{L^2_{x,v}(m)}
+ \eps \|\la v \ra^{\frac{\gamma+2\sigma-4}{2}} \partial^\alpha_v f \|^2_{L^2_{x,v}(m)} ,
\eal
$$
and
$$
\bal
T_{234}&\leq C\int_{|v|> 1 } \big(  \la v\ra^{\gamma+\sigma-2}+ \la v\ra^{\gamma+\sigma} \big)\, |(I-P_{v} )\nabla_{v }f| \, |\partial^\alpha_v f| \, m^2\\
&\leq C(\eps)   \|\la v \ra^{\frac{\gamma+2}{2}} (I-P_v) \nabla_v f \|^2_{L^2_{x,v}(m)}
+ \eps \|\la v \ra^{\frac{\gamma+2\sigma-2}{2}} \partial^\alpha_v f \|^2_{L^2_{x,v}(m)}.
\eal
$$
Gathering previous inequalities we complete the estimate of $T_{23}$
\beqn\label{eq:fv-H1-T23}
\bal
T_{23}
&\le \eps\| \la v \ra^{\frac{\gamma}{2}} \partial^\alpha_v f \|_{L^2_{x,v}(m)}^2
+  \eps \|\la v \ra^{\frac{\gamma+2\sigma-2}{2}} \partial^\alpha_v f \|^2_{L^2_{x,v}(m)}\\
&\quad
+ C(\eps) \|\la v \ra^{\frac{\gamma}{2}} P_{v}\nabla_{v}f\|^2_{L^2_{x,v}(m)}
+C(\eps)   \|\la v \ra^{\frac{\gamma+2}{2}} (I-P_v) \nabla_v f \|^2_{L^2_{x,v}(m)}.
\eal
\eeqn

Putting together \eqref{eq:fv-H1-T1} to \eqref{eq:fv-H1-T23} we get, using the fact that
$ 1 +  \la v \ra^{\gamma} + \la v \ra^{\gamma+2\sigma-2} \lesssim \la v \ra^{\gamma+\sigma}$,
\beqn\label{eq:fv-H1}
\bal
\frac12 \frac{d}{dt} \| \partial^\alpha_v f \|_{L^2_{x,v}(m)}^2
&\le - (c_0-\eps)  \int \big \{ \la v \ra^\gamma |P_v \nabla_v (\partial^\alpha_v f)|^2
+ \la v \ra^{\gamma+2} |(I-P_v) \nabla_v (\partial^\alpha_v f)|^2 \big\} \, m^2 \\
&\quad
+ \int \left\{ \varphi_{m,2}(v)
+ \eps \la v \ra^{\gamma+\sigma}
+ C \la v \ra^{\gamma-1}
+ M\frac{C}{R} {\mathbf 1}_{R \le |v| \le 2R}
- M \chi_R(v)  \right\} \, | \partial^\alpha_v f |^2\, m^2 \\
&\quad
+C(\eps)\int \big \{ \la v \ra^\gamma |P_v \nabla_v  f|^2
+ \la v \ra^{\gamma+2} |(I-P_v) \nabla_v  f|^2 \big\} \, m^2 \\
&\quad
+ \int \left\{  C \la v \ra^{\gamma-1} + M\frac{C}{R} {\mathbf 1}_{R \le |v| \le 2R}  \right\} \, |  f |^2\, m^2
+ C(\eps) \| \partial^\alpha_x f \|_{L^2_{x,v}(m)}^2.
\eal
\eeqn

\medskip
\noindent
{\it Step 4. Conclusion in the case $\ell=n=1$.}
We now introduce the following norm on $H^1_x H^1_v(m)$
$$
\| f \|_{\widetilde H^1(m)}^2 :=  \|f\|_{L^2_{x,v}(m)}^2 + \|\nabla_x f\|_{L^2_{x,v}(m)}^2 + \eta \, \|\nabla_v f\|_{L^2_{x,v}(m)}^2 ,
$$
which is equivalent to the standard $H^1_{x,v}(m)$-norm for any $\eta>0$.
Gathering estimates \eqref{eq:f-H1}, \eqref{eq:fx-H1} and \eqref{eq:fv-H1} of previous steps, we obtain
$$
\bal
\frac{1}{2} \frac{d}{dt} \| f \|_{\widetilde H^1(m)}^2
&\leq (-c_0 +  \eta \,  C(\eps))\int  \Big\{ \la v\ra^{\gamma} |P_v \nabla_{v} f|^{2} + \la v\ra^{\gamma+2} |(I-P_v )\nabla_{v} f|^{2} \Big\}\, m^2\\
& \qquad+  \int \left\{ \psi^0_m(v) +  \eta \, M \frac{C}{R} {\mathbf 1}_{R \le |v| \le 2R} - M\chi_R(v) \right\}   f^{2} \, m^2 \\
&\qquad -c_0 \sum_{|\beta|=1}\int  \Big\{ \la v\ra^{\gamma} |P_v \nabla_{v} (\partial^\beta_x f)|^{2} + \la v\ra^{\gamma+2} |(I-P_v )\nabla_{v} (\partial^\beta_x f)|^{2} \Big\} \, m^2\\
& \qquad+  \int \left\{ \psi^1_m(v)  - M\chi_R(v) \right\}  | \nabla_{x}f|^{2} \, m^2  \\
& \qquad + \eta  (-c_0 + \eps ) \sum_{|\alpha|=1} \int
\Big\{ \la v\ra^{\gamma} |P_v \nabla_{v} (\partial^\alpha_v f)|^{2}
+  \la v\ra^{\gamma+2} |(I-P_v )\nabla_{v} (\partial^\alpha_v f)|^{2} \Big\} \, m^2  \\
&\qquad+  \eta \int \left\{ \psi^2_m(v) + M \frac{C}{R} {\mathbf 1}_{R \le |v| \le 2R}  - M\chi_R(v) \right\}  |\nabla_v f |^{2} \, m^2 .
\eal
$$
where we have defined
$$
\bal
& \psi^0_m(v) := \varphi_{m,2}(v) + C\eta \la v \ra^{\gamma-1}, \\
& \psi^1_m(v) := \varphi_{m,2}(v) + \eta C(\eps), \\
& \psi^2_m(v) := \varphi_{m,2}(v) +\eps\la v \ra^{\gamma+\sigma} + C \la v \ra^{\gamma-1}.
\eal
$$
Let us fix any $\lambda < \lambda_{m,2}$. We first choose $\eps >0$ small enough so that $-c_0 + \eps  < 0$ and $-\lambda_{m,2} + \eps < - \lambda$. Then we choose $\eta>0$ small enough such that $ -c_0 + \eta \, C(\eps) \leq 0$ and $ -\lambda_{m,2} +  \eta C(\eps) < - \lambda$.
Hence the functions
$\psi^i_m$ have the same asymptotic behaviour than $\varphi_{m,2}$ (see \eqref{cas:poly1}, \eqref{cas:exp1} and \eqref{cas:exp2}). Then, using Lemma~\ref{lem:varphi}, for any $\lambda < \lambda_{m,2}$ and $\delta \in (0,\lambda_{m,2} - \lambda)$, one may find $M>0$ and $R>0$ large enough such that
$$
\bal
\psi^0_m(v)+  \eta \, M \frac{C}{R} {\mathbf 1}_{R \le |v| \le 2R}
- M\chi_R(v) &\leq -\lambda - \delta\la v \ra^{\gamma+\sigma}, \\
\psi^1_m(v)- M\chi_R(v)  &\leq -\lambda - \delta\la v \ra^{\gamma+\sigma}, \\
\psi^2_m(v) + M \frac{C}{R} {\mathbf 1}_{R \le |v| \le 2R}  - M\chi_R(v) &\leq -\lambda - \delta\la v \ra^{\gamma+\sigma}.
\eal
$$
This implies
$$
\bal
\frac{1}{2} \frac{d}{dt} \| f \|_{\widetilde H^1(m)}^2
&\le  -\lambda  \| f \|_{\widetilde H^1(m)}^2 - \delta \|  f \|_{\widetilde H^1(m \la v \ra^{(\gamma+\sigma)/2})}^2 \\
&\quad
- K \Big\{  \| \la v \ra^{\frac{\gamma}{2}} P_v \nabla_v f \|_{L^2(m)}^2 + \| \la v \ra^{\frac{\gamma+2}{2}}(I- P_v) \nabla_v f \|_{L^2(m)}^2  \Big\} \\
&\quad
- K \Big\{  \| \la v \ra^{\frac{\gamma}{2}} P_v \nabla_v (\nabla_x f) \|_{L^2(m)}^2 + \| \la v \ra^{\frac{\gamma+2}{2}}(I- P_v) \nabla_v (\nabla_x f) \|_{L^2(m)}^2  \Big\} \\
&\quad
- K \Big\{  \| \la v \ra^{\frac{\gamma}{2}} P_v \nabla_v (\nabla_v f) \|_{L^2(m)}^2 + \| \la v \ra^{\frac{\gamma+2}{2}}(I- P_v) \nabla_v (\nabla_v f) \|_{L^2(m)}^2  \Big\} ,
\eal
$$
and then
$$
\| \SS_\BB(t) f \|_{H^1_{x,v}(m)} \le C e^{-\lambda t} \| f \|_{H^1_{x,v}(m)}.
$$
This concludes the proof of the hypodissipativity of $\BB+\lambda$ in $H^1_{x,v}(m)$.

\bigskip

\medskip
\noindent
\textbf{Case $\ell \ge 2$ :}
The higher order derivatives are treated in the same way, introducing the (equivalent) norm on $H^n_x H^\ell_v(m)$
$$
\| f \|_{\widetilde{H^n_x H^\ell_v} (m)}^2 = \| f \|_{L^2(m)}^2
+ \sum_{1 \le |\alpha| + |\beta| \le \max(\ell,n); |\alpha| \le \ell; |\beta| \le n} \eta^{|\alpha|} \, \|  \partial_{v}^{\alpha} \partial_{x}^{\beta} f \|_{L^2(m)}^2,
$$
and choosing $\eta>0$ small enough as in the case $\ell=1$.
\end{proof}

\begin{lem}\label{lem:hypo3}
Consider hypothesis {\bf (H1)}, {\bf (H2)} or {\bf (H3)}, $ \ell \in \N$ and $n \in \N^*$,
and $p \in [1,2]$.
Then, for any $\lambda < \lambda_{m,p}$, we can choose $M>0$ and $R>0$ large enough such that the operator $\BB+\lambda$ is hypo-dissipative in $W^{n,p}_{x} W^{\ell,p}_{v}(m)$, in the sense that
$$
\forall\, t \ge0, \quad \| \SS_\BB(t) \|_{\BBB(W^{n,p}_{x} W^{\ell,p}_{v}(m))} \le C e^{-\lambda t}.
$$

\end{lem}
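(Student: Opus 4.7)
The plan is to extend the computations of Lemmas \ref{lem:hypo1} (case $p=1$) and \ref{lem:hypo2} (case $p=2$) to general $p\in[1,2]$, using the $L^p$ framework already set up in Lemma \ref{lem:hypo}. Since $\nabla_x$ commutes with $\BB$, each $\partial^\beta_x f$ satisfies the same equation as $f$ and its $L^p(m)$ estimate follows exactly as in Lemma \ref{lem:hypo}, producing the factor $\varphi_{m,p}(v) - M\chi_R(v)$ together with the coercive anisotropic gradient term in $v$. The nontrivial work lies in the $v$-derivatives, and by induction on $\ell$ it suffices to describe the base case $|\alpha|=1$.

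For a first-order $v$-derivative one writes
\begin{equation*}
\partial_t \partial^\alpha_v f = \BB(\partial^\alpha_v f) + Q(\partial^\alpha_v \mu, f) - (\partial^\alpha_v v_i)\partial_{x_i} f - M(\partial^\alpha_v \chi_R) f,
\end{equation*}
and one tests against $\Phi'(\partial^\alpha_v f)\,m^p$ with $\Phi'(z)=|z|^{p-1}\mathrm{sign}(z)$. The $\BB$-piece contributes, by the computation of Lemma \ref{lem:hypo}, the quantity $\{\varphi_{m,p}(v)-M\chi_R(v)\}|\partial^\alpha_v f|^p m^p$ plus the dissipation $\tfrac{4(p-1)}{p^2}\int \bar a_{ij}\,\partial_i(|\partial^\alpha_v f|^{p/2})\,\partial_j(|\partial^\alpha_v f|^{p/2})\,m^p$. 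The commutator $Q(\partial^\alpha_v\mu,f)=(\partial^\alpha_v \bar a_{ij})\partial_{ij}f-(\partial^\alpha_v\bar c)\,f$ is controlled by integrations by parts exactly as for the terms $T_2,T_3$ in the proof of Lemma \ref{lem:hypo2}, using the anisotropic decomposition of $\bar a_{ij}$ via $\ell_1(v),\ell_2(v)$ from Lemma \ref{lem:bar-aij} together with Young's inequality; this produces error contributions absorbed either by the coercive gradient terms or by lower-order quantities weighted by at most $\la v\ra^{\gamma+\sigma}$. The transport and cut-off commutators are trivial. Combining through the weighted norm
\begin{equation*}
\Nt f\Nt^p := \|f\|_{L^p(m)}^p + \|\nabla_x f\|_{L^p(m)}^p + \eta\,\|\nabla_v f\|_{L^p(m)}^p
\end{equation*}
for $\eta>0$ small (with an analogous multi-scale weighting for $\ell,n\ge 2$), and choosing $M,R$ large thanks to Lemma \ref{lem:varphi} so that the resulting effective symbol is $\le -\lambda-\delta\la v\ra^{\gamma+\sigma}$, yields $\frac{d}{dt}\Nt f\Nt^p\le-p\lambda\Nt f\Nt^p$ and hence the stated exponential decay.

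The main obstacle is that for $p\in(1,2)$ the function $\Phi'(z)=|z|^{p-1}\mathrm{sign}(z)$ is not smooth at zero, so the integration-by-parts manipulations of Lemma \ref{lem:hypo2} (which relied on $\partial_{ij}f\cdot f=\tfrac12\partial_{ij}(f^2)-\partial_i f\,\partial_j f$) must be redone via the identity $\partial_{ij}f\cdot\Phi'(f)=p^{-1}\partial_{ij}(|f|^p)-(p-1)\,\partial_i f\,\partial_j f\,|f|^{p-2}$, combined if necessary with a standard regularization $\Phi'_\varepsilon(z)=(z^2+\varepsilon)^{(p-2)/2}z$ and a passage to the limit. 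A less computational alternative is to apply Riesz--Thorin interpolation to the linear semigroup $\SS_\BB(t)$ between the endpoint estimates of Lemmas \ref{lem:hypo1} and \ref{lem:hypo2}, via the identification $W^{n,p}_x W^{\ell,p}_v(m)=[W^{n,1}_x W^{\ell,1}_v(m),H^n_x H^\ell_v(m)]_\theta$ with $1/p=1-\theta/2$. This gives the rate $(1-\theta)\lambda_{m,1}+\theta\lambda_{m,2}$, which coincides with $\lambda_{m,p}$ whenever the latter is infinite or affine in $1-1/p$; only in case {\bf (H3)-(iii)}, where $\lambda_{m,p}=4r(1-2rp)$ is quadratic in $p$, does interpolation fall short, so there the direct $L^p$ computation above is genuinely needed to recover the full abscissa.
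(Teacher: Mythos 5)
The paper's proof is a one-liner: it interpolates $\SS_\BB(t)$ between the endpoint bounds of Lemma~\ref{lem:hypo1} ($p=1$) and Lemma~\ref{lem:hypo2} ($p=2$) via Riesz--Thorin. You identify this route as the ``less computational alternative'', but your conclusion that it falls short in case {\bf (H3)-(iii)} rests on a misreading of the abscissa. In {\bf (H3)-(iii)} the paper defines $\lambda_{m,p} := 4r(1-2r)$, which is \emph{independent of} $p$. The $p$-dependent expression $4r(1-2rp)$ you quote is the asymptotic of $\varphi_{m,p}$ from Lemma~\ref{lem:varphi}, but the abscissa in {\bf (H3)-(iii)} is set from the sharper asymptotic of $\tilde\varphi_{m,p}$, which equals $4r(1-2r)$ for every $p$. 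Hence $\lambda_{m,1}=\lambda_{m,2}=4r(1-2r)$, and the interpolated rate $(1-\theta)\lambda_{m,1}+\theta\lambda_{m,2}$ recovers $\lambda_{m,p}$ exactly. More generally, in all of {\bf (H1)--(H3)} the abscissa $\lambda_{m,p}$ is $+\infty$, constant, or affine in $1-1/p$ (case {\bf (H2)-(i)}), so the Riesz--Thorin rate always coincides with the stated one, and no direct $L^p$ computation is required.

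Your first route --- the direct $L^p$ estimate on the $v$-derivatives, pairing the equation for $\partial^\alpha_v f$ against $\Phi'(\partial^\alpha_v f)\,m^p$ and integrating by parts using the anisotropic structure of $\bar a_{ij}$ --- would also give the result and is a self-contained alternative. You correctly flag that $\Phi'(z)=|z|^{p-1}\mathrm{sign}(z)$ is not $C^1$ at $z=0$ for $p\in(1,2)$, so either a regularization $\Phi'_\eps$ or the formulation in terms of $|f|^{p/2}$ (as in Lemma~\ref{lem:hypo}) is needed before passing to the limit. But this extra machinery is precisely what the interpolation argument allows the paper to skip; with the correct reading of $\lambda_{m,p}$ there is no case in which it is genuinely needed.
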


\begin{proof}
It is a consequence of Lemmas \ref{lem:hypo1} and \ref{lem:hypo2}, together with the Riesz-Thorin interpolation theorem.
\end{proof}

\subsection{Regularization}\label{ssec:reg}
We now turn to the boundedness of $\AA$ as well as regularization properties of $\AA \SS_\BB(t)$.
We recall the operator $\AA$ defined in \eqref{eq:AB}
$$
\AA f = \AA_0 f + M\chi_R f = (a_{ij}\ast f)\partial_{ij}\mu - (c\ast f)\mu + M\chi_R f,
$$
for $M$ and $R$ large enough chosen before.
Thanks to the smooth cut-off function $\chi_R$, for any $q\in[1,+\infty]$, $p\geq q$ and any weight function $m$ under the hypotheses {\bf (H1)}-{\bf (H2)}-{\bf (H3)}, we easily obtain
$$
\bal
\| M \chi_R f \|_{L^q_{x,v}( \mu^{-1/2} )}
\lesssim \|f\|_{L^q_x L^p_v(m)}.
\eal
$$
Taking derivatives we get an analogous estimate, for any $n,\ell \in \N$,
$$
\bal
\| M \chi_R f \|_{W^{n,q}_x W^{\ell,q}_v ( \mu^{-1/2} )}
\lesssim \| f \|_{W^{n,q}_x W^{\ell,p}_v ( m)}, \qquad
\eal
$$
Arguing by duality we also have
$$
\| M \chi_R f \|_{H^n_x H^{-1}_v( \mu^{-1/2} )} \lesssim \| f \|_{H^n_x H^{-1}_v (m)}.
$$
Finally we obtain
\beqn\label{chiRbounded}
M \chi_R \in
\left\{
\bal
\BBB \left( L^{p}_{x,v} ( m),  L^{p}_{x,v} ( \mu^{-1/2} ) \right),& \quad \forall\, p \in [1,\infty];\\
\BBB \left( W^{n,p}_x W^{\ell,p}_v ( m),  W^{n,p}_x W^{\ell,p}_v ( \mu^{-1/2} ) \right),& \quad \forall\, p \in [1,2], \, n \in \N^*, \, \ell \in \N .
\eal
\right.
\eeqn

We know obtain the boundedness of $\AA$.

\begin{lem}\label{lem:A0}
Consider {\bf (H1)}, {\bf (H2)} or {\bf (H3)} and a weight function $m$.
\begin{enumerate}[(i)]

\item For any $p \in[1,\infty]$, there holds
$$
\AA \in \BBB \left(  L^{p}_{x,v}(m) , L^{p}_{x,v}(\mu^{-1/2}) \right).
$$

\item For any $p \in[1,2]$ , $ n \in \N^*$ and $\ell \in \N $, there holds
$$
\AA \in \BBB \left(  W^{n,p}_x W^{\ell,p}_{v}(m) , W^{n,p}_x W^{\ell,p}_{v}(\mu^{-1/2}) \right).
$$


\end{enumerate}
In particular $ \AA \in \BBB(E) \cap \BBB(\EE)$ for any admissible space $\EE$ in \eqref{def:EE}.

\end{lem}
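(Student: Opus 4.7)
The plan is to split $\AA = \AA_0 + M\chi_R$ and handle each piece separately. The multiplicative part $M\chi_R$ is already covered by the estimate \eqref{chiRbounded} recalled just above, so the real work consists in bounding
\[
\AA_0 f = (a_{ij}\ast_v f)\,\partial_{ij}\mu - (c\ast_v f)\,\mu
\]
from the relevant source space into the corresponding Gaussian-weighted target space. The key observation throughout will be that $\partial_{ij}\mu$ and $\mu$ are Schwartz, so after multiplication by the target weight $\mu^{-1/2}$ one is still left with a Gaussian factor; that factor will absorb any polynomial growth produced by the convolutions.

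For part (i), I would first note that $\AA_0$ involves only integration in $v$, so it suffices to prove the analogous bound at each fixed $x$. For the $a_{ij}$-piece I plan to combine $|a_{ij}(v-v_*)|\lesssim \la v-v_*\ra^{\gamma+2}$ with Peetre's inequality $\la v-v_*\ra^{\gamma+2}\lesssim \la v\ra^{\gamma+2}\la v_*\ra^{\gamma+2}$ (valid because $\gamma+2\ge 0$), then apply H\"older's inequality in $v_*$, arriving at a pointwise bound
\[
|(a_{ij}\ast_v f)(v)\,\partial_{ij}\mu(v)|\,\mu^{-1/2}(v) \lesssim G(v)\,\Big\|\tfrac{\la\cdot\ra^{\gamma+2}}{m}\Big\|_{L^{p'}_v}\,\|f(x,\cdot)\|_{L^p_v(m)},
\]
where $G(v):=\la v\ra^{\gamma+2}|\partial_{ij}\mu|(v)\mu^{-1/2}(v)$ is Gaussian, hence in every $L^p_v$. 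A direct check shows that $\|\la\cdot\ra^{\gamma+2}/m\|_{L^{p'}_v}$ is finite in all admissible cases of {\bf(H1)}--{\bf(H3)} (this is precisely why the condition $k>\gamma+2+3(1-1/p)$ appears in the polynomial case). The $c$-piece is treated in the same way when $\gamma\ge 0$; when $\gamma<0$ the kernel $c(v)\sim|v|^\gamma$ is singular at the origin, which I would handle by splitting $c=c_1+c_2$ with $c_1:=c\,\mathbf{1}_{|v|<1}$ and $c_2:=c\,\mathbf{1}_{|v|\ge 1}$. Since $\gamma>-3$ we have $c_1\in L^1(\R^3)$, so Young's convolution inequality together with $\|\mu^{1/2}\|_{L^\infty}<\infty$ gives $\|(c_1\ast_v f)\mu^{1/2}\|_{L^p_v}\lesssim \|c_1\|_{L^1}\,\|f\|_{L^p_v(m)}$; the piece $c_2$ is bounded and is handled exactly like the $a_{ij}$-term, using $\la v-v_*\ra^\gamma\lesssim\la v\ra^\gamma\la v_*\ra^{|\gamma|}$.

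For part (ii), since $x$-derivatives commute with $\AA_0$, I only need to estimate $\partial_v^\alpha\AA_0 f$ for $|\alpha|\le\ell$. Applying Leibniz and distributing the $v$-derivatives, whenever a derivative lands on a convolution I would systematically move it onto $f$, using $\partial_v^{\alpha_1}(a_{ij}\ast_v f)=a_{ij}\ast_v\partial_v^{\alpha_1}f$ (and likewise for $c$); this is essential, because differentiating the kernel $c$ would produce factors more singular than $|v|^{-3}$ and destroy local integrability. Derivatives falling on the Schwartz factors $\partial_{ij}\mu$ or $\mu$ merely yield other Schwartz functions. Each resulting term is then of exactly the form treated in part~(i), but with $f$ replaced by $\partial_v^{\alpha_1}\partial_x^\beta f$ and with the Gaussian $\partial_{ij}\mu$ replaced by a different Schwartz function, so Step~1 applies verbatim. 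Adding the contribution of $M\chi_R$ via \eqref{chiRbounded} yields the desired bound in $W^{n,p}_x W^{\ell,p}_v$ for $p\in[1,2]$.

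\textbf{Anticipated main obstacle.} The only non-mechanical point is the singular part of the convolution with $c$ in the range $\gamma\in[-2,0)$; the clean $L^1$/Peetre splitting of $c$ described above dispatches it and prevents the argument from degenerating into a tedious case analysis. The condition $k>\gamma+2+3(1-1/p)$ in the polynomial case is sharp for the H\"older step and explains the quantitative assumption on the weight.
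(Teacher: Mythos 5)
Your proof is correct and follows the same overall architecture as the paper's: split $\AA = \AA_0 + M\chi_R$, dispose of $M\chi_R$ via \eqref{chiRbounded}, bound the $a_{ij}$-convolution pointwise by $\la v\ra^{\gamma+2}\|f\|_{L^1_v(\la\cdot\ra^{\gamma+2})}$ so the Gaussian factor $\partial_{ij}\mu\,\mu^{-1/2}$ absorbs the polynomial growth, treat the $c$-convolution likewise when $\gamma\ge 0$, split $c$ into a compactly supported singular piece plus a bounded tail when $\gamma<0$, and handle $v$-derivatives in part (ii) by Leibniz with the derivatives pushed onto $f$ and the Schwartz factors rather than onto the kernels. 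The one genuine difference is the treatment of the singular piece $c_1 = c\,\mathbf{1}_{|\cdot|<1}$. You observe $c_1\in L^1(\R^3)$ (since $\gamma>-3$) and apply Young's convolution inequality, which yields $\|(c_1*f)\mu^{1/2}\|_{L^p_v}\lesssim\|f\|_{L^p_v}\lesssim\|f\|_{L^p_v(m)}$ uniformly in $p\in[1,\infty]$ in a single stroke. The paper instead derives an $L^p_v\to L^p_v$ bound only for $p<3/|\gamma|$ by integrating the singularity against $\mu^{1/2}$, a separate bound via H\"older into $L^\infty_v$ for $q>3/(3+\gamma)$, and then invokes Riesz--Thorin between the $p=1$ and $p=\infty$ endpoints. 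Your Young argument is cleaner and avoids both the restriction $p<3/|\gamma|$ and the endpoint interpolation; the paper's variant produces a marginally sharper local bound ($\lesssim\|f\|_{L^p_v(\la v\ra^\gamma)}$ rather than $\lesssim\|f\|_{L^p_v}$), but since $\la v\ra^\gamma\lesssim 1\lesssim m$ this extra gain is never used, so nothing is lost by your route.
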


\begin{proof}
Thanks to \eqref{chiRbounded} we just need to consider the operator $\AA_0$. We write $$\AA_0 f = (a_{ij}*f)\partial_{ij}\mu - (c*f)\mu$$ and split the proof into several steps.

\smallskip
{\it Step 1.} Since $\gamma \in [-2,1]$ we have $|a_{ij}(v-v_*)| \lesssim \la v \ra^{\gamma+2} \la v_* \ra^{\gamma+2}$, which implies $ |(a_{ij}*f) (v)| \lesssim \la v \ra^{\gamma+2} \| f \|_{L^1_v(\la v \ra^{\gamma+2})}$. Therefore, for any $p \in [1,\infty]$, we have
$$
\| (a_{ij}*f)\partial_{ij}\mu \|_{L^p_v (\mu^{-1/2})} \lesssim \| f \|_{L^1_v(\la v \ra^{\gamma+2})},
$$
from which we can also easily deduce
$$
\| \partial^\alpha_v \partial^\beta_x (a_{ij}*f)\partial_{ij}\mu \|_{L^p_v (\mu^{-1/2})}
\lesssim \sum_{\alpha_1  \le \alpha} \| \partial^{\alpha_1}_v \partial^\beta_x f \|_{L^1_{v}(\la v\ra^{\gamma+2})}.
$$
Integrating in the $x$-variable, we finally get
$$
\| (a_{ij}*f)\partial_{ij}\mu \|_{W^{n,p}_x W^{\ell,p}_{v} (\mu^{-1/2})} \lesssim \| f \|_{W^{n,p}_x W^{\ell,1}_{v}(\la v \ra^{\gamma+2})}.
$$

\smallskip
{\it Step 2.} Assume $\gamma \in [0,1]$. In that case we have $|c(v-v_*)| \lesssim \la v \ra^\gamma \la v_* \ra^\gamma$ and the same argument as above gives
$$
\| (c*f) \mu \|_{W^{n,p}_x W^{\ell,p}_{v} (\mu^{-1/2})} \lesssim \| f \|_{W^{n,p}_x W^{\ell,1}_{v}(\la v \ra^{\gamma})}.
$$

\smallskip
{\it Step 3.} Assume $\gamma \in [-2,0)$.
We decompose $ c = c_+ + c_-$ with $c_+ = c {\mathbf 1}_{|\cdot| > 1}$ and $c_- = c {\mathbf 1}_{|\cdot| \le 1}$. For the non-singular term $c_+$ we easily get, for any $p\in[1,\infty]$,
$$
\| (c_+ * f) \mu \|_{L^p_v(\mu^{-1/2})} \lesssim \| f \|_{L^1_v}
$$
whence
$$
\| (c_+ * f) \mu \|_{W^{n,p}_x W^{\ell,p}_{v} (\mu^{-1/2})} \lesssim \| f \|_{W^{n,p}_x W^{\ell,1}_{v}}.
$$
We now investigate the singular term $c_-$. For any $p \in [1, 3/|\gamma| )$ we get
$$
\bal
\| (c_- * f) \mu \|_{L^p_v (\mu^{-1/2})}^p
&= \| (c_- * f) \mu^{1/2} \|_{L^p_v}^p
\lesssim \int_v \left|  \int_{v_*} |v-v_*|^{\gamma} \,{\mathbf 1}_{|v-v_*| \le 1} \, |f(v_*)| \right|^p \mu^{1/2}(v) \\
&\lesssim \int_{v_*} |f(v_*)|^p \left\{ \int_v |v-v_*|^{\gamma p } \,{\mathbf 1}_{|v-v_*| \le 1} \, \mu^{1/2}(v)   \right\} \\
&\lesssim \| f \|_{L^p_v (\la v \ra^\gamma)}^p,
\eal
$$
where we have used that $|\gamma| p < 3$ (so that the integral in $v$ is bounded) and Lemma~\ref{lem:Aalpha}. Taking derivatives and integrating in $x$ it follows
$$
\| (c_- * f) \mu \|_{W^{n,p}_x W^{\ell,p}_{v} (\mu^{-1/2})} \lesssim \| f \|_{W^{n,p}_x W^{\ell,p}_{v}(\la v \ra^\gamma)}, \quad \forall\, p \in [1,3/|\gamma|).
$$
Remark that by H\"older's inequality, for any $q \in ( 3/(3+\gamma), \infty]$ we have
$$
\bal
| (c_- * f)(v)|
&\lesssim \int_{v_*} |v-v_*|^{\gamma} \,{\mathbf 1}_{|v-v_*| \le 1} \, |f(v_*)|
\lesssim \left( \int_{v_*} |v-v_*|^{\gamma q'} \,{\mathbf 1}_{|v-v_*| \le 1} \right)^{1/q'} \| f \|_{L^q_v} \lesssim \| f \|_{L^q_v},
\eal
$$
which implies
$$
\| (c_- * f) \mu \|_{L^p_v(\mu^{-1/2})} \lesssim \| f \|_{L^q_v}, \quad \forall\, p \in [1,\infty],
$$
and similarly
$$
\| (c_- * f) \mu \|_{W^{n,p}_x W^{\ell,p}_v(\mu^{-1/2})} \lesssim \| f \|_{W^{n,p}_x W^{\ell,q}_v}, \quad \forall\, p \in [1,\infty].
$$
Observe that in particular the operator $T f = (c_- * f) \mu $ is a bounded operator from $W^{n,1}_x W^{\ell,1}_v(m)$ to $W^{n,1}_x W^{\ell,1}_v (\mu^{-1/2})$ and from $W^{n,\infty}_x W^{\ell,\infty}_v(m)$ to $W^{n,\infty}_x W^{\ell,\infty}_v(\mu^{-1/2})$, thus by interpolation also from $W^{n,p}_x W^{\ell,p}_v (m)$ to $W^{n,p}_x W^{\ell,p}_v (\mu^{-1/2})$ for any $p \in [1,\infty]$. This together with estimates of previous steps completes the proof of points (i) and (ii).
\end{proof}

We turn now to regularization properties of the semigroup $\SS_\BB$. We follow a technique introduced by H\'erau \cite{Herau} for Fokker-Plank equations (see also \cite[Section A.21]{Vi-hypo} and \cite{MM}).

\begin{lem}\label{lem:reg}
Consider hypothesis {\bf (H1)}, {\bf (H2)} or {\bf (H3)} and let $m_0 $ be some weight function with $\gamma+ \sigma >0$. Define
$$
m_1 :=
\left\{
\bal
& m_0 &\quad\text{if } \gamma \in [0,1];\\
& \la v \ra^{\frac{|\gamma|}{2}} m_0  & \quad \text{if } \gamma \in [-2,0).
\eal
\right.
\quad
m_2 :=
\left\{
\bal
& m_0 &\quad\text{if } \gamma \in [0,1];\\
& \la v \ra^{4 |\gamma|} m_0  & \quad \text{if } \gamma \in [-2,0).
\eal
\right.
$$
Then there hold:

\begin{enumerate}

\item From $L^2$ to $H^\ell$ for $\ell \ge 1$:
$$
\forall\, t \in (0,1], \qquad
\| \SS_\BB(t)  \|_{\BBB(L^2(m_1) , H^\ell(m_0))} \le C\, t^{-3\ell/2}
$$

\item From $L^1$ to $L^2$:
$$
\forall\, t \in (0,1], \qquad
\| \SS_\BB(t)  \|_{\BBB(L^1(m_2) , L^2(m_1))} \le C\, t^{-8} .
$$

\item From $L^2$ to $L^\infty$:
$$
\forall\, t \in (0,1], \qquad
\| \SS_\BB(t)  \|_{\BBB(L^2(m_2) , L^\infty(m_1))} \le C\, t^{-8} .
$$


\end{enumerate}

\end{lem}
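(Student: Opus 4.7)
The lemma is a hypoelliptic regularization statement for the splitting operator $\BB$, which has only $v$-diffusion from $\bar a_{ij}\partial_{ij}$ plus the transport $-v\cdot\nabla_x$. The plan is to follow H\'erau's hypoelliptic method, as adapted in \cite{MM,Vi-hypo}: introduce a time-dependent functional combining $L^2$-norms of $f$, $\nabla_v f$, $\nabla_x f$, and a mixed $\la \nabla_v f,\nabla_x f\ra$ term, with carefully tuned powers of $t$, then exploit the commutator identity $[\partial_{v_i},-v\cdot\nabla_x]=-\partial_{x_i}$ to see that pure $v$-regularization is transferred into $x$-regularization after one commutator, which is why $x$-derivatives cost the cube of $t^{-1/2}$ instead of the first power.

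\textbf{Step 1: Part (1) for $\ell=1$.} Let $f_t=\SS_\BB(t)f$. Introduce
\[
\FF(t,f_t):=\|f_t\|_{L^2(m_0)}^2+\alpha\,t\,\|\nabla_v f_t\|_{L^2(m_0)}^2+2\beta\,t^2\,\la \nabla_v f_t,\nabla_x f_t\ra_{L^2(m_0)}+\gamma\,t^3\,\|\nabla_x f_t\|_{L^2(m_0)}^2,
\]
with $0<\beta^2\ll\alpha\gamma\ll1$. Differentiate in time; the quadratic form contributions of $\BB$ give the dissipation estimates already computed in the proof of Lemma~\ref{lem:hypo2}, producing gradient-squared terms with weight $\la v\ra^{\gamma/2}$ on $P_v\nabla_v$ and $\la v\ra^{(\gamma+2)/2}$ on $(I-P_v)\nabla_v$. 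Crucially, the time-derivative of the $t^2$ cross term yields
\[
2\beta\,t^2\,\la \nabla_v f_t,[\nabla_v,-v\cdot\nabla_x]f_t\ra_{L^2(m_0)}=-2\beta\,t^2\,\|\nabla_x f_t\|_{L^2(m_0)}^2,
\]
which contributes the $x$-gradient dissipation absent from $\BB$ itself. Absorbing the cross-terms via Young's inequality, and absorbing the zeroth-order error terms (containing $\la v\ra^{\gamma+\sigma}$, $\la v\ra^{\gamma}$, $\la v\ra^{\gamma-1}$ factors from commutators of $\bar a_{ij},\bar b_i,\bar c$ with $\nabla_v$, as in Step~3 of Lemma~\ref{lem:hypo2}) requires one extra power of $\la v\ra^{|\gamma|/2}$ in the source norm when $\gamma<0$: this is exactly the passage from $m_0$ to $m_1$. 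Choosing $\alpha,\beta,\gamma$ small, one obtains $\frac{d}{dt}\FF(t,f_t)\le C\|f_t\|_{L^2(m_1)}^2\lesssim e^{-\lambda t}\|f\|_{L^2(m_1)}^2$ (using Lemma~\ref{lem:hypo}), and integrating on $[0,t]$ with $t\le 1$ gives the claimed $H^1$ bound with rate $t^{-3/2}$.

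\textbf{Step 2: Part (1) for $\ell\ge 2$.} Iterate via the semigroup property $\SS_\BB(t)=\SS_\BB(t/\ell)^{\ell}$ and apply the $\ell=1$ estimate $\ell$ times, possibly on different weighted $L^2$ spaces at each stage; alternatively, define an extended H\'erau functional involving all derivatives $\partial_v^\alpha\partial_x^\beta f$ of total order $\le\ell$ with time weights $t^{|\alpha|+3|\beta|}$. The mixed commutator terms between $\partial_v^\alpha$ and $-v\cdot\nabla_x$ produce precisely the derivatives of one order less in $v$ and one more in $x$, whose coefficients are designed so that lower-order terms can be absorbed. This gives a bound of order $t^{-3\ell/2}$ for $\|\SS_\BB(t)f\|_{H^\ell(m_0)}^2$ in terms of $\|f\|_{L^2(m_1)}^2$, at the cost of at most one $\la v\ra^{|\gamma|/2}$ weight loss (the additional commutator losses at higher orders sit at strictly lower velocity weights and are absorbed by the extra $v$-dissipation).

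\textbf{Step 3: Parts (2) and (3).} For (3), apply Part (1) with $\ell$ large enough that the velocity-Sobolev embedding $H^\ell_{x,v}\hookrightarrow L^\infty_{x,v}$ on $\T^3\times\R^3$ holds ($\ell\ge 4$ suffices since the dimension is $6$), then Part (1) gives $\SS_\BB(t):L^2(m_1)\to L^\infty(m_0)$ with rate $t^{-3\ell/2}\le t^{-8}$, after possibly one further weight enlargement to bound the embedding constant in the weighted setting---this promotes $m_1$ to $m_2=\la v\ra^{4|\gamma|}m_0$. For (2), use duality: the formal adjoint $\BB^*$ (up to weight conjugation, cf. the computations around \eqref{B*m}--\eqref{Bm*phi2} in Lemma~\ref{lem:hypo4}) has exactly the same structure as $\BB$, and the proof of Part (1) applies to $\SS_{\BB^*_m}$ to yield $L^2\to L^\infty$ bounds with the same rate. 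Dualizing gives $\SS_\BB(t):L^1(m_2)\to L^2(m_1)$ with rate $t^{-8}$.

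\textbf{Main obstacle.} The principal technical difficulty is the anisotropic and $v$-degenerate structure of $\bar a_{ij}$ in the moderately soft regime $\gamma\in[-2,0)$: the velocity dissipation provides only $\la v\ra^{\gamma/2}|P_v\nabla_v\cdot|^2$ and $\la v\ra^{(\gamma+2)/2}|(I-P_v)\nabla_v\cdot|^2$, while the commutator error terms produced by H\'erau's functional carry weights up to $\la v\ra^{\gamma+\sigma}$ or worse. Balancing these without losing the hypoelliptic gain forces the weight inflation from $m_0$ to $m_1$ (one factor $\la v\ra^{|\gamma|/2}$) and, upon iteration through $\ell$ orders and duality, to $m_2$ (up to $\la v\ra^{4|\gamma|}$). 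Carefully tracking the exponents of $t$ and of $\la v\ra$ through each iteration, and checking that Lemma~\ref{lem:hypo2}'s coercivity is indeed sufficient to absorb all cross-terms, is the heart of the proof.
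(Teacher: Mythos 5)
Your Part (1) for $\ell=1$ reproduces the paper's argument: the same H\'erau-type time-dependent functional $\FF(t,f)=\|f\|_{L^2(m_1)}^2+\alpha_1 t\|\nabla_v f\|^2+\alpha_2 t^2\la\nabla_x f,\nabla_v f\ra+\alpha_3 t^3\|\nabla_x f\|^2$, the cross-term commutator producing the $-\|\nabla_x f\|^2$ dissipation, and the weight loss $m_0\to m_1$ absorbing the commutator errors in the degenerate soft range. That part matches.

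For Parts (2) and (3) you go in the opposite order from the paper and by a genuinely different mechanism. You propose to prove the $L^2\to L^\infty$ bound first, via Part (1) with $\ell=4$ followed by the Sobolev embedding $H^4_{x,v}\hookrightarrow L^\infty_{x,v}$ on $\T^3\times\R^3$, and then dualize to get $L^1\to L^2$. The paper instead proves $L^1\to L^2$ directly, by introducing the second functional $\GG(t,f_t)=\|f_t\|_{L^1(m_2)}^2+\alpha_0 t^N\widetilde\FF(t,f_t)$ and closing a Nash-type interpolation inequality $\|f\|_{L^2(m_1)}^2\lesssim\|f\|_{L^1(m_2)}^{1/2}\|\nabla_{x,v}(m_0 f)\|_{L^2}^{3/2}$ along the flow, choosing $N=16$; and then obtains $L^2\to L^\infty$ by duality through $\BB^*_m$. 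The paper's Nash/ultracontractivity iteration has the advantage of only ever commuting the weight with a single derivative, so the weight-derivative factor $\la v\ra^{\sigma-1}$ arises once and is dominated by the $\la v\ra^{(\gamma+\sigma)/2}$ coercivity already present in the $L^2(m_1)$ dissipation, for every weight class in {\bf (H1)--(H3)}.

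This is where your route has a genuine gap. The embedding $H^4(m_0)\hookrightarrow L^\infty(m_0)$ does not hold uniformly when $m_0$ is an exponential weight with $\sigma\ge 1$: writing $g=m_0 f$ one has $\|f\|_{L^\infty(m_0)}\lesssim\|m_0 f\|_{H^4}$, and passing from $\|m_0 f\|_{H^4}$ to a weighted norm of $f$ via Leibniz produces factors $|\partial^\beta m_0|\lesssim\la v\ra^{|\beta|(\sigma-1)}m_0$, i.e. a loss of $\la v\ra^{4(\sigma-1)}$ in the worst case. For $\sigma=2$ (assumptions {\bf (H1)-(iii)}, {\bf (H2)-(iii)}, {\bf (H3)-(ii)-(iii)}) this is $\la v\ra^4$, and it does not match the stated weights: for $\gamma\in[0,1]$ the lemma has $m_1=m_2=m_0$, so Part (1) gives you $L^2(m_0)\to H^4(m_0)$, and the embedding then lands you in $L^\infty(\la v\ra^{-4}m_0)$, not $L^\infty(m_0)$ as claimed. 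Your remark that "$m_2=\la v\ra^{4|\gamma|}m_0$ absorbs the embedding constant" is not actually doing what is needed, because the loss is governed by $\sigma$, not $\gamma$, and vanishes for $\gamma\ge 0$. Consequently your Part (3) does not close for exponential weights, and since Part (2) is derived from it by duality, the gap propagates there as well. The Nash-type argument the paper uses avoids this because it commutes the weight with only one derivative.

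A secondary, minor point: the semigroup-splitting iteration $\SS_\BB(t)=\SS_\BB(t/\ell)^\ell$ you suggest for $\ell\ge 2$ in Part (1) needs to track the weight loss $m_1\to m_0$ at each stage, so one would have to introduce a chain of intermediate weights rather than reuse $(m_1,m_0)$; the paper's route (an extended H\'erau functional as you alternatively mention) is more direct. This is a bookkeeping issue, not a conceptual gap.
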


\begin{proof}[Proof of Lemma \ref{lem:reg}]
We consider the equation $\partial_t f = \BB f$ and split the proof into three steps.

\medskip\noindent
{\it Step 1: from $L^2$ to $H^\ell$.} We only prove the case $\ell=1$, the other cases being treated in the same way.
Let us define
$$
\FF(t,f) := \| f\|_{L^2(m_1)}^2 + \alpha_1\, t \,\| \nabla_v f\|_{L^2(m_0)}^2 + \alpha_2 \,t^2 \, \la \nabla_x f, \nabla_v f\ra_{L^2(m_0)} + \alpha_3\, t^3 \, \| \nabla_x f \|_{L^2(m_0)}^2.
$$
We now choose $\alpha_i$, $i=1,2,3$ such that $0<\alpha_3 \le \alpha_2 \le \alpha_1 \le 1$ and $\alpha_2^2 \le 2 \alpha_1 \alpha_3$. Then, there holds
$$
2 \FF(t,f) \geq \alpha_3 \, t^3 \,  \| \nabla_{x,v} f \|^2_{L^2(m_0)}.
$$
Moreover, denoting $f_t = \SS_\BB(t) f$, we have
$$
\bal
\frac{d}{dt}\, \FF(t,f_t)
&= \frac{d}{dt} \| f_t\|_{L^2(m_1)}^2
+ \alpha_1 \,\| \nabla_v f_t\|_{L^2(m_0)}^2
+ \alpha_1\, t \,\frac{d}{dt}\| \nabla_v f_t\|_{L^2(m_0)}^2 \\
&\quad
+ 2 \alpha_2\, t\,  \la \nabla_x f_t , \nabla_v f_t \ra_{L^2(m_0)}
+ \alpha_2\, t^2\, \frac{d}{dt} \la \nabla_x f_t , \nabla_v f_t \ra_{L^2(m_0)} \\
&\quad
+ 3\alpha_3 \,t^2 \, \| \nabla_x f_t \|_{L^2(m_0)}^2
+ \alpha_3 \,t^3 \, \frac{d}{dt} \| \nabla_x f_t \|_{L^2(m_0)}^2.
\eal
$$
We need to compute
$$
\bal
\frac{d}{dt} \la \nabla_x f , \nabla_v f \ra_{L^2(m_0)}
&= \sum_{|\alpha|=1}
 \int \left\{\partial^\alpha_x(\BB f) \, (\partial^\alpha_v f)
 +  (\partial^\alpha_x f) \, \partial^\alpha_v (\BB f) \right\} \, m_0^2 .
\eal
$$
Let us denote $f_x := \partial^\alpha_x f$ and $f_v := \partial^\alpha_v f$ to simplify and recall that
$$
\bal
\partial^\alpha_x (\BB f) = \bar a_{ij} \partial_{ij} f_x - \bar c f_x - v\cdot \nabla_x f_x - M \chi_R f_x,
\eal
$$
and
$$
\bal
\partial^\alpha_v (\BB f)
&= \bar a_{ij} \partial_{ij} f_v - \bar c f_v - v\cdot \nabla_x f_v - M \chi_R f_v \\
&\quad
+ (\partial^\alpha_v \bar a_{ij}) \partial_{ij} f - (\partial^\alpha_v \bar c) f - f_x - M(\partial^\alpha_v \chi_R) f.
\eal
$$
Using the same computation as in Lemma~\ref{lem:hypo2}, we obtain
$$
\int \left\{ \partial^\alpha_x (\BB f) \, (\partial^\alpha_v f)
 +  (\partial^\alpha_x f) \, \partial^\alpha_v (\BB f) \right\} \, m_0^2
 = T_0 + T_1 + T_2 + T_3,
$$
where
$$
T_0 := - 2 \int \bar a_{ij} \, \partial_i f_x  \, \partial_j f_v \, m_0^2,
$$
$$
\bal
T_1 &:= \int \{ \varphi_{m_0,2}(v) - 2M \chi_R(v) \} \, f_x \, f_v \, m_0^2,
\eal
$$
$$
\bal
T_2 &:= -\int   \left\{  (\partial^\alpha_v\bar a_{ij}) \,\frac{\partial_{i} m_0^2}{m_0^2} + \partial^\alpha_v \bar b_j \right\} \, \partial_j f \, f_x \, m_0^2
-\int  \left\{ \partial^\alpha_v \bar c + M (\partial^\alpha_v \chi_R) \right\}  f \, f_x \, m_0^2
- \int |f_x|^2 \, m_0^2
\eal
$$
and
$$
T_3 := - \int (\partial^\alpha_v \bar a_{ij}) \, \partial_i f \, \partial_j f_x \, m_0^2.
$$
For the term $T_1$, from the proof of Lemma~\ref{lem:varphi} we get
$$
T_1 \lesssim  \int \la v \ra^{\gamma + \sigma} |f_x| \, |f_v| \, m_0^2
\lesssim \eps t \,\| \la v\ra^{\frac{\gamma + \sigma}{2}} \, \partial^\alpha_x f \|_{L^2(m_0)}^2 + \eps^{-1} t^{-1} \, \| \la v\ra^{\frac{\gamma + \sigma}{2}} \, \partial^\alpha_v f \|_{L^2(m_0)}^2.
$$
In a similar way, using $|\partial^\alpha_v \bar a_{ij} | \le C \la v \ra^{\gamma+1}$, $|\partial^\alpha_v \bar b_j| \le C \la v \ra^{\gamma}$ and $|\partial_i m^2 | \le C \la v \ra^{\sigma-1} m^2$, we obtain for the second term
$$
\bal
T_2 &\lesssim  \int \la v \ra^{\gamma+\sigma} |\nabla_v f| \, |f_x| \, m_0^2
+ \int  \left\{ \la v \ra^{\gamma-1} + \frac{M}{R}{\mathbf 1}_{R \le |v| \le 2R}   \right\}|f| \, |f_x| \, m_0^2
- \| \partial^\alpha_x f \|^2_{L^2(m_0)}\\
&\lesssim \eps t \int \left\{  \la v\ra^{\gamma+\sigma} + \la v \ra^{\gamma-1} + \frac{M}{R}{\mathbf 1}_{R \le |v| \le 2R} \right\} \, |\partial^\alpha_x f |^2 \, m_0^2
+ \eps^{-1} t^{-1} \int \left\{ \la v \ra^{\gamma-1} + \frac{M}{R}{\mathbf 1}_{R \le |v| \le 2R} \right\} \, |f|^2 \, m_0^2 \\
&\quad
+ \eps^{-1} t^{-1} \, \| \la v\ra^{\frac{\gamma + \sigma}{2}} \, \nabla_v f \|_{L^2(m_0)}^2
 - \| \partial^\alpha_x f \|^2_{L^2(m_0)}.
\eal
$$
We now investigate $T_0$ and, decomposing $\partial_i f_x = P_v \partial_i f_x + (I-P_v)\partial_i f_x$ and the same for $\partial_j f_v$, we easily get
$$
\bal
T_0
&\lesssim \eps t \big\{ \| \la v \ra^{\frac{\gamma}{2}} P_v \nabla_v(\partial^\alpha_x f)\|_{L^2(m_0)}^2 + \| \la v \ra^{\frac{\gamma+2}{2}} (I-P_v) \nabla_v(\partial^\alpha_x f)\|_{L^2(m_0)}^2 \big\} \\
&\quad
+ \eps^{-1} t^{-1} \big\{ \| \la v \ra^{\frac{\gamma}{2}} P_v \nabla_v(\partial^\alpha_v f)\|_{L^2(m_0)}^2 + \| \la v \ra^{\frac{\gamma+2}{2}} (I-P_v) \nabla_v(\partial^\alpha_v f)\|_{L^2(m_0)}^2 \big\}.
\eal
$$
For the remainder term $T_3$, arguing as in the proof of Lemma~\ref{lem:hypo2} (term $T_{22}$ in that lemma, see \eqref{eq:fv-H1-T22}) gives us
$$
\bal
T_3
&\lesssim \eps t \big\{ \| \la v \ra^{\frac{\gamma}{2}} P_v \nabla_v(\partial^\alpha_x f)\|_{L^2(m_0)}^2 + \| \la v \ra^{\frac{\gamma+2}{2}} (I-P_v) \nabla_v(\partial^\alpha_x f)\|_{L^2(m_0)}^2 \big\} \\
&\quad
+ \eps^{-1} t^{-1} \big\{ \| \la v \ra^{\frac{\gamma}{2}} P_v \nabla_v f\|_{L^2(m_0)}^2 + \| \la v \ra^{\frac{\gamma+2}{2}} (I-P_v) \nabla_v  f\|_{L^2(m_0)}^2 \big\}.
\eal
$$
Finally, putting together previous estimates we obtain
$$
\bal
&\int \{ \nabla_x (\BB f) \nabla_v f  + \nabla_x f \nabla_v(\BB f) \} \, m_0^2 \\
&\lesssim \eps t \Big\{  \| \la v \ra^{\frac{\gamma+\sigma}{2}} \nabla_x f \|_{L^2(m_0)}^2
+  \| \la v \ra^{\frac{\gamma}{2}} P_v \nabla_v (\nabla_x f) \|_{L^2(m_0)}^2
+  \| \la v \ra^{\frac{\gamma+2}{2}} (I-P_v) \nabla_v (\nabla_x f) \|_{L^2(m_0)}^2 \Big\} \\
&\quad
+C \eps^{-1} t^{-1} \Big\{  \| \la v \ra^{\frac{\gamma+\sigma}{2}} \nabla_v f \|_{L^2(m_0)}^2
+  \| \la v \ra^{\frac{\gamma}{2}} P_v \nabla_v (\nabla_v f) \|_{L^2(m_0)}^2
+  \| \la v \ra^{\frac{\gamma+2}{2}} (I-P_v) \nabla_v (\nabla_v f) \|_{L^2(m_0)}^2 \Big\} \\
&\quad
+C \eps^{-1} t^{-1} \Big\{  \| \la v \ra^{\frac{\gamma}{2}} P_v \nabla_v  f \|_{L^2(m_0)}^2
+  \| \la v \ra^{\frac{\gamma+2}{2}} (I-P_v) \nabla_v f \|_{L^2(m_0)}^2 \Big\} \\
&\quad
+ C \eps^{-1} t^{-1} \| f \|_{L^2(m_0)}^2  - \| \nabla_x f \|_{L^2(m_0)}^2.
\eal
$$
Using Cauchy-Schwarz inequality, we also write the following
$$
\bal
2 \alpha_2 t \la \nabla_x f, \nabla_v f \ra_{L^2(m_0)} \leq  \alpha_2 \left(\eps t^2 \|\nabla_x f \|^2_{L^2(m_0)} + C \eps^{-1} \|\nabla_v f \|^2_{L^2(m_0)} \right).
\eal
$$
Moreover, picking up estimates of Lemma~\ref{lem:hypo2}, it follows that: for any $0<\lambda<\lambda_{m,2}$ and $0 < \delta < \lambda_{m,2}-\lambda$, there are $M,R>0$ large enough such that,
$$
\bal
\int (\BB f) f \, m_1^2
&
\le - c_0 \Big\{ \| \la v\ra^{\frac{\gamma}{2}} \, P_v \nabla_v f \|_{L^2(m_1)}^2
+ \| \la v\ra^{\frac{\gamma+2}{2}} \, (I-P_v) \nabla_v f \|_{L^2(m_1)}^2  \Big\} \\
&\quad
- \lambda \| f \|_{L^2(m_1)}^2 - \delta \| \la v \ra^{\frac{\gamma+\sigma}{2}} f \|_{L^2(m_1)}^2,
\eal
$$
also, for some $\eps_0>0$ to be chosen later,
$$
\bal
\int \nabla_v (\BB f) \nabla_v f \, m_0^2
&\le - c_0 \Big\{ \| \la v\ra^{\frac{\gamma}{2}} \, P_v \nabla_v (\nabla_v f) \|_{L^2(m_0)}^2
+ \| \la v\ra^{\frac{\gamma+2}{2}} \, (I-P_v) \nabla_v ( \nabla_v f) \|_{L^2(m_0)}^2  \Big\}\\
&\quad
- \lambda \| \nabla_v f \|_{L^2(m_0)}^2 - \delta \| \la v \ra^{\frac{\gamma+\sigma}{2}} \nabla_v f_t \|_{L^2(m_0)}^2 \\
&\quad
+ C \Big\{ \| \la v\ra^{\frac{\gamma}{2}} \, P_v \nabla_v f \|_{L^2(m_0)}^2
+ \| \la v\ra^{\frac{\gamma+2}{2}} \, (I-P_v) \nabla_v f \|_{L^2(m_0)}^2  \Big\} \\
&\quad
+ C \| f \|_{L^2(m_0)}^2
+C \eps_0^{-1} t^{-1} \| \nabla_v f \|_{L^2(m_0)}^2 + C \eps_0 t \| \nabla_x f \|_{L^2(m_0)}^2,
\eal
$$
and finally
$$
\bal
\int \nabla_x (\BB f) \nabla_x f\, m_0^2
&\le - c_0 \Big\{ \| \la v\ra^{\frac{\gamma}{2}} \, P_v \nabla_v (\nabla_x f) \|_{L^2(m_0)}^2
+ \| \la v\ra^{\frac{\gamma+2}{2}} \, (I-P_v) \nabla_v ( \nabla_x f) \|_{L^2(m_0)}^2  \Big\}\\
&\quad
- \lambda \| \nabla_x f \|_{L^2(m_0)}^2 - \delta \| \la v \ra^{\frac{\gamma+\sigma}{2}} \nabla_x f \|_{L^2(m_0)}^2.
\eal
$$
We choose
$$
\eps_0 = \eps^2, \quad
\alpha_1 := \eps^{5/2}, \quad \alpha_2 := \eps^4, \quad \alpha_3:= \eps^{9/2}.
$$
Therefore, for any $t \in [0,1]$, we can gather previous estimates to obtain
$$
\bal
&\frac{d}{dt}\, \FF(t,f_t) \\
&\le + \left( - c_0 + C \eps^{1/2}  + C \eps^{5/2} + C \eps^{3}  \right) \Big\{ \| \la v\ra^{\frac{\gamma}{2}} \, P_v \nabla_v f_t \|_{L^2(m_1)}^2
+ \| \la v\ra^{\frac{\gamma+2}{2}} \, (I-P_v) \nabla_v f_t \|_{L^2(m_1)}^2  \Big\} \\
&\quad
+ t \eps^{5/2} \left( - c_0   + C \eps^{1/2} \right) \Big\{ \| \la v\ra^{\frac{\gamma}{2}} \, P_v \nabla_v (\nabla_v f_t) \|_{L^2(m_0)}^2
+ \| \la v\ra^{\frac{\gamma+2}{2}} \, (I-P_v) \nabla_v (\nabla_v f_t) \|_{L^2(m_0)}^2  \Big\} \\
&\quad
+ t^3  \eps^{9/2} \left( - c_0   + C \eps^{1/2} \right) \Big\{ \| \la v\ra^{\frac{\gamma}{2}} \, P_v \nabla_v ( \nabla_x f_t) \|_{L^2(m_0)}^2
+ \| \la v\ra^{\frac{\gamma+2}{2}} \, (I-P_v) \nabla_v ( \nabla_x f_t) \|_{L^2(m_0)}^2  \Big\} \\
&\quad
- \lambda \| f_t \|_{L^2(m_1)}^2 - \delta \| \la v \ra^{\frac{\gamma+ \sigma}{2}} f_t \|_{L^2(m_1)}^2 + C t ( \eps^{5/2}  + \eps^{3}) \| f_t \|_{L^2(m_0)}^2 \\
&\quad
- \lambda \eps^{5/2} t  \| \nabla_v f_t \|_{L^2(m_0)}^2
- t  \eps^{5/2} \left( \delta  - C \eps^{1/2}  \right) \| \la v \ra^{\frac{\gamma+ \sigma}{2}} \nabla_v f_t \|_{L^2(m_0)}^2   \\
&\quad
- t^2 \left( \lambda \eps^{9/2} t - C \eps^{9/2}  - \eps^{5}   - C \eps^{9/2} t + \eps^{4}  \right)  \| \nabla_x f_t \|_{L^2(m_0)}^2
- t^3  \eps^{9/2} \left( \delta  - \eps^{1/2}   \right) \|  \la v \ra^{\frac{\gamma+ \sigma}{2}} \nabla_x f_t \|_{L^2(m_0)}^2.
\eal
$$
We then choose $\eps >0$ small enough such that the following conditions are fulfilled:
$$
\left\{
\bal
- c_0  + C \eps^{1/2} + C \eps^{5/2}+ C \eps^{3}  & < -K < 0 , \\
-c_0 + C \eps^{1/2}  & < -K < 0 ,\\
-\lambda + Ct (\eps^{5/2} + \eps^3) & < -K < 0, \\
\delta - C \eps^{1/2} & < -K < 0, \\
C \eps^{9/2}  + \eps^{5}    + C \eps^{9/2} - \eps^{4}  & < -K < 0 .
\eal
\right.
$$
We have then proved that, for any $t \in [0,1]$,
$$
\bal
\frac{d}{dt} \FF(t, f_t)
&\le - K' \Big\{ \| f_t \|_{L^2(m_1)}^2 + \|   \nabla_v f_t \|_{L^2(m_0)}^2 + t^2 \| \nabla_x f \|_{L^2(m_0)}^2 \Big\}
- \delta \| \la v \ra^{\frac{\gamma+\sigma}{2}} f_t \|_{L^2(m_1)}^2,
\eal
$$
which implies
$$
C t^3  \| \nabla_{x,v} f_t \|^2_{L^2(m_0)} \le \FF(t,f_t) \le \FF(0,f_0) = \|f_0\|^2_{L^2(m_1)}.
$$
We deduce
$$
\forall \, t \in (0,1], \quad \| \nabla_{x,v} \SS_\BB(t) f \|_{L^2(m_0)} \le C \, t^{-3/2} \, \|f_0\|_{L^2(m_1)},
$$
and the proof of point (1) for $\ell=1$ is complete.

\medskip
\noindent
{\it Step 2: from $L^1$ to $L^2$.}
We define,
$$
\bal
\GG(t,f_t)
&:= \| f_t \|_{L^1(m_2)}^2
+ \alpha_0 \, t^{N} \widetilde \FF(t,f_t),   \\
\widetilde \FF(t,f_t) &:=
  \| f_t \|_{L^2(m_1)}^2
+ \alpha_1 \, t^2 \| \nabla_v f_t \|_{L^2(m_0)}^2 \\
&\quad
+ \alpha_2 \, t^{4}  \la \nabla_x f_t , \nabla_v f_t \ra_{L^2(m_0)}
+ \alpha_3 \, t^{6}  \| \nabla_x f_t \|_{L^2(m_0)}^2,
\eal
$$
for some $N$ to be chosen later.
Thanks to H\"older and Sobolev inequalities (in $\T^3_x \times \R^3_v$), there holds
$$
\| \la v \ra^q g \|_{L^2}^2 \lesssim \| \nabla_{x,v} g \|_{L^2}^{3/2} \, \| \la v \ra^{4q} g \|_{L^1}^{1/2},
$$
which implies that
\beqn\label{eq:nash}
\bal
\| f \|_{L^2(m_1)}^2
&\lesssim  \|  f \|_{L^1(m_2)}^{1/2} \, \| \nabla_{x,v} (m_0 f) \|_{L^2}^{3/2}   \\
&\lesssim C_\eps t^{-15} \| f \|_{L^1(m_2)}^2
+ \eps t^{5} \| \nabla_{x,v} f \|_{L^2(m_0)}^2
+ \eps t^{5} \| \la v \ra^{\sigma-1} f \|_{L^2(m_0)}^2 \\
&\lesssim C_\eps t^{-15} \| f \|_{L^1(m_2)}^2
+ \eps t^{5} \| \nabla_{x,v} f \|_{L^2(m_0)}^2
+ \eps t^{5} \| \la v \ra^{\frac{\gamma+\sigma}{2}} f \|_{L^2(m_1)}^2,
\eal
\eeqn
where we have used in last line that $ \la v \ra^{\sigma - 1} m_0 \lesssim \la v \ra^{\frac{\gamma+\sigma}{2}} m_1$.
Arguing as in step 1, we have
$$
\bal
\frac{d}{dt} \widetilde \FF(t, f_t)
&\le - K' \Big\{ \| f_t \|_{L^2(m_1)}^2 + \|   \nabla_v f_t \|_{L^2(m_0)}^2 + t^4 \| \nabla_x f \|_{L^2(m_0)}^2 \Big\}
- \delta \| \la v \ra^{\frac{\gamma+\sigma}{2}} f_t \|_{L^2(m_1)}^2.
\eal
$$
Putting together previous estimates it follows
$$
\bal
\frac{d}{d t} \GG (t,f_t)
&\le - K \| f_t \|_{L^1(m_2)}^2 + \alpha_0 N t^{N-1} \widetilde \FF(t,f)
\\&\quad - K' \alpha_0 t^N \Big\{ \| f_t \|_{L^2(m_1)}^2 +  \|  \nabla_v f_t \|_{L^2(m_0)}^2 + t^4 \| \nabla_x f \|_{L^2(m_0)}^2 \Big\}  - \delta \alpha_0 t^N \| \la v \ra^{\frac{\gamma+\sigma}{2}} f_t \|_{L^2(m_1)}^2\\
&\le - K \| f_t \|_{L^1(m_2)}^2 \\
&\quad
+ \alpha_0 N t^{N-1} \| f_t \|_{L^2(m_1)}^2
+ C \alpha_0 N t^{N+1}  \| \nabla_v f_t \|_{L^2(m_0)}^2
+ C \alpha_0 N t^{N+5}  \| \nabla_x f_t \|_{L^2(m_0)}^2 \\
&\quad
- K' \alpha_0 t^N \Big\{ \| f_t \|_{L^2(m_1)}^2 +  \|  \nabla_v f_t \|_{L^2(m_0)}^2 + t^4 \| \nabla_x f \|_{L^2(m_0)}^2 \Big\}
- \delta \alpha_0 t^N \| \la v \ra^{\frac{\gamma+\sigma}{2}} f_t \|_{L^2(m_1)}^2.
\eal
$$
Choose $t_*\in (0,1)$ so that $N t^{N+1} \ll K' t^{N}$ then, for any $t \in [0, t_*]$,
$$
\bal
\frac{d}{d t} \GG (t,f_t)
&\le - K \| f_t \|_{L^1(m_2)}^2
+ C \alpha_0 t^{N-1} \| f_t \|_{L^2(m_1)}^2 - \delta \alpha_0 t^N \| \la v \ra^{\frac{\gamma+\sigma}{2}} f_t \|_{L^2(m_1)}^2 \\
&\quad
- K'' \alpha_0 t^N \Big\{  \| \nabla_v f_t \|_{L^2(m_0)}^2 + t^4 \| \nabla_x f \|_{L^2(m_0)}^2 \Big\}.
\eal
$$
Thanks to \eqref{eq:nash}, for any $t \in [0, t_*]$, we get
$$
\bal
\frac{d}{d t} \GG (t,f_t)
&\le -  (K - C_\eps  \alpha_0 t^{N-16})\| f_t \|_{L^1(m_2)}^2
-  \alpha_0 t^N (\delta - C \eps)\| \la v \ra^{\frac{\gamma+\sigma}{2}} f_t \|_{L^2(m_1)}^2\\
&\quad
- \alpha_0 t^{N+4} (K'' - C \eps ) \| \nabla_{x,v} f \|_{L^2(m_0)}^2
\eal
$$
Taking $N = 16$ and choosing $\eps>0$ small enough then $\alpha_0 >0$ small enough, we get $\frac{d}{d t} \GG (t,f_t) \le 0$ then
$$
\forall\, t \in [0,t_*], \quad
C t^{16} \| f_t \|_{L^2(m_1)}^2 \le \GG (t,f_t) \le \GG (0,f_0) = \| f_0 \|_{L^1(m_2)}^2.
$$
This ends the proof of point (2), using the fact that the norm is propagated for $t > t_*$.

\medskip
\noindent
{\it Step 3: From $L^2$ to $L^\infty$.} Arguing by duality as in Lemma~\ref{lem:hypo4}, the proof follows as in step 2.
\end{proof}

We define the convolution $\SS_1 * \SS_2$ by
$$
(\SS_1 * \SS_2)(t) := \int_0^t \SS_1(\tau) \, \SS_2(t-\tau) \, d\tau,
$$
and, for $n \in \N^*$, we define $\SS^{(*n)}$ by $ \SS^{(n)} = \SS * \SS^{(*(n-1))}$ with $\SS^{(*1)} = \SS$.

\Black

\begin{cor}\label{cor:reg}
Consider hypothesis {\bf (H1)}, {\bf (H2)} or {\bf (H3)}, and spaces $\EE_0, \EE_1$ of the type $E$ or $\EE$ defined in \eqref{def:E} and \eqref{def:EE}.
Then for any $\lambda' < \lambda  < \lambda_{m,p}$ (where $\lambda$ is defined in Lemmas~\ref{lem:hypo}, \ref{lem:hypo4}, \ref{lem:hypo1}, \ref{lem:hypo2} or \ref{lem:hypo3}) there exists $N \in \N$ such that
$$
\| (\AA\SS_\BB)^{(*N)}(t)  \|_{\BBB(\EE_1 , \EE_0 )} \leq C\, e^{- \lambda' t}, \qquad \forall\, t\geq 0.
$$
\end{cor}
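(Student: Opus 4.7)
The proof follows the standard enlargement technique of Gualdani--Mischler--Mouhot: iterate the operator $\UU(t) := \AA \SS_\BB(t)$ and exploit the fact that each iteration produces a simultaneous gain in regularity/integrability (via smoothing of $\SS_\BB$ on short times) and carries an exponential decay factor (via hypodissipativity of $\BB$). The plan is to construct a finite chain of intermediate admissible spaces along which $\UU$ is bounded with an integrable singularity at $t=0$, then to assemble them by convolution.

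The first ingredient is obtained by combining Lemma~\ref{lem:A0} (boundedness of $\AA$ between admissible spaces, with the image carrying the Gaussian weight $\mu^{-1/2}$) with the hypodissipativity results of Lemmas~\ref{lem:hypo}, \ref{lem:hypo1}, \ref{lem:hypo2} and \ref{lem:hypo3}, yielding $\| \SS_\BB(t) \|_{\BBB(\EE)} \leq C e^{-\lambda t}$ for any admissible $\EE$. Composing these gives $\| \UU(t) \|_{\BBB(\EE, \widetilde\EE)} \leq C e^{-\lambda t}$, where $\widetilde\EE$ denotes the analog of $\EE$ but equipped with the weight $\mu^{-1/2}$. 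The second ingredient is Lemma~\ref{lem:reg}, whose hypothesis $\gamma+\sigma>0$ is trivially satisfied by $\mu^{-1/2}$; this provides short-time regularization bounds $\| \SS_\BB(t) \|_{\BBB(\YY_0, \YY_1)} \lesssim t^{-\alpha}$ for pairs of the form $(L^2, H^\ell)$, $(L^1, L^2)$ and $(L^2, L^\infty)$. Combined with the exponential decay, this gives, along a suitable chain $\EE_1 = \XX_0, \XX_1, \ldots, \XX_N = \EE_0$ of admissible Gaussian-weighted spaces,
$$
\| \UU(t) \|_{\BBB(\XX_{i-1}, \XX_i)} \leq C \min(1, t^{-\alpha_i}) \, e^{-\lambda t}, \qquad \alpha_i < \infty.
$$
The chain is built concretely as follows: one first application of $\UU$ converts the weight to $\mu^{-1/2}$ (with no gain in regularity); if needed, one or two applications boost the integrability via the $L^1 \to L^2$ and $L^2 \to L^\infty$ estimates; finally, repeated applications of $\UU$ using the $L^2 \to H^\ell$ estimate raise the regularity up to that of $\EE_0$.

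To conclude, I would write
$$
\UU^{*N}(t) \;=\; \int_{\Delta_{N-1}(t)} \UU(\tau_N) \, \UU(\tau_{N-1}) \cdots \UU(\tau_1) \, d\tau_1 \cdots d\tau_{N-1},
$$
with $\tau_N := t - \tau_1 - \cdots - \tau_{N-1}$, bound the integrand in $\BBB(\EE_1,\EE_0)$ by $\prod_{i=1}^N C \min(1, \tau_i^{-\alpha_i}) \, e^{-\lambda \tau_i}$, and use that each factor $\min(1, s^{-\alpha_i}) e^{-\lambda s}$ belongs to $L^1(\R_+)$. The convolution then yields a bound of the form $C_N (1+t)^{N-1} e^{-\lambda t}$, which is dominated by $C e^{-\lambda' t}$ for any $\lambda' < \lambda$, the polynomial loss being absorbed into the exponential gap $\lambda - \lambda'$.

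The main obstacle is selecting the intermediate chain and the integer $N$ so that every link is covered by Lemma~\ref{lem:reg}: in particular, the auxiliary weights $m_1, m_2$ appearing there must remain admissible for the hypodissipativity lemmas used in the next step of the iteration, and the composition of regularization exponents $\alpha_i$ must stay finite so that convolving $N$ integrable singularities still produces an integrable kernel. Once these bookkeeping details are checked, the decay estimate $C e^{-\lambda' t}$ follows from the elementary convolution argument above.
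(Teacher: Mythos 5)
Your overall plan — compose the boundedness of $\AA$ with the hypodissipativity and short-time smoothing of $\SS_\BB$, then iterate by convolution along a chain of intermediate spaces — is exactly the route the paper takes, except that the paper delegates the convolution bookkeeping to the abstract results \cite[Lemma 2.4]{MM} and \cite[Lemma 2.17]{GMM}. However, your account of why the convolution converges contains a genuine gap. You write the one-step bound as $\| \UU(t) \|_{\BBB(\XX_{i-1},\XX_i)} \le C \min(1, t^{-\alpha_i}) e^{-\lambda t}$ and then assert that each factor lies in $L^1(\R_+)$. The correct one-step bound produced by Lemma~\ref{lem:reg} is $\max(1, t^{-\alpha_i})\, e^{-\lambda t}$ (the smoothing estimate \emph{blows up} as $t\to 0^+$, it does not improve there); once the $\min$ is replaced by the correct $\max$, the factor is \emph{not} in $L^1(\R_+)$, because the exponents appearing in Lemma~\ref{lem:reg} are $3\ell/2$ and $8$, both $\ge 1$, so $t^{-\alpha_i}$ is not integrable near $t=0$. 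Your phrase ``integrable singularity at $t=0$'' is therefore false in this setting, and the $N$-fold convolution cannot be estimated by declaring each factor integrable.

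The mechanism that actually tames the singularity is different: one splits the convolution integral $\UU^{*2}(t) = \int_0^t \UU(t-s)\UU(s)\,ds$ into $[0,t/2]$ and $[t/2,t]$, and on each half uses the singular bound on the factor whose argument is $\ge t/2$ while using a non-singular $\BBB(\XX_i,\XX_i)$ or $\BBB(\XX_{i-1},\XX_{i-1})$ bound on the other factor. This produces $\|\UU^{*2}(t)\|_{\BBB(\XX_{i-1},\XX_i)} \lesssim t\,(t/2)^{-\alpha}\,e^{-\lambda t}$, i.e.\ each convolution \emph{gains a power of $t$}. Iterating, $\|\UU^{*N}(t)\| \lesssim t^{\,N-1-\alpha} e^{-\lambda t}$, which is bounded once $N > \alpha+1$, and the residual polynomial growth is then absorbed into the gap $\lambda - \lambda'$ to yield $C e^{-\lambda' t}$. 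Your claimed endpoint $C_N(1+t)^{N-1}e^{-\lambda t}$ and its absorption are consistent with this, but the reasoning you gave does not produce it. To repair the proposal you should either invoke \cite[Lemma 2.4]{MM} / \cite[Lemma 2.17]{GMM} directly (as the paper does), or spell out the domain-splitting argument above, and in particular note that each intermediate space must admit a non-singular $\BBB(\XX_i,\XX_i)$ bound for $\SS_\BB$ so that the splitting trick is available — which is indeed supplied by Lemmas~\ref{lem:hypo}--\ref{lem:hypo3}.
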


\begin{proof}
It is a consequence of the hypodissipativity properties of $\BB$ (Lemmas~\ref{lem:hypo}, \ref{lem:hypo1}, \ref{lem:hypo2} and \ref{lem:hypo3}), the boundedness of the operator $\AA$ (Lemma~\ref{lem:A0}), and the regularization properties in Lemma~\ref{lem:reg}, together with \cite[Lemma 2.4]{MM} and \cite[Lemma 2.17]{GMM}.
\end{proof}

\subsection{Proof of Theorem~\ref{thm:extension}}
Thanks to the estimates proven in previous section, we can now turn to the proof of Theorem~\ref{thm:extension}.

\begin{proof}[Proof of Theorem~\ref{thm:extension}]
Let $\EE$ be an admissible space defined in \eqref{def:EE} and consider $\ell_0 \ge 1$ large enough such that $E := H^{\ell_0}_{x,v} (\mu^{-1/2})$ defined in \eqref{def:E} satisfies $E \subset \EE$.
Recall that in the small/reference space $E$ we already have a spectral gap in 
Theorem~\ref{theo:gapE}.

Then the proof of Theorem~\ref{thm:extension} is a consequence of the hypo-dissipative properties of $\BB$ in Lemmas~\ref{lem:hypo}, \ref{lem:hypo1}, \ref{lem:hypo2}, \ref{lem:hypo3}, the boundedness of $\AA$ in Lemma~\ref{lem:A0} and the regularizing properties of $(\AA \SS_\BB)^{(*N)}$ in Corollary~\ref{cor:reg},
with which we are able to apply
the ``extension theorem'' from \cite[Theorem 2.13]{GMM} (see also \cite[Theorem 1.1]{MM}).
\end{proof}

\subsection{Proof of Theorem~\ref{thm:SL-regularity}}
We give in this subsection a regularity estimate for the semigroup~$\SS_\LL$.

\begin{proof}[Proof of Theorem~\ref{thm:SL-regularity}]
A key argument in the proof of \cite[Theorem 2.13]{GMM} in order to obtain the exponential decay (that gives point $(iii)$ in Theorem~\ref{thm:extension}) is the following factorization of the semigroup, for any $\ell \in \N^*$,
\beqn\label{SL-factorization}
\SS_{\Lambda}(t) (I-\Pi_0) = \sum_{j=0}^{\ell-1} ((I-\Pi_0) \SS_{\BB} * (\AA \SS_\BB)^{(*j)}  )(t) + (\SS_{\Lambda}(I-\Pi_0) * (\AA \SS_{\BB})^{(*\ell)} )(t),
\eeqn
which has been used with $\ell = N$ given by Corollary~\ref{cor:reg}. We now turn to the proof of \eqref{eq:SLambda-reg}, and recall that $\EE = H^n_x L^2_v(m)$ and $\EE_{-1} = H^n_x (H^{-1}_{v,*}(m))$. For sake of simplicity, in what follows, we denote $e_\lambda(t) := e^{\lambda t}$. We write \eqref{SL-factorization} with $\ell = N+1$
$$
\SS_{\Lambda}(t) (I-\Pi_0) = \sum_{j=0}^{N} ((I-\Pi_0) \SS_{\BB} * (\AA \SS_\BB)^{(*j)}  )(t) + (\SS_{\Lambda}(I-\Pi_0) * (\AA \SS_{\BB})^{(*N)} * (\AA \SS_{\BB}) )(t) ,
$$
so that, for any $\lambda_D < \lambda_{m,2}$ and any $\lambda < \lambda_1$, where $\lambda_1 \le \min \{ \lambda_0 , \lambda_D \}$ is given by Theorem~\ref{thm:extension}, we have
\beqn\label{eq:SjN}
e^{\lambda t} \SS_{\Lambda}(t) (I-\Pi_0) = \sum_{j=0}^{N} \SS_j(t) + \SS_{N+1}(t)
\eeqn
with
$$
\SS_j (t) =  \left( (I-\Pi_0) e_\lambda \SS_{\BB} * (e_\lambda\AA \SS_\BB)^{(*j)}\right) (t), \quad j = 0, \ldots, N,
$$
and
$$
\SS_{N+1}(t) = \left( e_\lambda\SS_{\Lambda}(I-\Pi_0) * (e_\lambda\AA \SS_{\BB})^{(*N)}   * (e_\lambda\AA \SS_\BB) \right) (t).
$$
We now prove that $ \| e^{\lambda t} \SS_{\Lambda}(t) (I-\Pi_0)\|_{\BBB(\EE_{-1} , \EE )} \in L^2_t(\R_+)$ by evaluating each term in \eqref{eq:SjN}, which in turn completes the proof of \eqref{eq:SLambda-reg}. 
Using Lemma~\ref{lem:A0}, we easily observe that thanks to Lemmas~\ref{lem:hypo} and \ref{lem:hypo4} there hold
$$
\| e^{\lambda t} \AA \SS_\BB (t) \|_{ \BB (\EE_{-1}, \EE )} \le C \| e^{\lambda t} \SS_\BB (t) \|_{ \BB (\EE_{-1}, \EE )} ,
$$
and also
$$
\| e^{\lambda t} \AA \SS_\BB (t) \|_{ \BB (\EE, \EE )} \le C \| e^{\lambda t} \SS_\BB (t) \|_{ \BB (\EE, \EE )} \le C e^{- (\lambda_D - \lambda) t},
$$
from which we first obtain
$$
\| e^{\lambda t} \AA \SS_\BB (t) \|_{ \BB (\EE_{-1}, \EE )} \in L^2_t (\R_+), \quad
\| e^{\lambda t} \AA \SS_\BB (t) \|_{ \BB (\EE, \EE )} \in L^1_t (\R_+).
$$
Therefore we deduce
$$
\| \SS_0(t) \|_{ \BB (\EE_{-1}, \EE )} = \| e^{\lambda t} \SS_\BB (t) \|_{ \BB (\EE_{-1}, \EE )} \in L^2_t (\R_+), 
$$
and, for $j=1, \dots, N$, 
$$
\| \SS_j(t) \|_{ \BB (\EE_{-1}, \EE )} 
 \le C \| e^{\lambda t} \SS_\BB (t) \|_{ \BB (\EE, \EE )} * \| (e_\lambda \AA \SS_\BB)^{(*(j-1))} (t) \|_{ \BB (\EE, \EE )}
* \| e^{\lambda t} \AA \SS_\BB (t) \|_{ \BB (\EE_{-1}, \EE )} ,
$$
which implies by induction
$$
\| \SS_j(t) \|_{ \BB (\EE_{-1}, \EE )}  \in L^1_t (\R_+) * L^1_t (\R_+) * L^2_t (\R_+) \subset L^2_t (\R_+).
$$
For the last term we first observe that, thanks to Theorem~\ref{theo:gapE},
$$
\| e^{\lambda t}\SS_{\Lambda}(t)(I-\Pi_0) \|_{\BBB(E,E)} \le C e^{-(\lambda_0 - \lambda)t} \in L^1_t(\R_+),
$$
and also, thanks to Corollary~\ref{cor:reg},
$$
\| (e_\lambda\AA \SS_{\BB})^{(*N)}(t) \|_{\BB(\EE, E)} \le C e^{- (\lambda_D - \lambda) t}
\in L^1_t(\R_+).
$$
These estimates finally yield
$$
\bal
&\| \SS_{N+1}(t) \|_{ \BB (\EE_{-1}, \EE )} \\
&\qquad \le C \| e^{\lambda t}\SS_{\Lambda}(t)(I-\Pi_0) \|_{\BBB(E,E)} * \| (e_\lambda\AA \SS_{\BB})^{(*N)}(t) \|_{\BB(\EE, E)}  * \| e^{\lambda t}\AA \SS_\BB(t) \|_{\BBB(\EE_{-1} , \EE)}
\in L^2_t (\R_+),
\eal
$$
which completes the proof of \eqref{eq:SLambda-reg}.
\end{proof}

\Black

\section{The nonlinear equation}\label{sec:NL}

This section is devoted to the proof of Theorem~\ref{main1}.
We develop a perturbative Cauchy theory for the (nonlinear) Landau equation
using the estimates on the linearized operator obtained in the previous section.

\subsection{Functional spaces}
We recall the following definitions
$$
\| f \|_{H^1_{v,*}(m)}^2 = \| \la v \ra^{\frac{\gamma+\sigma}{2}} f \|_{L^2_v(m)}^2
+ \| \la v \ra^{\frac{\gamma}{2}} P_v \nabla_v f \|_{L^2_v(m)}^2
+ \| \la v \ra^{\frac{\gamma+2}{2}} (I-P_v) \nabla_v f \|_{L^2_v(m)}^2,
$$
and we also define the (stronger) norm
$$
\| f \|_{H^1_{v,**}(m)}^2 =  \| \la v \ra^{\frac{\gamma+2}{2}} f \|_{L^2_v(m)}^2
+ \| \la v \ra^{\frac{\gamma}{2}} P_v \nabla_v f \|_{L^2_v(m)}^2
+ \| \la v \ra^{\frac{\gamma+2}{2}} (I-P_v) \nabla_v f \|_{L^2_v(m)}^2.
$$
Recall the space $\HH^3_x L^2_v(m)$ defined in \eqref{HH3xL2v} associated to the norm
$$
\bal
\| f \|_{\HH^3_x L^2_v(m)}^2 = \sum_{0 \le j \le 3} \| \nabla^j_x f \|_{L^2_x L^2_v (m \la v \ra^{-j(1 - \sigma/2)})}^2,
\eal
$$
and also the space $\HH^3_x ( H^1_{v,*}(m) )$ defined in \eqref{HH3xH1v*} by
$$
\bal
\| f \|_{\HH^3_x (H^1_{v,*}(m))}^2 
&= \sum_{0 \le j \le 3} \| \nabla^j_x f \|_{L^2_x ( H^1_{v,*} (m \la v \ra^{-j(1 - \sigma/2)}))}^2. \\
&= \sum_{0 \le j \le 3} \int_{\T^3_x} \| \nabla^j_x f \|_{ H^1_{v,*} (m \la v \ra^{-j(1 - \sigma/2)})}^2. 
\eal
$$
We define in a similar way the space $\HH^3_x ( H^1_{v,**}(m))$ using the norm $H^1_{v,**}(m)$ (instead of $H^1_{v,*}(m)$).
We also define the negative Sobolev space $\HH^3_x ( H^{-1}_{v,*}(m) )$ by duality in the following way
\beqn\label{HH3xH-1v*}
\bal
\| f \|_{\HH^3_x ( H^{-1}_{v,*}(m) )} 
&:= \sup_{\| \phi \|_{\HH^3_x ( H^1_{v,*}(m) )} \le 1} \;
 \la f , \phi \ra_{\HH^3_x L^2_v(m)} \\
&:= \sup_{\| \phi \|_{\HH^3_x ( H^1_{v,*}(m) )} \le 1}
 \sum_{0 \le j \le 3} \; \la \nabla^j_x f , \nabla^j_x \phi \ra_{L^2_x L^2_v (m \la v \ra^{-j(1 - \sigma/2)})}.
\eal
\eeqn

\medskip

The results on the linearized operator $\Lambda$ in Theorems~\ref{thm:extension} and \ref{thm:SL-regularity} are stated for spaces of the type $H^3_x L^2_v (m)$, but they can be easily adapted for the spaces $\HH^3_x L^2_v (m)$ above, more precisely we have:

\begin{cor}\label{cor:decay&regularity}
Consider hypothesis {\bf (H1), (H2) or (H3)} and some weight function $m$, with the additional assumption $k > \gamma + 5 + 3/2$ in the case of polynomial weight $m = \la v \ra^k$. Then for any $\lambda < \lambda_{m,2}$ and any $\lambda_1 \le \min \{ \lambda_0, \lambda  \}$, there exists a constant $C>0$ such that
$$
\forall \, t \ge 0, \, \forall f \in \HH^3_x L^2_v (m), \quad
\| \SS_{\Lambda}(t) (I - \Pi_0) f \|_{\HH^3_x L^2_v (m)} \le C \, e^{- \lambda_1 t} \, \|  (I - \Pi_0) f \|_{\HH^3_x L^2_v (m)} .
$$ 
Moreover, for any $\lambda < \lambda_1$,
$$
\int_0^\infty e^{2 \lambda t} \, \| \SS_{\Lambda}(t) (I - \Pi_0) f \|_{\HH^3_x L^2_v (m)}^2 \, dt \le C \| (I - \Pi_0) f \|_{\HH^3_x (H^{-1}_{v,*} (m) )}^2.
$$

\end{cor}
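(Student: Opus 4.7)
The plan is to reduce everything to Theorems~\ref{thm:extension} and~\ref{thm:SL-regularity}, applied in a family of $L^2_{x,v}$-type spaces with $v$-weights that decrease as the order of $x$-derivative increases. Set $m_j := m \la v \ra^{-j(1-\sigma/2)}$ for $j=0,1,2,3$, so that by definition
$$
\| f \|_{\HH^3_x L^2_v(m)}^2 = \sum_{j=0}^3 \| \nabla_x^j f \|_{L^2_x L^2_v(m_j)}^2.
$$
Two elementary facts will drive the whole argument. First, since $\Lambda = \LL - v \cdot \nabla_x$ has coefficients depending only on $v$, $\nabla_x$ commutes with $\Lambda$, hence with $\SS_\Lambda(t)$. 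Second, the explicit formula \eqref{eq:Pi0} combined with integration by parts on $\T^3$ shows that $\Pi_0 \nabla_x^j f = 0$ for every $j \ge 1$. Combining the two yields the key identity
$$
\nabla_x^j \, \SS_\Lambda(t)(I-\Pi_0) f \;=\; \SS_\Lambda(t)(I-\Pi_0) \nabla_x^j f, \qquad j=0,1,2,3.
$$

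I would then check that each weight $m_j$ is still admissible in the sense of Subsection~\ref{subsec:hypm} with abscissa $\lambda_{m_j,2} = \lambda_{m,2}$. For a polynomial weight $m = \la v \ra^k$ (where $\sigma=0$), $m_j = \la v \ra^{k-j}$ satisfies {\bf (H1)(i)} or {\bf (H2)(i)} iff $k - j > \gamma + 7/2$ for $j \le 3$, which is exactly the strengthened hypothesis $k > \gamma + 5 + 3/2$ of the corollary; for $m = e^{r\la v \ra^2}$ one has $\sigma=2$ and $m_j = m$; for a stretched exponential the extra polynomial factor is subdominant in $|v|$ and does not alter the leading-order asymptotics computed in Lemma~\ref{lem:varphi}, so the hypodissipativity constants and abscissa are unchanged. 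Applying Theorem~\ref{thm:extension} in $\EE_j := L^2_x L^2_v(m_j)$ to $(I-\Pi_0)\nabla_x^j f$, using the commutation identity above, and summing the squared estimates over $j=0,\dots,3$ yields the first claim.

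For the regularity estimate, the same scheme with Theorem~\ref{thm:SL-regularity} (with dual space $\EE_{j,-1} := L^2_x H^{-1}_{v,*}(m_j)$) produces
$$
\int_0^\infty e^{2\lambda t} \| \SS_\Lambda(t)(I-\Pi_0) f \|_{\HH^3_x L^2_v(m)}^2 \, dt \;\le\; C \sum_{j=0}^3 \| \nabla_x^j (I-\Pi_0) f \|_{L^2_x H^{-1}_{v,*}(m_j)}^2.
$$
It remains to identify the right-hand side with $\| (I-\Pi_0) f \|_{\HH^3_x (H^{-1}_{v,*}(m))}^2$ defined in \eqref{HH3xH-1v*}. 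This follows from the direct-sum structure of $\HH^3_x(H^1_{v,*}(m))$ across orders of $x$-derivative: one direction is Cauchy--Schwarz in $j$ combined with the local $H^1_{v,*}/H^{-1}_{v,*}$ duality; the reverse inequality is obtained by testing the dual definition against a $\phi$ whose only nontrivial component lies at a prescribed order $j$.

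The main bookkeeping obstacle I anticipate is the verification, in the stretched-exponential case, that multiplying $m$ by $\la v \ra^{-\alpha}$ with $\alpha \in \{0, 1-s/2, 2(1-s/2), 3(1-s/2)\}$ produces only subleading corrections in the functions $\varphi_{m,p}$ and $\tilde\varphi_{m,p}$. This amounts to re-running the computation of Lemma~\ref{lem:varphi} and observing that the extra terms $\bar a_{ij}$, $\bar b_i$ acting on the logarithmic derivatives of $\la v \ra^{-\alpha}$ contribute at most $O(\la v \ra^{\gamma})$, which is absorbed into the gain $\delta \la v \ra^{\gamma+\sigma}$. Once this routine check is in place, the transfer of Theorems~\ref{thm:extension} and~\ref{thm:SL-regularity} to the weights $m_j$ is mechanical, and summing over $j$ closes the argument.
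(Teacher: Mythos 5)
Your overall scheme — decompose $\HH^3_x L^2_v(m)$ as the $\ell^2$-sum over $j$ of $L^2_x L^2_v(m_j)$ with $m_j := m \la v\ra^{-j(1-\sigma/2)}$, exploit that $\nabla_x$ commutes with $\Lambda$ and that $\Pi_0 \nabla_x^j f = 0$ for $j\ge 1$, then apply Theorems~\ref{thm:extension} and~\ref{thm:SL-regularity} with each weight $m_j$ and sum — is exactly the adaptation the paper alludes to, and it yields the semigroup decay estimate cleanly.

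The regularity estimate, however, contains a genuine gap in your final duality step. Your scheme produces the bound by
$\sum_{j=0}^3 \| \nabla_x^j (I-\Pi_0) f \|_{L^2_x H^{-1}_{v,*}(m_j)}^2$,
and you need to dominate this by $\| (I-\Pi_0) f \|_{\HH^3_x(H^{-1}_{v,*}(m))}^2$ as defined in \eqref{HH3xH-1v*}. Cauchy--Schwarz gives the inequality $\| f \|_{\HH^3_x(H^{-1}_{v,*}(m))} \le \bigl(\sum_j \|\nabla_x^j f\|_{L^2_x H^{-1}_{v,*}(m_j)}^2\bigr)^{1/2}$, which is the \emph{wrong} direction for your argument: you need the reverse. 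And your justification for the reverse — ``testing the dual definition against a $\phi$ whose only nontrivial component lies at a prescribed order $j$'' — is not available: $\HH^3_x(H^1_{v,*}(m))$ is \emph{not} a direct sum indexed by $j$. The quantities $\nabla_x^j\phi$ in \eqref{HH3xH-1v*} are all generated from a single function $\phi$ by differentiation and cannot be prescribed independently; in particular there is no $\phi$ with $\nabla_x^j\phi \ne 0$ for one value of $j$ only, so the intended test functions do not exist. The desired reverse inequality is plausibly true — passing to Fourier series in $x$, it reduces, mode by mode, to the observation that the ratio of the $H^1_{v,*}(m_j)$ quadratic form to the $L^2_v(m_j)$ pairing is (up to lower-order commutator corrections from $\nabla_v$ acting on $\la v\ra^{-j(1-\sigma/2)}$) independent of $j$ — but this requires an actual argument, which your proposal does not supply. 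A cleaner route that avoids the norm comparison altogether is to re-run the duality argument of Lemma~\ref{lem:hypo4} with the $\HH^3_x L^2_v(m)$ inner product in place of $H^n_x L^2_v(m)$: this produces the dual norm $\|\cdot\|_{\HH^3_x(H^{-1}_{v,*}(m))}$ directly and never introduces the direct-sum norm.

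A smaller point: your claim that $\lambda_{m_j,2} = \lambda_{m,2}$ is false for \textbf{(H2)(i)}: there $\lambda_{m_j,2} = 2\big[(k-j) - 3/2\big]$ decreases with $j$, so the admissible range of $\lambda$ should really be $\lambda < \lambda_{m_3,2}$ rather than $\lambda < \lambda_{m,2}$. This is harmless for the applications in Section~\ref{sec:NL} (which under \textbf{(H0)} take $\lambda$ well below the spectral gap and use $m_0$ non-polynomial or with a large enough $k$), but your blanket assertion is inaccurate as stated.
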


\Black

\subsection{Dissipative norm for the linearized equation}
We construct now a norm for which the linearized semigroup $\SS_\Lambda(t)$ is dissipative, with a rate as close as we want to the optimal rate decay from Theorem~\ref{thm:extension}, and also has a stronger dissipativity property.

\begin{prop}\label{prop:dissipative}
Consider some weight function $m$ satisfying {\bf (H0)}, and let $X := \HH^3_x L^2_v(m)$ and $Y := \HH^3_x (H^1_{v,*}(m))$. Consider another weight function $\bar m$ satisfying {\bf (H1)-(H2)-(H3)} with $\bar m \lesssim m \la v \ra^{-(1-\sigma/2)}$ and denote $\bar X := \HH^3_x L^2_{v}( \bar m)$.

Define for any $\eta >0$ and any $\lambda_2 < \lambda_1$ (where $\lambda_1 >0$ is the optimal rate in Theorem~\ref{thm:extension}) the equivalent norm on $X$
\beqn\label{norm-diss}
\Nt f \Nt_{X}^2 := \eta \| f \|_{X}^2 + \int_0^\infty \| \SS_{\Lambda}(\tau) e^{\lambda_2 \tau} f \|_{\bar X}^2 \, d\tau.
\eeqn
Then there is $\eta>0$ small enough such that the solution $f_t = \SS_\Lambda(t) f$ to the linearized equation satisfies, for any $t \ge 0$ and some constant $K>0$,
$$
\frac12\frac{d}{dt} \Nt \SS_{\Lambda}(t) f \Nt_{X}^2
\le - \lambda_2 \Nt \SS_{\Lambda}(t) f \Nt_{X}^2  - K \| \SS_{\Lambda}(t) f \|_{Y}^2, \quad \forall\, f \in X, \, \Pi_0 f=0.
$$
\end{prop}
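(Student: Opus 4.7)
First I would check that $\Nt\cdot\Nt_X$ is genuinely equivalent to $\|\cdot\|_X$ on the invariant subspace $\ker\Pi_0$. The lower bound $\Nt f\Nt_X^2\geq\eta\|f\|_X^2$ is immediate. For the upper bound, $\bar m\lesssim m$ gives $\|g\|_{\bar X}\lesssim\|g\|_X$, and combined with the decay $\|\SS_\Lambda(\tau)f\|_X\lesssim e^{-\lambda_1\tau}\|f\|_X$ from Corollary~\ref{cor:decay&regularity} (which applies since $\Pi_0 f=0$), the integral in \eqref{norm-diss} is finite with a bound proportional to $\|f\|_X^2$ because $\lambda_2<\lambda_1$.

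Setting $f_t:=\SS_\Lambda(t)f$ and $I(g):=\int_0^\infty e^{2\lambda_2\tau}\|\SS_\Lambda(\tau)g\|_{\bar X}^2\,d\tau$, the semigroup property together with the change of variable $s=t+\tau$ yields the identity
\begin{equation*}
\tfrac{d}{dt}I(f_t)=-2\lambda_2\,I(f_t)-\|f_t\|_{\bar X}^2,
\end{equation*}
so that $\tfrac12\tfrac{d}{dt}\Nt f_t\Nt_X^2=\eta\,\la\Lambda f_t,f_t\ra_X-\lambda_2 I(f_t)-\tfrac12\|f_t\|_{\bar X}^2$. The heart of the proof is a sharp energy estimate for $\la\Lambda f,f\ra_X$. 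Splitting $\Lambda=\AA+\BB$ as in \eqref{eq:AB} with $M,R$ large, the proof of Lemma~\ref{lem:hypo2} applied in the anisotropic space $\HH^3_x L^2_v(m)$ (treating each level of $x$-derivative separately, since $\nabla_x$ commutes with $\BB$) produces \emph{simultaneously} an $X$-dissipation and the full $Y$-dissipation,
\begin{equation*}
\la\BB f,f\ra_X\leq-\lambda\|f\|_X^2-K_B\|f\|_Y^2,
\end{equation*}
for any $\lambda<\lambda_{m,2}$ (in particular one with $\lambda>\lambda_2$) upon taking $M,R$ large enough. For the $\AA$ contribution, Lemma~\ref{lem:A0} combined with the Gaussian decay of $\mu$ and $\partial_{ij}\mu$ in $\AA_0 f=(a_{ij}*f)\partial_{ij}\mu-(c*f)\mu$ plus the compact support of $\chi_R$ deliver the sharper bound $\|\AA f\|_X\lesssim\|f\|_{\bar X}$, so that by Cauchy--Schwarz $|\la\AA f,f\ra_X|\leq C_\AA\|f\|_{\bar X}\|f\|_X$.

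Combining and applying Young's inequality $\eta C_\AA\|f_t\|_{\bar X}\|f_t\|_X\leq\tfrac12\|f_t\|_{\bar X}^2+\tfrac{\eta^2 C_\AA^2}{2}\|f_t\|_X^2$ allows the negative $-\tfrac12\|f_t\|_{\bar X}^2$ coming from $I$ to absorb exactly the bad $\bar X$-term from $\AA$, yielding
\begin{equation*}
\tfrac12\tfrac{d}{dt}\Nt f_t\Nt_X^2\leq-\eta\bigl(\lambda-\tfrac{\eta C_\AA^2}{2}\bigr)\|f_t\|_X^2-\eta K_B\|f_t\|_Y^2-\lambda_2 I(f_t).
\end{equation*}
Choosing $\eta>0$ small enough so that $\lambda-\eta C_\AA^2/2\geq\lambda_2$ gives the announced inequality with $K=\eta K_B$. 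The main technical obstacle I expect is the sharpened boundedness $\|\AA f\|_X\lesssim\|f\|_{\bar X}$, and the simultaneous extraction of both $X$- and $Y$-dissipations from $\la\BB f,f\ra_X$ in the anisotropic multi-weight space $\HH^3_x L^2_v(m)$, whose $x$-derivative levels carry different velocity weights differing by factors $\la v\ra^{-(1-\sigma/2)}$ that must be threaded carefully through the computation of Lemma~\ref{lem:hypo2}.
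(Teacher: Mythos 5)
Your proposal follows the same overall structure as the paper's proof: the equivalence of norms via Corollary~\ref{cor:decay&regularity}, the identity $\tfrac{d}{dt}I(f_t)=-2\lambda_2 I(f_t)-\|f_t\|_{\bar X}^2$ (which the paper obtains by the same shift $\tau\mapsto\tau+t$ and boundary evaluation), the splitting $\Lambda=\AA+\BB$, the hypodissipativity estimate $\la\BB f,f\ra_X\le-\lambda\|f\|_X^2-K\|f\|_Y^2$ deduced by running the computations of Lemmas~\ref{lem:hypo} and~\ref{lem:hypo2} level-by-level in $x$ (using that $\nabla_x$ commutes with $\BB$), and absorption of the $\AA$-contribution by $-\tfrac12\|f_t\|_{\bar X}^2$ for $\eta$ small. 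The only departure is cosmetic: you bound the $\AA$-term by $|\la\AA f,f\ra_X|\le C_\AA\|f\|_{\bar X}\|f\|_X$ via Cauchy--Schwarz and then invoke Young's inequality, whereas the paper asserts $\la\AA f_t,f_t\ra_X\le C\|f_t\|_{\bar X}^2$ directly (there is an evident typo in the paper's display, missing a square on the right-hand side, but the subsequent line makes the intended quadratic bound clear). Your version bypasses the need to verify that the stronger bound is compatible with all admissible $\bar m$, and both routes yield the same final inequality once $\eta$ is chosen small with respect to $\lambda-\lambda_2>0$, so this is a correct and essentially equivalent argument.
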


\begin{proof}
First we remark that the norm $\Nt \cdot \Nt_{\HH^3_x L^2_v(m)}$ is equivalent to the norm $\| \cdot \|_{\HH^3_x L^2_v(m)}$ defined in \eqref{HH3xL2v} for any $\eta >0$ and any $\lambda_2 < \lambda_1$. Indeed, using Corollary~\ref{cor:decay&regularity}, we have
$$
\bal
\eta \| f \|_{\HH^3_x L^2_v(m)}^2
&\le \Nt f \Nt_{\HH^3_x L^2_v(m)}^2 = \eta \| f \|_{\HH^3_x L^2_v(m)}^2  + \int_0^\infty \| \SS_{\Lambda}(\tau) e^{\lambda_2 \tau} f \|_{\HH^3_x L^2_v(\bar m)}^2 \, d\tau \\
&\le \eta \| f \|_{\HH^3_x L^2_v(m)}^2 + \int_0^\infty C^2 e^{- 2 (\lambda_1 - \lambda_2) \tau}  \| f \|_{\HH^3_x L^2_v(\bar m)}^2 \, d\tau \le (\eta + C) \| f \|_{\HH^3_x L^2_v(m)}^2.
\eal
$$
We now compute, denoting $f_t = \SS_\Lambda(t) f$,
$$
\bal
\frac12\frac{d}{dt} \Nt f_t \Nt_{\HH^3_x L^2_v(m)}^2
 = \eta \la \Lambda f_t \,  f_t \ra_{\HH^3_x L^2_v(m)}
 + \frac12 \int_0^\infty \frac{\partial}{\partial t} \| \SS_\Lambda(\tau) e^{\lambda_2 t} f_{t} \|_{\HH^3_x L^2_v(\bar m)}^2 \, d\tau
 =: I_1 + I_2.
\eal
$$
For $I_1$ we write $\Lambda = \AA + \BB$.
Arguing exactly as in Section~\ref{sec:lin}, more precisely Lemma~\ref{lem:A0}, we first obtain that $\AA \in \BB({\HH^3_x L^2_v(\bar m)}, \HH^3_x L^2_v(\mu^{-1/2}))$, whence
$$
\la \AA f_t , f_t \ra_{\HH^3_x L^2_v(m)} \le C \| f_t \|_{_{\HH^3_x L^2_v(\bar m)}}.
$$
Moreover, repeating the estimates for the hypodissipativity of $\BB$ in Lemmas~\ref{lem:hypo} and \ref{lem:hypo2} we easily get, for any $\lambda_2 \le \lambda < \lambda_{m,2}$ and some $K>0$,
$$
\bal
\la \BB f , f \ra_{\HH^3_x L^2_v(m)}
&\le - \lambda \| f \|_{\HH^3_x L^2_v(m)}^2 - K \| f \|_{\HH^3_x (H^1_{v,*}(m))}^2,
\eal
$$
therefore it follows
$$
I_1
\le  - \lambda \eta \| f_t \|_{\HH^3_x L^2_v(m)}^2 - \eta K \| f_t \|_{\HH^3_x (H^1_{v,*}(m))}^2 + \eta C \| f_t \|_{\HH^3_x L^2_v(\bar m)}^2 .
$$
The second term is computed exactly
$$
\bal
I_2
&= \frac12 \int_0^\infty \frac{\partial}{\partial t} \| \SS_{\Lambda}(\tau + t) e^{\lambda_2 \tau} f \|_{\HH^3_x L^2_v(\bar m)}^2 \, d\tau  \\
&= \frac12 \int_0^\infty \frac{\partial}{\partial \tau} \| \SS_{\Lambda}(\tau + t) e^{\lambda_2 \tau} f \|_{\HH^3_x L^2_v(\bar m)}^2 \, d\tau
- \lambda_2 \int_0^\infty \| \SS_\Lambda(\tau) e^{\lambda_2 \tau} f_{t} \|_{\HH^3_x L^2_v(\bar m)}^2 \, d\tau \\
&= \frac12\left[  \| \SS_\Lambda(\tau) e^{\lambda_2 \tau} f_{t} \|_{\HH^3_x L^2_v(\bar m)}^2  \right]_{\tau=0}^{\tau=+\infty} - \lambda_2 \int_0^\infty \| \SS_\Lambda(\tau) e^{\lambda_2 \tau} f_{t} \|_{\HH^3_x L^2_v(\bar m)}^2 \, d\tau \\
&= -\frac12  \| f_{t} \|_{\HH^3_x L^2_v(\bar m)}^2  - \lambda_2 \int_0^\infty \| \SS_\Lambda(\tau) e^{\lambda_2 \tau} f_{t} \|_{\HH^3_x L^2_v(\bar m)}^2 \, d\tau
\eal
$$
where we have used the semigroup decay from Corollary~\ref{cor:decay&regularity}.

Gathering previous estimates and using that $\lambda \ge \lambda_2$
we obtain
$$
\bal
I_1 + I_2
&\le -\lambda_2 \left\{ \eta \| f_t \|_{\HH^3_x L^2_v(m)}^2 + \int_0^\infty \| \SS_\Lambda(\tau) e^{\lambda_2 \tau} f_{t} \|_{\HH^3_x L^2_v(\bar m)}^2 \, d\tau  \right\} \\
&\quad
 - \eta K \| f_t \|_{\HH^3_x (H^1_{v,*}(m))}^2 + \eta C\| f_t \|_{\HH^3_x L^2_v(\bar m)}^2
- \frac12\| f_t \|_{\HH^3_x L^2_v(\bar m)}^2.
\eal
$$
We complete the proof choosing $\eta>0$ small enough.
\end{proof}

\subsection{Nonlinear estimates}
We prove in this section some estimates for the nonlinear operator $Q$.
We will use the following auxiliary results.
\begin{lem}\label{lem:Aalpha}
Let $-3<\alpha<0$ and $\theta>3$. Then
$$
A_\alpha(v) := \int_{\R^3} |v-v_*|^\alpha \, \la v_* \ra^{-\theta} \, dv_* \lesssim \la v \ra^\alpha.
$$
\end{lem}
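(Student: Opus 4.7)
The plan is to split the integral according to the size of $|v-v_*|$ relative to $\langle v\rangle$, handling the singularity of the kernel separately from its decay at infinity. The estimate is of a familiar convolution type: a locally integrable singularity ($\alpha > -3$ ensures local integrability in dimension $3$) convolved against a rapidly decaying profile ($\theta > 3$ ensures integrability at infinity).

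More precisely, I would decompose $\R^3 = \Omega_1 \cup \Omega_2$ with
$$
\Omega_1 := \{v_* : |v-v_*| \le \langle v\rangle/2\}, \qquad \Omega_2 := \{v_* : |v-v_*| > \langle v\rangle/2\},
$$
and bound $A_\alpha(v) = I_1(v) + I_2(v)$ accordingly. On $\Omega_2$ the kernel satisfies $|v-v_*|^\alpha \le C\langle v\rangle^\alpha$ (using $\alpha < 0$), so
$$
I_2(v) \lesssim \langle v\rangle^\alpha \int_{\R^3} \langle v_*\rangle^{-\theta}\, dv_* \lesssim \langle v\rangle^\alpha,
$$
where the last integral is finite because $\theta > 3$.

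On $\Omega_1$, the triangle inequality forces $|v_*| \ge |v| - \langle v\rangle/2 \gtrsim \langle v\rangle$ (one should check the small-$|v|$ case separately, where $\langle v\rangle \sim 1$ and everything is uniformly bounded), so $\langle v_*\rangle^{-\theta} \lesssim \langle v\rangle^{-\theta}$. Changing variables to $w = v-v_*$ then gives
$$
I_1(v) \lesssim \langle v\rangle^{-\theta} \int_{|w|\le \langle v\rangle/2} |w|^\alpha \, dw \lesssim \langle v\rangle^{-\theta}\, \langle v\rangle^{3+\alpha} = \langle v\rangle^{\alpha - (\theta - 3)} \lesssim \langle v\rangle^\alpha,
$$
since $\alpha > -3$ makes $|w|^\alpha$ locally integrable, and $\theta > 3$ gives the final inequality.

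There is no real obstacle here — the only minor point to watch is the small-$|v|$ regime, where one should simply note that $A_\alpha(v)$ is a continuous bounded function on any compact set (the singularity at $v_* = v$ is integrable, and $\langle v_*\rangle^{-\theta}$ kills the tail), so $A_\alpha(v) \lesssim 1 \sim \langle v\rangle^\alpha$ uniformly for $|v| \le 1$. Combining the two regimes yields the claim.
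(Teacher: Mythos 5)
Your proof is correct and follows essentially the same decomposition as the paper: split into $|v-v_*|$ small versus large relative to $\la v \ra$, use $\alpha<0$ on the far region and local integrability of $|w|^\alpha$ (with $\theta>3$) on the near region, and treat small $|v|$ separately. The only difference is the choice of radius ($\la v \ra/2$ versus the paper's $\la v\ra/4$), which is immaterial.
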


\begin{proof}
Let $|v| \le 1/2$, thus $|v_*| + 1/2 \le 1 + |v-v_*|$ and we get
$$
A_\alpha(v) = \int_{\R^3} |v_*|^\alpha \, \la v-v_* \ra^{-\theta} \, dv_*
\lesssim \int_{\R^3} |v_*|^\alpha \, \la v_* \ra^{-\theta} \, dv_* \lesssim \la v \ra^\alpha.
$$
Consider now $|v| >1/2$ and split the integral into two regions: $|v-v_*| > \la v \ra/4$ and $|v-v_*| \le \la v \ra/4$. For the first region we obtain
$$
\int_{\R^3} {\mathbf 1}_{|v-v_*| > \frac{\la v \ra}{4}} \, |v-v_*|^\alpha \, \la v_* \ra^{-\theta} \, dv_* \lesssim \la v \ra^\alpha \int_{\R^3} \la v_* \ra^{-\theta} \, dv_*
\lesssim \la v \ra^\alpha.
$$
For the second region, $|v| >1/2$ and $|v-v_*| \le \la v \ra/4$ imply $|v_*| \ge |v|/4$, hence
$$
\int_{\R^3} {\mathbf 1}_{|v-v_*| \le \frac{\la v \ra}{4}} \, |v-v_*|^\alpha \, \la v_* \ra^{-\theta} \, dv_*
\lesssim \la v \ra^{-\theta} \int_{\R^3} {\mathbf 1}_{|v-v_*| \le \frac{\la v \ra}{4}} \, |v-v_*|^\alpha  \, dv_*
\lesssim \la v \ra^{-\theta + \alpha + 3}
\lesssim \la v \ra^{\alpha}.
$$

\end{proof}

\begin{lem}\label{lem:abc*f}
There holds:
\begin{enumerate}[(i)]

\item For any $\theta > \gamma + 4 + 3/2$
$$
|(a_{ij}*f)(v) \, v_i v_j|
+ |(a_{ij}*f)(v) \, v_i |
+ |(a_{ij}*f)(v)  |   \lesssim \la v \ra^{\gamma+2} \, \| f \|_{L^2_v (\la v \ra^{\theta})}.
$$

\item For any $\theta' > (\gamma + 1)_+ + 3/2$ (where $x_+ := \max \{ x,0\}$)
$$
|(b_{j}*f)(v) |  \lesssim \la v \ra^{\gamma+1} \, \| f \|_{L^2_v (\la v \ra^{\theta'})}.
$$

\item If $\gamma \in [0,1]$, for any $\theta'' > \gamma + 3/2$
$$
|(c*f)(v)| \lesssim \la v \ra^{\gamma} \, \| f \|_{L^2_v (\la v \ra^{\theta''})}.
$$

\item If $\gamma \in [-2,0)$, for any $p > \frac{3}{3+\gamma}$ and $\theta'' > 3(1-1/p)$
$$
|(c*f)(v)| \lesssim \la v \ra^{\gamma} \, \| f \|_{L^p_v (\la v \ra^{\theta''})}.
$$
In particular, when $\gamma \in (-3/2,0)$ we can choose $p=2$ and $\theta''>3/2$; and when $\gamma \in [-2,-3/2]$ we can choose $p=4$ and $\theta''>9/4$.

\end{enumerate}
\end{lem}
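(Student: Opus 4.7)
The key algebraic observation for (i) is that the matrix $a_{ij}(w)$ annihilates its argument, i.e.\ $a_{ij}(w)\,w_i = 0$, as can be read off from \eqref{eq:aij}. Writing $v = (v-v_*) + v_*$ and expanding, this identity collapses
\[
a_{ij}(v-v_*)\, v_i v_j = a_{ij}(v-v_*)\, v_{*i} v_{*j},
\qquad a_{ij}(v-v_*)\, v_i = a_{ij}(v-v_*)\, v_{*i},
\]
so in all three sub-cases the growing factors in $v$ are transferred onto polynomial factors in $v_*$. Combined with $|a_{ij}(v-v_*)| \lesssim |v-v_*|^{\gamma+2}$ and the Peetre-type inequality $|v-v_*|^{\gamma+2} \lesssim \la v \ra^{\gamma+2}\la v_*\ra^{\gamma+2}$ (valid because $\gamma+2 \ge 0$ in our range), the worst of the three terms is the one with two $v_*$ factors, yielding pointwise
\[
|(a_{ij}*f)(v)\, v_i v_j| \lesssim \la v \ra^{\gamma+2} \int_{\R^3} \la v_*\ra^{\gamma+4}\, |f(v_*)|\, dv_*.
\]
A Cauchy--Schwarz against the weight $\la v_*\ra^{\theta}$ then converges precisely under $\theta > \gamma + 4 + 3/2$, which is the threshold stated. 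The main subtlety of the whole lemma is that one cannot afford the naive estimate $|v|^2 \lesssim \la v\ra^2$ in place of the cancellation, since that would pollute the left-hand side with an extra $\la v \ra^2$; the identity $a_{ij}(w)w_i=0$ is essential in order to land on the correct power $\la v \ra^{\gamma+2}$.

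For (ii), the kernel is $|b_j(v-v_*)| \lesssim |v-v_*|^{\gamma+1}$, and I would split on the sign of $\gamma+1$. When $\gamma \ge -1$ the exponent is nonnegative and Peetre gives $|v-v_*|^{\gamma+1} \lesssim \la v \ra^{\gamma+1} \la v_*\ra^{\gamma+1}$, after which a direct Cauchy--Schwarz yields the bound with any $\theta' > \gamma + 1 + 3/2$. When $\gamma < -1$ the exponent is negative and the kernel is singular; here I would apply Cauchy--Schwarz with the weight $\la v_*\ra^{-\theta'}$ pulled inside and invoke Lemma~\ref{lem:Aalpha} with $\alpha = 2(\gamma+1) \in (-3,0)$ and $\theta = 2\theta' > 3$ to obtain $(\int |v-v_*|^{2(\gamma+1)}\la v_*\ra^{-2\theta'}\,dv_*)^{1/2} \lesssim \la v \ra^{\gamma+1}$, which matches the claim under the weaker condition $\theta' > 3/2 = (\gamma+1)_+ + 3/2$.

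Parts (iii) and (iv) follow the same pattern on the function $c$. For $\gamma \in [0,1]$ the kernel $|c(v-v_*)| \lesssim |v-v_*|^\gamma$ is nonsingular, and Peetre plus Cauchy--Schwarz gives (iii) with $\theta'' > \gamma + 3/2$. For $\gamma \in [-2,0)$ the kernel is singular, so I would instead use H\"older with conjugate exponent $p'$: the integral $\int |v-v_*|^{\gamma p'}\la v_*\ra^{-\theta'' p'}\,dv_*$ converges under $\gamma p' > -3$, which rearranges exactly to $p > 3/(3+\gamma)$, and Lemma~\ref{lem:Aalpha} controls it by $\la v \ra^{\gamma p'}$; taking the $1/p'$ power produces the desired prefactor $\la v \ra^\gamma$, with $\theta'' > 3(1-1/p)$ ensuring that the weight on $v_*$ is integrable. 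The specific choices $(p,\theta'')=(2, 3/2 +\varepsilon)$ for $\gamma\in(-3/2,0)$ and $(p,\theta'')=(4, 9/4+\varepsilon)$ for $\gamma\in[-2,-3/2]$ are then immediate consistency checks of these two inequalities.
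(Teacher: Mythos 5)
Your proof is correct and follows essentially the same route as the paper's: the cancellation $a_{ij}(w)w_j=0$ to trade $v$-factors for $v_*$-factors in (i), Peetre plus Cauchy--Schwarz for the nonsingular regimes, a case split on the sign of $\gamma+1$ in (ii) with Lemma~\ref{lem:Aalpha} handling the singular case, and H\"older with conjugate exponent $p'$ together with Lemma~\ref{lem:Aalpha} for (iv). The threshold computations ($\gamma p'>-3 \Leftrightarrow p>3/(3+\gamma)$ and $\theta''p'>3 \Leftrightarrow \theta''>3(1-1/p)$) all check out against the paper.
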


\begin{proof}
Recall that $0$ is an eigenvalue of the matrix $a_{ij}$ so that $a_{ij}(v-v_*) v_i = a_{ij}(v-v_*) v_{*i} $ and $a_{ij}(v-v_*) v_i v_j = a_{ij}(v-v_*) v_{*i} v_{*j}$. Using this we can easily obtain, for any $\theta > \gamma + 4 +3/2$,
$$
\bal
| (a_{ij} * f)(v) \, v_i v_j |
&= |\int_{v_*} a_{ij}(v-v_*) v_i v_j f_*|
= \left|\int_{v_*} a_{ij}(v-v_*) v_{*i} v_{*j} f_*\right| \\
&\lesssim \int_{v_*} \la v \ra^{\gamma+2} \la v_* \ra^{\gamma+4}  |f_*|
\lesssim \la v \ra^{\gamma + 2} \| f \|_{L^1_v (\la v \ra^{\gamma+4})} \\
&\lesssim \la v \ra^{\gamma + 2} \| f \|_{L^2_v (\la v \ra^{\theta})}.
\eal
$$
In a similar way we get
$$
| (a_{ij} * f)(v) \, v_i  | \lesssim \la v \ra^{\gamma+2} \| f \|_{L^2_v (\la v \ra^{\theta-1})},
$$
and we easily have, since $\gamma \in [-2,1]$,
$$
| (a_{ij} * f)(v)   | \lesssim \la v \ra^{\gamma+2} \| f \|_{L^2_v (\la v \ra^{\theta-2})}.
$$
For the term $(b*f)$, we recall that $b_i(z) = -2 |z|^\gamma z_i$ and we separate into two cases. When $\gamma \in [-1,1]$ we have, for any $\theta' > \gamma+1 + 3/2$,
$$
\bal
| (b_{i} * f)(v)  | &\lesssim  \int_{v_*} |v-v_*|^{\gamma+1} \, |f_*|
\lesssim \int_{v_*} \la v \ra^{\gamma+1} \la v_* \ra^{\gamma+1}  |f_*| \\
&\lesssim \la v \ra^{\gamma + 1} \| f \|_{L^1_v (\la v \ra^{\gamma+1})}
\lesssim \la v \ra^{\gamma + 1} \| f \|_{L^2_v (\la v \ra^{\theta'})}.
\eal
$$
When $\gamma \in [-2,-1)$ we use Lemma~\ref{lem:Aalpha} to obtain, for any $\theta' >3/2$,
$$
\bal
| (b_{i} * f)(v)  |
&\lesssim  \int_{v_*} |v-v_*|^{\gamma+1} \, \la v_* \ra^{-\theta'}  \la v_* \ra^{\theta'} |f_*|
\lesssim \left(\int_{v_*} |v-v_*|^{2(\gamma+1)} \, \la v_* \ra^{-2\theta'} \right)^{1/2} \| f \|_{L^2_v (\la v \ra^{\theta'})} \\
&\lesssim \la v \ra^{\gamma+1} \, \| f \|_{L^2_v (\la v \ra^{\theta'})}.
\eal
$$
Finally for the last term $(c*f)$, recall that $c(z) = -2(\gamma+3)|z|^\gamma$ and separate into two cases. When $\gamma \in [0,1]$ then, for any $\theta'' > \gamma+3/2$,
$$
\bal
| (c * f)(v)  | &\lesssim  \int_{v_*} |v-v_*|^{\gamma} \, |f_*|
\lesssim \int_{v_*} \la v \ra^{\gamma} \la v_* \ra^{\gamma}  |f_*| \\
&\lesssim \la v \ra^{\gamma } \| f \|_{L^1_v (\la v \ra^{\gamma})}
\lesssim \la v \ra^{\gamma} \| f \|_{L^2_v (\la v \ra^{\theta''})}.
\eal
$$
When $\gamma \in [-2,0)$ we use Lemma~\ref{lem:Aalpha} to obtain, for any $p > \frac{3}{3+\gamma}$ and for any $\theta'' >3(1-1/p)$,
$$
\bal
| (c * f)(v)  |
&\lesssim  \int_{v_*} |v-v_*|^{\gamma} \, \la v_* \ra^{-\theta''}  \la v_* \ra^{\theta''} |f_*|
\lesssim \left(\int_{v_*} |v-v_*|^{\gamma \frac{p}{p-1}} \, \la v_* \ra^{-\theta'' \frac{p}{p-1}} \right)^{(p-1)/p} \| f \|_{L^p_v (\la v \ra^{\theta''})} \\
&\lesssim \la v \ra^{\gamma} \, \| f \|_{L^2_v (\la v \ra^{\theta''})},
\eal
$$
thanks to $|\gamma| p/(p-1) <3$.
\end{proof}

We now prove nonlinear estimates for the Landau operator $Q$.

\begin{lem}\label{lem:NL1}
Consider hypothesis {\bf (H1)}, {\bf (H2)} or ${\bf (H3)}$.
\begin{enumerate}[(i)]

\item For any $\theta > \gamma+4 + 3/2$, there holds
$$
\la  Q(f,g),h \ra_{L^{2}_v(m)}
\lesssim \| f \|_{L^2_v(\la v \ra^{\theta})} \, \| g \|_{H^1_{v,**}(m)} \, \| h \|_{H^1_{v,*}(m)}.
$$

\item For any $\theta > \gamma+4 + 3/2$ and $\theta'>9/4$, there holds
$$
\la  Q(f,g),g \ra_{L^{2}_v(m)}
\lesssim
\| f \|_{L^2_v(\la v \ra^{\theta})} \, \| g \|_{H^1_{v,*}(m)}^2, \quad \text{if } \gamma\in (-3/2,1] ;
$$
and
$$
\bal
\la  Q(f,g),g \ra_{L^{2}_v(m)}
&\lesssim
\| f \|_{L^2_v(\la v \ra^{\theta})} \, \| g \|_{H^1_{v,*}(m)}^2
+ \| f \|_{H^1_v(\la v \ra^{\theta'})} \, \| g \|_{L^2_{v}(m)}^2 \quad \text{if } \gamma\in [-2,-3/2].
\eal
$$

\end{enumerate}
\end{lem}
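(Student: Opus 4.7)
Starting from the divergence form
$$
Q(f,g) = \partial_i\bigl\{ (a_{ij}*f)\,\partial_j g - (b_i*f)\,g\bigr\},
$$
I would pair with $h$ and integrate by parts in $v$ against $m^2$, obtaining the four-term identity
\begin{align*}
\la Q(f,g),h\ra_{L^2_v(m)}
&= -\int (a_{ij}*f)\,\partial_j g\,\partial_i h\,m^2\,dv - \int (a_{ij}*f)\,\partial_j g\,h\,\partial_i(m^2)\,dv \\
&\quad + \int (b_i*f)\,g\,\partial_i h\,m^2\,dv + \int (b_i*f)\,g\,h\,\partial_i(m^2)\,dv.
\end{align*}
In each term the convolutions are controlled pointwise via Lemma~\ref{lem:abc*f}: $|(a_{ij}*f)(v)|\lesssim\la v\ra^{\gamma+2}\|f\|_{L^2_v(\la v\ra^\theta)}$ and $|(b_i*f)(v)|\lesssim\la v\ra^{\gamma+1}\|f\|_{L^2_v(\la v\ra^{\theta'})}$.

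For part~(i), the key is that the anisotropic weights of $H^1_{v,*}(m)$ and $H^1_{v,**}(m)$ are recovered by exploiting the algebraic identity $a_{ij}(v-v_*)(v-v_*)_i=0$, which is the very mechanism behind the sharp bound $|(a_{ij}*f)(v)\,v_iv_j|\lesssim\la v\ra^{\gamma+2}$ of Lemma~\ref{lem:abc*f}(i) (with no extra powers of $|v|$). Concretely, I would split $\nabla g=P_v\nabla g+(I-P_v)\nabla g$ and likewise for $\nabla h$, and note that in any $P_v$-projected contraction the matrix $(a_{ij}*f)$ effectively loses a factor $|v|^{-1}$ (and two such factors under a double $P_v$-projection), so the crude bound $\la v\ra^{\gamma+2}$ becomes $\la v\ra^{\gamma+1}$ or $\la v\ra^{\gamma}$ respectively, matching exactly the weights in the $H^1_{v,*}$ and $H^1_{v,**}$ norms. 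The asymmetry (namely $H^1_{v,**}$ on $g$ versus $H^1_{v,*}$ on $h$) comes from the factor $|\partial_i(m^2)|\lesssim\la v\ra^{\sigma-1}m^2$, which always points radially along $v$: after Cauchy--Schwarz the residual weight $\la v\ra^{\gamma+\sigma-1}$ or $\la v\ra^{\gamma+\sigma}$ must be distributed asymmetrically, and using $\sigma\le 2$ this forces the stronger zeroth-order weight $\la v\ra^{(\gamma+2)/2}$ on $g$.

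For part~(ii), taking $h=g$ allows an additional integration by parts in the cross term
$$
\int(b_j*f)\,\partial_j g\cdot g\,m^2 = \tfrac12\int(b_j*f)\,m^2\,\partial_j(g^2) = -\tfrac12\int(c*f)\,g^2\,m^2 - \tfrac12\int(b_j*f)\,g^2\,\partial_j(m^2),
$$
which recombines with the $(c*f)g^2$ contribution coming from $Q(f,g)$ and cancels the term that otherwise required the stronger $H^1_{v,**}$ weight on $g$. What survives is the quadratic gradient term $\int(a_{ij}*f)\partial_j g\,\partial_i g\,m^2$ (estimated by $\|g\|_{H^1_{v,*}(m)}^2$ exactly as in (i)) and a scalar contribution of the form $\int(c*f)\,g^2\,m^2$. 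For $\gamma\in(-3/2,1]$, Lemma~\ref{lem:abc*f}(iii)--(iv) applied with $p=2$ yields $|(c*f)|\lesssim\la v\ra^\gamma\|f\|_{L^2_v(\la v\ra^\theta)}$, and this piece is absorbed into $\|g\|_{H^1_{v,*}(m)}^2$ via $\la v\ra^\gamma\le\la v\ra^{\gamma+\sigma}$. The main obstacle is the singular range $\gamma\in[-2,-3/2]$: here Lemma~\ref{lem:abc*f}(iv) forces $p>3/(3+\gamma)\ge 2$; choosing $p=4$ (admissible since $\gamma\ge -2>-9/4$) and invoking the three-dimensional Sobolev embedding $H^1_v\hookrightarrow L^6_v\hookrightarrow L^4_v$ to estimate $\|f\|_{L^4_v}\lesssim\|f\|_{H^1_v}$, I would obtain the extra term $\|f\|_{H^1_v(\la v\ra^{\theta'})}\|g\|_{L^2_v(m)}^2$ in the stated bound.
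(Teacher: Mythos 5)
Your proof plan shares the paper's overall architecture: the same divergence-form expansion into four terms, the use of Lemma~\ref{lem:abc*f} for the pointwise convolution bounds, the $P_v/(I-P_v)$ splitting, the extra integration by parts on the $(b_j*f)$ cross term in part~(ii), and the $L^4\hookleftarrow H^1_v$ Sobolev step for $\gamma\in[-2,-3/2]$. Those are all the right moves.

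However, your explanation of where the $H^1_{v,**}$ norm on $g$ comes from in part~(i) is incorrect, and the error is not cosmetic because it makes the passage to part~(ii) internally inconsistent. You attribute the asymmetry to the term $T_2 = -\int (a_{ij}*f)\partial_i g\, h\,\partial_j(m^2)$, via the radial factor $|\partial_j(m^2)|\lesssim\la v\ra^{\sigma-1}m^2$. In fact $T_2$ (and $T_4$) can be closed entirely within $H^1_{v,*}$ on both $g$ and $h$: the $\partial_j(m^2)$ factor is radial, so it sits in the kernel direction of $a_{ij}$ and brings an extra $|v|^{-1}$ gain, and after splitting $\partial_i g$ into $P_v$ and $(I-P_v)$ parts the weights that land on $\nabla g$ are $\la v\ra^{(\gamma+\sigma-2)/2}\le\la v\ra^{\gamma/2}$ and $\la v\ra^{(\gamma+\sigma)/2}\le\la v\ra^{(\gamma+2)/2}$, both within $H^1_{v,*}$, while $h$ collects the zeroth-order weight $\la v\ra^{(\gamma+\sigma)/2}$ of $H^1_{v,*}$. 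The term that genuinely forces $H^1_{v,**}$ on $g$ is instead $T_3 = \int (b_j*f)\, g\,\partial_j h\, m^2$, which contains no $\partial m^2$ at all: the bound $|(b_j*f)|\lesssim\la v\ra^{\gamma+1}$ must be split as $\la v\ra^{(\gamma+2)/2}\cdot\la v\ra^{\gamma/2}$, and the derivative sits on $h$, not $g$, so $g$ must absorb the zeroth-order weight $\la v\ra^{(\gamma+2)/2}$ — this is exactly the $H^1_{v,**}$ weight, which is strictly stronger than $\la v\ra^{(\gamma+\sigma)/2}$ whenever $\sigma<2$. If your attribution were right, the IBP you perform on $T_3$ in part~(ii) would leave the offending $T_2$ untouched and you would still be stuck with $H^1_{v,**}$, so your argument would not close. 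The correct picture is that $T_3$ is both the sole source of $H^1_{v,**}$ in~(i) and the term fixed by the IBP in~(ii).

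A second, more minor slip: you say the IBP ``recombines with the $(c*f)g^2$ contribution coming from $Q(f,g)$ and cancels'' a problematic term. In the divergence form you chose there is no $(c*f)g^2$ term in the original expansion, and no cancellation is taking place. The IBP simply rewrites $T_3$ (with $h=g$) as $-\tfrac12\int(c*f)g^2m^2-\tfrac12\int(b_j*f)\partial_j(m^2)g^2$; the second piece is then estimated like $T_4$, and the first via Lemma~\ref{lem:abc*f}(iii)/(iv), with $p=2$ for $\gamma\in(-3/2,1]$ and $p=4$ plus $\|f\|_{L^4_v}\lesssim\|f\|_{H^1_v}$ for $\gamma\in[-2,-3/2]$. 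Your final bounds for part~(ii) are correct, but you should replace the cancellation narrative by a direct estimate of the two post-IBP terms.
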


\begin{proof}
We write
$$
\bal
\la Q(f,g) , h \ra_{L^2_v(m)}
&= \int \partial_j \{ (a_{ij} * f) \partial_{i} g  - (b_j*f) g \} \, h \, m^2 \\
&= - \int (a_{ij}*f) \partial_i g \, \partial_j h \, m^2 - \int (a_{ij}*f) \partial_i g \, \partial_j m^2 \, h \\
&\quad
+ \int (b_{j}*f)  g \, \partial_j h \, m^2
+ \int (b_j*f) g \, h \, \partial_j m^2 \\
&=: T_1 + T_2 + T_3 + T_4.
\eal
$$

\smallskip
{\it Step 1. Point $(i)$.}
We estimate each term separately.

{\it Step 1.1.} For the first term, since the estimate for $|v|\le 1$ is evident, we only consider the case $|v| >1$. We decompose $\partial_i g = P_v \partial_i g + (I-P_v) \partial_i g$ and similarly for $\partial_j h$, where we recall that $P_v \partial_i g = v_i |v|^{-2} (v \cdot \nabla_v g)$. We hence write
$$
\bal
T_1 &= \int (a_{ij}*f) \, \{ P_v \partial_i g \, P_v \partial_j h + P_v \partial_i g \, (I-P_v) \partial_j h + (I-P_v) \partial_i g \, P_v \partial_j h
 + (I-P_v) \partial_i g \, (I-P_v) \partial_j h \}\, m^2 \\
 &=: T_{11} + T_{12} + T_{13} + T_{14}.
\eal
$$
Therefore we have, using Lemma \ref{lem:abc*f},
$$
\bal
T_{11}
& = \int (a_{ij}*f) v_i v_j \, \frac{(v \cdot \nabla_v g)}{|v|^2} \, \frac{(v \cdot \nabla_v h)}{|v|^2} \, m^2 \\
&\lesssim \| f \|_{L^2_v(\la v \ra^\theta)} \int \la v \ra^{\gamma+2}  |v|^{-2}\, |\nabla_v g| \, |\nabla_v h|  \, m^2 \\
&\lesssim \| f \|_{L^2_v(\la v \ra^\theta)}  \, \| \la v \ra^{\frac{\gamma}{2}} \nabla_v g \|_{L^2_v(m)}
\, \| \la v \ra^{\frac{\gamma}{2}} \nabla_v h \|_{L^2_v(m)}.
\eal
$$
Moreover
$$
\bal
T_{12}
& = \int (a_{ij}*f) v_i  \, \frac{(v \cdot \nabla_v g)}{|v|^2} \, \{(I-P_v)\partial_j h\} \, m^2 \\
&\lesssim \| f \|_{L^2_v(\la v \ra^\theta)} \int \la v \ra^{\gamma+2}  |v|^{-1}\, |\nabla_v g| \, |(I-P_v)\nabla_v h|  \, m^2 \\
&\lesssim \| f \|_{L^2_v(\la v \ra^\theta)}  \, \| \la v \ra^{\frac{\gamma}{2}} \nabla_v g \|_{L^2_v(m)}
\, \| \la v \ra^{\frac{\gamma+2}{2}} (I-P_v)\nabla_v h \|_{L^2_v(m)},
\eal
$$
and similarly
$$
\bal
T_{13}
&\lesssim \| f \|_{L^2_v(\la v \ra^\theta)}  \, \| \la v \ra^{\frac{\gamma+2}{2}} (I-P_v) \nabla_v g \|_{L^2_v(m)}
\, \| \la v \ra^{\frac{\gamma}{2}} \nabla_v h \|_{L^2_v(m)}.
\eal
$$
For the term $T_{14}$ we obtain
$$
\bal
T_{14}
& = \int (a_{ij}*f)   \, \{ (I-P_v)\partial_i g \} \, \{ (I-P_v)\partial_j h\} \, m^2 \\
&\lesssim \| f \|_{L^2_v(\la v \ra^\theta)} \int \la v \ra^{\gamma+2}   |(I-P_v)\nabla_v g| \, |(I-P_v)\nabla_v h|  \, m^2 \\
&\lesssim \| f \|_{L^2_v(\la v \ra^\theta)}  \, \| \la v \ra^{\frac{\gamma+2}{2}} (I-P_v) \nabla_v g \|_{L^2_v(m)}
\, \| \la v \ra^{\frac{\gamma+2}{2}} (I-P_v)\nabla_v h \|_{L^2_v(m)}.
\eal
$$

{\it Step 1.2.}
Let us investigate the second term $T_2$, and again we only consider $|v|>1$. Since $\partial_j m^2 = C v_j \la v \ra^{\sigma-2} m^2$, where we recall that $\sigma=0$ when $m=\la v \ra^k$ and $\sigma = s$ when $m=e^{r\la v \ra^s}$, the same argument as for $T_1$ gives us
$$
\bal
T_2 &= \int (a_{ij}*f) \, \{ P_v \partial_i g \, \partial_j m^2
+  (I-P_v) \partial_i g \, \partial_j m^2  \}\, h \\
 &=: T_{21} + T_{22}.
\eal
$$
Then we have
$$
\bal
T_{21} &= C \int (a_{ij}*f) v_i v_j \la v \ra^{\sigma-2} \, \frac{(v \cdot \nabla_v g)}{|v|^2} \, h \, m^2 \\
&\lesssim \| f \|_{L^2_v(\la v \ra^\theta)} \int \la v \ra^{\gamma+2} \la v \ra^{\sigma-2} \, |v|^{-1} \, |\nabla_v g| \, |h| \, m^2 \\
&\lesssim \| f \|_{L^2_v(\la v \ra^\theta)} \, \| \la v \ra^{\frac{\gamma+\sigma-2}{2}} \nabla_v g \|_{L^2_v(m)} \,
\| \la v \ra^{\frac{\gamma+\sigma}{2}}  h \|_{L^2_v(m)},
\eal
$$
and we recall that $\gamma+\sigma-2 \le \gamma$. For the other term we get
$$
\bal
T_{21} &= C \int (a_{ij}*f)  v_j \la v \ra^{\sigma-2} \, \{ (I-P_v)\partial_i g \} \, h \, m^2 \\
&\lesssim \| f \|_{L^2_v(\la v \ra^\theta)} \int \la v \ra^{\gamma+2} \la v \ra^{\sigma-2}  \, |(I-P_v)\nabla_v g| \, |h| \, m^2 \\
&\lesssim \| f \|_{L^2_v(\la v \ra^\theta)} \, \| \la v \ra^{\frac{\gamma+\sigma}{2}} (I-P_v)\nabla_v g \|_{L^2_v(m)} \,
\| \la v \ra^{\frac{\gamma+\sigma}{2}}  h \|_{L^2_v(m)},
\eal
$$
and recall that $\gamma + \sigma \le \gamma + 2$.

{\it Step 1.3.} For the term $T_4$,
$$
\bal
T_4 &= C \int (b_j * f) \, v_j \la v \ra^{\sigma-2} \, g \, h \, m^2 \\
&\lesssim \| f \|_{L^2_v(\la v \ra^\theta)} \int \la v \ra^{\gamma+1} \la v \ra^{\sigma-1} \, |g| \, |h| \, m^2 \\
&\lesssim \| f \|_{L^2_v(\la v \ra^\theta)}  \, \| \la v \ra^{\frac{\gamma+\sigma}{2}} g \|_{L^2_v(m)} \, \| \la v \ra^{\frac{\gamma+\sigma}{2}}  h \|_{L^2_v(m)}.
\eal
$$
Remark that up to now we have obtained
$$
T_1 + T_2 + T_4 \lesssim \| f \|_{L^2_v( \la v \ra^\theta)} \, \| g \|_{H^1_{v,*} (m)} \, \| h \|_{H^1_{v,*}(m)},
$$
however in the estimate of the term $T_3$ (see below) we will get a worst estimate (with the norm $\| g \|_{H^1_{v,**} (m)}$ instead of $\| g \|_{H^1_{v,*} (m)}$).

{\it Step 1.4.} We finally investigate the term $T_3$ and we get
$$
\bal
T_3 &\lesssim \| f \|_{L^2_v( \la v \ra^\theta)} \int \la v \ra^{\gamma+1} \, |g| \, |\nabla_v h| \, m^2 \\
&\lesssim \| f \|_{L^2_v( \la v \ra^\theta)} \, \| \la v \ra^{\frac{\gamma+2}{2}} g \|_{L^2_v(m)} \, \| \la v \ra^{\frac{\gamma}{2}}  \nabla_v h \|_{L^2_v(m)} \\
&\lesssim \| f \|_{L^2_v( \la v \ra^\theta)} \, \| g \|_{H^1_{v,**}(m)} \, \| \la v \ra^{\frac{\gamma}{2}}  \nabla_v h \|_{L^2_v(m)}.
\eal
$$
We complete the proof of point $(i)$ gathering previous estimates.

\smallskip
{\it Step 2. Point $(ii)$.}
Arguing as in Step 1, with $h$ replaced by $g$, we already have
$$
T_1 + T_2 + T_4 \lesssim \| f \|_{L^2_v( \la v \ra^\theta)} \, \| g \|_{H^1_{v,*} (m)}^2,
$$
and we only estimate the term $T_3$. Integrating by parts we get
$$
\bal
T_3 &= \int (b_j *f) \,g \, \partial_j g  \, m^2
&= - \frac12 \int (c*f) \, g^2 \, m^2 - \frac12 \int (b_j*f)\, \partial_j m^2 \, g^2 =: I + II.
\eal
$$
The term $II$ can be estimated exactly as $T_4$. For $I$, thanks to Lemma~\ref{lem:abc*f}, we obtain
$$
I \lesssim \| f \|_{L^2_v( \la v \ra^\theta)} \, \| \la v \ra^{\frac{\gamma}{2}} g \|_{L^2_v(m)}^2, \quad\text{if } \gamma \in (-3/2,1];
$$
and
$$
\bal
I &\lesssim \| f \|_{L^2_v( \la v \ra^\theta)} \, \| \la v \ra^{\frac{\gamma}{2}} g \|_{L^2_v(m)}^2
+ \| f \|_{L^4_v( \la v \ra^{\theta'})} \, \| \la v \ra^{\frac{\gamma}{2}} g \|_{L^2_v(m)}^2 , \quad\text{if } \gamma \in [-2,-3/2]; \\
&\lesssim \| f \|_{L^2_v( \la v \ra^\theta)} \, \| \la v \ra^{\frac{\gamma}{2}} g \|_{L^2_v(m)}^2
+ \| f \|_{H^1_v( \la v \ra^{\theta'})} \, \| \la v \ra^{\frac{\gamma}{2}} g \|_{L^2_v(m)}^2
\eal
$$
and that concludes the proof.
\end{proof}

\begin{lem}\label{lem:Qf}
Let assumption {\bf (H0)} be in force.

\begin{enumerate}[(i)]

\item There holds
$$
\la  Q(f,g), h \ra_{\HH^3_x L^2_v(m)}
\lesssim  \| f \|_{\HH^3_x L^2_v(m)} \, \| g \|_{\HH^3_x (H^1_{v,**}(m))} \, \| h \|_{\HH^3_x (H^1_{v,*}(m))},
$$
therefore
$$
\| Q(f,g) \|_{\HH^3_x (H^{-1}_{v,*}(m))} \lesssim \| f \|_{\HH^3_x L^2_v(m)} \, \| g \|_{\HH^3_x (H^1_{v,**}(m))}.
$$

\item There holds
$$
\bal
\la  Q(f,g),g \ra_{\HH^3_x L^2_v(m)}
&\lesssim  \| f \|_{\HH^3_x L^2_v(m)} \, \| g \|_{\HH^3_x ( H^1_{v,*}(m) )}^2 
+ \| f \|_{\HH^3_x ( H^1_{v,*}(m)) } \, \| g \|_{\HH^3_x L^2_{v}(m)}^2.
\eal
$$

\end{enumerate}

\end{lem}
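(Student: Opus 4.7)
The plan is to distribute the $x$-derivatives via the Leibniz rule, apply the pointwise-in-$x$ bilinear estimates of Lemma~\ref{lem:NL1}, and integrate over $\T^3_x$ using Hölder together with the 3D Sobolev embedding $H^2(\T^3)\hookrightarrow L^\infty(\T^3)$. Writing $m_\alpha := m\la v\ra^{-|\alpha|(1-\sigma/2)}$ for $|\alpha|\le 3$, bilinearity of $Q$ and the Leibniz rule give
\[
\la Q(f,g),h\ra_{\HH^3_x L^2_v(m)} = \sum_{|\alpha|\le 3} \sum_{\alpha_1+\alpha_2 = \alpha} C_{\alpha_1,\alpha_2}\,\la Q(\nabla^{\alpha_1}_x f,\nabla^{\alpha_2}_x g),\nabla^\alpha_x h\ra_{L^2_x L^2_v(m_\alpha)},
\]
and analogously for (ii) with $h$ replaced by $g$.

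For (i), I apply Lemma~\ref{lem:NL1}(i) at fixed $x$ with weight $m_\alpha$, yielding the triple product $\|\nabla^{\alpha_1}_x f\|_{L^2_v(\la v\ra^\theta)}\,\|\nabla^{\alpha_2}_x g\|_{H^1_{v,**}(m_\alpha)}\,\|\nabla^\alpha_x h\|_{H^1_{v,*}(m_\alpha)}$. Since $m_\alpha \le m_{\alpha_1}$ and, in the polynomial case of {\bf (H0)}, the assumption $k > \gamma + 7 + 3/2$ exactly guarantees $\la v\ra^\theta \le m_{\alpha_1}$ even for $\alpha_1 = 3$ (for exponential weights this is trivial), we bound $\|\nabla^{\alpha_1}_x f\|_{L^2_v(\la v\ra^\theta)} \le \|\nabla^{\alpha_1}_x f\|_{L^2_v(m_{\alpha_1})}$, and similarly $\|\nabla^{\alpha_2}_x g\|_{H^1_{v,**}(m_\alpha)} \le \|\nabla^{\alpha_2}_x g\|_{H^1_{v,**}(m_{\alpha_2})}$. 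Since $\alpha = \alpha_1+\alpha_2 \le 3$, at least one of the three orders $\alpha_1, \alpha_2, \alpha$ is $\le 1$: I place that factor in $L^\infty_x$ using $H^2\hookrightarrow L^\infty$ in 3D, and the remaining two in $L^2_x$, so each resulting $x$-norm is dominated by the corresponding $\HH^3_x$-norm. Summing over $|\alpha|\le 3$ yields the pairing estimate, and the $\HH^3_x(H^{-1}_{v,*}(m))$ bound follows by taking the supremum over $h$ via \eqref{HH3xH-1v*}.

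For (ii), the Leibniz sum splits naturally into the diagonal term $\alpha_1 = 0$ and the off-diagonal $\alpha_1 \ge 1$. In the diagonal case $\alpha_2 = \alpha$, both $g$-factors coincide and Lemma~\ref{lem:NL1}(ii) applies directly: it yields $\|f\|_{L^2_v(\la v\ra^\theta)}\|\nabla^\alpha_x g\|_{H^1_{v,*}(m_\alpha)}^2$ for $\gamma\in(-3/2,1]$, and in the very soft range $\gamma\in[-2,-3/2]$ an extra term $\|f\|_{H^1_v(\la v\ra^{\theta'})}\|\nabla^\alpha_x g\|_{L^2_v(m_\alpha)}^2$. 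Under {\bf (H0)} the latter range only occurs with an exponential weight, so $\|f\|_{H^1_v(\la v\ra^{\theta'})}\lesssim \|f\|_{H^1_{v,*}(m)}$; integrating in $x$ with $f\in L^\infty_x$ via Sobolev produces precisely the two summands on the right hand side. In the off-diagonal case $\alpha_1 \ge 1$, I apply Lemma~\ref{lem:NL1}(i) with $h = \nabla^\alpha_x g$. The key algebraic identity is $H^1_{v,**}(m_\alpha) \lesssim H^1_{v,*}(m_{\alpha_2})$ whenever $\alpha_1 \ge 1$: $H^1_{v,**}$ and $H^1_{v,*}$ differ only by a weight $\la v\ra^{(\gamma+2)/2}/\la v\ra^{(\gamma+\sigma)/2} = \la v\ra^{1-\sigma/2}$ on the undifferentiated $g$-term, which is exactly compensated by $m_\alpha/m_{\alpha_2} = \la v\ra^{-\alpha_1(1-\sigma/2)} \le \la v\ra^{-(1-\sigma/2)}$. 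These off-diagonal contributions thus fit into the first summand $\|f\|_{\HH^3_x L^2_v(m)}\|g\|_{\HH^3_x(H^1_{v,*}(m))}^2$ after integrating in $x$.

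The main technical obstacle is precisely this weight identity. The decay factor $\la v\ra^{-j(1-\sigma/2)}$ attached to the $j$-th $x$-derivative in the definition \eqref{HH3xL2v} of $\HH^3_x L^2_v(m)$ is tailored to match the gap between the $H^1_{v,**}$-norm (naturally produced by $Q$ when no integration-by-parts trick is available) and the weaker $H^1_{v,*}$-norm (the dissipation actually available from the linearized problem via Proposition~\ref{prop:dissipative}). Without this calibration, each off-diagonal Leibniz term would lose a factor $\la v\ra^{1-\sigma/2}$ per $x$-derivative, and the nonlinear closure in Section~\ref{sec:NL} could not go through.
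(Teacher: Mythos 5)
Your overall strategy (Leibniz in $x$, pointwise Lemma~\ref{lem:NL1}, Sobolev in $x$, and the weight identity $\|\nabla^{\alpha_2}_x g\|_{H^1_{v,**}(m\la v\ra^{-|\alpha|(1-\sigma/2)})} \lesssim \|\nabla^{\alpha_2}_x g\|_{H^1_{v,*}(m\la v\ra^{-|\alpha_2|(1-\sigma/2)})}$ for $|\alpha_1|\ge 1$) matches the paper. However, the blanket Hölder choice ``place the order-$\le 1$ factor in $L^\infty_x$ via $H^2(\T^3)\hookrightarrow L^\infty$, the other two in $L^2_x$'' contains a genuine gap for $|\alpha_1|=2$.

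Concretely, take $|\alpha|=2$, $\alpha_1=\alpha$, $\alpha_2=0$. Lemma~\ref{lem:NL1}(i) gives the pointwise bound
\[
\|\nabla^2_x f\|_{L^2_v(\la v\ra^\theta)} \,\|g\|_{H^1_{v,**}(m\la v\ra^{-2(1-\sigma/2)})}\,\|\nabla^2_x g\|_{H^1_{v,*}(m\la v\ra^{-2(1-\sigma/2)})}.
\]
Placing the $g$-factor (the only order $\le 1$ factor here) in $L^\infty_x\lesssim H^2_x$ forces you to bound $\|\nabla^j_x g\|_{L^2_x(H^1_{v,**}(m\la v\ra^{-2(1-\sigma/2)}))}$ for $j=0,1,2$ by $\|g\|_{\HH^3_x(H^1_{v,*}(m))}$. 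The relevant weight comparison is
\[
\la v\ra^{(\gamma+2)/2}\, m\, \la v\ra^{-2(1-\sigma/2)} \;\le\; \la v\ra^{(\gamma+\sigma)/2}\, m\, \la v\ra^{-j(1-\sigma/2)}
\quad\Longleftrightarrow\quad 1-\sigma/2 \le (2-j)(1-\sigma/2),
\]
which requires $j\le 1$ when $\sigma<2$. The $j=2$ contribution to the $H^2_x$-norm fails, so ``each resulting $x$-norm is dominated by the corresponding $\HH^3_x$-norm'' is false here. The same failure occurs for $|\alpha|=3$, $|\alpha_1|=2$, $|\alpha_2|=1$ (the $j=3$ level fails). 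The underlying mechanism: the weight identity between $H^1_{v,**}$ at level $|\alpha|$ and $H^1_{v,*}$ at level $|\alpha_2|$ consumes exactly the gap $|\alpha_1|=|\alpha|-|\alpha_2|\ge 1$ in the $\la v\ra^{-j(1-\sigma/2)}$ ladder; the $H^2_x\hookrightarrow L^\infty_x$ embedding then adds two more rungs on $g$ \emph{without} a corresponding weight gain, which overshoots the calibration unless $|\alpha_1|\ge 3$.

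The paper avoids this precisely for these $|\alpha_1|=2$ subcases by replacing $L^\infty_x\times L^2_x\times L^2_x$ with the split $L^6_x\times L^3_x\times L^2_x$, using $\|u\|_{L^6_x}\lesssim\|u\|_{H^1_x}$ on $\nabla^2_x f$ (costing one extra $x$-derivative on $f$, which is available since we have three) and the interpolation $\|u\|_{L^3_x}\lesssim\|u\|_{H^1_x}^{1/2}\|u\|_{L^2_x}^{1/2}$ on the low-derivative $g$-factor (costing only one extra $x$-derivative on $g$, which is exactly what the weight ladder can absorb). You need to introduce this refined Hölder split in the $|\alpha_1|=2$ cases; the remaining cases ($|\alpha_1|\in\{0,1,3\}$) do go through with the cruder $L^\infty_x\times L^2_x\times L^2_x$ as you wrote.

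Note that your part (ii) diagonal-term discussion is fine, as is your handling of the $\gamma\in[-2,-3/2]$ extra term. The $k>\gamma+7+3/2$ observation (so that $\la v\ra^\theta\lesssim m\la v\ra^{-3(1-\sigma/2)}$ with $\sigma=0$ for polynomial weights) and the final duality step for the $\HH^3_x(H^{-1}_{v,*}(m))$-bound are also correct.
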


\begin{proof}
We only prove point $(ii)$. Point $(i)$ can be proven in the same manner, using the estimate of Lemma~\ref{lem:NL1}-$(i)$ instead of Lemma~\ref{lem:NL1}-$(ii)$ as we shall do next.

We write
$$
\la  Q(f,g),g \ra_{\HH^3_x L^2_v(m)} = \la  Q(f,g),g \ra_{L^2_x L^2_v(m)}
+ \sum_{1 \le |\beta| \le 3} \la \partial^\beta_x Q(f,g), \partial^\beta_x g \ra_{L^2_x L^2_v(m \la v \ra^{-|\beta|(1-\sigma/2)})},
$$
and
$$
\partial^\beta_x Q(f,g) = \sum_{\beta_1 + \beta_2 = \beta} C_{\beta_1,\beta_2} Q ( \partial^{\beta_1}_x f , \partial^{\beta_2}_x g).
$$
The proof of the lemma is a consequence of Lemma~\ref{lem:NL1} together with the following inequalities, that we shall use in the sequel when integrating in $x \in \T^3$,
\beqn\label{eq:Sob_x}
\| u \|_{L^\infty (\T^3_x)} \lesssim \| u \|_{H^2 (\T^3_x)}, \quad
\| u \|_{L^6 (\T^3_x)} \lesssim \| u \|_{H^1 (\T^3_x)}, \quad
\| u \|_{L^3 (\T^3_x)} \lesssim \|  u \|_{H^1 (\T^3_x)}^{1/2} \, \| u \|_{L^2 (\T^3_x)}^{1/2}.
\eeqn

\smallskip\noindent
{\it Step 1.}
Using Lemma~\ref{lem:NL1}-$(ii)$ and \eqref{eq:Sob_x} we easily get, for $\theta > \gamma + 4 + 3/2$ and $\theta' > 9/4$,
$$
\bal
\la  Q(f,g),g \ra_{L^2_x L^2_v(m)} 
&\lesssim 
\int_{\T^3_x} \| f \|_{L^2_v(\la v \ra^{\theta})} \, \| g \|_{ H^1_{v,*}(m)}^2
+ \| f \|_{ H^1_v(\la v \ra^{\theta'})} \, \| g \|_{L^2_{v}(m)}^2 \\
&\lesssim 
\| f \|_{H^2_x L^2_v(\la v \ra^{\theta})} \, \| g \|_{L^2_x (H^1_{v,*}(m))}^2
+ \| f \|_{H^2_x (H^1_v(\la v \ra^{\theta'}))} \, \| g \|_{L^2_x L^2_{v}(m)}^2.
\eal
$$

\smallskip\noindent
{\it Step 2. Case $|\beta|=1$.} Arguing as in the previous step, from Lemma~\ref{lem:NL1}-$(ii)$ and \eqref{eq:Sob_x}, it follows
$$
\bal
&\la  Q(f, \partial^{\beta}_x g ), \partial^{\beta}_x g \ra_{L^2_x L^2_v( m \la v \ra^{-(1 - \sigma/2)} )} \\ 
&\qquad
\lesssim \int_{\T^3_x} \| f \|_{L^2_v(\la v \ra^{\theta})} \, \| \nabla_x g \|_{ H^1_{v,*}(m \la v \ra^{-(1 - \sigma/2)})}^2   + \| f \|_{H^1_v(\la v \ra^{\theta'})} \, \| \nabla_x g \|_{L^2_{v}(m \la v \ra^{-(1 - \sigma/2)})}^2 \\
&\qquad
\lesssim \| f \|_{H^2_x L^2_v(\la v \ra^{\theta})} \, \| \nabla_x g \|_{L^2_x (H^1_{v,*}(m \la v \ra^{-(1 - \sigma/2)}))}^2   + \| f \|_{H^2_x (H^1_v(\la v \ra^{\theta'}))} \, \| \nabla_x g \|_{L^2_x L^2_{v}(m \la v \ra^{-(1 - \sigma/2)})}^2.
\eal
$$
Moreover, using now Lemma~\ref{lem:NL1}-$(i)$, we get
$$
\bal
&\la  Q( \partial^{\beta}_x f, g ), \partial^{\beta}_x g \ra_{L^2_x L^2_v(m \la v \ra^{-(1 - \sigma/2)})} \\
&\qquad
\lesssim \int_{\T^3_x} \| \nabla_x f \|_{L^2_v(\la v \ra^{\theta})}
\, \| g \|_{H^1_{v,**}(m \la v \ra^{-(1 - \sigma/2)})}
\, \| \nabla_x g \|_{ H^1_{v,*}(m \la v \ra^{-(1 - \sigma/2)})} \\
&\qquad
\lesssim \| \nabla_x f \|_{H^2_x L^2_v(\la v \ra^{\theta})}
\, \| g \|_{L^2_x (H^1_{v,**}(m \la v \ra^{-(1 - \sigma/2)}))}
\, \| \nabla_x g \|_{L^2_x (H^1_{v,*}(m \la v \ra^{-(1 - \sigma/2)}))}.
\eal
$$

\smallskip\noindent
{\it Step 3. Case $|\beta|=2$.} When $\beta_2=\beta$ we have
$$
\bal
&\la  Q(f, \partial^{\beta}_x g ), \partial^{\beta}_x g \ra_{L^2_x L^2_v( m \la v \ra^{-2(1 - \sigma/2)} )} \\
&\qquad
\lesssim \int_{\T^3_x}  \| f \|_{L^2_v(\la v \ra^{\theta})} \, \| \nabla^2_x g \|_{H^1_{v,*}(m \la v \ra^{-2(1 - \sigma/2)})}^2
+ \| f \|_{H^1_v(\la v \ra^{\theta'})} \, \| \nabla^2_x g \|_{L^2_{v}(m \la v \ra^{-2(1 - \sigma/2)})}^2 \\
&\qquad
\lesssim \| f \|_{H^2_x L^2_v(\la v \ra^{\theta})} \, \| \nabla^2_x g \|_{L^2_x (H^1_{v,*}(m \la v \ra^{-2(1 - \sigma/2)}))}^2  
+ \| f \|_{H^2_x (H^1_v(\la v \ra^{\theta'}))} \, \| \nabla^2_x g \|_{L^2_x L^2_{v}(m \la v \ra^{-2(1 - \sigma/2)})}^2.
\eal
$$
If $|\beta_1| = |\beta_2|=1$ then we obtain
$$
\bal
&\la  Q( \partial^{\beta_1}_x f, \partial^{\beta_2}_x g ), \partial^{\beta}_x g \ra_{L^2_x L^2_v(m \la v \ra^{-2(1 - \sigma/2)})} \\
&\qquad
\lesssim \int_{\T^3_x}  \| \nabla_x f \|_{L^2_v(\la v \ra^{\theta})}
\, \| \nabla_x g \|_{H^1_{v,**}(m \la v \ra^{-2(1 - \sigma/2)})}
\, \| \nabla^2_x g \|_{H^1_{v,*}(m \la v \ra^{-2(1 - \sigma/2)})} \\
&\qquad
\lesssim \| \nabla_x f \|_{H^2_x L^2_v(\la v \ra^{\theta})}
\, \| \nabla_x g \|_{L^2_x (H^1_{v,**}(m \la v \ra^{-2(1 - \sigma/2)}))}
\, \| \nabla^2_x g \|_{L^2_x (H^1_{v,*}(m \la v \ra^{-2(1 - \sigma/2)}))}.
\eal
$$
Finally, when $\beta_1=\beta$ we get
$$
\bal
& \la  Q( \partial^{\beta}_x f, g ), \partial^{\beta}_x g \ra_{L^2_x L^2_v(m \la v \ra^{-2(1 - \sigma/2)})}  \\
&\quad\lesssim \int_{\T^3_x} \| \nabla^2_x f \|_{L^2_v(\la v \ra^{\theta})}
\, \|  g \|_{H^1_{v,**}(m \la v \ra^{-2(1 - \sigma/2)})}
\, \| \nabla^2_x g \|_{H^1_{v,*}(m \la v \ra^{-2(1 - \sigma/2)})} \\
&\quad\lesssim \| \nabla^2_x f \|_{L^6_x L^2_v(\la v \ra^{\theta})}
\, \|  g \|_{L^3_x (H^1_{v,**}(m \la v \ra^{-2(1 - \sigma/2)}))}
\, \| \nabla^2_x g \|_{L^2_x (H^1_{v,*}(m \la v \ra^{-2(1 - \sigma/2)}))}\\
&\quad\lesssim \| \nabla^2_x f \|_{H^1_x L^2_v(\la v \ra^{\theta})}
\, \|  g \|_{L^2_x (H^1_{v,**}(m \la v \ra^{-2(1 - \sigma/2)}))}^{1/2}
\, \|  g \|_{H^1_x (H^1_{v,**}(m \la v \ra^{-2(1 - \sigma/2)}))}^{1/2}
\, \| \nabla^2_x g \|_{L^2_x (H^1_{v,*}(m \la v \ra^{-2(1 - \sigma/2)}))}.
\eal
$$

\smallskip\noindent
{\it Step 4. Case $|\beta|=3$.} When $\beta_2=\beta$ we obtain
$$
\bal
\la  Q(f, \partial^{\beta}_x g ), \partial^{\beta}_x g \ra_{L^2_x L^2_v( m \la v \ra^{-3(1 - \sigma/2)} )}
&\lesssim \| f \|_{H^2_x L^2_v(\la v \ra^{\theta})} \, \| \nabla^3_x g \|_{L^2_x (H^1_{v,*}(m \la v \ra^{-3(1 - \sigma/2)}))}^2  \\
&\quad + \| f \|_{H^2_x (H^1_v(\la v \ra^{\theta'}))} \, \| \nabla^3_x g \|_{L^2_x L^2_{v}(m \la v \ra^{-3(1 - \sigma/2)})}^2 .
\eal
$$
If $|\beta_1|=1$ and $|\beta_2|=2$ then
$$
\bal
&\la  Q( \partial^{\beta_1}_x f, \partial^{\beta_2}_x g ), \partial^{\beta}_x g \ra_{L^2_x L^2_v(m \la v \ra^{-3(1 - \sigma/2)})}  \\
&\quad\lesssim \int_{\T^3_x} \| \nabla_x f \|_{L^2_v(\la v \ra^{\theta})}
\, \| \nabla^2_x g \|_{H^1_{v,**}(m \la v \ra^{-3(1 - \sigma/2)})}
\, \| \nabla^3_x g \|_{H^1_{v,*}(m \la v \ra^{-3(1 - \sigma/2)})} \\
&\quad\lesssim \| \nabla_x f \|_{H^2_x L^2_v(\la v \ra^{\theta})}
\, \| \nabla^2_x g \|_{L^2_x (H^1_{v,**}(m \la v \ra^{-3(1 - \sigma/2)}))}
\, \| \nabla^3_x g \|_{L^2_x (H^1_{v,*}(m \la v \ra^{-3(1 - \sigma/2)}))}.
\eal
$$
When $|\beta_1|=2$ and $|\beta_2|=1$ then we get
$$
\bal
&\la  Q( \partial^{\beta_1}_x f, \partial^{\beta_2}_x g ), \partial^{\beta}_x g \ra_{L^2_x L^2_v(m \la v \ra^{-3(1 - \sigma/2)})} \\
&\qquad\lesssim \int_{\T^3_x} \| \nabla^2_x f \|_{L^2_v(\la v \ra^{\theta})}
\, \| \nabla_x g \|_{H^1_{v,**}(m \la v \ra^{-3(1 - \sigma/2)})}
\, \| \nabla^3_x g \|_{H^1_{v,*}(m \la v \ra^{-3(1 - \sigma/2)})} \\
&\qquad\lesssim \| \nabla^2_x f \|_{H^1_x L^2_v(\la v \ra^{\theta})}
\, \| \nabla_x g \|_{L^2_x (H^1_{v,**}(m \la v \ra^{-3(1 - \sigma/2)}))}^{1/2}
\, \| \nabla_x g \|_{H^1_x (H^1_{v,**}(m \la v \ra^{-3(1 - \sigma/2)}))}^{1/2}
\, \| \nabla^3_x g \|_{L^2_x (H^1_{v,*}(m \la v \ra^{-3(1 - \sigma/2)}))}.
\eal
$$
Finally, when $\beta_1=\beta$, it follows
$$
\bal
&\la  Q( \partial^{\beta}_x f, g ), \partial^{\beta}_x g \ra_{L^2_x L^2_v(m \la v \ra^{-3(1 - \sigma/2)})} \\
&\qquad\lesssim \int_{\T^3_x} \| \nabla^3_x f \|_{L^2_v(\la v \ra^{\theta})}
\, \|  g \|_{H^1_{v,**}(m \la v \ra^{-3(1 - \sigma/2)})}
\, \| \nabla^3_x g \|_{H^1_{v,*}(m \la v \ra^{-3(1 - \sigma/2)})} \\
&\qquad\lesssim \| \nabla^3_x f \|_{L^2_x L^2_v(\la v \ra^{\theta})}
\, \|  g \|_{H^2_x (H^1_{v,**}(m \la v \ra^{-3(1 - \sigma/2)}))}
\, \| \nabla^3_x g \|_{L^2_x (H^1_{v,*}(m \la v \ra^{-3(1 - \sigma/2)}))}.
\eal
$$

\smallskip\noindent
{\it Step 5. Conclusion.} We can conclude the proof gathering previous estimates and remarking that, for any $n=0,1,2$, there holds
$$
\| \la v \ra^{\frac{\gamma+2}{2}} \, \nabla^n_x g \|_{L^2_x L^2_{v}(m \la v \ra^{-(n+1)(1-\sigma/2 )} )} =
\| \la v \ra^{ \frac{\gamma+\sigma}{2}} \, \nabla^n_x g \|_{L^2_x L^2_{v}(m \la v \ra^{-n(1-\sigma/2 )} )},
$$
which implies
$$
\| \nabla^n_x g \|_{L^2_x ( H^1_{v,**}(m \la v \ra^{-(n+1)(1-\sigma/2)}))} \lesssim
\| \nabla^n_x g \|_{L^2_x ( H^1_{v,*}(m \la v \ra^{-n(1-\sigma/2)}))},
$$
and observing also that
$$
\| f \|_{H^3_x L^2_v(\la v \ra^{\theta})} \lesssim \| f \|_{\HH^3_x L^2_v(m)}
$$
and
$$
\| f \|_{H^2_x ( H^1_v(\la v \ra^{\theta'}))} \lesssim \| f \|_{\HH^3_x (H^1_{v,*}(m))}.
$$
\end{proof}

\subsection{Proof of Theorem \ref{main1}}

We consider the Cauchy problem for the perturbation  $f= F - \mu$. The equation satisfied by  $f=f(t,x,v)$ is
\beqn\label{eq:f}
\left\{
\bal
\partial_t f  &= \Lambda f + Q(f,f) \\
f_{|t=0} &= f_0= F_0 - \mu.
\eal
\right.
\eeqn
From the conservation laws (see \eqref{eq:conserv} and \eqref{laws}), for all $t >0$, $\Pi_0 f_t = 0$ since $\Pi_0 f_0 = 0$, more precisely $\int f_t(x,v) \, dx\, dv = \int v_j f_t(x,v) \, dx\, dv = \int |v|^2 f_t(x,v) \, dx\, dv=0$, and also $\Pi_0 Q(f_t,f_t) = 0$.

\medskip
Hereafter we fix some weight function $m$ that satisfies hypothesis {\bf (H0)}. We also fix a weight function $m_0$ satisfying the assumptions of Corollary~\ref{cor:decay&regularity} (i.e.\ $m_0$ satisfies {\bf (H1)}, {\bf (H2)} or {\bf (H3)} with the additional condition $k_0 > \gamma+5 +3/2$ if $m_0 = \la v \ra^{k_0}$) such that $m_0 \lesssim m \la v \ra^{-(1-\sigma/2)}$. Observe that this is always possible under the assumptions on $m$.

\medskip

We will construct solutions on $L^\infty_t (\HH^3_x L^2_v(m))$ under a smallness assumption on the initial data $\| f_0 \|_{\HH^3_x L^2_v(m)} \le \e_0$.
We introduce the notation to simplify
$$
\left\{
\bal
& X := \HH^3_x L^2_v (m), \quad
Y := \HH^3_x  (H^1_{v,*} (m)), \quad
Y' := \HH^3_x (H^{-1}_{v,*}(m)),
\\
& X_0 := \HH^3_x L^2_v (m_0 ), \quad
Y_0 := \HH^3_x (H^1_{v,*} (m_0)), \quad
Y'_0 := \HH^3_x (H^{-1}_{v,*} (m_0)), \quad
Z_0 := \HH^3_x (H^1_{v,**} (m_0)), 
\eal
\right.
$$
where we recall that these spaces are defined in \eqref{HH3xL2v}-\eqref{HH3xH1v*}-\eqref{HH3xH-1v*}, and we also remark that $\| f \|_{Z_0} \lesssim \| f \|_{Y}$.

\medskip

We split the proof of Theorem~\ref{main1} into three parts: Theorem~\ref{thm:existence}, Theorem~\ref{thm:uniq} and Theorem~\ref{thm:decay} below.

\subsubsection{A priori estimates}
We start proving a stability estimate.

\begin{prop}\label{prop:stab}
Any solution $f=f_t$ to \eqref{eq:f} satisfies, at least formally, the following differential inequality: for any $\lambda_2 < \lambda_1$ there holds
$$
\frac12\frac{d}{dt} \Nt f \Nt_{X}^2 \le
- \lambda_2 \Nt f \Nt_{X}^2
-\big( K - C\Nt f \Nt_{X} \big) \| f \|_{Y}^2,
$$
for some constants $K,C>0$.
\end{prop}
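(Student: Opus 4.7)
The plan is to adapt the proof of Proposition~\ref{prop:dissipative} to account for the nonlinear perturbation $Q(f_t,f_t)$. A preliminary remark is that the conservation laws \eqref{laws} together with $\Pi_0 Q(f,f)=0$ guarantee $\Pi_0 f_t = \Pi_0 f_0 = 0$ for all $t \ge 0$, so the dissipative norm $\Nt \cdot \Nt_X$ from \eqref{norm-diss} and the regularity estimate of Corollary~\ref{cor:decay&regularity} apply throughout.

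First, differentiate and separate linear and nonlinear parts:
\begin{equation*}
\tfrac12\tfrac{d}{dt}\Nt f_t\Nt_X^2 = \eta\la \partial_t f_t, f_t\ra_X + \tfrac12\int_0^\infty \partial_t \|\SS_\Lambda(\tau) e^{\lambda_2\tau} f_t\|_{\bar X}^2\, d\tau.
\end{equation*}
Because $f_t$ is not a linear flow, the key observation is
\begin{equation*}
\partial_t(\SS_\Lambda(\tau) f_t) = \SS_\Lambda(\tau)\Lambda f_t + \SS_\Lambda(\tau)Q(f_t,f_t) = \partial_\tau(\SS_\Lambda(\tau) f_t) + \SS_\Lambda(\tau)Q(f_t,f_t),
\end{equation*}
which allows us to carry out exactly the same integration by parts in $\tau$ as in Proposition~\ref{prop:dissipative} for the $\Lambda$-contribution, isolating the nonlinear remainder. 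The linear part yields the bound $-\lambda_2\Nt f_t\Nt_X^2 - K\|f_t\|_Y^2$ as in that proposition, while the nonlinear remainder is
\begin{equation*}
N := \eta\la Q(f_t,f_t), f_t\ra_X + \int_0^\infty \la e^{\lambda_2\tau}\SS_\Lambda(\tau) Q(f_t,f_t),\, \SS_\Lambda(\tau) e^{\lambda_2\tau} f_t\ra_{\bar X}\, d\tau.
\end{equation*}

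Next, estimate each piece of $N$. Lemma~\ref{lem:Qf}(ii) gives $\la Q(f_t,f_t), f_t\ra_X \lesssim \|f_t\|_X\|f_t\|_Y^2 + \|f_t\|_Y\|f_t\|_X^2$; the second term is absorbed into the first via the embedding $\|f\|_X \lesssim \|f\|_Y$, which holds in every case of {\bf (H0)} because $\gamma + \sigma \ge 0$ (so $\la v\ra^{(\gamma+\sigma)/2} \ge 1$). For the integral term, Cauchy--Schwarz combined with the regularity estimate of Corollary~\ref{cor:decay&regularity} applied to $(I-\Pi_0)Q(f_t,f_t) = Q(f_t,f_t)$ yields
\begin{equation*}
\Big|\int_0^\infty \la\cdots\ra_{\bar X} d\tau\Big| \lesssim \|Q(f_t,f_t)\|_{Y_0'} \cdot \Nt f_t\Nt_X \lesssim \|f_t\|_X\|f_t\|_Y \cdot \Nt f_t\Nt_X,
\end{equation*}
where in the last step Lemma~\ref{lem:Qf}(i) is used together with the weight comparison $\|f\|_{Z_0} \lesssim \|f\|_Y$ (consequence of $m_0\la v\ra^{(\gamma+2)/2} \lesssim m\la v\ra^{(\gamma+\sigma)/2}$, itself following from the assumption $m_0 \lesssim m\la v\ra^{-(1-\sigma/2)}$). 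Since $\Nt f_t\Nt_X \sim \|f_t\|_X$ as norms on $X$, both contributions to $N$ are bounded by $C\|f_t\|_X\|f_t\|_Y^2 \le C\Nt f_t\Nt_X\|f_t\|_Y^2$.

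The main technical obstacle is producing the scaling $N \lesssim \Nt f_t\Nt_X\|f_t\|_Y^2$ in the correct form: a naive Cauchy--Schwarz on the integral term gives $\Nt f_t\Nt_X^2\|f_t\|_Y$, which is not of the requested shape. The gain comes from using the $Y'_0 \to X_0$ regularity estimate from Corollary~\ref{cor:decay&regularity} for the $Q(f_t,f_t)$ factor (rather than merely the semigroup exponential decay), which is precisely what Lemma~\ref{lem:Qf}(i) was designed for. Combining the linear and nonlinear bounds yields
\begin{equation*}
\tfrac12\tfrac{d}{dt}\Nt f_t\Nt_X^2 \le -\lambda_2\Nt f_t\Nt_X^2 - K\|f_t\|_Y^2 + C\Nt f_t\Nt_X\|f_t\|_Y^2,
\end{equation*}
which is the claimed differential inequality.
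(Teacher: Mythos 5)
Your proposal is correct and follows essentially the same route as the paper: split the time derivative of $\Nt\cdot\Nt_X^2$ into linear and nonlinear parts, absorb the linear part via Proposition~\ref{prop:dissipative}, bound $\eta\la Q(f,f),f\ra_X$ with Lemma~\ref{lem:Qf}(ii), and handle the integral term by Cauchy--Schwarz in $\tau$ together with the $Y'_0\to X_0$ regularity estimate of Corollary~\ref{cor:decay&regularity}, Lemma~\ref{lem:Qf}(i), and the weight comparisons $\|f\|_{Z_0}\lesssim\|f\|_Y$ and $\|f\|_X\lesssim\|f\|_Y$. The only (harmless) cosmetic difference is that you bound the first Cauchy--Schwarz factor directly by $\Nt f\Nt_X$ (it is part of the definition of the norm) whereas the paper bounds it by $\|f\|_{X_0}$ via the semigroup decay; you also make explicit the embedding $\|f\|_X\lesssim\|f\|_Y$ coming from $\gamma+\sigma\ge0$, which the paper uses implicitly.
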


\begin{proof}[Proof of Proposition \ref{prop:stab}]
Recall that the norm $\Nt \cdot \Nt_{X}$ is defined in Proposition~\ref{prop:dissipative} and it is equivalent to the $\| \cdot \|_X$-norm.
Thanks to~\eqref{eq:f} we write
$$
\bal
\frac12\frac{d}{dt} \Nt f \Nt_{X}^2
&= \eta \la f , \Lambda f \ra_{X} + \int_0^\infty \la \SS_\Lambda(\tau) e^{\lambda_2 \tau} f , \SS_\Lambda(\tau)e^{\lambda_2 \tau} \Lambda f \ra_{X_0} \, d\tau     \\
&\quad
+ \eta \la f , Q(f,f) \ra_{X} + \int_0^\infty \la \SS_\Lambda(\tau)e^{\lambda_2 \tau} f , \SS_\Lambda(\tau)e^{\lambda_2 \tau}  Q(f,f) \ra_{X_0} \, d\tau \\
&=: I_1+I_2 + I_3 + I_4.
\eal
$$
For the linear part $I_1+I_2$, we already have from Proposition~\ref{prop:dissipative} that, for any $\lambda_2 < \lambda_1$,
$$
I_1+I_2 \le - \lambda_2 \Nt f \Nt_{X}^2 - K \| f \|_{Y}^2 .
$$
Let us investigate the nonlinear part. 
For the term $I_3$, Lemma~\ref{lem:Qf}-$(ii)$ gives us directly
$$
\bal
I_3
&\lesssim \| f \|_{X} \, \| f \|_{Y}^2 +
\| f \|_{X}^2 \, \| f \|_{Y} 
\lesssim \Nt f \Nt_{X} \, \| f \|_{Y}^2.
\eal
$$
For the last term $I_4$, we use the fact that $\Pi_0 f_t = 0$ and $\Pi_0 Q(f_t,f_t)=0$ for all $t\ge 0$, together with Corollary~\ref{cor:decay&regularity} to get
$$
\bal
&\int_0^\infty \la \SS_\Lambda(\tau) e^{\lambda_2 \tau} f , \SS_\Lambda(\tau) e^{\lambda_2 \tau}  Q(f,f) \ra_{X_0} \, d\tau \\
&\qquad\le \int_0^\infty \| \SS_\Lambda(\tau) e^{\lambda_2 \tau} f \|_{X_0} \, \| \SS_\Lambda(\tau) e^{\lambda_2 \tau} Q(f,f) \|_{X_0} \, d\tau \\
&\qquad\le \left( \int_0^\infty \| \SS_\Lambda(\tau) e^{\lambda_2 \tau} f \|_{X_0}^2 \, d\tau \right)^{1/2} \left( \int_0^\infty \| S_\Lambda(\tau) e^{\lambda_2 \tau} Q(f,f) \|_{X_0}^2 \, d\tau \right)^{1/2}  \\
&\qquad\lesssim \left( \int_0^\infty  e^{-2 (\lambda_1 - \lambda_2) \tau} \| f \|_{X_0}^2 \, d\tau \right)^{1/2} \left( \int_0^\infty e^{2 \lambda_2 \tau} \| S_\Lambda(\tau)  Q(f,f) \|_{X_0}^2 \, d\tau \right)^{1/2}  \\
&\qquad\lesssim  \| f \|_{X_0} \,  \| Q(f,f) \|_{Y'_0}  .
\eal
$$
From Lemma~\ref{lem:Qf}-$(i)$ we have
$$
\| Q(f,f) \|_{Y'_0} \lesssim \| f \|_{X_0} \, \| f \|_{Z_0}.
$$
Therefore, using that $m_0 \lesssim m \la v \ra^{-(1-\sigma/2)}$ so that $\| f \|_{Z_0} \lesssim \| f \|_{Y}$, we obtain
$$
I_4 \lesssim \| f \|_{X} \, \| f \|_{Y}^2
\lesssim  \Nt f \Nt_{X} \, \| f \|_{Y}^2,
$$
and the proof is complete.
\end{proof}

\Black

We prove now an a priori estimate on the difference of two solutions to \eqref{eq:f}.

\begin{prop}\label{prop:conv}
Consider two solutions $f$ and $g$ to \eqref{eq:f} associated to initial data $f_0$ and $g_0$, respectively. Then, at least formally, the difference $f-g$ satisfies the following differential inequality
$$
\bal
\frac12\frac{d}{dt} \Nt f  - g \Nt_{X_0}^2 
&\le
- K \| f- g \|_{Y_0}^2 
+ C \Nt g \Nt_{X_0} \, \| f-g \|_{Y_0}^2 \\
&\quad
+ C \big( \| g \|_{Y_0} + \| f \|_{Y} \big)\, \Nt f-g \Nt_{X_0} \, \| f-g \|_{Y_0},
\eal
$$
for some constants $K,C>0$.
\end{prop}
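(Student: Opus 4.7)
The difference $w := f - g$ satisfies $\partial_t w = \Lambda w + Q(w,f) + Q(g,w)$, and since both $f$ and $g$ inherit the conservation laws \eqref{laws} from their initial data, the projection $\Pi_0 w_t$ vanishes for all $t \ge 0$; similarly $\Pi_0 Q(\cdot,\cdot) = 0$. The plan is to mimic the proof of Proposition~\ref{prop:stab} step by step, simply replacing the ambient space $X$ by $X_0$ and working with the modified norm $\Nt h \Nt_{X_0}^2 := \eta\|h\|_{X_0}^2 + \int_0^\infty \|\SS_\Lambda(\tau) e^{\lambda_2\tau} h\|_{\bar X_0}^2 \, d\tau$, where $\bar m_0 \lesssim m_0 \la v\ra^{-(1-\sigma/2)}$. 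Applying Proposition~\ref{prop:dissipative} (with $X_0, \bar X_0, Y_0$ playing the roles of $X, \bar X, Y$) produces the linear dissipation $-\lambda_2 \Nt w\Nt_{X_0}^2 - K\|w\|_{Y_0}^2$.

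The nonlinear contribution then splits into a pointwise pairing $I_3 := \eta\la Q(w,f) + Q(g,w), w\ra_{X_0}$ and an integral piece $I_4$ analogous to the one in the proof of Proposition~\ref{prop:stab}. For $I_3$, I apply Lemma~\ref{lem:Qf}-(ii) with roles $f \mapsto g$, $g \mapsto w$ to bound $\la Q(g,w), w\ra_{X_0} \lesssim \|g\|_{X_0}\|w\|_{Y_0}^2 + \|g\|_{Y_0}\|w\|_{X_0}^2$, and Lemma~\ref{lem:Qf}-(i) to bound $\la Q(w,f), w\ra_{X_0} \lesssim \|w\|_{X_0}\|f\|_{Z_0}\|w\|_{Y_0}$. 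Using the embedding $\|f\|_{Z_0} \lesssim \|f\|_Y$ (which follows from $m_0 \lesssim m\la v\ra^{-(1-\sigma/2)}$), the inclusion $\|w\|_{X_0} \le \|w\|_{Y_0}$ (which holds whenever $\gamma + \sigma \ge 0$), and the norm equivalence $\Nt\cdot\Nt_{X_0} \sim \|\cdot\|_{X_0}$, these combine into $\Nt g\Nt_{X_0}\|w\|_{Y_0}^2 + (\|g\|_{Y_0} + \|f\|_Y)\Nt w\Nt_{X_0}\|w\|_{Y_0}$.

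For $I_4 = \int_0^\infty \la \SS_\Lambda(\tau) e^{\lambda_2\tau} w, \SS_\Lambda(\tau) e^{\lambda_2\tau}[Q(w,f) + Q(g,w)]\ra_{\bar X_0}\, d\tau$, I apply Cauchy--Schwarz: the first factor is controlled by $\Nt w\Nt_{X_0}$ directly from the definition of the modified norm, while the second factor is controlled via the regularity estimate of Corollary~\ref{cor:decay&regularity} (using $\Pi_0 Q(\cdot,\cdot) = 0$) by $\|Q(w,f) + Q(g,w)\|_{Y'_0}$. The dual form of Lemma~\ref{lem:Qf}-(i) then yields $\|Q(w,f)\|_{Y'_0} \lesssim \|w\|_{X_0}\|f\|_{Z_0} \lesssim \|w\|_{X_0}\|f\|_Y$ and $\|Q(g,w)\|_{Y'_0} \lesssim \|g\|_{X_0}\|w\|_{Z_0}$. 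Substituting back, invoking $\|g\|_{X_0} \lesssim \Nt g\Nt_{X_0}$ and again the embedding relations above, one repackages the resulting bound into the same two forms appearing on the right-hand side.

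The main technical obstacle --- identical to the one faced in Proposition~\ref{prop:stab} --- is the asymmetry of the bilinear estimate in Lemma~\ref{lem:Qf}-(i), which forces the second argument of $Q$ to be measured in the stronger velocity-Sobolev norm $H^1_{v,**}$ rather than $H^1_{v,*}$. Concretely, the dual estimate $\|Q(g,w)\|_{Y'_0} \lesssim \|g\|_{X_0}\|w\|_{Z_0}$ involves $\|w\|_{Z_0}$, which is not controlled by the dissipative norm $\|w\|_{Y_0}$ alone for $\sigma < 2$; this gap must be closed by carefully exploiting the weight relation $m_0 \lesssim m\la v\ra^{-(1-\sigma/2)}$ and absorbing the excess against the smallness parameter $\Nt g\Nt_{X_0}$. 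Once this absorption is carried out, every nonlinear contribution fits into one of the two forms in the statement, and the differential inequality follows.
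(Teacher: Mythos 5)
Your overall structure mirrors the paper's proof (which decomposes the time derivative into a linear part handled via Proposition~\ref{prop:dissipative} plus two nonlinear pointwise pairings and two integral terms). You have, however, correctly smelled out the one delicate point and then misdiagnosed how it is resolved.

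The issue is exactly where you flag it: the dual bilinear estimate of Lemma~\ref{lem:Qf}-(i) involves the stronger velocity norm $H^1_{v,**}$ on its second argument, and $\| w \|_{Z_0}$ is \emph{not} controlled by $\| w \|_{Y_0}$. But the cure is neither the weight relation $m_0 \lesssim m \la v\ra^{-(1-\sigma/2)}$ (that relation only helps when the $Z$-norm is applied to $f$, not to $w$) nor any absorption against the smallness of $\Nt g \Nt_{X_0}$ (a multiplicative small constant cannot compensate for a norm that is simply not dominated). The actual mechanism is already built into the triple norm $\Nt \cdot \Nt_{X_0}$: its integral piece is taken in $\bar X_0 = \HH^3_x L^2_v(\bar m_0)$ with $\bar m_0 \lesssim m_0 \la v\ra^{-(1-\sigma/2)}$, so when you apply Cauchy--Schwarz and Corollary~\ref{cor:decay&regularity} to the second factor of $I_4$, what you obtain is $\| Q(g,w) + Q(w,f) \|_{\HH^3_x(H^{-1}_{v,*}(\bar m_0))}$ and not $\| \cdot \|_{Y'_0} = \| \cdot \|_{\HH^3_x(H^{-1}_{v,*}(m_0))}$. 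Applying Lemma~\ref{lem:Qf}-(i) with weight $\bar m_0$ then gives $\| Q(g,w) \|_{\HH^3_x(H^{-1}_{v,*}(\bar m_0))} \lesssim \| g \|_{\HH^3_x L^2_v(\bar m_0)} \| w \|_{\HH^3_x(H^1_{v,**}(\bar m_0))}$, and now the identity
$$
\la v\ra^{(\gamma+2)/2} \, \bar m_0 \la v\ra^{-j(1-\sigma/2)} \lesssim \la v\ra^{(\gamma+2)/2} \, m_0 \la v\ra^{-(j+1)(1-\sigma/2)} = \la v\ra^{(\gamma+\sigma)/2} \, m_0 \la v\ra^{-j(1-\sigma/2)}
$$
(the same algebra as Step 5 of the proof of Lemma~\ref{lem:Qf}) shows $\| w \|_{\HH^3_x(H^1_{v,**}(\bar m_0))} \lesssim \| w \|_{\HH^3_x(H^1_{v,*}(m_0))} = \| w \|_{Y_0}$, closing the gap with no absorption required. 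Replacing your weight $m_0$ by $\bar m_0$ in the dual estimate, and invoking $\bar m_0 \lesssim m_0 \la v\ra^{-(1-\sigma/2)}$, renders the rest of your argument sound.
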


\begin{proof}
We write the equation safisfied by $f-g$:
$$
\left\{
\bal
\partial_t (f-g) &= \Lambda(f-g) + Q(g,f-g) + Q(f-g,f), \\
(f-g)_{|t=0} &= f_0 - g_0.
\eal
\right.
$$
Denote $\overline{X}_0 := \HH^3_x L^2_v(\bar m_0)$ where $ \bar m_0 \lesssim m_0 \la v \ra^{-(1-\sigma/2)}$ (see \eqref{norm-diss}).
Then we compute
$$
\bal
\frac12\frac{d}{dt} \Nt f_t - g_t \Nt_{X_0}^2
&= \eta \la (f - g) , \Lambda (f - g) \ra_{X_0} + \int_0^\infty \la S_\Lambda(\tau) e^{\lambda_2 \tau} (f - g) , S_\Lambda(\tau)e^{\lambda_2 \tau} \Lambda (f - g) \ra_{\overline X_0} \, d\tau     \\
&\quad
+ \eta \la (f - g) , Q(g,f - g) \ra_{X_0} + \int_0^\infty \la S_\Lambda(\tau)e^{\lambda_2 \tau} (f - g) , S_\Lambda(\tau)e^{\lambda_2 \tau}  Q(g,f-g) \ra_{\overline X_0} \, d\tau \\
&\quad
+ \eta \la (f - g) , Q(f - g, f ) \ra_{X_0} + \int_0^\infty \la S_\Lambda(\tau)e^{\lambda_2 \tau} (f - g) , S_\Lambda(\tau)e^{\lambda_2 \tau}  Q(f - g,f) \ra_{\overline X_0} \, d\tau \\
&=: T_1+T_2 + T_3 + T_4 + T_5 + T_6.
\eal
$$
Arguing as in Proposition \ref{prop:stab} we easily obtain, 
$$
T_1+T_2 \le 
- K \| f- g \|_{Y_0}^2,
$$
and also
$$
T_3+T_4 \lesssim \Nt g \Nt_{X_0} \, \| f-g \|_{Y_0}^2 +
\| g \|_{Y_0} \, \Nt f-g \Nt_{X_0} \, \| f-g \|_{Y_0}.
$$
Moreover, for the last part $T_5 + T_6$, arguing as in Proposition~\ref{prop:stab} and using Lemma~\ref{lem:Qf}-$(i)$, we get
$$
T_5 + T_6 
\lesssim \Nt f-g \Nt_{X_0} \, \| f \|_{Z_0} \, \| f-g \|_{Y_0}
\lesssim \Nt f-g \Nt_{X_0} \, \| f \|_{Y} \, \| f-g \|_{Y_0},
$$
which completes the proof.
\end{proof}

\subsubsection{Cauchy problem in the close-to-equilibrium setting}
Thanks to the a priori estimates in Proposition~\ref{prop:stab} and Proposition~\ref{prop:conv}, we are now able to construct solutions to \eqref{eq:f} on $L^\infty_t (X) = L^\infty_t (\HH^3_x L^2_v (m))$, assuming a smallness condition on the initial data.

\begin{thm}\label{thm:existence}
There is a constant $\e_0 = \e_0(m)>0$ such that, if $\Nt f_0 \Nt_{X} \le \e_0$ then there exists a global weak solution $f$ to \eqref{eq:f} that satisfies, for some constant $C>0$,
$$
\| f \|_{L^\infty ([0,\infty); X)} + \| f \|_{L^2 ([0,\infty); Y)} \le C \e_0.
$$
Moreover, if $F_0 = \mu+ f_0 \ge 0$ then $F(t) = \mu + f(t) \ge 0$.
\end{thm}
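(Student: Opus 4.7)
The plan is to construct the global weak solution by a Banach-type iteration scheme and pass to the limit using the a priori estimates of Propositions~\ref{prop:stab} and~\ref{prop:conv}. Set $f^{0}\equiv 0$ and define $f^{n+1}$ inductively as the unique solution of the \emph{linear} non-autonomous Cauchy problem
\begin{equation*}
\partial_t f^{n+1} = \Lambda f^{n+1} + Q(f^n, f^{n+1}), \qquad f^{n+1}_{|t=0} = f_0,
\end{equation*}
which is well posed as soon as $f^n\in L^\infty_t X\cap L^2_t Y$ is small, the perturbation $Q(f^n,\cdot)$ being subordinate to $\Lambda$ by Lemma~\ref{lem:Qf}. (A slight variant uses the frozen source $Q(f^n,f^n)$, at the price of invoking Lemma~\ref{lem:Qf}(i) instead of (ii) at the next step.)

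For uniform bounds, I would repeat the argument of Proposition~\ref{prop:stab} on $f^{n+1}$, bounding the nonlinear contributions via Lemma~\ref{lem:Qf}(ii) for the direct pairing $\langle Q(f^n,f^{n+1}),f^{n+1}\rangle_X$ and via Corollary~\ref{cor:decay&regularity} combined with Lemma~\ref{lem:Qf}(i) for the integral term appearing in~\eqref{norm-diss}. The outcome is a differential inequality of the form
\begin{equation*}
\tfrac{d}{dt}\Nt f^{n+1}\Nt_{X}^2 + 2\lambda_2 \Nt f^{n+1}\Nt_X^2 + \bigl(K - C\Nt f^n\Nt_X\bigr)\| f^{n+1}\|_Y^2 \le C \Nt f^n\Nt_X \| f^n\|_Y^2,
\end{equation*}
from which, choosing $\e_0>0$ small enough, one propagates inductively the uniform bound $\Nt f^n\Nt_{L^\infty_t X}^2 + \| f^n\|_{L^2_t Y}^2 \le (C\e_0)^2$ by time integration and Gronwall.

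Next, the difference $d^n := f^{n+1}-f^n$ satisfies $\partial_t d^n = \Lambda d^n + Q(f^n,d^n) + Q(d^{n-1},f^n)$, so repeating the proof of Proposition~\ref{prop:conv} in the weaker topology of $X_0, Y_0$ and inserting the uniform bound above yields a contraction
\begin{equation*}
\Nt d^n\Nt_{L^\infty_t X_0}^2 + \| d^n\|_{L^2_t Y_0}^2 \le \tfrac{1}{2}\bigl(\Nt d^{n-1}\Nt_{L^\infty_t X_0}^2 + \| d^{n-1}\|_{L^2_t Y_0}^2\bigr).
\end{equation*}
Hence $(f^n)$ is Cauchy in $L^\infty_t X_0 \cap L^2_t Y_0$; its limit $f$ inherits the strong uniform bound in $L^\infty_t X \cap L^2_t Y$ by Banach-Alaoglu and weak-$*$ lower semi-continuity, and the strong convergence in the weaker space is enough to pass to the limit in the bilinear term $Q(f^n,f^{n+1})\to Q(f,f)$ distributionally, giving a weak solution of~\eqref{eq:f} satisfying the announced bound. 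Non-negativity of $F=\mu+f$ is then obtained by a parallel construction performed on the full Landau equation (regularising $F_0$ to be bounded below by a positive constant so that classical solutions exist and $F\ge0$ follows from a standard maximum-principle argument on the Landau operator), together with the uniqueness statement of Theorem~\ref{thm:uniq} to identify the non-negative limit with $\mu+f$.

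The main obstacle, already visible in the stability proposition, is the anisotropic and weight-losing nature of the bilinear estimates of Lemma~\ref{lem:Qf}: the pairing $\langle Q(f,g),h\rangle_X$ only closes in the stronger velocity norm $H^1_{v,**}(m)$, which is not controlled by the dissipation $\|\cdot\|_Y$ coming from the linearised operator. Circumventing this forces the use of the auxiliary weight $m_0 \lesssim m\langle v\rangle^{-(1-\sigma/2)}$ together with the integral smoothing estimate of Corollary~\ref{cor:decay&regularity}, and the iteration closes precisely because hypothesis~\textbf{(H0)} has been chosen so that the trade-off between $m$ and $m_0$ makes $\|\cdot\|_{Z_0}\lesssim \|\cdot\|_Y$ at every step.
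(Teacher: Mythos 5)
Your proposal follows the same strategy as the paper: the iterative scheme $\partial_t f^{n+1}=\Lambda f^{n+1}+Q(f^n,f^{n+1})$ together with stability (Proposition~\ref{prop:stab}) and contraction (Proposition~\ref{prop:conv}) arguments, the key use of the auxiliary weight $m_0$ with Corollary~\ref{cor:decay&regularity} to circumvent the $H^1_{v,**}/H^1_{v,*}$ mismatch, limit passage in the weaker $X_0,Y_0$ topology, and a Guo-type argument for nonnegativity. One cosmetic remark: for the scheme $Q(f^n,f^{n+1})$ the right-hand side of your displayed differential inequality should contain cross-terms such as $\|f^n\|_Y\,\Nt f^{n+1}\Nt_X^2$ coming from Lemma~\ref{lem:Qf}(ii), rather than $C\Nt f^n\Nt_X\|f^n\|_Y^2$ (which fits the frozen-source variant $Q(f^n,f^n)$, where in fact the pairing $\la Q(f^n,f^n),f^{n+1}\ra_X$ via Lemma~\ref{lem:Qf}(i) would require the uncontrolled $H^1_{v,**}$ norm of $f^n$); the induction-plus-Gronwall argument nevertheless closes with the correct cross-terms.
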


\begin{proof}
The proof follows a standard argument by introducing an iterative scheme and using the estimates established in Propositions~\ref{prop:stab} and \ref{prop:conv}, thus we only sketch it.

\smallskip

For any integer $n \ge 1$ we define the iterative scheme
$$
\left\{
\bal
 \partial_t f^n  &= \Lambda f^n + Q(f^{n-1},f^n) \\
 f^n_{|t=0} &= f_0
\eal
\right.
\quad \forall \, n \ge 1, \quad \text{and}\quad
\left\{
\bal
\partial_t f^0 &= \Lambda f^0  \\
f^0_{|t=0} &= f_0
\eal
\right..
$$
Firstly, the functions $f^n$ are well defined on $X$ for all $t \ge 0$ thanks to the semigroup theory in Theorem~\ref{thm:extension} and Corollary~\ref{cor:decay&regularity}, and the stability estimates proven below.

\medskip
\noindent
{\it Step 1. Stability of the scheme.} 
We first prove the stability of the scheme on $X$.
Thanks to Propositions~\ref{prop:stab}, we prove by induction that, if $\e_0>0$ is small enough, there holds
\beqn\label{An(t)}
\forall\, n \ge 0, \forall \, t \ge 0, \quad
A_n(t) :=  \Nt f^n_t \Nt_{X}^2 + K \int_0^t \| f^n_\tau \|_{Y}^2 \, d\tau \le 2 \e_0^2.
\eeqn

\medskip
\noindent
{\it Step 2. Convergence of the scheme.} We now turn to the convergence of the scheme in $X_0$. Denote $d^n = f^{n+1} - f^n$ that satisfies
$$
\left\{
\bal
\partial_t d^n & = \Lambda d^n + Q(f^n,d^n)  + Q(d^{n-1}, f^n), \quad \forall\, n \in \N^*;\\
\partial_t d^0 & = \Lambda d^0 + Q(f^0,f^1).
\eal
\right.
$$
Thanks to Proposition~\ref{prop:stab}, Proposition~\ref{prop:conv} and estimate \eqref{An(t)},
we then prove by induction that, for $\e_0 >0$ small enough, it holds
\beqn\label{Bnt}
\forall\, t \ge 0, \forall\, n \ge 0, \quad
B_n(t) := \Nt d^n_t \Nt_{X_0}^2 + K \int_0^t \| d^n_\tau \|_{Y_0}^2 \, d\tau \le (C'\e_0)^{2n},
\eeqn
for some constant $C'>0$ that does not depend on $\e_0$.

Therefore the sequence $(f^n)_{n \in \N}$ is a Cauchy sequence in $L^\infty( [0,\infty) ;X_0) = L^\infty([0,\infty); \HH^3_x L^2_v(m_0))$, and its limit $f$ satisfies \eqref{eq:f} in a weak sense. We then deduce that
$$
\| f \|_{L^\infty ([0,\infty); X)} + \| f \|_{L^2 ([0,\infty); Y)} \le C \e_0,
$$
by passing to the limit $n\to \infty$ in \eqref{An(t)}.
Moreover, since $F_0 = \mu + f_0 \ge 0$ we easily obtain that $F(t) = \mu + f(t) \ge 0$ (see e.g. \cite{Guo}).
\end{proof}

We can now address the problem of uniqueness.


\begin{thm}\label{thm:uniq}
There is a constant $\e_0 = \e_0(m)>0$ such that, if $\Nt f_0 \Nt_{X} \le \e_0$ then there exists a unique global weak solution $f \in L^\infty([0,\infty); X) \cap L^2 ([0,\infty); Y)$ to \eqref{eq:f} such that
$$
\| f \|_{L^\infty([0,\infty); X)} + \| f \|_{L^2([0,\infty); Y)} \le C \e_0.
$$
\end{thm}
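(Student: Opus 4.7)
The existence part is already given by Theorem~\ref{thm:existence}, so the task is uniqueness. The plan is to apply the a priori difference estimate of Proposition~\ref{prop:conv} to two solutions $f,g \in L^\infty([0,\infty); X) \cap L^2([0,\infty); Y)$ starting from the same initial data $f_0 = g_0$, and then use the smallness of the solutions together with Gr\"onwall's lemma in the larger space $X_0 = \HH^3_x L^2_v(m_0)$.

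First, since both solutions satisfy $\|f\|_{L^\infty_t X}, \|g\|_{L^\infty_t X} \le C\e_0$, and since $X \hookrightarrow X_0$ (because $m_0 \lesssim m\la v\ra^{-(1-\sigma/2)} \lesssim m$), I have $\Nt g(t) \Nt_{X_0} \le C\e_0$ uniformly in $t$. Proposition~\ref{prop:conv} then gives, writing $h = f-g$,
\begin{equation*}
\tfrac12 \tfrac{d}{dt} \Nt h \Nt_{X_0}^2 \le -\bigl( K - C\Nt g \Nt_{X_0} \bigr) \| h \|_{Y_0}^2 + C\bigl( \| g \|_{Y_0} + \| f \|_{Y} \bigr)\, \Nt h \Nt_{X_0}\, \| h \|_{Y_0}.
\end{equation*}
Choosing $\e_0$ small enough so that $C \cdot C\e_0 \le K/2$, the dissipative term absorbs the $\Nt g \Nt_{X_0} \|h\|_{Y_0}^2$ term. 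A Young inequality on the cross term, using $ab \le \tfrac{K}{4}a^2 + C' b^2$, yields
\begin{equation*}
\tfrac{d}{dt} \Nt h \Nt_{X_0}^2 + \tfrac{K}{2} \| h \|_{Y_0}^2 \le C'' \bigl( \| g \|_{Y_0}^2 + \| f \|_{Y}^2 \bigr)\, \Nt h \Nt_{X_0}^2.
\end{equation*}
Since $\|g\|_{Y_0} \lesssim \|g\|_Y$ (again by the embedding $Y \hookrightarrow Y_0$) and $\|f\|_Y, \|g\|_Y \in L^2_t$ with $L^2_t$-norm bounded by $C\e_0$, the function $\varphi(t) := C''(\|g(t)\|_{Y_0}^2 + \|f(t)\|_Y^2)$ lies in $L^1([0,\infty))$. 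Gr\"onwall's lemma therefore gives
\begin{equation*}
\Nt h(t) \Nt_{X_0}^2 \le \Nt h(0) \Nt_{X_0}^2 \exp\!\Bigl( \int_0^t \varphi(\tau)\, d\tau \Bigr),
\end{equation*}
and since $h(0) = f_0 - g_0 = 0$, we conclude $h(t) \equiv 0$ in $X_0$, hence $f = g$.

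The only delicate point, and the step I expect to require the most care, is the rigorous justification of the differential inequality of Proposition~\ref{prop:conv} for weak solutions of mere regularity $L^\infty_t X \cap L^2_t Y$. The computation is formal as stated, since it differentiates $\Nt\cdot\Nt_{X_0}^2$ in time and uses the equation tested against $f-g$ in $X_0$. To make it rigorous, I would either (i) revisit the iterative scheme of Theorem~\ref{thm:existence}: the estimate on $B_n$ in \eqref{Bnt} already provides a quantitative Cauchy property in $X_0$, and one can directly compare two approximating sequences built from $f_0$ and $g_0$; or (ii) approximate $f$ and $g$ by mollification in $(x,v)$, apply Proposition~\ref{prop:conv} to the smooth approximants (where all the integrations by parts in the nonlinear estimates of Lemmas~\ref{lem:NL1} and \ref{lem:Qf} are legitimate), and pass to the limit using the regularity gain in $Y_0$. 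Since the bilinear estimates in Lemma~\ref{lem:Qf} are continuous in the norms involved, the limiting procedure is standard; the crucial structural input is exactly the duality pairing in $\HH^3_x(H^{-1}_{v,*}(m_0))$ handled by Corollary~\ref{cor:decay&regularity}, which allowed Proposition~\ref{prop:conv} to be formulated with just the $Y_0$ regularity on $f-g$.
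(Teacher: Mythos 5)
Your proof is correct and reaches the same conclusion, but by a genuinely different route from the paper's. The paper does not invoke Proposition~\ref{prop:conv} in its uniqueness argument; instead it works at the lowest regularity level, estimating $\frac{d}{dt}\|f-g\|^2_{L^2_x L^2_v(m_0)}$ directly. The linear part is handled by splitting $\Lambda=\AA+\BB$ (hypodissipativity of $\BB$ gives a term $-K\|f-g\|^2_{L^2_x(H^1_{v,*}(m_0))}$ while $\AA$ contributes $C\|f-g\|^2_{L^2_x L^2_v(m_0)}$), the two nonlinear terms are treated with Lemma~\ref{lem:NL1} plus the Sobolev embeddings \eqref{eq:Sob_x}, and a time-integrated Gronwall argument closes the loop using $\|f\|_{L^2_t Y},\|g\|_{L^2_t Y}\lesssim\e_0$. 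You instead apply Proposition~\ref{prop:conv} in the high-order space $X_0=\HH^3_x L^2_v(m_0)$ with the modified norm $\Nt\cdot\Nt_{X_0}$, absorb the bad terms by Young's inequality and the smallness of $\e_0$, and conclude by Gronwall. Both routes are valid and rest on essentially the same nonlinear estimates. The paper's version is more elementary and self-contained: it avoids the time-integral norm $\Nt\cdot\Nt_{X_0}$ and the factorization machinery of Corollary~\ref{cor:decay&regularity} behind Proposition~\ref{prop:conv}, and working at the lower regularity level $L^2_x L^2_v(m_0)$ makes the formal differentiation easier to justify (the concern you rightly flag at the end). Your version is shorter once Proposition~\ref{prop:conv} is granted; the Gronwall coefficient is $L^1_t$ rather than constant, which is a harmless cosmetic difference since the initial difference vanishes.
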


\begin{proof}
Let $f$ and $g$ be two solutions to \eqref{eq:f} with same initial data $g_0=f_0$ that satisfy
$$
\| f \|_{L^\infty([0,\infty); X)} + \| f \|_{L^2([0,\infty); Y)} \le C \e_0.
$$
and
$$
\| g \|_{L^\infty([0,\infty); X)} + \| g \|_{L^2([0,\infty); Y)} \le C \e_0.
$$
The difference $f-g$ satisfies then
$$
\partial_t (f-g) = \Lambda(f-g) + Q(g,f-g) + Q(f-g,f),
$$
with $f_0=g_0$.
We then compute the standard $L^2_x L^2_v(m_0)$-norm of the difference $f-g$
$$
\bal
\frac12 \frac{d}{dt} \| f-g \|_{L^2_x L^2_v(m_0)}^2
&= \la \Lambda (f-g), f-g \ra_{L^2_x L^2_v(m_0)}
+ \la Q(g,f-g),f-g \ra_{L^2_x L^2_v(m_0)} \\
&\quad
+ \la Q(f-g,f),f-g \ra_{L^2_x L^2_v(m_0)}.
\eal
$$
We write $\Lambda = \AA + \BB$ so that we obtain
$$
\la \Lambda (f-g), f-g \ra_{L^2_x L^2_v(m_0)} \le - K \| f-g \|_{L^2_x (H^1_{v,*}(m_0))}^2
+ C \| f-g \|_{L^2_x L^2_v(m_0)}^2.
$$
Moreover, Lemma~\ref{lem:NL1}-(ii) together with \eqref{eq:Sob_x} gives
$$
\bal
\la Q(g,f-g),f-g \ra_{L^2_x L^2_v(m_0)}
&\le C \| g \|_{H^2_x L^2_v (m_0)} \,
\| f-g \|_{L^2_x (H^1_{v,*}(m_0))}^2 + C \| g \|_{H^2_x (H^1_v(m_0))} \, \| f-g \|_{L^2_x L^2_v(m_0)}^2,
\eal
$$
whence, integrating in time,
$$
\bal
&\int_0^t \la Q(g_\tau,f_\tau-g_\tau),f_\tau-g_\tau \ra_{L^2_x L^2_v(m_0)} \, d\tau \\
&\quad
\le C \sup_{\tau \in[0,t]} \| g_\tau \|_{H^2_x L^2_v (m_0)}  \int_0^t
\| f_\tau - g_\tau \|_{L^2_x (H^1_{v,*}(m_0))}^2 \\
&\qquad
+ C  \left( \int_0^t \| g_\tau \|_{H^2_x (H^1_v(m_0))}^2 \right)^{1/2} \left( \sup_{\tau \in[0,t]} \| f_\tau - g_\tau \|_{L^2_x L^2_v(m_0)}^2 + \int_0^t \| f_\tau - g_\tau \|_{L^2_x L^2_v(m_0)}^2 \right).
\eal
$$
Thanks to Lemma~\ref{lem:NL1}-(i) it follows
$$
\bal
\la Q(f-g,f),f-g \ra_{L^2_x L^2_v(m_0)}
&\le C \| f - g \|_{L^2_x L^2_v(m_0)} \, \| f \|_{H^2_x (H^1_{v,**}(m_0))} \, \| f-g \|_{L^2_x (H^1_{v,*}(m_0))} ,
\eal
$$
which integrating in time gives
$$
\bal
&\int_0^t \la Q(f_\tau-g_\tau,f_\tau),f_\tau-g_\tau \ra_{L^2_x L^2_v(m_0)} \, d\tau \\
&\le C \left(\sup_{\tau \in [0,t]} \| f_\tau - g_\tau \|_{L^2_x L^2_v(m_0)}\right)\int_0^t  \| f_\tau \|_{H^2_x (H^1_{v,**}(m_0))} \, \| f_\tau-g_\tau \|_{L^2_x (H^1_{v,*}(m_0))} \\
&\le C \left(\int_0^t \| f_\tau \|_{H^2_x (H^1_{v,**}(m_0))}^2 \right)^{1/2} \left( \sup_{\tau \in [0,t]} \| f_\tau - g_\tau \|_{L^2_x L^2_v(m_0)}^2 + \int_0^t \| f_\tau-g_\tau \|_{L^2_x (H^1_{v,*}(m_0))}^2 \right),
\eal
$$
and observe that $\| f \|_{L^2_t (H^2_x (H^1_{v,**}(m_0)))} \lesssim \| f \|_{L^2_t (Y)} \le C \e_0$.
Therefore
$$
\bal
&\| f_t-g_t \|_{L^2_x L^2_v(m_0)}^2 + K \int_0^t \| f_\tau - g_\tau \|^2_{L^2_x (H^1_{v,*}(m_0))} \, d\tau \\
&\quad \le C \int_0^t \| f_\tau - g_\tau \|_{L^2_x L^2_v(m_0)}^2 \, d\tau
+ C \e_0  \int_0^t \| f_\tau - g_\tau \|_{L^2_x (H^1_{v,*}(m_0))}^2 \, d\tau \\
&\quad\quad
+ C\e_0 \left( \sup_{\tau \in [0,t]} \| f_\tau - g_\tau \|_{L^2_x L^2_v(m_0)}^2 + \int_0^t \| f_\tau-g_\tau \|_{L^2_x (H^1_{v,*}(m_0))}^2 \, d\tau\right),
\eal
$$
and when $\e_0>0$ is small enough we conclude the proof of uniqueness by Gronwall's inequality.
\end{proof}

\subsubsection{Convergence to equilibrium in the close-to-equilibrium setting}

\begin{thm}\label{thm:decay}
There is a positive constant $\e_1 \le \e_0 $ so that, if $\Nt f_0 \Nt_{X} \le \e_1$, then
the unique global weak solution $f$ to \eqref{eq:f} (constructed in Theorems~\ref{thm:existence} and \ref{thm:uniq}) verifies an exponential decay: for any $\lambda_2 < \lambda_1$ there exists $C >0$ such that
$$
\forall \, t \ge 0, \quad
\| f(t) \|_{X} \le C \, e^{-\lambda_2 t} \, \| f_0 \|_{X},
$$
where we recall that $\lambda_1>0$ is the optimal rate given by the semigroup
decay in Theorem~\ref{thm:extension}.
\end{thm}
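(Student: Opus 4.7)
The plan is to run a standard continuation (bootstrap) argument based on the dissipative differential inequality from Proposition~\ref{prop:stab}, working in the equivalent norm $\Nt \cdot \Nt_X$ introduced in Proposition~\ref{prop:dissipative}. Since that norm is equivalent to $\|\cdot\|_X$, there exist constants $c_1,c_2>0$ such that $c_1 \|f\|_X \le \Nt f\Nt_X \le c_2 \|f\|_X$, so it suffices to establish exponential decay in the triple-bar norm.

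First, recall that Proposition~\ref{prop:stab} yields, for the solution $f$ constructed in Theorems~\ref{thm:existence}--\ref{thm:uniq}, the estimate
\begin{equation*}
\tfrac{1}{2}\tfrac{d}{dt}\Nt f(t)\Nt_X^2
\le -\lambda_2 \Nt f(t)\Nt_X^2 - \bigl(K - C\Nt f(t)\Nt_X\bigr)\|f(t)\|_Y^2,
\end{equation*}
for any $\lambda_2 < \lambda_1$ and some constants $K,C>0$ independent of $f$. I would now fix $\e_1 \le \e_0$ small enough so that $c_2 \e_1 \le K/(2C)$; equivalently, $\Nt f_0\Nt_X \le \e_1$ forces $C\Nt f_0\Nt_X \le K/2$.

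Next, define
\begin{equation*}
T_\star := \sup\Bigl\{T \ge 0 \;:\; \sup_{t \in [0,T]} \Nt f(t)\Nt_X \le \tfrac{K}{2C}\Bigr\}.
\end{equation*}
By the assumption on $\e_1$ and the weak continuity of $t \mapsto \Nt f(t)\Nt_X$ granted by $f \in L^\infty([0,\infty);X)$ (and, if needed, a standard approximation by the iterative scheme of Theorem~\ref{thm:existence}), we have $T_\star>0$. On $[0,T_\star]$ the nonlinear remainder term is absorbed by the dissipation since $K - C\Nt f(t)\Nt_X \ge K/2 \ge 0$, so the differential inequality simplifies to
\begin{equation*}
\tfrac{d}{dt}\Nt f(t)\Nt_X^2 \le -2\lambda_2 \Nt f(t)\Nt_X^2,
\end{equation*}
and Gronwall's lemma gives $\Nt f(t)\Nt_X \le e^{-\lambda_2 t}\Nt f_0\Nt_X \le \e_1 \le K/(2C)$ for every $t \in [0,T_\star]$. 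This strict inequality, together with the maximality of $T_\star$ and the continuity of $t\mapsto \Nt f(t)\Nt_X$, forces $T_\star = +\infty$; otherwise the a priori bound could be extended past $T_\star$, contradicting its definition.

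Consequently the decay $\Nt f(t)\Nt_X \le e^{-\lambda_2 t}\Nt f_0\Nt_X$ holds for all $t \ge 0$, and passing back to the original norm through the equivalence $c_1 \|f\|_X \le \Nt f\Nt_X \le c_2 \|f\|_X$ yields $\|f(t)\|_X \le (c_2/c_1) e^{-\lambda_2 t} \|f_0\|_X$, which is the desired estimate with $C := c_2/c_1$. The main subtlety, and the step on which the whole argument hinges, is that the quadratic nonlinear contribution in Proposition~\ref{prop:stab} enters only as $C\Nt f\Nt_X\,\|f\|_Y^2$ with the dissipation prefactor $\|f\|_Y^2$ rather than $\|f\|_X^2$; this is precisely what allows the smallness of $\Nt f\Nt_X$ to defeat the nonlinearity while preserving the full linear decay rate $\lambda_2$, and it is the reason the delicate anisotropic bilinear estimates of Lemma~\ref{lem:Qf} had to be built to deliver a control of the form $\Nt f\Nt_X \|f\|_Y^2$ rather than any cruder bound.
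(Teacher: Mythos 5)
Your argument is correct, and the mathematical core is the same as the paper's: plug the smallness of $\Nt f\Nt_X$ into the differential inequality of Proposition~\ref{prop:stab} to absorb the term $-(K-C\Nt f\Nt_X)\|f\|_Y^2$, then apply Gronwall in the equivalent norm of Proposition~\ref{prop:dissipative} and translate back. The only real difference is how the uniform smallness of $\Nt f(t)\Nt_X$ is secured: the paper simply reads it off from the already-established a priori bound $\sup_{t\ge 0}\Nt f(t)\Nt_X^2 + \int_0^t\|f(\tau)\|_Y^2\,d\tau \lesssim \e_1^2$ coming from Theorem~\ref{thm:existence}, and then chooses $\e_1$ so that $-K + C\e_1 \le -K/2$. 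You instead re-derive the smallness via a continuation argument with the threshold $K/(2C)$. This is a more self-contained route (it would survive even without quoting the quantitative output of Theorem~\ref{thm:existence}), but it costs you the extra bookkeeping about the (weak) continuity of $t\mapsto \Nt f(t)\Nt_X$, which the paper's shortcut avoids. Either way is fine; the paper's is simply the lighter of the two since the needed uniform bound is already in hand.
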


\begin{proof}
From Theorem \ref{thm:existence} we have
$$
\sup_{t \ge 0} \Nt f(t) \Nt_{X}^2 + \int_0^t \| f(\tau) \|_{Y}^2 \, d\tau \le C\e_1^2.
$$
Using Proposition~\ref{prop:stab} we get, if $\e_1>0$ is small enough so that $- K + C\e_1 \le - K/2$, and for any $\lambda_2 < \lambda_1$,
$$
\bal
\frac12\frac{d}{dt} \Nt f \Nt_{X}^2
&\le - \lambda_2 \Nt f \Nt_X^2 - (K - C\e_1) \| f \|_{Y}^2 \\
&\le - \lambda_2 \Nt f \Nt_X^2 - \frac{K}{2} \| f \|_{Y}^2,
\eal
$$
and then we deduce an exponential convergence
$$
\forall \, t \ge 0, \qquad
\Nt f(t) \Nt_{X} \le e^{-\lambda_2 t} \, \Nt f_0 \Nt_{X},
$$
which implies
$$
\forall \, t \ge 0, \qquad
\| f(t) \|_{X} \le C e^{- \lambda_2 t} \, \| f_0 \|_{X}.
$$
\end{proof}


\end{document}